\setlist{leftmargin=5mm}
\numberwithin{equation}{section}
\newcommand{\N}{\mathbb{N}}
\newcommand{\R}{\mathbb{R}}
\newcommand{\E}{\mathbb{E}}
\newcommand{\Prob}{\mathbb{P}}
\newcommand{\G}{\mathbb{G}}
\newcommand{\pnorm}[2]{\lVert#1\rVert_{#2}}
\newcommand{\abs}[1]{\lvert#1\rvert}
\newcommand{\bigabs}[1]{\big\lvert#1\big\rvert}
\newcommand{\biggabs}[1]{\bigg\lvert#1\bigg\rvert}
\newcommand{\iprod}[2]{\langle#1,#2\rangle}
\renewcommand{\epsilon}{\varepsilon}
\renewcommand{\d}[1]{\mathrm{d}#1}
\newcommand{\floor}[1]{\left\lfloor #1 \right\rfloor}
\renewcommand{\hat}{\widehat}
\renewcommand{\tilde}{\widetilde}
\DeclareMathOperator*{\argmin}{argmin}
\newcommand{\magenta}[1]{{\color{magenta}#1}} 
\newcommand{\red}[1]{{\color{red}#1}}
\newcommand{\blue}[1]{{\color{blue}#1}}
\theoremstyle{definition}
\theoremstyle{remark}\newtheorem{assumption}{Assumption}
\theoremstyle{remark}\newtheorem{remark}{Remark}
\theoremstyle{definition}
\theoremstyle{plain}
\theoremstyle{plain}\newtheorem{theorem}{Theorem}
\theoremstyle{plain}\newtheorem{lemma}{Lemma}
\theoremstyle{plain}\newtheorem{proposition}{Proposition}
\theoremstyle{plain}\newtheorem{corollary}{Corollary}
\theoremstyle{plain}\newtheorem{conjecture}{Conjecture}
	\def\MR#1{}
\begin{document}

\title[Confidence intervals for multiple isotonic regression]{Confidence intervals for multiple isotonic regression and other monotone models}
\thanks{The research of H. Deng is partially supported by DMS-1454817. The research of Q. Han is partially supported by DMS-1916221. The research of C.-H. Zhang is partially supported by DMS-1513378, DMS-1721495, IIS-1741390 and CCF-1934924. }

\author[H. Deng]{Hang Deng}

\address[H. Deng]{
	Department of Statistics, Rutgers University, Piscataway, NJ 08854, USA.
}
\email{hdeng@stat.rutgers.edu}

\author[Q. Han]{Qiyang Han}

\address[Q. Han]{
Department of Statistics, Rutgers University, Piscataway, NJ 08854, USA.
}
\email{qh85@stat.rutgers.edu}

\author[C.-H. Zhang]{Cun-Hui Zhang}

\address[C.-H. Zhang]{
Department of Statistics, Rutgers University, Piscataway, NJ 08854, USA.
}
\email{czhang@stat.rutgers.edu}

\date{\today}
\keywords{limit distribution theory, confidence interval, multiple isotonic regression, Gaussian process, shape constraints}
\subjclass[2000]{60F17, 62E17}
\maketitle

\begin{abstract}
We consider the problem of constructing pointwise confidence intervals in the multiple isotonic regression model. Recently, Han and Zhang \cite{han2019limit} obtained a pointwise limit distribution theory for the so-called block max-min and min-max estimators \cite{fokianos2017integrated,deng2018isotonic} in this model, but inference remains a difficult problem due to the nuisance parameter in the limit distribution that involves multiple unknown partial derivatives of the true regression function.

In this paper, we show that this difficult nuisance parameter can be effectively eliminated by taking advantage of information beyond point estimates in the block max-min and min-max estimators. 
Formally, let $\hat{u}(x_0)$ (resp. $\hat{v}(x_0)$) be the maximizing lower-left (resp. minimizing upper-right) vertex in the block max-min (resp. min-max) estimator, and $\hat{f}_n$ be the average of the block max-min and min-max estimators. If all (first-order) partial derivatives of $f_0$ are non-vanishing at $x_0$, then the following pivotal limit distribution theory holds:
\begin{align*}
\textstyle \sqrt{n_{\hat{u},\hat{v}}(x_0)} \big(\hat{f}_n(x_0)-f_0(x_0)\big)\rightsquigarrow \sigma\cdot \mathbb{L}_{\bm{1}_d}.
\end{align*}
Here $n_{\hat{u},\hat{v}}(x_0)$ is the number of design points in the block $[\hat{u}(x_0),\hat{v}(x_0)]$, $\sigma$ is the standard deviation of the errors, and $\mathbb{L}_{\bm{1}_d}$ is a universal limit distribution free of nuisance parameters. This immediately yields confidence intervals for $f_0(x_0)$ with asymptotically exact confidence level and oracle length. Notably, the construction of the confidence intervals, even new in the univariate setting, requires no more efforts than performing an isotonic regression once using the block max-min and min-max estimators, and can be easily adapted to other common monotone models including, e.g., (i) monotone density estimation, (ii) interval censoring model with current status data, (iii) counting process model with panel count data, and (iv) generalized linear models. Extensive simulations are carried out to support our theory.

\end{abstract}


\setcounter{theorem}{-1}

\section{Introduction}

\subsection{Overview}

The field of estimation and inference under shape constraints has undergone rapid development in recent years, mostly notably in the direction of estimation theory of multi-dimensional shape constrained models. We briefly give some review of the history and some recent progress:
\begin{itemize}
	\item (\emph{Univariate shape constraints}) Starting from the seminal work of \cite{grenander1956theory,rao1969estimation,rao1970estimation}, estimation of a univariate monotone density or regression function has received much attention, cf. \cite{groeneboom1985estimating,groeneboom1989brownian,groeneboom1995isotonic,zhang2002risk,brunk1970estimation,wright1981asymptotic,chatterjee2015risk,bellec2018sharp,han2019berry}. Estimation of a univariate convex density or regression function is a more challenging task, but considerable progress has been made through the efforts of many authors, cf. \cite{hildreth1954point,hanson1976consistency,groeneboom2001canonical,groeneboom2001estimation,mammen1991nonparametric,guntuboyina2013global,chatterjee2015risk,bellec2018sharp}. Recent years also witnessed much progress in further understanding the behavior of the maximum likelihood estimator (MLE) of a univariate log-concave density, cf. \cite{dumbgen2009maximum,dumbgen2011approximation,balabdaoui2009limit,kim2016global,kim2016adaptation,doss2013global}. Other topics include estimation of unimodal regression functions, cf. \cite{chatterjee2015adaptive,bellec2018sharp}, estimation of a concave bathtub-shaped hazard function, cf. \cite{jankowski2009nonparametric}, and estimation of a $k$-monotone density, cf. \cite{balabdaoui2007estimation}.
	\item (\emph{Multi-dimensional monotonicity constraints}) \cite{chatterjee2018matrix} initiated a study of risk bounds for the least squares estimator (LSE) of a bivariate coordinate-wise non-decreasing regression function.
    \cite{han2017isotonic} extends the results of \cite{chatterjee2018matrix} to the more challenging case $d\geq 3$.
    See also \cite{han2019} for some further improvements. \cite{deng2018isotonic} studied 
    block max-min and min-max estimators originally proposed in \cite{fokianos2017integrated}.
    See also a recent work \cite{fang2019multivariate} for a different notion of multi-dimensional monotonicity.
	\item (\emph{Multi-dimensional convexity constraints}) Convex/concave regression in multi-dimensional settings is initiated in \cite{kuosmanen2008representation}. Consistency of the LSEs is proved in \cite{seijo2011nonparametric,lim2012consistency}.  \cite{han2016multivariate} studied global and adaptive risk bounds for convex bounded LSEs in a random design for $d\leq 3$. \cite{kim2016global} studied global risk bounds for log-concave MLEs, and \cite{feng2018adaptation} studied their adaptation properties, both for $d\leq 3$. \cite{xu2019high} studied log-concave density estimation in high dimensions. 
	Other topics include estimation of $s$-concave densities, cf. \cite{seregin2010nonparametric,koenker2010quasi,han2015approximation,han2019}, and additive modeling, cf. \cite{chen2016generalized}.
\end{itemize}

Despite these remarkable progress in the \emph{estimation} theory of shape constrained models in multivariate settings for various tuning-free estimators, little next to nothing is known about how these merits can be actually useful in making \emph{inference} for the multi-dimensional shape constrained function of interest. The purpose of this paper is to start to fill this gap, within the context of multiple isotonic regression \cite{han2017isotonic,deng2018isotonic}. 

Here is our setup. Consider the regression model 
\begin{align}\label{model:regression}
Y_i = f_0(X_i)+\xi_i,\quad i=1,\ldots,n,
\end{align}
where $X_1,\ldots,X_n$ are design points in $[0,1]^d$ which can be either fixed or random, and $\xi_1,\ldots,\xi_n$ are independent mean-zero errors. The true regression function $f_0$ is assumed to belong to the class of coordinate-wise nondecreasing functions on $[0,1]^d$:
\begin{align*}
f_0 \in \mathcal{F}_d\equiv \{f:[0,1]^d \to \R, f(x)\leq f(y) \textrm{ if }x_i\leq y_i\textrm{ for all }i=1,\ldots,d\}.
\end{align*}

The hope of making some real progress in the inference aspect of this model, beyond purely the estimation theory, is spurred by the recent work of the second and third authors \cite{han2019limit}, who obtained a \emph{pointwise limit distribution theory} for the block max-min and min-max estimators originally proposed in \cite{fokianos2017integrated} and rigorously defined in \cite{deng2018isotonic}. For \emph{any} $x_0 \in [0,1]^d$, let the block max-min and min-max estimators, $\hat{f}_n^{-}$ and $\hat{f}_n^{+}$, be defined as
\begin{align}\label{def:max_min_estimator}
\hat{f}_n^{-}(x_0) &\equiv \max_{u\leq x_0} \min_{ \substack{v\geq x_0\\ [u,v]\cap \{X_i\}\neq \emptyset}} \frac{1}{\abs{ \{i: u\leq X_i\leq v\}}}\sum_{{i: u\leq X_i\leq v}} Y_i\\
&\equiv \max_{u\leq x_0} \min_{ \substack{v\geq x_0\\ [u,v]\cap \{X_i\}\neq \emptyset}} \bar{Y}|_{[u,v]}, \hbox{ and }
\nonumber\\
\hat{f}_n^{+}(x_0) &\equiv \min_{v\geq x_0} \max_{ \substack{u\leq x_0\\ [u,v]\cap \{X_i\}\neq \emptyset}} \bar{Y}|_{[u,v]}.\nonumber
\end{align}
Note that in the univariate case ($d=1$), the block max-min estimator $\hat{f}_n^{-}$ and the block min-max estimator $\hat{f}_n^{+}$ are the same and coincide with the isotonic least squares estimator (LSE) at design points $\{X_i\}$. However, $\hat{f}_n^{-}$ and $\hat{f}_n^{+}$ are in general different and $\hat{f}_n^{-} \le \hat{f}_n^{+}$ is only guaranteed at design points in $d\geq 2$; see \cite{deng2018isotonic} for an explicit example in which the two estimators differ.

If the errors $\xi_i$'s are i.i.d. mean-zero with variance $\sigma^2$, \cite{han2019limit} showed that 
\begin{align}\label{eqn:limit_distribution}
\omega_n^{-1}(\bm{\alpha})\big(\hat{f}_n^{\mp}(x_0)-f_0(x_0)\big)\rightsquigarrow r(\sigma)\cdot K(f_0,x_0)\cdot \mathbb{D}^{\mp}_{\bm{\alpha}}.
\end{align}
Here $\omega_n(\bm{\alpha})$ is the local rate of convergence of $\hat{f}_n^{\mp}$, depending on the `local smoothness' level $\bm{\alpha}$ of $f_0$ at $x_0$ (the precise meaning of this will be clarified in Section \ref{section:CI_isotonic_regression}) 
and the design of the covariates in a fairly complicated way, $r(\sigma)$ is a constant depending on the noise level of the errors, and $K(f_0,x_0)$ is a constant depending on the unknown information concerning the derivatives of $f_0$ at $x_0$. \cite{han2019limit} also showed that the limit distribution theory (\ref{eqn:limit_distribution}) is optimal in a local asymptotic minimax sense.

One may naturally wish to use (\ref{eqn:limit_distribution}) for construction of confidence intervals (CIs) for $f_0(x_0)$, but unfortunately, the complications for using directly the above limit theory for inference are multi-fold:
\begin{enumerate}
	\item The constants $K(f_0,x_0), r(\sigma)$ depend on the unknown information of derivatives of $f_0$ at $x_0$ and the noise level $\sigma$;
	\item The local rate of convergence $\omega_n^{-1}(\bm{\alpha})$ depends on the unknown local smoothness level $\bm{\alpha}$ of $f_0$.
\end{enumerate}
Even one could be content with the knowledge of
the local smoothness level of $f_0$ at $x_0$, for instance assuming all first-order partial derivatives are non-vanishing, the problem of getting a consistent estimate of the nuisance parameter $K(f_0,x_0)$, which involves \emph{many} derivatives of $f_0$ at $x_0$ is already very challenging. Since one of the main features of shape-constrained methods is the avoidance of tuning parameters---which is particularly important in multi-dimensional settings---we would ideally want to avoid estimation of derivatives to begin with. 

A popular tuning-free testing approach for inference in the univariate monotone-response models, put forward in \cite{banerjee2001likelihood,banerjee2007likelihood}, proposes the use of a log likelihood ratio test. The strength of this method lies in the fact that the limit distribution of the log likelihood ratio statistic is \emph{pivotal}, i.e., not depending on nuisance parameters, in particular the derivative of the monotone function of interest, provided it is non-vanishing at the point of interest. Using the quantiles for the pivotal limit distribution, one can then obtain CIs by inverting a family of log likelihood ratio tests. The same idea is further exploited in \cite{doss2016inference,doss2019concave} in the contexts of inference for the mode of a log-concave density and for the value of a concave regression function. 

It is natural to wonder if a similar program, based on likelihood methods, can be extended to multi-dimensional settings, for instance in the multiple isotonic regression model (\ref{model:regression}) we study here. Apart from the apparent lack of any limit distribution theory for the LSE, i.e., the maximum likelihood estimator under Gaussian likelihood, the more fundamental problem is that the LSE does exhibit some undesirable sub-optimal behavior. In particular, as have been clear from the work \cite{han2017isotonic}, the LSE does not adapt to constant functions at the near optimal parametric rate, while the block max-min and min-max estimators (\ref{def:max_min_estimator}) do \cite{deng2018isotonic,han2019limit}. This strongly hints that a limit distribution theory of type (\ref{eqn:limit_distribution}) does not hold for the LSE, or at most can only hold for a very restrictive range of $\bm{\alpha}$, since (\ref{eqn:limit_distribution}) already recovers the parametric rate for constant signals.

Another common approach for avoiding estimation of nuisance parameters in limit distributions is the bootstrap. However, as shown in \cite{kosorok2008bootstrapping,sen2010inconsistency,seijo2011change}, standard bootstrap methods in non-standard problems, in particular those with cube-root asymptotics and non-normal limit distributions, typically lead to \emph{inconsistent} estimates. Although it is in principle possible to develop consistent bootstrap procedures, e.g., $m$-out-of-$n$ bootstrap, or bootstrap with smoothing, cf. \cite{sen2010inconsistency,seijo2011change}, these procedures involve one or more tuning parameters that need to be  carefully calibrated in practice, which unfortunately demerits the tuning-free advantages of shape-constrained methods.

The conceptual and practical difficulties in the likelihood and bootstrap methods lead us to a completely different approach for making inference of $f_0(x_0)$. Our proposal for the construction of the CI for $f_0(x_0)$, as will be detailed in \eqref{def:CI} below, \emph{requires essentially no more efforts than performing an isotonic regression once using the block max-min and min-max estimators}. The key idea for our proposal is to use information beyond point estimates in isotonic regression to directly estimate the scaled magnitude 
$\omega_n^\ast \equiv \omega_n(\bm{\alpha}) r(\sigma) K(f_0,x_0)/ \sigma$ of the 
error of estimating $f_0(x_0)$ in \eqref{eqn:limit_distribution} and therefore to bypass the difficult problem of estimating the
nuisance parameter $K(f_0,x_0)$. More important, the implementation of this idea 
does not require consistent estimation of the scaled magnitude: Given estimates $\hat{f}_n(x_0)$ and $\hat\omega_n^\ast$, 
it requires only the convergence in distribution of the product of $(\hat{f}_n(x_0)-f_0(x_0)\}/\omega_n^\ast$ and the ratio $\omega_n^\ast/\hat\omega_n^\ast$ to a known distribution. See Theorems \ref{thm:pivotal_limit_distribution} and \ref{thm:CI_exact} in the next section and their proofs.

Formally, let $(\hat{u}(x_0),\hat{v}(x_0))$ be any pair such that
\begin{align}\label{def:block_estiamtors}
\hat{f}_n^{-}(x_0) 
&\equiv \max_{u\leq x_0} \min_{ \substack{v\geq x_0\\ [u,v]\cap \{X_i\}\neq \emptyset}} \bar{Y}|_{[u,v]} =  \min_{ \substack{v\geq x_0\\ [u,v]\cap \{X_i\}\neq \emptyset}} \bar{Y}|_{[\hat{u}(x_0),v]},\\
\hat{f}_n^{+}(x_0) 
&\equiv \min_{v\geq x_0} \max_{ \substack{u\leq x_0\\ [u,v]\cap \{X_i\}\neq \emptyset}} \bar{Y}|_{[u,v]} =  \max_{ \substack{u\leq x_0\\ [u,v]\cap \{X_i\}\neq \emptyset}} \bar{Y}|_{[u,\hat{v}(x_0)]}. \nonumber
\end{align}
Let the average of the two estimators $\hat{f}_n^{\mp}$ in (\ref{def:max_min_estimator}) be the \emph{block average estimator} $\hat{f}_n(x_0)$, i.e.,
\begin{align}\label{def:block_avg}
\hat{f}_n(x_0)\equiv \frac{1}{2} \big(\hat{f}_n^{-}(x_0)+\hat{f}_n^{+}(x_0)\big),
\end{align}
and let $n_{\hat{u},\hat{v}}(x_0)$ be the number of design points in the block $[\hat{u}(x_0),\hat{v}(x_0)]$, i.e.,
\begin{align}\label{def:n_uv}
\textstyle n_{\hat{u},\hat{v}}(x_0) \equiv \sum_{i} \bm{1}_{X_i \in [\hat{u}(x_0), \hat{v}(x_0)]}.
\end{align}
\begin{figure}
	\centering
\begin{tikzpicture}[x=6.5cm, y = 5cm]
\draw [dashed, gray, opacity = 0.7] (0,-0.02) -- (0,1.02);
\draw [dashed, gray, opacity = 0.7] (-0.02,0) -- (1.02,0);
\draw [dashed, gray, opacity = 0.7] (1,-0.02) -- (1,1.02);
\draw [dashed, gray, opacity = 0.7] (-0.02,1) -- (1.02,1);

\foreach \x/\y/\z in {0.03/0.33/1, 0.10/0.13/2, 0.18/0.89/3, 0.26/0.30/4, 0.39/0.19/5, 0.55/0.53/6, 0.62/0.21/7, 0.70/0.42/8, 0.76/0.66/9, 0.93/0.06/10}{
	\draw [dashed, gray, opacity = 0.7] (\x, -0.02) -- (\x, 1.02);
	\draw [dashed, gray, opacity = 0.7] (-0.02, \y) -- (1.02, \y);
	\filldraw [black] (\x,\y) circle (1.5pt);
	\coordinate[label={[label distance=-3pt]135: \scalebox{.5}{$X_{\z}$} } ] ($X_{\z}$) at (\x, \y);
}

\coordinate[label={[label distance=-2pt]270: \scalebox{.7}{\magenta{$\hat{u}(x_0) = \hat{u}^-$}} } ] (\detokenize{$\hat{u}(x_0)$}) at (0.10, 0.13);
\coordinate[label={[label distance=-2pt]45: \scalebox{.7}{\magenta{$\hat{v}^-$}} } ] (\detokenize{$\hat{v}(x_0)$}) at (0.76, 0.89);

\coordinate[label={[label distance=-2pt]270: \scalebox{.7}{\blue{$\hat{u}^+$}} } ] (\detokenize{$\hat{u}(x_0)^+$}) at (0.26, 0.06);
\coordinate[label={[label distance=-2pt]45: \scalebox{.7}{\blue{$\hat{v}(x_0) = \hat{v}^+$}} } ] (\detokenize{$\hat{v}(x_0)$}) at (0.93, 0.66);

\node[star,star points=5, star point ratio=2.25, fill=red, minimum size = 3mm, inner sep=0pt] at (0.5,0.5) {};
\coordinate[label={[label distance=-3pt]-135: \red{\tiny{$x_0$}} } ] ($x_0$) at (0.5, 0.5);

\draw [dash pattern = on 5pt off 5pt, magenta, line width=0.5mm, opacity = 0.5] (0.10, 0.13) -- (0.76, 0.13);
\draw [dash pattern = on 5pt off 5pt, magenta, line width=0.5mm, opacity = 0.5] (0.10, 0.13) -- (0.10, 0.89);
\draw [dash pattern = on 5pt off 5pt, magenta, line width=0.5mm, opacity = 0.5] (0.76, 0.13) -- (0.76, 0.89);
\draw [dash pattern = on 5pt off 5pt, magenta, line width=0.5mm, opacity = 0.5] (0.10, 0.89) -- (0.76, 0.89);

\draw [dash pattern = on 5pt off 5pt, blue, line width=0.5mm, opacity = 0.5] (0.26, 0.06) -- (0.93, 0.06);
\draw [dash pattern = on 5pt off 5pt, blue, line width=0.5mm, opacity = 0.5] (0.26, 0.06) -- (0.26, 0.66);
\draw [dash pattern = on 5pt off 5pt, blue, line width=0.5mm, opacity = 0.5] (0.93, 0.06) -- (0.93, 0.66);
\draw [dash pattern = on 5pt off 5pt, blue, line width=0.5mm, opacity = 0.5] (0.26, 0.66) -- (0.93, 0.66);

\fill [green, opacity = 0.2] (0.10,0.13) rectangle (0.93,0.66);

\end{tikzpicture}
	\caption{Figure illustration of the specification of $\hat{u}(x_0)$ and $\hat{v}(x_0)$.}
	\label{fig:n_uv}
\end{figure}

When $(Y_1,\ldots,Y_n)$ are in general positions (i.e., $A\neq B$ implies $\bar{Y}|_A \neq \bar{Y}|_B$ for nonempty $A$ and $B$), the set of design points in the rectangle $[\hat{u}^-,\hat{v}^-]$ giving $\hat{f}^-_n(x_0)$ in (\ref{def:block_estiamtors}) is unique. In this case, $\hat{u}(x_0)=\hat{u}^-$ is unique if we confine our choice to the rectangle $[\hat{u}^-,\hat{v}^-]$ with at least one design point in each of its $2^d$ sides. Similarly, $\hat{v}(x_0)$ is also unique when the solution $[\hat{u}^+,\hat{v}^+]$ for $\hat{f}_n^+(x_0)$ in (\ref{def:block_estiamtors}) is required to have a design point on each side. For such specification of $(\hat{u}(x_0),\hat{v}(x_0))$, $n_{\hat{u},\hat{v}}(x_0)$ defined above is uniquely specified. See Figure \ref{fig:n_uv} for an illustration. In any cases, our theoretical results hold for all feasible pairs $(\hat{u}(x_0),\hat{v}(x_0))$ in (\ref{def:block_estiamtors}).

We propose the following form of CI:
\begin{align}\label{def:CI}
\mathcal{I}_n(c_\delta)\equiv \bigg[\hat{f}_n(x_0) - \frac{c_\delta\cdot \hat{\sigma}}{ \sqrt{n_{\hat{u},\hat{v}}(x_0)}  },\hat{f}_n(x_0) + \frac{c_\delta\cdot \hat{\sigma}}{\sqrt{n_{\hat{u},\hat{v}}(x_0) } } \bigg],
\end{align}
where $\hat{\sigma}$ is the square root of an estimator $\hat{\sigma}^2$ of the variance $\sigma^2$, and $c_\delta>0$ is a critical value chosen by the user that depends only on the confidence level $\delta>0$. 

The crux of our proposal (\ref{def:CI}) is the following \emph{pivotal limit distribution theory}: Under the same conditions as in the limit distribution theory (\ref{eqn:limit_distribution}),
\begin{align}\label{eqn:pivotal_limit_distribution}
\sqrt{n_{\hat{u},\hat{v}}(x_0)}
\big(\hat{f}_n(x_0)-f_0(x_0)\big)\rightsquigarrow \sigma\cdot  \mathbb{L}_{\bm{\alpha}},
\end{align}
where the distribution of $\mathbb{L}_{\bm{\alpha}}$ \emph{does not depend on the nuisance parameter $K(f_0,x_0)$}. Hence, given a consistent variance estimator $\hat{\sigma}^2$, the CI (\ref{def:CI}) can both achieve asymptotically \emph{exact} confidence level $1-\delta$ and shrink at the optimal length on the order of $\omega_n(\bm{\alpha})\cdot r(\sigma) \cdot K(f_0, x_0)$, provided that the local smoothness $\bm{\alpha}$ is known. 
This is the case, e.g., if one assumes that all first-order partial derivatives are non-vanishing, much as in \cite{banerjee2001likelihood,banerjee2007likelihood} in the univariate case that assumes a non-vanishing first derivative for the monotone function at the point of interest. By relaxing the requirement of asymptotically \emph{exact} confidence level, it is also possible, by \emph{calibrating the critical value $c_\delta$ alone}, to construct conservative CIs (\ref{def:CI}) that adapt to 
any given range of local smoothness levels $\bm{\alpha}$, while maintaining the optimal order of the length as in the limit theory (\ref{eqn:limit_distribution}). One natural question here is that whether the likelihood approach of \cite{banerjee2001likelihood,banerjee2007likelihood} in the much simpler univariate setting, wins over our proposal (\ref{def:CI}) in terms of adaptation to unknown local smoothness $\bm{\alpha}$. As will be clear in Section \ref{section:CI_isotonic_regression}, the limit distributions of log likelihood ratio tests also depend on $\bm{\alpha}$, so indeed the likelihood approach of \cite{banerjee2001likelihood,banerjee2007likelihood} by itself does not offer a stronger degree of universality from the perspective of adaptation.

At a deeper level, the viewpoint of our construction for the CI for $f_0(x_0)$ is markedly different from that of \cite{banerjee2001likelihood,banerjee2007likelihood}. We treat the nuisance parameters $K(f_0,x_0)$ and $r(\sigma)$ differently in the limit theory (\ref{eqn:limit_distribution}), with a particular view that \emph{it is $K(f_0,x_0)$ that constitutes the main difficulty of using (\ref{eqn:limit_distribution}) for inference, while the problem of $r(\sigma)$ is relatively minor. } The underlying reason for this is that $K(f_0,x_0)$ involves the information for derivatives of $f_0$ at $x_0$, which cannot be obtained in a simple way from point estimates that take local averages, while $r(\sigma)$ can be relatively easily estimated using known methods (e.g., difference estimators \cite{rice1984bandwidth,hall1991estimation,munk2005difference}), or large samples (if available) in the data-driven local block $[\hat{u}(x_0),\hat{v}(x_0)]$.

The idea for the construction of the proposed CI (\ref{def:CI}) has a much broader scope of applications beyond the isotonic regression model (\ref{model:regression}). In Section \ref{section:more_examples} similar constructions of CIs are exploited in a number of other models with monotonicity shape constraints, including (i) monotone density estimation \cite{rao1970estimation,groeneboom1985estimating,groeneboom1989brownian}, (ii) interval censoring model with current status data \cite{groeneboom1992information}, (iii) estimation of the mean function of a counting process with panel count data \cite{wellner2000two}, and more generally, (iv) generalized linear models with monotonicity. These new CIs share the same general scheme that the constructions utilize the local information encoded by the analogue of $\hat{u}(x_0),\hat{v}(x_0)$ as in the regression setting, and require essentially no more efforts than performing the (maximum likelihood) estimation procedure once.

The results of this paper, in particular the pivotal limit distribution theory (\ref{eqn:pivotal_limit_distribution}) and the resulting CI (\ref{def:CI}), make a significant further step in developing practical inference methods using the block estimators (\ref{def:max_min_estimator}) beyond the limit theory (\ref{eqn:limit_distribution}) developed in \cite{han2019limit}, especially in view of the dependence of the constant factor $K(f_0,x_0)$ on the partial derivatives of the unknown $f_0$. However, the techniques used in proving (\ref{eqn:limit_distribution}) in \cite{han2019limit} serve as the foundation for establishing the limit theory in (\ref{eqn:pivotal_limit_distribution}) in this paper. In addition, as a key technical ingredient in proving (\ref{eqn:pivotal_limit_distribution}), we show that the limiting Gaussian white noise versions of properly rescaled $\hat{u}(x_0),\hat{v}(x_0)$ are almost surely well-defined (see Lemma \ref{lem:uniqueness_supinf}), so the limit distribution $\mathbb{L}_{\bm{\alpha}}$ in (\ref{eqn:pivotal_limit_distribution}) is indeed well-defined.

The rest of the article is organized as follows. In Section \ref{section:CI_isotonic_regression} we give a review of the limit distribution theory (\ref{eqn:limit_distribution}) developed in \cite{han2019limit}, study the proposed CI \eqref{def:CI}, and present the pivotal limit distribution theory (\ref{eqn:pivotal_limit_distribution}). 
Some comparisons with the Banerjee-Wellner likelihood based inference method are also detailed in Section \ref{section:CI_isotonic_regression}. In Section \ref{section:more_examples}, we illustrate the generality of our method of constructing CIs in the four models mentioned above. Section \ref{section:simulation} contains extensive simulation results that demonstrate the accuracy of the coverage probability of our proposed CIs, along with a detailed numerical comparison with the Banerjee-Wellner CIs \cite{banerjee2001likelihood,banerjee2007likelihood}. For clarity of presentation, proofs are deferred to the Appendix.

\subsection{Notation}\label{section:notation}
For the simplicity of presentation, we write the CI $[\widehat{\theta} - c_0, \widehat{\theta} + c_0]$ which is symmetric around $\widehat{\theta}$ as $\mathcal{I} = [ \widehat{\theta} \pm c_0]$. 

For a real-valued measurable function $f$ defined on $(\mathcal{X},\mathcal{A},P)$, $\pnorm{f}{L_p(P)}\equiv \pnorm{f}{P,p}\equiv \big(P\abs{f}^p)^{1/p}$ denotes the usual $L_p$-norm under $P$, and $\pnorm{f}{\infty}\equiv \sup_{x \in \mathcal{X}} \abs{f(x)}$. Let $(\mathcal{F},\pnorm{\cdot}{})$ be a subset of the normed space of real functions $f:\mathcal{X}\to \R$. For $\epsilon>0$ let $\mathcal{N}(\epsilon,\mathcal{F},\pnorm{\cdot}{})$ be the $\epsilon$-covering number of $\mathcal{F}$; see \cite[pp. 83]{van1996weak} for more details.

For two real numbers $a,b$, $a\vee b\equiv \max\{a,b\}$ and $a\wedge b\equiv\min\{a,b\}$. For $x \in \R^d$, let $\pnorm{x}{p}$ denote its $p$-norm $(0\leq p\leq \infty)$. For any $x, y \in \R^d$, $x\leq y$ if and only if $x_i\leq y_i$ for all $1\leq i\leq d$. Let $[x,y] \equiv \prod_{k=1}^d [x_k\wedge y_k,x_k\vee y_k]$, $xy\equiv (x_ky_k)_{k=1}^d$, and $x\wedge (\vee) y \equiv (x_k \wedge (\vee) y_k)_{k=1}^d$. 
Let $\bm{1}_d=(1,\ldots,1)\in\R^d$. For $\ell_1, \ell_2 \in \{1,\ldots,d\}$, we let $\bm{1}_{[\ell_1:\ell_2]} \in \R^d$ be such that $(\bm{1}_{[\ell_1:\ell_2]})_k = \bm{1}_{\ell_1\leq k\leq \ell_2}$. We use $C_{x}$ to denote a generic constant that depends only on $x$, whose numeric value may change from line to line unless otherwise specified. $a\lesssim_{x} b$ and $a\gtrsim_x b$ mean $a\leq C_x b$ and $a\geq C_x b$ respectively, and $a\asymp_x b$ means $a\lesssim_{x} b$ and $a\gtrsim_x b$ [$a\lesssim b$ means $a\leq Cb$ for some absolute constant $C$]. $\mathcal{O}_{\mathbf{P}}$ and $\mathfrak{o}_{\mathbf{P}}$ denote the usual big and small O notation in probability. $\rightsquigarrow$ is reserved for weak convergence. For two integers $k_1>k_2$, we interpret $\sum_{k=k_1}^{k_2}\equiv 0, \prod_{k=k_1}^{k_2}\equiv 1$. We also interpret $(\infty)^{-1}\equiv 0, 0/0\equiv 0$.

\section{Confidence interval: Isotonic regression}\label{section:CI_isotonic_regression}

\subsection{Limit distribution theory in \cite{han2019limit}: a review}
Let us now describe the setting under which limit distribution theory for the block max-min and min-max estimators (\ref{def:max_min_estimator}) is developed in \cite{han2019limit}. The exposition below largely follows \cite{han2019limit}.

First some further notation. For $f: \R^d \to \R$, and $k \in \{1,\ldots,d\}$, $\alpha_k \in \mathbb{Z}_{\geq 1}$, let $\partial_k^{\alpha_k} f(x)\equiv \frac{d^{\alpha_k}}{d x_k^{\alpha_k}}f(x)$. For a multi-index $\bm{j}=(j_1,\ldots,j_d) \in \mathbb{Z}_{\geq 0}^d$, let $\partial^{\bm{j}} \equiv \partial_{1}^{j_1}\cdots \partial_d^{j_d}$, and $\bm{j}! \equiv j_1!\cdots j_d!$ and $x^{\bm{j}} \equiv x_1^{j_1}\ldots x_d^{j_d}$ for $x \in \R^d$. For $\bm{\alpha}=(\alpha_1,\ldots,\alpha_d) \in \mathbb{Z}_{\geq 1}^d$ in Assumption \ref{assump:smoothness} below, i.e., for some $0\leq s\leq d$, $1\leq \alpha_1,\ldots,\alpha_s<\infty = \alpha_{s+1}=\ldots=\alpha_d$, let $J(\bm{\alpha})$ (resp. $J_\ast(\bm{\alpha})$) be the set of all $\bm{j}=(j_1,\ldots,j_d) \in \mathbb{Z}_{\geq 0}^d$ satisfying $0<\sum_{k=1}^s j_k/\alpha_k \leq 1$ (resp. $\sum_{k=1}^s j_k/\alpha_k = 1$) and $j_k = 0$ for $s+1\leq k\leq d$, and let $J_0(\bm{\alpha})\equiv J(\bm{\alpha})\cup \{\bm{0}\}$. We often write $J=J(\bm{\alpha})$, $J_\ast = J_\ast(\bm{\alpha})$ and $J_0 = J_0(\bm{\alpha})$ if no confusion arises.

\begin{assumption}\label{assump:smoothness}
	$f_0$ is coordinate-wise nondecreasing (i.e., $f_0 \in \mathcal{F}_d$), and is $\bm{\alpha}$-smooth at $x_0$ with intrinsic dimension $s$, $\bm{\alpha}=(\alpha_1,\ldots,\alpha_d)$ with integers $1\leq \alpha_1,\ldots,\alpha_s<\infty = \alpha_{s+1}=\ldots=\alpha_d$, $0\leq s\leq d$, in the sense that $\partial_k^{j_k} f_0(x_0)=0$ for $1\leq j_k\leq \alpha_k-1$ and $\partial_k^{\alpha_k} f_0(x_0)\neq 0$, $1\leq k\leq s$, and in rectangles of the form $\cap_{k=1}^d \{\abs{(x-x_0)_k}\leq L_0\cdot(r_n)_k\}$, $r_n = (\omega_n^{1/\alpha_1},\ldots,\omega_n^{1/\alpha_d})$ with $\omega_n>0$, the Taylor expansion of $f_0$ satisfies for all $L_0>0$,
	\begin{align*}
		\lim_{\omega_n \downarrow 0} \omega_n^{-1} \sup_{ \substack{x \in[0,1]^d,\\ \abs{(x-x_0)_k}\leq L_0 \cdot (r_n)_k, \\1\leq k\leq d} } \biggabs{f_0(x)- \sum_{\bm{j} \in J_0} \frac{\partial^{\bm{j}}f_0(x_0)}{\bm{j}!}(x-x_0)^{\bm{j}} }= 0.
	\end{align*}
\end{assumption}

The above assumption will be satisfied if $f_0$ depends only on its first $s$ coordinates, and is locally $C^{\max_{1\leq k\leq s}\alpha_k}$ at $x_0$, with $\partial_k^{j_k} f_0(x_0)=0$ for $1\leq j_k\leq \alpha_k-1$ and $\partial_k^{\alpha_k} f_0(x_0)\neq 0$, $1\leq k\leq s$.

\begin{assumption}\label{assump:design}
	The design points $\{X_i\}_{i=1}^n$ satisfy either of the following:
	\begin{itemize}
		\item (\emph{Fixed design}) $\{X_i\}$'s follow a $\bm{\beta}$-fixed lattice design: there exist some $\{\beta_1,\ldots,\beta_d\}\subset (0,1)$ with $\sum_{k=1}^d \beta_k=1$ such that $x_0 \in \{X_i\}_{i=1}^n = \prod_{k=1}^d \{x_{1,k},\ldots,x_{n_k,k}\}$, where $\{x_{1,k},\ldots,x_{n_k,k}\}$ are equally spaced in $[0,1]$ (i.e., $\abs{x_{j,k}-x_{j+1,k}}=1/n_k$ for all $j=1,\ldots,n_k-1$) and $n_k = \floor{n^{\beta_k}}$. 
		\item (\emph{Random design}) 
		$\{X_i\}$'s follow i.i.d. uniform random design in $[0,1]^d$ and are also independent of $\{\xi_i\}$'s.
	\end{itemize}
\end{assumption}

In $\bm{\beta}$-fixed lattice design,
we assume without loss of generality
\begin{align}\label{def:ordering}
0\leq \alpha_1 \beta_1 \leq \ldots \leq \alpha_s\beta_s\leq \ldots \leq \alpha_d \beta_d \leq \infty.
\end{align}
Otherwise we may find a permutation of $\{1,\ldots,d\}$ to satisfy the above condition and the theory below will be carried over for the permuted indices.

In the random design, we assume for simplicity that the law $P$ of $X_i$ is uniform on $[0,1]^d$; the forthcoming Theorem \ref{thm:limit_distribution_pointwise} holds with minor changes when $P$ is relaxed to have Lebesgue density $\pi$ that is bounded away from $0$ and $\infty$ on $[0,1]^d$ and is continuous over an open set containing the region $\big\{\big((x_0)_1,\ldots,(x_0)_s,x_{s+1},\ldots,x_d\big): 0\leq x_k\leq 1, s+1\leq k\leq d\big\}$. More discussion on the above assumptions is referred to \cite{han2019limit}. 
The following limit distribution theory is obtained by \cite{han2019limit}.

\begin{theorem}\label{thm:limit_distribution_pointwise}
	Let $x_0 \in (0,1)^d$. Suppose Assumptions \ref{assump:smoothness} and \ref{assump:design} hold, and the errors $\{\xi_i\}$ are i.i.d. mean-zero with finite variance $\E \xi_1^2=\sigma^2<\infty$ (and are independent of $\{X_i\}$ in the random design case). Let $\kappa_\ast,n_\ast$ be defined by
	\vspace{0.5ex}
	\setlength{\tabcolsep}{8pt} 
	\renewcommand{\arraystretch}{1.2} 
	\begin{center}
		\begin{tabular}{|c||c|c|}
			\hline 
			& $\bm{\beta}$-fixed lattice design & random design\\
			\hline\hline
			$\kappa_\ast$ & $\arg\max\limits_{1\leq \ell \leq d} \frac{\sum_{k=\ell}^d \beta_k}{2+\sum_{k=\ell}^s \alpha_k^{-1}}$ & $1$\\
			\hline
			$n_\ast$ & $n^{\sum_{k=\kappa_\ast}^d \beta_k}$ & $n$ \\
			\hline
		\end{tabular}
	\end{center}
	\vspace{0.5ex}
	If $\kappa_\ast$ is uniquely defined, then for some finite random variables $\mathbb{C}^{\mp }(f_0,x_0)$, 
	\begin{align*}
	(n_\ast/\sigma^2)^{\frac{1}{2+\sum_{k=\kappa_\ast}^s \alpha_k^{-1}}}\big(\hat{f}_n^{\mp}(x_0)-f_0(x_0)\big) \rightsquigarrow \mathbb{C}^{\mp}(f_0,x_0).
	\end{align*}
	Furthermore, if either $\{\alpha_k\}_{k=1}^s$ is a set of relative primes, i.e., the greatest common divisor of $\{\alpha_{k_1},\alpha_{k_2}\}$ is $1$ for all $1\leq k_1<k_2\leq s$, or all mixed derivatives $\partial^{\bm{j}} f_0$ of $f_0$ vanish at $x_0$ for all $\bm{j}\in J_\ast$, then 
	\begin{align*}
	\mathbb{C}^{\mp}(f_0,x_0) =_d K(f_0,x_0)\cdot \mathbb{D}^{\mp}_{\bm{\alpha}},
	\end{align*} 
	where $K(f_0,x_0) = \big\{\prod_{k=\kappa_\ast}^s\big({\partial_k^{\alpha_k} f_0(x_0)}/{(\alpha_k+1)!}\big)^{1/\alpha_k}\big\}^{\frac{1}{2+\sum_{k=\kappa_\ast}^s \alpha_k^{-1}}}$, and $\mathbb{D}^{\mp}_{\bm{\alpha}}$ are given by
	\begin{align*}
	\mathbb{D}^{-}_{\bm{\alpha}} &\equiv \sup_{g_1 \in \mathscr{G}_1}\inf_{g_2 \in \mathscr{G}_2} \mathbb{V}_{\bm{\alpha}}(g_1,g_2), \quad	\mathbb{D}^{+}_{\bm{\alpha}} \equiv \inf_{g_2 \in \mathscr{G}_2} \sup_{g_1 \in \mathscr{G}_1}\mathbb{V}_{\bm{\alpha}}(g_1,g_2).
	\end{align*}
	Here 
	\begin{align*}
	&\mathscr{G}_1\equiv \{g_1 \in \R^d: g_1>0, (g_1)_k\leq (x_0)_k, s+1\leq k\leq d\},\\
	&\mathscr{G}_2\equiv \{g_2 \in \R^d: g_2>0, (g_2)_k\leq (1-x_0)_k, s+1\leq k\leq d\},\\
		&\mathbb{V}_{\bm{\alpha}}(g_1,g_2)\equiv \frac{ \G(g_1,g_2)}{\prod_{k=\kappa_\ast}^d \big((g_1)_k+(g_2)_k\big)}+\sum_{k=\kappa_\ast}^s \frac{ (g_2)_k^{\alpha_k+1}-(g_1)_k^{\alpha_k+1}}{(g_2)_k+(g_1)_k},
	\end{align*}
	where $\G$ is a Gaussian process defined on $\R^{d}_{\geq 0}\times \R^{d}_{\geq 0}$ with the following covariance structure: for any $(g_1,g_2), (g_1',g_2')$,
	\begin{align*}
	&\mathrm{Cov}\big(\G(g_1,g_2),\G(g_1',g_2')\big)= \prod_{k=\kappa_\ast}^d \big( (g_1)_k\wedge (g_1')_k+ (g_2)_k\wedge (g_2')_k\big).
	\end{align*}
\end{theorem}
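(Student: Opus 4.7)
The plan is to follow a cube-root asymptotics strategy adapted to the multivariate anisotropic scaling. Introduce the local reparameterization $u = x_0 - g_1\, r_n$, $v = x_0 + g_2\, r_n$, where $r_n = (\omega_n^{1/\alpha_1},\ldots,\omega_n^{1/\alpha_d})$ with $\omega_n = (n_\ast/\sigma^2)^{-1/(2+\sum_{k=\kappa_\ast}^s \alpha_k^{-1})}$, and decompose
\[
\bar{Y}|_{[u,v]} - f_0(x_0) = \big(\bar{f}_0|_{[u,v]} - f_0(x_0)\big) + \bar{\xi}|_{[u,v]}.
\]
Then $\omega_n^{-1}(\hat{f}_n^-(x_0) - f_0(x_0))$ becomes a sup-inf over $(g_1,g_2)$ of a rescaled drift-plus-noise process, which should converge in distribution (after appropriate continuous-mapping arguments) to the stated limit.

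For the bias, Assumption \ref{assump:smoothness} allows Taylor expansion of $f_0$ on boxes of size $L_0 r_n$; averaging over $[u,v]$ (via integration for the random design, or lattice averaging for the fixed design) and rescaling by $\omega_n^{-1}$ produces the deterministic drift
\[
\sum_{k=\kappa_\ast}^s \frac{\partial_k^{\alpha_k} f_0(x_0)}{(\alpha_k+1)!} \cdot \frac{(g_2)_k^{\alpha_k+1} - (-(g_1)_k)^{\alpha_k+1}}{(g_1)_k + (g_2)_k}.
\]
Coordinates $k<\kappa_\ast$ yield smaller-order contributions and vanish in the limit; coordinates $k>s$ contribute no bias by flatness. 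For the noise, the number of design points in $[u,v]$ is approximately $n \cdot \prod_{k=\kappa_\ast}^d ((g_1)_k+(g_2)_k)(r_n)_k$ up to lower-order corrections (in both design regimes), so a functional CLT on compacta yields $\bar{\xi}|_{[u,v]}$ converging, after rescaling, to $\G(g_1,g_2)/\prod_{k=\kappa_\ast}^d ((g_1)_k+(g_2)_k)$ with the advertised covariance, since overlaps of two such blocks factorize as the product $\prod_{k=\kappa_\ast}^d ((g_1)_k \wedge (g_1')_k + (g_2)_k \wedge (g_2')_k)$ up to the common $(r_n)_k$ factors that rescale out.

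Next I would apply a continuous mapping theorem for sup-inf functionals. This requires argmax localization: showing that with probability tending to one the sup-inf is attained on a bounded set in the active $(g_1,g_2)$-coordinates, with the free coordinates $s+1,\ldots,d$ compactified by the box constraints encoded in $\mathscr{G}_1, \mathscr{G}_2$. A dyadic peeling argument across shells in $(g_1,g_2)$-space combined with maximal inequalities for the Gaussian noise should deliver this via drift domination (polynomial in $g$) over the noise (square-root in the block size). Finally, the factorization $\mathbb{C}^\mp \equald K(f_0,x_0) \cdot \mathbb{D}^\mp_{\bm{\alpha}}$ emerges from a deterministic change of variables $(g_1,g_2)_k \mapsto (c_k (g_1)_k, c_k (g_2)_k)$ with $c_k$ chosen to normalize $\partial_k^{\alpha_k} f_0(x_0)/(\alpha_k+1)!$ to unity; the relative-primes or vanishing-mixed-derivatives hypothesis is precisely what ensures no unabsorbed cross-monomials $(g_1^{\bm j}, g_2^{\bm j})$ with $\bm j \in J_\ast$ survive in the drift to obstruct this rescaling, while the $\prod((g_1)_k+(g_2)_k)$ normalization of $\G$ is coordinate-wise scale-invariant.

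The main obstacle will be the argmax localization in the anisotropic multivariate setting, because one must handle different polynomial growth rates across coordinates simultaneously; care is needed to rule out ``elongated'' near-maximizers where $(g_1,g_2)$ is large in one direction but small in another. A secondary technical point is verifying the bias/variance counts under the $\bm{\beta}$-fixed lattice design when the ordering \eqref{def:ordering} has ties in $\alpha_k \beta_k$: the uniqueness stipulation on $\kappa_\ast$ is exactly what prevents competing dominant scales and guarantees the limit has product-form structure.
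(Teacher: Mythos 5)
The paper does not prove this theorem: immediately before the statement it says ``The following limit distribution theory is obtained by \cite{han2019limit},'' and after it remarks ``Strictly speaking, \cite{han2019limit} proved the case for the block max-min estimator $\hat{f}_n^{-}$, but the case for the block min-max estimator $\hat{f}_n^{+}$ follows from the same proofs.'' So there is no in-paper proof to compare against; Theorem 1 is quoted as background from the prior work.

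That said, your outline is a sound high-level description of the standard route and is consistent with the architecture the cited proof (and the pieces of it visible in this paper's appendix, e.g.\ the definition of $r_n$ and $\omega_n$ and the localization in the proof of Theorem \ref{thm:pivotal_limit_distribution}) actually takes: local reparameterization $u = x_0 - g_1 r_n$, $v = x_0 + g_2 r_n$, bias-plus-noise decomposition of $\bar{Y}|_{[u,v]}$, a Taylor-expansion drift, a functional CLT for the rescaled noise on compacta with the factorized covariance $\prod_{k\ge\kappa_\ast}((g_1)_k\wedge(g_1')_k + (g_2)_k\wedge(g_2')_k)$, argmax localization by dyadic peeling to permit a continuous-mapping step for the sup-inf functional, and finally a coordinatewise rescaling $g_k \mapsto c_k g_k$ to peel off the $K(f_0,x_0)$ factor. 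Your identification of argmax localization across anisotropic directions as the hard step, and of the role of the relative-primes/vanishing-mixed-derivatives hypothesis (it forces $J_\ast$ to contain only ``pure'' multi-indices, so no cross-monomials survive in the drift to obstruct the rescaling), both match the structure of the actual argument.

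Two small imprecisions worth tightening. First, your drift expression $(g_2)_k^{\alpha_k+1}-(-(g_1)_k)^{\alpha_k+1}$ matches the stated $\mathbb{V}_{\bm\alpha}$ only after observing that monotonicity of $f_0$ forces each $\alpha_k$ ($k\le s$) to be odd and $\partial_k^{\alpha_k}f_0(x_0)>0$ (cf.\ the remark after Theorem \ref{thm:LRT}); then $\alpha_k+1$ is even and $(-(g_1)_k)^{\alpha_k+1}=(g_1)_k^{\alpha_k+1}$. Second, in the fixed-lattice design the design-point count in $[u,v]$ is of order $n_\ast\prod_{k\ge\kappa_\ast}((g_1)_k+(g_2)_k)(r_n)_k$, not $n$ times that product: for $k<\kappa_\ast$ the box width $(r_n)_k$ falls below the lattice spacing $1/n_k$, so those coordinates contribute a single layer of points (this is exactly what the indicator $\bm{1}_{[\kappa_\ast:d]}$ in the appendix's definition of $r_n$ is encoding), and the remaining factor $n^{\sum_{k\ge\kappa_\ast}\beta_k}=n_\ast$ is what enters the rate.
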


Strictly speaking, \cite{han2019limit} proved the case for the block max-min estimator $\hat{f}_n^{-}$, but the case for the block min-max estimator $\hat{f}_n^{+}$ follows from the same proofs. We refer the readers to \cite{han2019limit} for detailed discussion and some concrete examples for $\kappa_\ast,n_\ast$. The merit of using the block average estimator $\hat{f}_n$ is discussed in Section \ref{subsec:blocl-average}.

Theorem \ref{thm:limit_distribution_pointwise} is comprehensive under the general Assumptions \ref{assump:smoothness} and \ref{assump:design}. To capture the essence, we consider a simple yet important case in the following Corollary \ref{coro:simple}.

\begin{corollary}\label{coro:simple}
Suppose $f_0$ is locally $C^1$ at $x_0 \in (0,1)^d$ with $\partial_k f_0(x_0)>0$ for all $1\leq k\leq d$, and the design points $\{X_i\}$ either (i) form a balanced fixed lattice design with $\beta_1=\cdots=\beta_d = 1/d$, or (ii) follow a uniform random design on $[0,1]^d$. Suppose the errors $\{\xi_i\}$ are i.i.d. mean-zero with finite variance $\E \xi_1^2=\sigma^2<\infty$ (and are independent of $\{X_i\}$ in the random design case). Then,
\begin{align*}
(n/\sigma^2)^{1/(2+d)} \big(\hat{f}_n^{\mp}(x_0)-f_0(x_0)\big) \rightsquigarrow \bigg\{\prod_{k=1}^d \big(\partial_k f_0(x_0)/2\big)\bigg\}^{1/(2+d)}\cdot \mathbb{D}_{\bm{1}_d}^\mp.
\end{align*}
\end{corollary}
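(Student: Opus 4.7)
The plan is to obtain Corollary \ref{coro:simple} by direct specialization of Theorem \ref{thm:limit_distribution_pointwise} with $\bm{\alpha} = \bm{1}_d$ and $s = d$. The first step is to verify Assumption \ref{assump:smoothness}. With $\alpha_k = 1$ for all $k$, the index set $J_0$ collapses to $\{\bm{0}\} \cup \{e_1,\ldots,e_d\}$, and the displayed Taylor polynomial becomes the standard first-order expansion $f_0(x_0) + \sum_{k=1}^d \partial_k f_0(x_0)(x-x_0)_k$. Because $f_0$ is locally $C^1$ at $x_0$ and the rectangles in the assumption have radii $(r_n)_k = \omega_n$, the remainder is $o(\omega_n)$ uniformly as $\omega_n \downarrow 0$; the non-vanishing condition $\partial_k f_0(x_0) \neq 0$ holds since $\partial_k f_0(x_0) > 0$. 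Assumption \ref{assump:design} is taken as a hypothesis in both sub-cases.

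The second step is to identify $\kappa_\ast$ and $n_\ast$. In the balanced lattice design with $\beta_k = 1/d$,
\begin{align*}
\frac{\sum_{k=\ell}^d \beta_k}{2 + \sum_{k=\ell}^d \alpha_k^{-1}} = \frac{(d-\ell+1)/d}{(d-\ell+1) + 2},
\end{align*}
which, with $m = d-\ell+1 \in \{1,\ldots,d\}$, equals $\frac{1}{d}\bigl(1 - \frac{2}{m+2}\bigr)$ and is strictly increasing in $m$. Hence the argmax is uniquely attained at $\ell = 1$, so $\kappa_\ast = 1$ and $n_\ast = n^{\sum_{k=1}^d \beta_k} = n$. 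In the random design, $\kappa_\ast = 1$ and $n_\ast = n$ by definition. The rate exponent therefore equals $1/(2 + \sum_{k=1}^d \alpha_k^{-1}) = 1/(2+d)$ in both cases, matching the form $(n/\sigma^2)^{1/(2+d)}$ stated in the corollary.

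The third step invokes the pivotal-constant conclusion of Theorem \ref{thm:limit_distribution_pointwise}. Since $\alpha_k = 1$ for every $k$, every pair $\{\alpha_{k_1},\alpha_{k_2}\}$ has $\gcd(1,1)=1$, so the ``relative primes'' hypothesis is trivially met and we may use the identity $\mathbb{C}^\mp(f_0,x_0) =_d K(f_0,x_0)\cdot \mathbb{D}^\mp_{\bm{\alpha}}$. Specializing the formula for $K(f_0,x_0)$ with $(\alpha_k+1)! = 2$ yields
\begin{align*}
K(f_0,x_0) = \biggl\{\prod_{k=1}^d \frac{\partial_k f_0(x_0)}{2}\biggr\}^{1/(2+d)},
\end{align*}
and $\mathbb{D}^\mp_{\bm{\alpha}} = \mathbb{D}^\mp_{\bm{1}_d}$. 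Assembling the three steps finishes the proof.

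I anticipate no serious obstacle here, since the corollary is essentially a bookkeeping exercise on top of Theorem \ref{thm:limit_distribution_pointwise}; the only point that requires genuine verification is the \emph{uniqueness} of the argmax in the definition of $\kappa_\ast$, which is what ensures that the first (as opposed to a degenerate) conclusion of the theorem applies, and this uniqueness is precisely what the strict monotonicity calculation in $m$ above provides.
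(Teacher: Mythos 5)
Your proposal is correct and follows exactly the route the paper itself takes (the paper gives no explicit proof of Corollary \ref{coro:simple}, treating it as a routine specialization of Theorem \ref{thm:limit_distribution_pointwise} with $\bm{\alpha}=\bm{1}_d$, $s=d$, $\kappa_\ast=1$, $n_\ast=n$): you verify Assumption \ref{assump:smoothness} from local $C^1$-regularity, show uniqueness of the argmax defining $\kappa_\ast$ via the monotonicity of $m\mapsto \frac{m/d}{m+2}$, note the relative-primes hypothesis is trivially met, and plug $\alpha_k=1$ into the formula for $K(f_0,x_0)$. All the bookkeeping checks out.
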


\begin{remark}\label{remark:error_independence}
	Theorem \ref{thm:limit_distribution_pointwise} (and Corollary \ref{coro:simple}) in the random design case requires the independence of the errors and the random design points due to the precise form of the limit distribution. Although this independence assumption is not most desirable, it is common in limit distribution theories for other one-dimensional shape-constrained regression estimators under random design settings, for instance in the convex regression model \cite{ghosal2017univariate}. It is an interesting open question to see if the theory carries over to the more general settings, for instance $\E[\xi_i|X_i]=0$ and $\E[\xi_i^2|X_i=x]=\sigma^2(x)$ for some smooth enough function $\sigma$.
\end{remark}

\subsection{Pivotal limit distribution theory}

In this subsection, we formally establish the pivotal limit distribution theory (\ref{eqn:pivotal_limit_distribution}). The main idea of the pivotal limit distribution theory (\ref{eqn:pivotal_limit_distribution}) is that the information for $K(f_0,x_0)$ is already encoded in $\hat{u}(x_0),\hat{v}(x_0)$, so after proper scaling, we may naturally view $\sqrt{n_{\hat{u},\hat{v}}(x_0)/\sigma^2}$ as an estimator for 
$\{\omega_n(\bm{\alpha})r(\sigma)K(f_0,x_0)\}^{-1}
= (n_\ast/\sigma^2)^{1/{(2+\sum_{k=\kappa_\ast}^s \alpha_k^{-1})}}/K(f_0,x_0)$.
Indeed, we have the following:

\begin{theorem}\label{thm:pivotal_limit_distribution}
Let $x_0 \in (0,1)^d$. Suppose Assumptions \ref{assump:smoothness} and \ref{assump:design} hold, and the errors $\{\xi_i\}$ are i.i.d. mean-zero with finite variance $\E \xi_1^2=\sigma^2<\infty$ (and are independent of $\{X_i\}$ in the random design case). If the limit distribution in Theorem \ref{thm:limit_distribution_pointwise} is of the explicit form $\mathbb{C}^{\mp} (f_0,x_0) = K(f_0,x_0)\cdot \mathbb{D}^{\mp}_{\bm{\alpha}}$, then with $\hat{f}_n(x_0)$ and $n_{\hat{u},\hat{v}}(x_0)$ defined respectively in (\ref{def:block_avg}) and (\ref{def:n_uv})
\begin{align*}
\sqrt{n_{\hat{u},\hat{v}}(x_0)}
\big(\hat{f}_n(x_0)-f_0(x_0)\big)\rightsquigarrow \sigma\cdot \mathbb{L}_{\bm{\alpha}}.
\end{align*}
Here $\mathbb{L}_{\bm{\alpha}}$ is a finite random variable defined by
\begin{align*}
\mathbb{L}_{\bm{\alpha}}&\equiv  \mathbb{S}_{\bm{\alpha}}(g_{1,\bm{\alpha}}^\ast,g_{2,\bm{\alpha}}^\ast)\cdot\frac{1}{2}\bigg(\sup_{g_1 \in \mathscr{G}_1}\inf_{g_2 \in \mathscr{G}_2} \mathbb{V}_{\bm{\alpha}}(g_{1},g_{2})+\inf_{g_2 \in \mathscr{G}_2} \sup_{g_1 \in \mathscr{G}_1 }\mathbb{V}_{\bm{\alpha}}(g_{1},g_{2})\bigg),
\end{align*}
where $g_{1,\bm{\alpha}}^\ast$ and $g_{2,\bm{\alpha}}^\ast$ are almost surely uniquely determined via
\begin{align*}
\inf_{g_2 \in \mathscr{G}_2} \mathbb{V}_{\bm{\alpha}}(g_{1,\bm{\alpha}}^\ast,g_{2}) &= \sup_{g_1 \in \mathscr{G}_1}\inf_{g_2 \in \mathscr{G}_2} \mathbb{V}_{\bm{\alpha}}(g_{1},g_{2}), \\
\sup_{ g_1 \in \mathscr{G}_1} \mathbb{V}_{\bm{\alpha}}(g_1,g_{2,\bm{\alpha}}^\ast ) &= \inf_{g_2 \in \mathscr{G}_2} \sup_{g_1 \in \mathscr{G}_1 }\mathbb{V}_{\bm{\alpha}}(g_{1},g_{2}),
\end{align*}
and
\begin{align*}
\mathbb{S}_{\bm{\alpha}}(g_{1,\bm{\alpha}}^\ast,g_{2,\bm{\alpha}}^\ast) \equiv \textstyle \sqrt{\prod_{k=\kappa_\ast}^s (g_{2,\bm{\alpha}}^\ast+g_{1,\bm{\alpha}}^\ast)_k}, 
\end{align*}
with the $\mathscr{G}_i(i=1,2)$, $\mathbb{V}_{\bm{\alpha}}$ and $\kappa_\ast$ in Theorem \ref{thm:limit_distribution_pointwise}. In particular, $\mathbb{L}_{\bm{\alpha}}$ does not depend on $K(f_0,x_0)$.
\end{theorem}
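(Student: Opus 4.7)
The strategy is to upgrade the marginal limit theory of Theorem \ref{thm:limit_distribution_pointwise} to a joint convergence statement for the block vertices $(\hat{u}(x_0), \hat{v}(x_0))$ and the point estimates $(\hat{f}_n^{-}(x_0), \hat{f}_n^{+}(x_0))$, and then apply Slutsky's lemma so that the block-count $n_{\hat{u},\hat{v}}(x_0)$ absorbs the nuisance factor $K(f_0,x_0)$ exactly. The proof of Theorem \ref{thm:limit_distribution_pointwise} in \cite{han2019limit} introduces a local rate vector $r_n \in \R^d$ with $(r_n)_k \asymp \omega_n^{1/\alpha_k}$ for $\kappa_* \leq k \leq s$ and $(r_n)_k \asymp 1$ for $s+1 \leq k \leq d$, under which the reparametrization $u = x_0 - r_n g_1$, $v = x_0 + r_n g_2$ turns the rescaled average $\omega_n^{-1}(\bar{Y}|_{[x_0 - r_n g_1, x_0 + r_n g_2]} - f_0(x_0))$ into a process in $(g_1, g_2)$ converging weakly to $K(f_0,x_0) \cdot \mathbb{V}_{\bm{\alpha}}(g_1,g_2)$. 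Combining this process-level convergence with the almost sure uniqueness of the optimizers $g_{1,\bm{\alpha}}^*, g_{2,\bm{\alpha}}^*$ supplied by Lemma \ref{lem:uniqueness_supinf}, a suitable argmax/argmin continuous mapping theorem delivers the joint weak convergence
\begin{align*}
&\Big(r_n^{-1}(x_0 - \hat{u}(x_0)),\, r_n^{-1}(\hat{v}(x_0) - x_0),\, \omega_n^{-1}(\hat{f}_n^{-}(x_0) - f_0(x_0)),\, \omega_n^{-1}(\hat{f}_n^{+}(x_0) - f_0(x_0))\Big) \\
&\quad \rightsquigarrow \Big(g_{1,\bm{\alpha}}^*,\, g_{2,\bm{\alpha}}^*,\, K(f_0,x_0) \mathbb{D}^{-}_{\bm{\alpha}},\, K(f_0,x_0) \mathbb{D}^{+}_{\bm{\alpha}}\Big).
\end{align*}

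Next, the continuous mapping theorem converts this into block-count asymptotics. Under random uniform design, $n_{\hat{u},\hat{v}}(x_0)/n$ concentrates asymptotically on the volume of the random rectangle $[\hat{u}(x_0), \hat{v}(x_0)]$, which by the displayed convergence scales like $\prod_{k=\kappa_*}^d (r_n)_k (g_{1,\bm{\alpha}}^* + g_{2,\bm{\alpha}}^*)_k$ times a deterministic factor in the unrefined coordinates $k > s$; a direct lattice count gives the analogue in the fixed-design regime with $n$ replaced by $n_*$. The key bookkeeping is that the constants $(\partial_k^{\alpha_k} f_0(x_0)/(\alpha_k+1)!)^{1/\alpha_k}$ baked into $(r_n)_k$ combine precisely into $K(f_0,x_0)^{-2}$ by the very definition of $K$ in Theorem \ref{thm:limit_distribution_pointwise}, so that the rate identity $\omega_n^{-1} = (n_*/\sigma^2)^{1/(2+\sum_{k=\kappa_*}^s \alpha_k^{-1})}$ yields $\sqrt{n_{\hat{u},\hat{v}}(x_0)} \cdot K(f_0,x_0)/(\omega_n^{-1}\sigma) \rightsquigarrow \mathbb{S}_{\bm{\alpha}}(g_{1,\bm{\alpha}}^*, g_{2,\bm{\alpha}}^*)$. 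Slutsky's lemma on a common probability space then produces
\[
\sqrt{n_{\hat{u},\hat{v}}(x_0)}(\hat{f}_n(x_0) - f_0(x_0)) = \frac{\sqrt{n_{\hat{u},\hat{v}}(x_0)} K(f_0,x_0)}{\omega_n^{-1}\sigma} \cdot \frac{\sigma}{K(f_0,x_0)} \cdot \omega_n^{-1}(\hat{f}_n(x_0) - f_0(x_0)),
\]
whose right-hand side converges jointly to $\mathbb{S}_{\bm{\alpha}}(g_{1,\bm{\alpha}}^*, g_{2,\bm{\alpha}}^*) \cdot \sigma \cdot \tfrac12 (\mathbb{D}^{-}_{\bm{\alpha}} + \mathbb{D}^{+}_{\bm{\alpha}}) = \sigma \mathbb{L}_{\bm{\alpha}}$, as desired.

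The principal obstacle is the joint weak convergence in the first step. Although the marginal limits $\omega_n^{-1}(\hat{f}_n^{\mp}(x_0) - f_0(x_0)) \rightsquigarrow K(f_0,x_0) \mathbb{D}^{\mp}_{\bm{\alpha}}$ are already available, the vertex $\hat{u}^-$ is the outer argmax of the sup-inf defining $\hat{f}_n^-$, while $\hat{v}^+$ is the outer argmin of the \emph{distinct} inf-sup defining $\hat{f}_n^+$; extracting their joint weak limit therefore requires process-level convergence of both objectives on a common probability space, together with almost sure uniqueness of the outer argmax and outer argmin of the limit $\mathbb{V}_{\bm{\alpha}}$---this last point being precisely the content of the forthcoming Lemma \ref{lem:uniqueness_supinf}, which is the critical technical input. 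A secondary subtlety is that the reparametrized rectangles are unbounded in the directions $k > s$ (where the local rate is $O(1)$), so the argmax continuous mapping must be applied on a non-locally-compact space via tightness, and one must verify that no boundary contributions along these coordinates contaminate either the argmax extraction or the subsequent block-count computation.
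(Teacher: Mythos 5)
Your proposal follows essentially the same route as the paper's proof: establish the \emph{joint} weak convergence of the rescaled block vertices $(\hat{h}_1,\hat{h}_2)$ together with $\big(\omega_n^{-1}(\hat{f}_n^{\mp}(x_0)-f_0(x_0))\big)$ to $(h_1^\ast,h_2^\ast,\mathbb{D}^-_{\bm{\alpha}},\mathbb{D}^+_{\bm{\alpha}})$ (suitably scaled), invoke Lemma~\ref{lem:uniqueness_supinf} to secure a.s.\ uniqueness of the limiting optimizers, apply continuous mapping/Slutsky, and observe that the change of scale $g=\gamma h$ cancels $K(f_0,x_0)$ against $\mathbb{S}_{\bm{\alpha}}$. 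You have correctly identified both the principal obstacle (upgrading marginal to joint convergence) and the decisive technical input (Lemma~\ref{lem:uniqueness_supinf}); the paper carries out the joint convergence not via an abstract argmax continuous mapping theorem but by a limsup/liminf sandwich of the pre-limit probabilities on a high-probability event $\Omega_c$ where all optimizers lie in a fixed compact box---which is in effect the elementary implementation of the argmax CMT you gesture at. One small inversion: in your ``secondary subtlety,'' the coordinates that are genuinely unbounded and require the tightness/localization argument are $\kappa_\ast \le k \le s$ (where the rescaling is $\omega_n^{1/\alpha_k}$), whereas the directions $k>s$ have $(r_n)_k\asymp 1$ and their optimization ranges are \emph{a priori} bounded by $[0,(x_0)_k]$ and $[0,(1-x_0)_k]$ from the feasibility constraints $\mathscr{G}_1,\mathscr{G}_2$; relatedly, if the nuisance constants are absorbed into $r_n$ then the limiting process is $\mathbb{V}_{\bm{\alpha}}$ up to a $\sigma$-dependent scalar, not $K(f_0,x_0)\cdot\mathbb{V}_{\bm{\alpha}}$. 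These are bookkeeping slips, not structural gaps.
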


Theorem \ref{thm:pivotal_limit_distribution} provides a \emph{pivotal} limit distribution theory in the sense that the limit distribution of certain statistic concerning $\hat{f}_n(x_0)-f_0(x_0)$ does not depend on the \emph{difficult nuisance parameter $K(f_0,x_0)$}. 

This pivotal phenomenon can be understood from an oracle perspective. As an illustration, we focus on the leading case $\bm{\alpha}=(1,\ldots,1)$ and $\sigma=1$ in the setting of balanced fixed lattice design. In this case, $n_\ast =n$ and $\kappa_\ast =1$. The oracle bandwidth vector $h^\ast =h^\ast(x_0)$ balances the bias and variance:
\begin{align*}
h_\ell^\ast\partial_\ell f_0(x_0) \approx \frac{1}{\sqrt{\prod_{k=1}^d \big(n^{1/d} h_{k}^\ast\big)}}, \hbox{ for } 1\le \ell \le d,
\end{align*}
which yields 
\begin{align*}
\textstyle h_\ell^\ast\approx \big(\partial_\ell f_0(x_0)\big)^{-1} \cdot n^{-1/(2+d)}\big(\prod_{k=1}^d \partial_{k} f_0(x_0)\big)^{1/(2+d)}.
\end{align*}
Hence, with $u^\ast=u^\ast(x_0)= x_0- h^\ast/2$ and $v^\ast=v^\ast(x_0)= x_0 + h^\ast/2$,
\begin{align*}
&
\bigabs{
\textstyle \sqrt{n_{u^\ast,v^\ast}(x_0)}
\big(\hat{f}_n(x_0)-f_0(x_0)\big)}\\
&\approx \textstyle \sqrt{n \prod_{k=1}^d (v_k^\ast-u_k^\ast) } \cdot n^{-1/(d+2)}K(f_0,x_0) \cdot \abs{\mathcal{O}_{\mathbf{P}}(1)} \\
& = \textstyle \sqrt{n^{2/(2+d)} \big(\prod_{k=1}^d \partial_k f_0(x_0)\big)^{-2/(2+d)} } \cdot n^{-1/(d+2)}K(f_0,x_0) \cdot \abs{\mathcal{O}_{\mathbf{P}}(1)}  \\
&= \mathrm{const.}\times \abs{\mathcal{O}_{\mathbf{P}}(1)},
\end{align*}
where $\abs{\mathcal{O}_{\mathbf{P}}(1)}$ denotes a universal random variable. Theorem \ref{thm:pivotal_limit_distribution} can then be understood as the data driven bandwidth vectors $\hat{u}(x_0),\hat{v}(x_0)$ mimic the above oracle vectors $u^\ast(x_0),v^\ast(x_0)$ in achieving a pivotal limiting behavior. We note that the asymptotic variance of $\hat{f}_n(x_0)$ is 
proportional to 
$\big(n^{-1}\prod_{k=1}^d \partial_k f_0(x_0)\big)^{2/(2+d)}$.

\subsection{Confidence interval}\label{subsection:CI} 
The pivotal limit distribution theory in Theorem \ref{thm:pivotal_limit_distribution} naturally implies the tuning free CI \eqref{def:CI}. In this subsection, we study
\eqref{def:CI} under fixed lattice and uniform random designs as in Assumption \ref{assump:design}. The non-uniform random design case will be discussed at the end.

To construct the CI, it remains to find a good estimate for the variance $\sigma^2$. Estimation of variance in the nonparametric regression model (\ref{model:regression}) is a well studied topic in the literature; for instance, we may use the class of difference estimators (\cite{rice1984bandwidth,hall1991estimation,munk2005difference}).  Below we present a `principled' estimator that shows the reason why $\sigma^2$ is much easier to estimate than $K(f_0,x_0)$---point estimates already contain enough information for the variance, as long as a law of large numbers is satisfied. Formally, let
\begin{align}\label{def:var_est}
\sigma_{\hat{u},\hat{v}}^2\equiv \frac{1}{n_{\hat{u},\hat{v}}(x_0)} \sum_{X_i \in [\hat{u}(x_0),\hat{v}(x_0)]} (Y_i-\hat{f}_n(x_0))^2.
\end{align} 
Note that $\sigma_{\hat{u},\hat{v}}^2$ only requires information of the observed $\{Y_i\}$ in the data driven neighborhood $[\hat{u}(x_0),\hat{v}(x_0)]$ of $x_0$ and the fitted value $\hat{f}_n(x_0)$. Intuitively, since we have large samples in $[\hat{u}(x_0),\hat{v}(x_0)]$, it is natural to expect good performance of $\sigma_{\hat{u},\hat{v}}^2$ in the large sample limit. In fact, we have:

\begin{proposition}\label{prop:consistent_var_est}
Under the conditions of Theorem 1, $\sigma_{\hat{u},\hat{v}}^2\to_p \sigma^2$.
\end{proposition}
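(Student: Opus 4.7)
The plan is to decompose $\sigma_{\hat{u},\hat{v}}^2$ and show each piece converges appropriately. Writing $Y_i - \hat{f}_n(x_0) = \xi_i + (f_0(X_i) - \hat{f}_n(x_0))$ and abbreviating $\hat{B} \equiv [\hat{u}(x_0),\hat{v}(x_0)]$, $N_n \equiv n_{\hat{u},\hat{v}}(x_0)$, expanding the square gives
\begin{align*}
\sigma_{\hat{u},\hat{v}}^2 = T_1 + 2T_2 + T_3,
\end{align*}
where $T_1 \equiv N_n^{-1}\sum_{X_i \in \hat{B}} \xi_i^2$, $T_2 \equiv N_n^{-1}\sum_{X_i \in \hat{B}} \xi_i(f_0(X_i) - \hat{f}_n(x_0))$, and $T_3 \equiv N_n^{-1}\sum_{X_i \in \hat{B}} (f_0(X_i) - \hat{f}_n(x_0))^2$. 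The bias-type term $T_3$ is handled by the crude bound $T_3 \leq 2\sup_{x \in \hat{B}}(f_0(x) - f_0(x_0))^2 + 2(\hat{f}_n(x_0) - f_0(x_0))^2$; the second summand is $\mathfrak{o}_{\mathbf{P}}(1)$ by Theorem \ref{thm:limit_distribution_pointwise}, and for the first, tightness of the scaled vertices $(\omega_n^{-1/\alpha_k}((\hat{u}(x_0))_k - (x_0)_k))_{k=1}^s$---a by-product of the limit theory underlying Theorem \ref{thm:pivotal_limit_distribution}---ensures that for any $\epsilon>0$ there exists $C>0$ with $\hat{B} \subseteq \{x \in [0,1]^d : \abs{x_k - (x_0)_k} \leq C\omega_n^{1/\alpha_k},\, 1\leq k\leq s\}$ on an event of probability at least $1-\epsilon$, so Assumption \ref{assump:smoothness} yields $\sup_{x \in \hat{B}}\abs{f_0(x) - f_0(x_0)} = \mathfrak{o}_{\mathbf{P}}(1)$ and hence $T_3 \to_p 0$. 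The cross term is then absorbed via Cauchy--Schwarz, $\abs{T_2} \leq \sqrt{T_1 T_3}$, provided $T_1 = \mathcal{O}_{\mathbf{P}}(1)$.

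The main step is to establish $T_1 \to_p \sigma^2$. Because $\hat{B}$ depends on $\{Y_i\}$ and hence on $\{\xi_i\}$, a direct law of large numbers does not apply, so I would sandwich $\hat{B}$ between deterministic blocks. Using tightness of $(r_n^{-1}(\hat{u}(x_0) - x_0), r_n^{-1}(\hat{v}(x_0) - x_0))$ with $(r_n)_k = \omega_n^{1/\alpha_k}$, for any $\epsilon>0$ one can pick deterministic blocks $B_n^- \subseteq B_n^+$ around $x_0$ (with $k$-th side lengths of order $\omega_n^{1/\alpha_k}$) such that $\Prob(B_n^- \subseteq \hat{B} \subseteq B_n^+) \geq 1-\epsilon$ and the sample counts $N_n^\pm \equiv \abs{\{i: X_i \in B_n^\pm\}}$ satisfy $N_n^\pm \to \infty$ with $N_n^-/N_n^+ \geq 1-\epsilon$ (deterministically in the lattice design; in probability under the random design via Bernstein's inequality applied to the Bernoulli sum defining $N_n^\pm$). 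On this good event,
\begin{align*}
\frac{1}{N_n^+}\sum_{X_i \in B_n^-} \xi_i^2 \leq T_1 \leq \frac{1}{N_n^-}\sum_{X_i \in B_n^+} \xi_i^2.
\end{align*}
The indices $\{i: X_i \in B_n^\pm\}$ depend only on $\{X_i\}$ and are independent of $\{\xi_i\}$ under both design regimes, so the ordinary law of large numbers yields $(N_n^\pm)^{-1}\sum_{X_i \in B_n^\pm} \xi_i^2 \to_p \sigma^2$. Letting $\epsilon \downarrow 0$ then gives $T_1 \to_p \sigma^2$, and in particular $T_1 = \mathcal{O}_{\mathbf{P}}(1)$, closing the loop for $T_2$.

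I expect the chief obstacle to be the justification of the sandwich step: isolating from the proof of Theorem \ref{thm:pivotal_limit_distribution} a clean tightness statement for the scaled vertices $r_n^{-1}(\hat{u}(x_0) - x_0)$ and $r_n^{-1}(\hat{v}(x_0) - x_0)$, and (in the random design) showing that the counts $N_n^\pm$ concentrate around their expected volumes sharply enough that $N_n^-/N_n^+$ behaves as the ratio of volumes of $B_n^\pm$. Both ingredients are essentially already present in the machinery of \cite{han2019limit}, so no genuinely new argument is expected; the proposition then follows by assembling them with the above decomposition.
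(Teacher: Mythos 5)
The decomposition $T_1 + 2T_2 + T_3$ is the same as the paper's, and your treatment of $T_3$ (via tightness of the scaled vertices and Assumption~A) and $T_2$ (Cauchy--Schwarz) matches the paper in spirit. The gap is in the key step for $T_1$. You require deterministic blocks $B_n^-\subseteq B_n^+$ such that \emph{simultaneously} $\Prob(B_n^-\subseteq\hat B\subseteq B_n^+)\ge 1-\epsilon$ and $N_n^-/N_n^+\ge 1-\epsilon$. These two requirements are incompatible. By Theorem~\ref{thm:pivotal_limit_distribution} (or \cite[Proposition 5]{han2019limit}), the rescaled vertices $\hat h_1,\hat h_2$ converge in distribution to the \emph{non-degenerate} random variables $h_1^\ast, h_2^\ast$; hence to bracket $\hat B$ with probability $1-\epsilon$, the $k$-th side length of $B_n^-$ must sit near the lower end of the support of $(h_1^\ast+h_2^\ast)_k$ and that of $B_n^+$ near the upper end. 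The ratio of the volumes of $B_n^-$ and $B_n^+$ is therefore bounded away from $1$ by a constant depending only on the spread of $(h_1^\ast,h_2^\ast)$ and $\epsilon$, so $N_n^-/N_n^+$ cannot be made $\ge 1-\epsilon$. With fixed bracketing constants your sandwich yields only
\begin{align*}
c^{-2d_\ast}\sigma^2 + \mathfrak{o}_{\mathbf{P}}(1)\ \le\ T_1\ \le\ c^{2d_\ast}\sigma^2 + \mathfrak{o}_{\mathbf{P}}(1)
\end{align*}
on the good event, i.e.\ $T_1=\mathcal{O}_{\mathbf{P}}(1)$, which is enough to close $T_2$ but not to conclude $T_1\to_p\sigma^2$.

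The underlying difficulty is that $\hat B$ is determined by the same $\{\xi_i\}$ being averaged, so one must control a \emph{supremum} of centered partial sums over a class of data-dependent rectangles; a single sandwich cannot do this. The paper handles this with a maximal inequality over sub-rectangles (``essentially \cite[Lemma 2]{deng2018isotonic}''): on the event $\{\hat h_i\in[c^{-1},c]^{d_\ast}\}$ one bounds $\sup_{h_1,h_2\in[c^{-1},c]^{d_\ast}}\bigabs{\sum_{X_i\in[x_0-h_1r_n,\,x_0+h_2r_n]}(\xi_i^2-\sigma^2)}$ by the analogous quantity over the largest rectangle, which is then $\mathfrak{o}_{\mathbf{P}}(n\omega_n^{\sum_k\alpha_k^{-1}})$. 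Your argument can be repaired by discretizing $[c^{-1},c]^{2d_\ast}$ into finitely many cubes of diameter $\eta$, applying your sandwich on each cube (where the volume ratio \emph{is} close to $1$ when $\eta$ is small), and taking a union bound over the cubes; but as written the sandwich step is false.
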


One theoretical advantage of (\ref{def:var_est}) compared with the difference estimators is that $\sigma_{\hat{u},\hat{v}}^2$ takes local average around $x_0$, and therefore may \emph{in principle} estimate the variance even in the heteroscedastic regression setting,  
e.g., when the variance of $\bar{Y}|_{[u,v]}$ is given by $\sigma^2_{u,v}$ with a 
certain strictly positive and continuous $\sigma_{u,v}$ defined on the entire $[0,1]^d\times [0,1]^d$. The practical issue, however, is that the effective sample size in $[\hat{u}(x_0),\hat{v}(x_0)]$ is relatively small so (\ref{def:var_est}) typically requires very large samples to achieve accurate variance estimation, in particular for $d\geq 2$.

With a consistent variance estimator, Theorem \ref{thm:pivotal_limit_distribution} can then be used to justify the use of the CI of the form defined in (\ref{def:CI}).

We first consider the leading case $\bm{\alpha}=\bm{1}_d$, where it is possible to construct \emph{asymptotically exact CIs}. 

\begin{theorem}\label{thm:CI_exact}
Let $c_\delta>0$ be a continuity point of the d.f. of $\abs{\mathbb{L}_{\bm{1}_d}}$ such that 
\begin{align}\label{cond:critical_value}
\Prob(\abs{\mathbb{L}_{\bm{1}_d}}>c_\delta) = \delta.
\end{align}
For any consistent variance estimator $\hat{\sigma}^2$, i.e., $\hat{\sigma}^2\to_p \sigma^2$, the CI $\mathcal{I}_n(c_\delta)$ defined in (\ref{def:CI}) satisfies
\begin{align}\label{eqn:coverage}
\lim_{n \to \infty} \Prob_{f_0}\big(f_0(x_0) \in \mathcal{I}_n(c_\delta)\big) = 1-\delta.
\end{align}
Furthermore, with $K(f_0,x_0)=\big(\prod_{k=1}^d (\partial_k f_0(x_0)/2)\big)^{1/(d+2)}$ as in 
Theorem~\ref{thm:limit_distribution_pointwise}, for any $\epsilon>0$,
\begin{align}\label{eqn:length}
\liminf_{n \to \infty} \Prob_{f_0} \bigg(\abs{\mathcal{I}_n(c_\delta)} 
< 2c_\delta \mathfrak{g}_\epsilon\cdot  (\sigma^2/n)^{1/(d+2)} K(f_0, x_0)
\bigg) \geq 1-\epsilon.
\end{align}
Here
$\mathfrak{g}_\epsilon \in (0,\infty)$ is such that 
\begin{align}
\Prob\big(\mathbb{S}_{\bm{1}_d}^{-1}\geq\mathfrak{g}_\epsilon\big)\leq\epsilon.
\end{align}
\end{theorem}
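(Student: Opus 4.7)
The plan is to derive both claims from Theorem \ref{thm:pivotal_limit_distribution} via Slutsky's theorem and the Portmanteau theorem, treating coverage and length separately.

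\textbf{Coverage.} The inclusion event is exactly $\{T_n \le c_\delta\}$ with
\[
T_n\equiv \sqrt{n_{\hat u,\hat v}(x_0)}\,\bigabs{\hat f_n(x_0)-f_0(x_0)}/\hat\sigma.
\]
Theorem \ref{thm:pivotal_limit_distribution} applied with $\bm{\alpha}=\bm{1}_d$ gives $\sqrt{n_{\hat u,\hat v}(x_0)}(\hat f_n(x_0)-f_0(x_0))\rightsquigarrow \sigma\mathbb{L}_{\bm{1}_d}$, while continuous mapping on $\hat\sigma^2\to_p\sigma^2>0$ yields $\hat\sigma\to_p\sigma$. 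By Slutsky, $T_n\rightsquigarrow\abs{\mathbb{L}_{\bm{1}_d}}$, and since $c_\delta$ is a continuity point of the d.f.\ of $\abs{\mathbb{L}_{\bm{1}_d}}$, combining with \eqref{cond:critical_value} gives \eqref{eqn:coverage}.

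\textbf{Length.} Using $\abs{\mathcal I_n(c_\delta)}=2c_\delta\hat\sigma/\sqrt{n_{\hat u,\hat v}(x_0)}$, the event inside the probability in \eqref{eqn:length} rearranges as $(\sigma/\hat\sigma)Q_n>1/\mathfrak{g}_\epsilon$, where
\[
Q_n \equiv \frac{\sqrt{n_{\hat u,\hat v}(x_0)}\,(\sigma^2/n)^{1/(d+2)}K(f_0,x_0)}{\sigma}.
\]
The auxiliary statement to establish is $Q_n\rightsquigarrow \mathbb{S}_{\bm{1}_d}$. Granted this, Slutsky together with $\hat\sigma/\sigma\to_p 1$ yields $(\sigma/\hat\sigma)Q_n\rightsquigarrow \mathbb{S}_{\bm{1}_d}$; since $\mathbb{S}_{\bm{1}_d}>0$ almost surely, Portmanteau on the open set $(1/\mathfrak{g}_\epsilon,\infty)$ gives
\[
\liminf_n \Prob\Big(\tfrac{\sigma}{\hat\sigma}Q_n>\tfrac{1}{\mathfrak{g}_\epsilon}\Big) \ge \Prob\big(\mathbb{S}_{\bm{1}_d}>1/\mathfrak{g}_\epsilon\big) = 1-\Prob\big(\mathbb{S}_{\bm{1}_d}^{-1}\ge \mathfrak{g}_\epsilon\big) \ge 1-\epsilon
\]
by the defining property of $\mathfrak{g}_\epsilon$.

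\textbf{Main obstacle.} The auxiliary claim $Q_n\rightsquigarrow\mathbb{S}_{\bm{1}_d}$ is not spelled out in Theorem \ref{thm:pivotal_limit_distribution} but is intrinsic to its proof. That proof must establish joint weak convergence of (a suitable rescaling of) the minimax pair $(\hat u(x_0),\hat v(x_0))$ to the a.s.-unique optimizers $(g^\ast_{1,\bm{1}_d},g^\ast_{2,\bm{1}_d})$ of the $\mathbb{V}_{\bm{1}_d}$ problems (this is the content of Lemma~\ref{lem:uniqueness_supinf} together with the convergence arguments in the proof of Theorem \ref{thm:pivotal_limit_distribution}), since the decomposition $\mathbb{L}_{\bm{1}_d}=\mathbb{S}_{\bm{1}_d}\bar{\mathbb{D}}_{\bm{1}_d}$ is driven by precisely this convergence. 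Given it, $n_{\hat u,\hat v}(x_0)$ is a Riemann-sum (fixed lattice) or empirical-measure (uniform random design) functional of the rescaled block $[\hat u(x_0),\hat v(x_0)]$ whose appropriately normalized limit is $\prod_{k=1}^d(g^\ast_{1,\bm{1}_d}+g^\ast_{2,\bm{1}_d})_k=\mathbb{S}_{\bm{1}_d}^2$; taking a square root and matching the deterministic rate constants then produces $Q_n\rightsquigarrow \mathbb{S}_{\bm{1}_d}$ via continuous mapping. Extracting this intermediate convergence from the proof of Theorem \ref{thm:pivotal_limit_distribution}, and verifying that $\mathbb{S}_{\bm{1}_d}>0$ a.s.\ so that the reciprocal manipulation above is legitimate, is the principal technical work; both claims of the theorem then follow by the bookkeeping above.
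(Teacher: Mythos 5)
Your proposal is correct and follows essentially the same route as the paper: coverage via Slutsky and the continuity point of the d.f., and length via the joint weak convergence $(\hat h_1,\hat h_2)\rightsquigarrow(h_1^\ast,h_2^\ast)$ established in the proof of Theorem \ref{thm:pivotal_limit_distribution} (equation \eqref{ineq:local_pivotal_1}), which after the $\gamma$-rescaling and continuous mapping gives exactly your auxiliary claim $Q_n\rightsquigarrow\mathbb{S}_{\bm{1}_d}$. Your remark about verifying $\mathbb{S}_{\bm{1}_d}>0$ a.s.\ for the reciprocal manipulation is the same point the paper handles by citing $\mathbb{S}_{\bm{1}_d}^{-1}=\mathcal{O}_{\mathbf{P}}(1)$ from \cite{han2019limit}, so no gap.
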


Theorem \ref{thm:CI_exact} shows that if the critical value $c_\delta$ is chosen according to (\ref{cond:critical_value}), then the CI $\mathcal{I}_n(c_\delta)$ achieves the asymptotic exact confidence level $1-\delta$. Furthermore, the CI $\mathcal{I}_n(c_\delta)$ shrinks at the optimal length, being automatically adaptive to the unknown information on the derivatives of $f_0$ at $x_0$, i.e., $K(f_0,x_0)$.

The choice of the critical value $c_\delta$ depends on the distribution of $\mathbb{L}_{\bm{1}_d}$. Since $\mathbb{L}_{\bm{1}_d}$ does not depend on the unknown regression function $f_0$, it is possible to simulate $c_\delta$ for different values of confidence levels $\delta>0$. See Section \ref{section:simulation} for more details on simulated critical values of $\mathbb{L}_{\bm{1}_d}$ for $d=1,2,3$.

Note that the CI in Theorem \ref{thm:CI_exact}, although enjoying the advantage of achieving asymptotically exact confidence level, is not adaptive to the unknown local smoothness $\bm{\alpha}$ of the isotonic regression function $f_0$ at $x_0$. By relaxing the requirement of \emph{exact} CI and calibrating the critical value $c_\delta$ only, the CI $\mathcal{I}_n(c_\delta)$ may adapt to all local smoothness level $\bm{\alpha}$ as well as the unknown information of $f_0$ at $x_0$ expressed by $K(f_0,x_0)$. More concretely, we have:
 
\begin{theorem}\label{thm:adaptive_confidence_interval}
Let $c_\delta>0$ be chosen such that 
\begin{align}\label{cond:critical_value_adaptive}
\sup_{\bm{\alpha}}\Prob(\abs{\mathbb{L}_{\bm{\alpha}}}>c_\delta) \leq \delta.
\end{align}
For any consistent variance estimator $\hat{\sigma}^2$, i.e., $\hat{\sigma}^2\to_p \sigma^2$, the CI $\mathcal{I}_n(c_\delta)$ defined in (\ref{def:CI}) satisfies
	\begin{align}\label{eqn:coverage_adaptive}
	\liminf_{n \to \infty} \Prob_{f_0}\big(f_0(x_0) \in \mathcal{I}_n(c_\delta)\big) \geq 1-\delta.
	\end{align}
	Furthermore, with $\kappa_\ast,n_\ast$ 
	and $K(f_0, x_0)$ defined in Theorem \ref{thm:limit_distribution_pointwise}, for any $\epsilon>0$,
	\begin{align}\label{eqn:length_adaptive}
	\liminf_{n \to \infty} \Prob_{f_0} \bigg(\abs{\mathcal{I}_n(c_\delta)} 
	< 2c_\delta \mathfrak{g}_{\epsilon,\bm{\alpha}}(\sigma^2/n_\ast)^{\frac{1}{2+\sum_{k=\kappa_\ast}^s \alpha_k^{-1}}} K(f_0,x_0)\bigg) \geq 1-\epsilon.
	\end{align}
	Here $\mathfrak{g}_{\epsilon,\bm{\alpha}} \in (0,\infty)$ is such that 
	\begin{align}
	\Prob\big(\mathbb{S}_{\bm{\alpha}}^{-1}(g_{1,\bm{\alpha}}^\ast,g_{2,\bm{\alpha}}^\ast)
	\geq \mathfrak{g}_{\epsilon,\bm{\alpha}}\big)\leq \epsilon.
	\end{align}
\end{theorem}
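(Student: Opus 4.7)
The plan is to reduce both coverage~\eqref{eqn:coverage_adaptive} and length~\eqref{eqn:length_adaptive} to the pivotal limit theorem (Theorem~\ref{thm:pivotal_limit_distribution}) combined with the consistency $\hat{\sigma}^2\to_p\sigma^2$, in direct analogy with the proof of Theorem~\ref{thm:CI_exact}. The only structural change is that $c_\delta$ now dominates the entire family $\{\abs{\mathbb{L}_{\bm{\alpha}'}}\}_{\bm{\alpha}'}$, which converts exact coverage into a one-sided lower bound and turns the previously asymptotically deterministic length into a high-probability statement driven by the random scale $\mathbb{S}_{\bm{\alpha}}^{-1}(g_{1,\bm{\alpha}}^\ast,g_{2,\bm{\alpha}}^\ast)$.

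For the coverage bound, I would write
\[
\Prob_{f_0}\big(f_0(x_0)\in\mathcal{I}_n(c_\delta)\big)=\Prob_{f_0}\bigg(\frac{\sqrt{n_{\hat{u},\hat{v}}(x_0)}\,\abs{\hat{f}_n(x_0)-f_0(x_0)}}{\hat{\sigma}}\le c_\delta\bigg),
\]
and apply Slutsky to Theorem~\ref{thm:pivotal_limit_distribution} together with $\hat{\sigma}\to_p\sigma$ to conclude weak convergence of the statistic on the right to $\abs{\mathbb{L}_{\bm{\alpha}}}$. Portmanteau then yields
\[
\liminf_n \Prob_{f_0}\big(f_0(x_0)\in\mathcal{I}_n(c_\delta)\big)\ge \Prob(\abs{\mathbb{L}_{\bm{\alpha}}}<c_\delta)\ge 1-\sup_{\bm{\alpha}'}\Prob(\abs{\mathbb{L}_{\bm{\alpha}'}}>c_\delta)\ge 1-\delta,
\]
the last step being the defining property~\eqref{cond:critical_value_adaptive} of $c_\delta$; the middle inequality becomes an equality under continuity of the law of $\abs{\mathbb{L}_{\bm{\alpha}}}$, supplied by the almost-sure uniqueness of the sup-inf argmax/min pair noted in Lemma~\ref{lem:uniqueness_supinf}.

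For the length bound, set $r\equiv 1/(2+\sum_{k=\kappa_\ast}^s\alpha_k^{-1})$ and factor
\[
\abs{\mathcal{I}_n(c_\delta)}=\frac{2c_\delta\hat{\sigma}}{\sqrt{n_{\hat{u},\hat{v}}(x_0)}}=2c_\delta(\sigma^2/n_\ast)^{r}K(f_0,x_0)\cdot\hat{\sigma}\cdot\frac{(n_\ast/\sigma^2)^{r}}{K(f_0,x_0)\sqrt{n_{\hat{u},\hat{v}}(x_0)}}.
\]
The joint weak convergence underlying the proof of Theorem~\ref{thm:pivotal_limit_distribution} delivers the marginal limit $(n_\ast/\sigma^2)^{r}/[K(f_0,x_0)\sqrt{n_{\hat{u},\hat{v}}(x_0)}]\rightsquigarrow 1/[\sigma\mathbb{S}_{\bm{\alpha}}(g_{1,\bm{\alpha}}^\ast,g_{2,\bm{\alpha}}^\ast)]$; combined with $\hat{\sigma}\to_p\sigma$ via Slutsky this gives $\abs{\mathcal{I}_n(c_\delta)}/[2c_\delta(\sigma^2/n_\ast)^{r}K(f_0,x_0)]\rightsquigarrow \mathbb{S}_{\bm{\alpha}}^{-1}(g_{1,\bm{\alpha}}^\ast,g_{2,\bm{\alpha}}^\ast)$, and Portmanteau together with the defining property of $\mathfrak{g}_{\epsilon,\bm{\alpha}}$ delivers~\eqref{eqn:length_adaptive}. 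The main obstacle I anticipate is precisely this marginal weak convergence of $\sqrt{n_{\hat{u},\hat{v}}(x_0)}$ at the rate $(n_\ast/\sigma^2)^{r}/K(f_0,x_0)$ with limit $\sigma\mathbb{S}_{\bm{\alpha}}(g_{1,\bm{\alpha}}^\ast,g_{2,\bm{\alpha}}^\ast)$; I expect it to emerge from the rescaled empirical process argument underpinning Theorem~\ref{thm:pivotal_limit_distribution} via the continuous mapping theorem applied to the sup-inf argmax map at the almost-surely unique limit pair of Lemma~\ref{lem:uniqueness_supinf}, since $n_{\hat{u},\hat{v}}(x_0)$ is a continuous functional of the rescaled vertices $\hat{u}(x_0),\hat{v}(x_0)$ whose properly normalized limit is exactly $\mathbb{S}_{\bm{\alpha}}^2(g_{1,\bm{\alpha}}^\ast,g_{2,\bm{\alpha}}^\ast)$.
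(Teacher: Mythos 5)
Your proposal follows essentially the same route as the paper: the paper's proof of this theorem simply remarks that it "follows easily" from the proof of Theorem~\ref{thm:CI_exact}, which uses exactly your two steps — Slutsky plus Portmanteau applied to the pivotal limit (\ref{eqn:pivotal_limit_distribution}) for the coverage bound, and the joint weak convergence of $\omega_n\sqrt{n_{\hat{u},\hat{v}}(x_0)}$ together with continuous mapping to obtain $\abs{\mathcal{I}_n(c_\delta)}/\big[2c_\delta(\sigma^2/n_\ast)^r K(f_0,x_0)\big]\rightsquigarrow\mathbb{S}_{\bm{\alpha}}^{-1}$ for the length bound. One small caveat: your attribution of continuity of the law of $\abs{\mathbb{L}_{\bm{\alpha}}}$ to Lemma~\ref{lem:uniqueness_supinf} overstates what that lemma delivers (a.s.\ uniqueness of the argmax/argmin, not absence of atoms in the scalar law), but the paper itself sidesteps this in Theorem~\ref{thm:CI_exact} by requiring $c_\delta$ to be a continuity point and does not address it here, so this is not a substantive divergence.
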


To make the above theorem useful for construction of 
$\bm{\alpha}$-adaptive CIs, it is crucial to choose the critical value $c_\delta>0$ such that (\ref{cond:critical_value_adaptive}) is satisfied. The proposition below shows that this is indeed possible.

\begin{proposition}\label{prop:uniform_tail_L}
The following holds for some constant $L_0>0$ depending only on $d,x_0$: for any $t\geq 1$,
\begin{align*}
\sup_{\bm{\alpha}} \Prob\big(\abs{\mathbb{L}_{\bm{\alpha}}}>t\big)\leq L_0 \exp(-t^{4/(d+2)}/L_0).
\end{align*}
\end{proposition}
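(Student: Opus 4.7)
The plan is to bound the tail of $\mathbb{L}_{\bm{\alpha}}$ by decomposing it into a rescaling factor and a normalized fluctuation, localizing the sup-inf/inf-sup optimizers via a dyadic argument, and applying Gaussian concentration for $\G$.

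First, starting from the representation
\begin{align*}
\abs{\mathbb{L}_{\bm{\alpha}}} \leq \mathbb{S}_{\bm{\alpha}}(g_{1,\bm{\alpha}}^\ast,g_{2,\bm{\alpha}}^\ast)\cdot \max\big(\abs{\mathbb{D}^-_{\bm{\alpha}}},\abs{\mathbb{D}^+_{\bm{\alpha}}}\big),
\end{align*}
I would analyze the structure of $\mathbb{V}_{\bm{\alpha}}(g_1,g_2)$. The stochastic component $\G(g_1,g_2)/\prod_{k=\kappa_\ast}^d(g_1+g_2)_k$ has variance $1/\prod_{k=\kappa_\ast}^d(g_1+g_2)_k$, which blows up at small scales. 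The deterministic bias component $\sum_{k=\kappa_\ast}^s ((g_2)_k^{\alpha_k+1}-(g_1)_k^{\alpha_k+1})/((g_1+g_2)_k)$ grows at large scales. Use the oracle choice $g_1=g_2 = g^\dagger$ where the bias vanishes identically to produce quantitative upper/lower bounds on $\mathbb{D}^\mp$ in terms of the Gaussian increment alone.

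Second, I would partition the parameter region via dyadic scales $h_k \equiv (g_1+g_2)_k \in [2^{j_k}, 2^{j_k+1}]$ indexed by $\bm{j}\in \mathbb{Z}^d$. On each dyadic rectangle, Borell-TIS (or a chaining inequality via Dudley's theorem using the intrinsic semimetric of $\G$) yields a bound of the form
\begin{align*}
\Prob\Big(\sup_{(g_1,g_2)\in R_{\bm{j}}} \big|\mathbb{V}_{\bm{\alpha}}(g_1,g_2)-B_{\bm{\alpha}}(g_1,g_2)\big| \geq \lambda \cdot 2^{-\sum_k j_k/2}\Big) \leq C_d \exp\big(-\lambda^2/C_d\big),
\end{align*}
where $B_{\bm{\alpha}}$ is the bias part. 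Summing this with the analogous bound for the infimum and aggregating across dyadic shells via a union bound, one reduces the problem to controlling the scale $h^\ast$ at which the optimum is attained. The balance between the typical Gaussian fluctuation $\sim 1/\sqrt{\prod h_k^\ast}$ and the polynomial bias $\sim \sum_k h_k^{\ast \alpha_k}$ shows that if $\abs{\mathbb{L}_{\bm{\alpha}}}>t$, then some dyadic shell at scale $h^\ast$ with $\sqrt{\prod_{k=\kappa_\ast}^s h_k^\ast}\asymp t^{d/(d+2)}$ must host an exceedance, and on this shell the rescaled Gaussian fluctuation must exceed $t^{2/(d+2)}$. Plugging into the sub-Gaussian tail yields the exponent $4/(d+2)$, since $(t^{2/(d+2)})^2 = t^{4/(d+2)}$.

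Third, for uniformity over $\bm{\alpha}$, I would carefully track constants. The integer $\kappa_\ast$ and the intrinsic dimension $s$ vary with $\bm{\alpha}$, but the worst-case exponent across all admissible $\bm{\alpha}$ is exactly $4/(d+2)$, attained in the full-dimensional regime. For $s<d$ or large $\alpha_k$, the effective number of "noisy" coordinates decreases and the bias steepens, producing \emph{tighter} (not looser) tails that are dominated by the $4/(d+2)$ exponent. Constants coming from the constraints $(g_1)_k\leq (x_0)_k$ and $(g_2)_k\leq (1-x_0)_k$ for $k\geq s+1$ depend only on $d$ and $x_0$, and are absorbed into $L_0$.

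The main obstacle is the localization step: showing that the optimizers $g_{1,\bm{\alpha}}^\ast, g_{2,\bm{\alpha}}^\ast$ cannot drift to atypical scales on the event $\{\abs{\mathbb{L}_{\bm{\alpha}}}>t\}$ without triggering an even lower-probability deviation of the Gaussian process $\G$ elsewhere. This requires a two-sided comparison between $\mathbb{V}_{\bm{\alpha}}$ at the random optimizer and $\mathbb{V}_{\bm{\alpha}}$ at deterministic oracle scales, where the latter admits a clean Borell-TIS bound through the diagonal trick $g_1 = g_2$. Handling this uniformly in $\bm{\alpha}$ -- especially when some $\alpha_k$ are very large so that the bias profile becomes sharply peaked -- is the main technical hurdle, but can be resolved by a monotone-in-$\bm{\alpha}$ envelope argument on the bias term.
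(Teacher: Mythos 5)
Your proposal follows essentially the same strategy as the paper: factor $\mathbb{L}_{\bm{\alpha}}$ into $\mathbb{S}_{\bm{\alpha}}(g_{1,\bm{\alpha}}^\ast,g_{2,\bm{\alpha}}^\ast)$ times the $\mathbb{V}_{\bm{\alpha}}^{\mp}$ averages, peel over dyadic shells in the scale variables, bound each shell via Dudley's entropy integral plus Gaussian concentration, and recover the exponent $4/(d+2)$ by optimally splitting the threshold between $\mathbb{S}_{\bm{\alpha}}\asymp t^{s_\ast/(s_\ast+2)}$ and $|\mathbb{V}_{\bm{\alpha}}^{\mp}|\asymp t^{2/(s_\ast+2)}$, with the worst case at $s_\ast=d$.

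The one place your outline has a real gap is the localization step. You propose to annihilate the bias with the diagonal $g_1=g_2$, but in a sup-inf/inf-sup problem this diagonal only furnishes \emph{one-sided} bounds: $\sup_{g_1}\inf_{g_2}\mathbb{V}\leq\sup_{g_1}\mathbb{V}(g_1,g_1)$ and $\inf_{g_2}\sup_{g_1}\mathbb{V}\geq\inf_{g_2}\mathbb{V}(g_2,g_2)$. These do not bound $\mathbb{D}^+$ from above, $\mathbb{D}^-$ from below, nor force $g_2^\ast$ to be bounded. The paper instead squeezes with the \emph{asymmetric} deterministic probes $g_1=\bm{1}_1$ and $g_2=\bm{1}_2$: plugging $g_2=\bm{1}_2$ gives the unconditional upper bound
$\mathbb{V}^-\vee\mathbb{V}^+\leq\bigl(\prod_{k>s}(1-x_0)_k\bigr)^{-1}\sup_{g_1}\frac{|\G(g_1,\bm{1}_2)|}{\prod_{k}(1+(g_1)_k)}+d$,
while on the event $\{\max_k(g_2^\ast)_k>u\}$, plugging $g_1=\bm{1}_1$ into $\mathbb{V}^+=\sup_{g_1}\mathbb{V}(g_1,g_2^\ast)$ lower-bounds $\mathbb{V}^+$ by $-\bigl(\prod_{k>s}(x_0)_k\bigr)^{-1}\sup_{g_2}\frac{|\G(\bm{1}_1,g_2)|}{\prod_k(1+(g_2)_k)}+(u-1)-d$. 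The two together force one of two concrete normalized Gaussian suprema to exceed $\approx u/2$, and these suprema are what the dyadic peeling plus Dudley plus Gaussian concentration is applied to. This deterministic squeeze is cleaner than trying to argue that the exceedance "must live on a particular shell," and it is the piece you would need to make precise to close the argument. The rest of your plan, including the uniformity over $\bm{\alpha}$ (constants depending only on $d,x_0$, monotone envelope on the bias), matches the paper.
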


Hence it suffices to choose $c_\delta \asymp_{d,x_0} \log^{(d+2)/4}(1/\delta)$ to satisfy (\ref{cond:critical_value_adaptive}).

\medskip
\noindent \textbf{Non-uniform random design.} So far we have assumed that the design distribution $P$ is uniform on $[0,1]^d$ in the random design case. The situation will be more complicated for general design distributions $P$, as the limit distribution in Theorem \ref{thm:limit_distribution_pointwise} can depend on $P$ in a rather complicated and non-local way. Note that for general $P$, Theorem \ref{thm:limit_distribution_pointwise} requires the Lebesgue density $\pi$ of $P$ to be bounded away from $0$ and $\infty$ on $[0,1]^d$ and to be continuous in the neighborhood of $x_0$. In the special case where $s=d$ (i.e., $f_0(x)$ depends on all elements of $x$ at $x = x_0$), the effect of $P$ is local and can be factored out in the limit distribution theory in Theorem \ref{thm:limit_distribution_pointwise}; see \cite[Remark 1 (5)]{han2019limit} for detailed discussion. In this case, using similar arguments as in the proof of Theorem \ref{thm:pivotal_limit_distribution}, we still have
\begin{align*}
\sqrt{n_{\hat{u}, \hat{v}}(x_0)}
\big(\hat{f}_n(x_0)-f_0(x_0)\big)\rightsquigarrow \sigma \cdot \mathbb{L}_{\bm{1}_d}.
\end{align*}
Hence for any consistent variance estimator $\hat{\sigma}^2$, (\ref{def:CI}) continues to be an asymptotic $1- \delta$ CI of $f_0(x_0)$ which shrinks at the optimal length in a similar sense to the statements of Theorem \ref{thm:CI_exact}.

\subsection{Comparison with the approach of \cite{banerjee2001likelihood} in $d=1$}\label{section:compare_BW_theory}

\cite{banerjee2001likelihood,banerjee2007confidence} developed an inference procedure using the log likelihood ratio test for various monotone-response models in the univariate case $d=1$. In the regression setting, this idea is best illustrated in the random design, with $P$ being the uniform distribution on $[0,1]$ and the errors $\{\xi_i\}$ being i.i.d. $\mathcal{N}(0,1)$. The block max-min and min-max estimators $\hat{f}_n^{\mp}$
defined via (\ref{def:max_min_estimator}) and their average $\hat{f}_n$ all
reduce to the univariate LSE at design points.

We consider testing the hypothesis
\begin{align*}
H_0: f_0(x_0) = m_0\quad \textrm{vs.}\quad H_1: f_0(x_0)\neq m_0.
\end{align*}
To form a likelihood ratio test, let $\hat{f}_n^0$ be the \emph{constrained} least squares estimator defined via
\begin{align*}
\hat{f}_n^0\in \argmin_{f \in \mathcal{F}_1, f(x_0)=m_0} \sum_{i=1}^n (Y_i-f(X_i))^2.
\end{align*}
$\hat{f}_n^0$ is well-defined on the design points, and can be computed by performing two isotonic regressions on $\{(Y_i,f(X_i)): X_i\leq x_0\}$ and $\{(Y_i,f(X_i)): X_i> x_0\}$ followed by thresholding (see \cite[pp. 939]{banerjee2007likelihood}). Under the Gaussian likelihood, the likelihood ratio test statistic is given by
\begin{align}\label{def:log_likelihood_ratio}
2\log \lambda_n(m_0) = -\sum_{i=1}^n (Y_i-\hat{f}_n(X_i))^2 + \sum_{i=1}^n (Y_i-\hat{f}_n^0(X_i))^2.
\end{align}
\cite{banerjee2001likelihood,banerjee2007confidence} showed that if $f_0$ is locally $C^1$ at $x_0$ with $f_0'(x_0)>0$ and $H_0$ holds, then
\begin{align}\label{eqn:banerjee_wellner_limit}
2\log \lambda_n(m_0) \rightsquigarrow \mathbb{K}_1,
\end{align}
where the distribution $\mathbb{K}_1$ is free of the nuisance parameter $f_0'(x_0)$ that would otherwise be present in the limit distribution theory (cf. Theorem \ref{thm:limit_distribution_pointwise} in the simplest case $d=1,\alpha=1$). A CI of $f_0(x_0)$ can now be obtained through inversion of (\ref{eqn:banerjee_wellner_limit}): Let $\mathcal{I}^{\mathrm{BW}}_n(d_\delta)\equiv \{m_0: 2\log \lambda_n(m_0)\leq d_\delta\}$ (BW refers to Banerjee-Wellner) with $\Prob\big(\mathbb{K}_1>d_\delta\big)=\delta$. Then
\begin{align*}
\Prob_{f_0}\big(f_0(x_0) \in \mathcal{I}^{\mathrm{BW}}_n(d_\delta) \big) \to \Prob\big(\mathbb{K}_1\leq d_\delta\big)=1-\delta.
\end{align*}

\begin{remark}
For a fixed $m_0$, the likelihood ratio test requires two isotonic regressions to calculate the test statistic (\ref{def:log_likelihood_ratio}), which can be computed efficiently thanks to the fast PAVA algorithms. With carefully written algorithms the inversion of (\ref{eqn:banerjee_wellner_limit}) may not add too much computational burden to obtain the likelihood ratio test based CIs, see e.g., \cite{groeneboom2015rcpp} for a fast algorithm in the related current status model. However, the proposed procedure \eqref{def:CI} is still computationally simpler and more straightforward.
\end{remark}	

The validity of the Banerjee-Wellner CI crucially relies on the assumption $f_0'(x_0)>0$. Compared with our procedure, it is natural to wonder if the likelihood ratio approach offers a stronger degree of universality in terms of adaptation to unknown local smoothness of the regression function. As we will show below, the limit distribution of the log likelihood ratio test statistic does depend on the number of vanishing derivatives of $f_0$, and is therefore not adaptive to the local smoothness of $f_0$.

To formally state the result, let $\mathrm{slogcm}(f,I)$ be the left-hand slope of the greatest convex minorant of $f$ restricted to the interval $I$. Write $\mathrm{slogcm}(f)= \mathrm{slogcm}(f,\R)$ for simplicity. Let 
\begin{align*}
\mathrm{slogcm}^0(f) = \big(\mathrm{slogcm}(f,(-\infty,0])\wedge 0\big)\bm{1}_{(-\infty,0]}+\big(\mathrm{slogcm}(f,(0,\infty))\vee 0\big)\bm{1}_{(0,\infty)}.
\end{align*}
Let $\mathbb{B}$ be the standard two-sided Brownian motion started from $0$, and $X_{a,b;\alpha}(t)\equiv a \mathbb{B}(t) + b t^{\alpha+1}$. Let $g_{a,b;\alpha} \equiv \mathrm{slogcm}(X_{a,b;\alpha})$ and $g_{a,b;\alpha}^0\equiv \mathrm{slogcm}^0(X_{a,b;\alpha})$. These quantities are a.s. well-defined for $b>0$ and an odd integer $\alpha\geq 1$ as $X_{a,b;\alpha}(t)$ is of the order $\mathcal{O}_{a.s.}(t^{\alpha+1})$ for $t\to \pm\infty$ and is a.s. bounded on compacta. Informally, $g_{a,b;\alpha}$ is the `isotonic regression for $X_{a,b;\alpha}$' in the Gaussian white noise model $\d{X_{a,b;\alpha}(t)} = b(\alpha+1) t^{\alpha}\,\d{t}+ a\,\d{\mathbb{B}(t)}$, and $g^0_{a,b;\alpha}$ is the `constrained isotonic regression' subject to $g^0_{a,b;\alpha}(0)=0$.

\begin{theorem}\label{thm:LRT}
Consider the above setting. Suppose $f_0$ is nondecreasing and locally $C^\alpha$ at $x_0$ for some $\alpha\geq 1$, with $\partial^{j} f_0(x_0)=0$ for $j=1,\ldots,\alpha-1$ and $\partial^{\alpha} f_0(x_0)\neq 0$. Then under $H_0$,
\begin{align*}
2 \log \lambda_n(m_0) \rightsquigarrow \int_{\R} \big\{(g_{1,1;\alpha}(t))^2-(g_{1,1;\alpha}^0(t) )^2 \big\}\ \d{t}\equiv \mathbb{K}_{\alpha}.
\end{align*}
\end{theorem}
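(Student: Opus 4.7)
Under the Gaussian likelihood with $\sigma=1$, setting $D_i=\hat f_n^0(X_i)-\hat f_n(X_i)$ and expanding the squared differences,
\[
2\log\lambda_n(m_0)=\sum_iD_i^2-2\sum_iD_i\bigl(Y_i-\hat f_n(X_i)\bigr).
\]
This decomposition isolates the deterministic-looking part $\|D\|^2$ from a cross term involving the residuals of the unconstrained fit.

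\textbf{Localization and GCM limit.} Set $h_n=n^{-1/(2\alpha+1)}$ and define the rescalings $\tilde g_n(t)=h_n^{-\alpha}(\hat f_n(x_0+h_nt)-m_0)$, $\tilde g_n^0(t)=h_n^{-\alpha}(\hat f_n^0(x_0+h_nt)-m_0)$. The Taylor expansion $f_0(x_0+h_nt)-m_0=(\alpha+1)bh_n^\alpha t^\alpha(1+o(1))$ with $b=\partial^\alpha f_0(x_0)/(\alpha+1)!$, combined with Donsker's invariance principle, implies that the locally rescaled cumulative-sum process converges weakly on compacta to $X_{1,b;\alpha}(t)=\mathbb B(t)+bt^{\alpha+1}$. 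Since $\hat f_n$ (resp.\ $\hat f_n^0$) equals the left-slope of the (resp.\ constrained) greatest convex minorant of the cumulative-sum diagram at design points, continuity of the slope-of-GCM functional in the locally uniform topology---applicable because $\alpha$ odd and $b>0$ force $X_{1,b;\alpha}(t)\to+\infty$ as $|t|\to\infty$, so the GCM is locally determined---yields $(\tilde g_n,\tilde g_n^0)\rightsquigarrow(g,g^0)$ where $g=g_{1,b;\alpha}$ and $g^0=g^0_{1,b;\alpha}$, the constrained limit arising via the $\mathrm{slogcm}^0$ truncation that encodes $\tilde g_n^0(0)=0$.

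\textbf{Integral limit.} The change of variable $s_i=(X_i-x_0)/h_n$, together with the weak convergence of the rescaled empirical design measure to $nh_n$ times Lebesgue measure on compacta, a Donsker approximation of the noise partial sums by $\mathbb B$, and $nh_n^{2\alpha+1}=1$, gives
\begin{align*}
\sum_iD_i^2&\rightsquigarrow\int_\R(g-g^0)^2\d t,\\
\sum_iD_i\bigl(Y_i-\hat f_n(X_i)\bigr)&\rightsquigarrow\int_\R(g^0-g)\d X_{1,b;\alpha}-\int_\R(g^0-g)\,g\d t.
\end{align*}
The integral with respect to $\d X_{1,b;\alpha}$ is a well-defined pathwise Riemann-Stieltjes integral because $g^0-g$ has compact support (by the tail-control statement below). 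The KKT identities $\int g\,\d X_{1,b;\alpha}=\int g^2\d t$ and $\int g^0\,\d X_{1,b;\alpha}=\int(g^0)^2\d t$---valid because both $\pm g$ (resp.\ $\pm g^0$, with $g^0(0)=0$) lie in the tangent cone of the (resp.\ constrained) monotone cone at itself, the continuous-time analog of the zero-residual-sum on level sets of isotonic regression---yield $\int(g-g^0)\d X_{1,b;\alpha}=\int(g^2-(g^0)^2)\d t$. Substituting and cancelling gives
\[
2\log\lambda_n(m_0)\rightsquigarrow\int_\R\bigl(g(t)^2-g^0(t)^2\bigr)\d t.
\]

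\textbf{Pivoting via Brownian scaling; main obstacle.} Setting $\lambda=b^{-2/(2\alpha+1)}$, the Brownian scaling $X_{1,b;\alpha}(\lambda t)/\sqrt\lambda\equald X_{1,1;\alpha}(t)$ descends to $g_{1,b;\alpha}(s)\equald\lambda^{-1/2}g_{1,1;\alpha}(s/\lambda)$ and likewise for $g^0$; substituting and accounting for the change-of-variables Jacobian eliminates all $b$-dependence, giving the limit $\mathbb K_\alpha$ as defined in the statement. The two principal technical obstacles are (i) \emph{tail control}, i.e., proving that $\hat f_n=\hat f_n^0$ outside a window of size $O_P(h_n)$ around $x_0$ with probability approaching one, so that the convergence on compacta translates to convergence of the integrated functionals; and (ii) rigorous treatment of the pathwise identities $\int g\,\d X_{1,b;\alpha}=\int g^2\d t$ (whose sides individually diverge), handled by applying them only to the compactly supported difference $g-g^0$, yielding $\int(g-g^0)\d X_{1,b;\alpha}=\int(g^2-(g^0)^2)\d t$ as a well-defined identity. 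Both obstacles adapt the Banerjee-Wellner techniques for $\alpha=1$ to the polynomial-drift setting, via exponential deviation bounds for $X_{1,b;\alpha}$ against its drift $bt^{\alpha+1}$.
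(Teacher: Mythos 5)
Your sketch reaches the same conclusion and shares the paper's outer structure (local rescaling, joint weak convergence of $(\hat f_n,\hat f^0_n)$ to the slope-of-GCM processes, final Brownian-scaling pivot), but the middle step — passing from $2\log\lambda_n$ to the integral functional of $(g,g^0)$ — uses a genuinely different decomposition. You write $2\log\lambda_n=\sum D_i^2-2\sum D_i(Y_i-\hat f_n(X_i))$ with $D_i=\hat f_n^0-\hat f_n$, take weak limits of each sum, and then invoke Fenchel/KKT identities \emph{for the limiting processes} ($\int g\,\mathrm{d}X=\int g^2\,\mathrm{d}t$, etc., applied to the compactly supported difference $g-g^0$). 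The paper instead invokes the Banerjee--Wellner \emph{finite-sample} self-consistency identity (their display (2.6)): on the set $J_n$ where the two fits disagree, $\hat f_n^0$ (and $\hat f_n$) equal block averages, which makes the cross terms collapse algebraically to $2\log\lambda_n=\sum_{J_n}(f_0(x_0)-\hat f_n)^2-\sum_{J_n}(f_0(x_0)-\hat f^0_n)^2$; weak convergence of this quantity is then routine. Both arguments need the same tail control (that $J_n$ is confined to an $O_P(n^{-1/(2\alpha+1)})$ window so the convergence on compacta becomes convergence of the integrals), but the paper's route sidesteps the issues you correctly flag as obstacles: it avoids the formally divergent integrals $\int g\,\mathrm dX$, $\int g^2\,\mathrm dt$, and it needs no continuum Fenchel identity for the constrained process $g^0$, which is a bit delicate to state cleanly because the constrained minimizer $\mathrm{slogcm}^0$ is obtained by a left/right split and a clipping at zero rather than a single projection. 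If you want to preserve your route, the key lemma to add is the pathwise identity $\int(g-g^0)\,\mathrm dX_{1,b;\alpha}=\int(g^2-(g^0)^2)\,\mathrm dt$ for the compactly supported difference, proved by summing the exact block-level identities over the (almost surely finitely many) constant pieces on which $g\ne g^0$ and observing that these are simultaneously knots for both GCMs. Minor point: the paper's stated scaling exponent ``$2+\alpha$'' in the rescaling of $X_{1,b;\alpha}$ is a typo for $2\alpha+1$; your exponent $\lambda=b^{-2/(2\alpha+1)}$ is the correct one.
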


\begin{remark}
By \cite[Lemma 1]{han2019limit}, $\alpha$'s satisfying the assumption of Theorem \ref{thm:LRT} must be odd, and $\partial^\alpha f_0(x_0)>0$.
\end{remark}

It is clear from Theorem \ref{thm:LRT} that the limit distribution for the log likelihood ratio test depends on the unknown local smoothness level $\alpha$ of $f_0$ through the slope processes $g_{1,1;\alpha},g_{1,1;\alpha}^0$. This phenomenon is observed numerically in \cite{doss2016inference} in another related setting:  the limit distributions for the log-likelihood ratio tests for the mode of a log-concave density depend on the number of vanishing derivatives at the mode. 

\begin{remark}\label{rmk:LRT_unknown_var}
If the variance $\sigma^2$ is unknown, then the log-likelihood ratio test statistic involves the unknown $\sigma^2$. By taking (\ref{def:log_likelihood_ratio}) as the definition of the quantity of $2\log \lambda_n(m_0)$, it holds (under the same conditions as in Theorem \ref{thm:LRT}) that $
2 \log \lambda_n(m_0)\rightsquigarrow \sigma^2 \cdot \mathbb{K}_{\alpha}$.
\end{remark}

\section{Beyond isotonic regression: Inference in other monotone models}\label{section:more_examples}

The idea for constructing tuning-free CIs in the previous section has a much broader scope beyond the setting of multiple isotonic regression.  As a proof of concept, in this section we construct CIs for a few further non-parametric models with certain monotonicity shape constraints, adapting essentially the same idea as developed in the previous section.

\subsection{The common scheme}

We briefly outline the common scheme for the construction of CIs in the models to be studied in detail below. Suppose we want to estimate a 
univariate monotone function $f_0$. There is a natural piecewise constant estimator $\hat{f}_n$ (usually the maximum likelihood estimator) for $f_0$ that exhibits a non-standard limit distribution at the point of interest $x_0$, typically at a cube-root rate $\omega_n= n^{-1/3}$ under curvature conditions on $f_0'$ at $x_0$:
\begin{align*}
\omega_n^{-1}\big(\hat{f}_n(x_0)-f_0(x_0)\big)&\rightsquigarrow \sup_{h_1>0}\inf_{h_2>0} \bigg[a \cdot \frac{\G(h_1,h_2)}{h_1+h_2} + b\cdot (h_2-h_1)\bigg]\\
& =_d \inf_{h_2>0} \sup_{h_1>0}\bigg[a \cdot \frac{\G(h_1,h_2)}{h_1+h_2} + b\cdot (h_2-h_1)\bigg] \\
& =_d (a^2 b)^{1/3}\cdot \mathbb{D}_1.
\end{align*}
Recall that $\mathbb{D}_1\equiv \mathbb{D}_1^+=_d \mathbb{D}_1^-$ is defined in Theorem \ref{thm:limit_distribution_pointwise}. In fact, $\mathbb{D}_1/2$  follows the well-known Chernoff distribution (cf. \cite{groeneboom2014nonparametric}). 

Here we have two nuisance parameters, namely $a,b$:
\begin{itemize}
	\item $b$ is a \emph{difficult} nuisance parameter to estimate, which usually involves the derivatives of the monotone function to be estimated. This will be tackled by the analogue of `$n_{\hat{u},\hat{v}}(x_0)$'.
	\item $a$ is typically \emph{easy} to estimate, either via observed samples or via fitted values in the analogue of the `local block $[\hat{u}(x_0),\hat{v}(x_0)]$' of $x_0$.
\end{itemize}
One special feature in the one-dimensional setting is the exchangability of supremum and infimum in the limit distribution, so $\hat{u}(x_0)$ and $\hat{v}(x_0)$ are simply the left and right end-points of the constant piece of $\hat{f}_n$ that contains $x_0$. As $\hat{v}(x_0)-\hat{u}(x_0)$ is of order $\mathcal{O}_{\mathbf{P}}(\omega_n (a/b)^{2/3})$, we would expect the following pivotal limit distribution theory:
\begin{align}\label{eqn:pivot_limit_generic}
\textstyle \sqrt{n(\hat{v}(x_0)-\hat{u}(x_0))}\big(\hat{f}_n(x_0)-f_0(x_0)\big)\rightsquigarrow a\cdot \mathbb{L}_1.
\end{align}

Now given a consistent estimate $\hat{a}_n$ of $a$, we have the following generic CI of $f_0(x_0)$:
\begin{align*}
\mathcal{I}_n^\ast(c_\delta)\equiv \Big[\hat{f}_n(x_0)\pm c_\delta \cdot \hat{a}_n \big/ \textstyle \sqrt{n(\hat{v}(x_0)-\hat{u}(x_0))} \Big].
\end{align*}
Similar to the regression setting, the construction of CIs in specific models to be detailed below is tuning-free, and requires essentially no further efforts beyond a single step of (maximum likelihood) estimation.

\subsection{Monotone density estimation}

Consider the classical problem of estimating a decreasing density $f_0$ on $[0,\infty)$ based on i.i.d. observations $X_1,\ldots,X_n$. The maximum likelihood estimator (MLE) $\hat{f}_n$, known as the \emph{Grenander estimator}, is the left derivative of the least concave majorant of the empirical distribution function $\mathbb{F}_n$. By the max-min representation, for any $x_0 \in (0,\infty)$, we may write
\begin{align*}
\hat{f}_n(x_0) = \inf_{0<u<x_0}\sup_{v\geq x_0} \frac{\mathbb{F}_n(v)-\mathbb{F}_n(u)}{v-u} = \frac{\mathbb{F}_n(\hat{v}(x_0))-\mathbb{F}_n(\hat{u}(x_0))}{\hat{v}(x_0)-\hat{u}(x_0)},
\end{align*}
where $(\hat{u}(x_0),\hat{v}(x_0))$ is the a.s. uniquely specified pair for which the last equality in the above display holds. It is well known (see e.g., \cite{rao1969estimation,groeneboom1985estimating,groeneboom1989brownian,van1996weak}) that if $f_0$ is locally $C^1$ at $x_0$ with $f_0'(x_0)<0$ and $f_0(x_0)>0$, then
\begin{align*}
n^{1/3}\big(\hat{f}_n(x_0)-f_0(x_0)\big)&\rightsquigarrow \sup_{h_1>0}\inf_{h_2>0}\bigg[\sqrt{f_0(x_0)}\cdot \frac{\G(h_1,h_2)}{h_1+h_2}+\frac{1}{2}\abs{f'_0(x_0)} (h_2-h_1)\bigg]\\
 & =_d \big(f_0(x_0)\abs{f_0'(x_0)}/2\big)^{1/3}\cdot  \mathbb{D}_1.
\end{align*}
To use the above limit theorem to form CI, the difficult nuisance to estimate is $f_0'(x_0)$, while the easy one is $f_0(x_0)$. The inference problem in the density setting is recently tackled in \cite{groeneboom2015nonparametric}, using both the log likelihood ratio test approach similar to \cite{banerjee2001likelihood,banerjee2007likelihood} and a bootstrap assisted approach for the smoothed maximum likelihood estimator. Our proposal for a CI of $f_0(x_0)$ is the following:
\begin{align*}
\textstyle \mathcal{I}_n^{\textrm{den}}(c_\delta)\equiv \Big[\hat{f}_n(x_0)\pm c_{\delta} \cdot \sqrt{\hat{f}_n(x_0)} \big/ \textstyle \sqrt{n(\hat{v}(x_0)-\hat{u}(x_0))} \Big]\cap [0,\infty).
\end{align*}
Let $\mathbb{L}_1$ and $\mathbb{S}_1$ be as in 
Theorem \ref{thm:pivotal_limit_distribution} with $d=1$ and $\bm{\alpha}=1$.

\begin{theorem}\label{thm:CI_grenander}
Suppose $f_0$ is locally $C^1$ at $x_0$ with $f_0'(x_0)<0$ and $f_0(x_0)>0$. Let $c_\delta>0$ be a continuity point of the d.f. of $\abs{\mathbb{L}_{1}}$ such that $
\Prob\big(\abs{\mathbb{L}_1}>c_\delta\big)=\delta$. Then 
\begin{align*}
\lim_{n \to \infty} \Prob_{f_0}\big(f_0(x_0) \in \mathcal{I}_n^{\textrm{den}}(c_\delta)\big) = 1-\delta.
\end{align*}
Furthermore, for any $\epsilon>0$,
\begin{align*}
\liminf_{n \to \infty} \Prob_{f_0} \bigg(\abs{\mathcal{I}_n^{\textrm{den}}(c_\delta)}< 2c_\delta \mathfrak{g}_\epsilon\cdot  n^{-1/3} \big(f_0(x_0)\abs{f_0'(x_0)}/2\big)^{1/3} \bigg) \geq 1-\epsilon.
\end{align*}
Here $\mathfrak{g}_\epsilon\in (0,\infty)$ is such that $
\Prob\big(\mathbb{S}_1^{-1}\geq \mathfrak{g}_\epsilon\big)\leq\epsilon$.
\end{theorem}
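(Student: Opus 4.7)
The plan is to derive Theorem \ref{thm:CI_grenander} from a density-model analogue of the pivotal limit theory of Theorem \ref{thm:pivotal_limit_distribution}, namely
\begin{align}\label{eqn:grenander_pivot}
\sqrt{n(\hat{v}(x_0)-\hat{u}(x_0))}\big(\hat{f}_n(x_0)-f_0(x_0)\big)\rightsquigarrow \sqrt{f_0(x_0)}\cdot \mathbb{L}_1.
\end{align}
Granted \eqref{eqn:grenander_pivot}, the coverage and length statements follow by a Slutsky argument: since $\hat{f}_n(x_0)\to_p f_0(x_0)>0$ by the classical $n^{1/3}$-consistency of the Grenander estimator and the continuous mapping theorem, the ratio $\sqrt{\hat{f}_n(x_0)/f_0(x_0)}$ tends to one in probability, so dividing both sides of \eqref{eqn:grenander_pivot} by $\sqrt{\hat{f}_n(x_0)}$ yields a pivotal limit whose $(1-\delta)$-quantile is precisely $c_\delta$. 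Equating the event $f_0(x_0)\in\mathcal{I}_n^{\textrm{den}}(c_\delta)$ with $|\hat{f}_n(x_0)-f_0(x_0)|\leq c_\delta\sqrt{\hat{f}_n(x_0)}/\sqrt{n(\hat{v}-\hat{u})}$ produces the asymptotic coverage $1-\delta$, while the length bound follows from the joint convergence below since $\mathbb{S}_1^{-1}=(g_{1,1}^\ast+g_{2,1}^\ast)^{-1/2}$ governs the rescaled size of $\hat{v}-\hat{u}$.

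To prove \eqref{eqn:grenander_pivot}, localize around $x_0$ by the change of variables $u=x_0-r_n h_1$, $v=x_0+r_n h_2$ with $r_n=(f_0(x_0)/(|f_0'(x_0)|^2 n))^{1/3}$ chosen to balance the stochastic and bias parts of $(\mathbb{F}_n(v)-\mathbb{F}_n(u))/(v-u)-f_0(x_0)$. A standard Hungarian/empirical-process approximation combined with the Taylor expansion $F_0(x)-F_0(x_0)=f_0(x_0)(x-x_0)+\tfrac12 f_0'(x_0)(x-x_0)^2+o((x-x_0)^2)$ yields, uniformly on compacta in $(h_1,h_2)$,
\begin{align*}
 n^{1/3}\bigg[\frac{\mathbb{F}_n(v)-\mathbb{F}_n(u)}{v-u}-f_0(x_0)\bigg]
 \rightsquigarrow K_0\bigg[\frac{\G(h_1,h_2)}{h_1+h_2}-\frac{h_2-h_1}{2}\bigg],
\end{align*}
where $K_0=(f_0(x_0)|f_0'(x_0)|/2)^{1/3}$ and $\G$ is the Gaussian process in Theorem~\ref{thm:limit_distribution_pointwise}. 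Taking $\sup_{h_1>0}\inf_{h_2>0}$ and applying the argmax continuous mapping theorem, whose hypotheses are verified by Lemma~\ref{lem:uniqueness_supinf} (almost sure uniqueness of the sup-inf pair $(g_{1,1}^\ast,g_{2,1}^\ast)$ of $\mathbb{V}_1$), produces the joint weak convergence
\begin{align*}
 \Big(n^{1/3}(\hat{f}_n(x_0)-f_0(x_0)),\ r_n^{-1}(x_0-\hat{u}(x_0)),\ r_n^{-1}(\hat{v}(x_0)-x_0)\Big)\rightsquigarrow (K_0\mathbb{D}_1,\ g_{1,1}^\ast,\ g_{2,1}^\ast).
\end{align*}
Multiplying the first coordinate by $\sqrt{n(\hat{v}-\hat{u})}/n^{1/3}=\sqrt{r_n n^{1/3}\cdot r_n^{-1}(\hat v-\hat u)}\,/\,$const and plugging in the explicit value of $r_n$, the factor $K_0$ cancels against the scaling of $\sqrt{r_n n^{1/3}(g_{1,1}^\ast+g_{2,1}^\ast)}$, leaving exactly $\sqrt{f_0(x_0)}\cdot\mathbb{S}_1(g_{1,1}^\ast,g_{2,1}^\ast)\cdot\mathbb{D}_1=\sqrt{f_0(x_0)}\cdot\mathbb{L}_1$, as required.

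The main obstacle is the rate/constant bookkeeping in the last step: the sole reason \eqref{eqn:grenander_pivot} is pivotal is that the dependence on $|f_0'(x_0)|$ hidden in $r_n$ and in $K_0$ conspires to cancel in the product, leaving only the ``easy'' nuisance $f_0(x_0)$. Getting this cancellation right requires writing out the scaling in a single joint convergence statement on the product space, which in turn requires the almost-sure uniqueness of $(g_{1,1}^\ast,g_{2,1}^\ast)$ from Lemma~\ref{lem:uniqueness_supinf} to license the argmax continuous mapping theorem; everything else (Slutsky, continuous mapping from the quantile definition of $c_\delta$ and $\mathfrak{g}_\epsilon$) is routine.
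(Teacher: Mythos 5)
Your route is essentially the one the paper takes: localize around $x_0$, prove a joint weak convergence of the rescaled Grenander error and $(\hat{h}_1,\hat{h}_2)$, exploit the cancellation in $\omega_n\sqrt{nr_n(\hat{h}_1+\hat{h}_2)}$ to obtain the pivotal limit $\sqrt{f_0(x_0)}\cdot\mathbb{L}_1$, and close by Slutsky using $\hat{f}_n(x_0)\to_p f_0(x_0)>0$.

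There is, however, a genuine gap. You invoke the argmax continuous mapping theorem and cite Lemma \ref{lem:uniqueness_supinf} as verifying its hypotheses, but that lemma only delivers almost sure \emph{uniqueness} of the sup-inf saddle of the limiting Gaussian process. What is also needed, and is not automatic, is \emph{tightness} of $(\hat{h}_1,\hat{h}_2)$ in both directions: with probability $1-\epsilon$ the optimizing pair must lie in a fixed compact $[c^{-1},c]^2$. This is precisely where the bulk of the paper's proof lies. The ``large deviation'' step bounds $\hat{h}_1$ above by combining the $n^{1/3}$-consistency of $\hat{f}_n(x_0)$ with a peeled empirical-process supremum bound showing that if $\hat{h}_1>c$ the rescaled error would be of order $c$; the ``small deviation'' step bounds $\hat{h}_1$ away from $0$ via a modulus-of-continuity estimate for the localized empirical process together with a reflection-principle lower bound on the Brownian fluctuation over $[0,c^{-b}]$. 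Without these, weak convergence on compacta plus a.s.\ uniqueness of the limiting saddle does not yield convergence of the unrestricted sup-inf pair: one has to rule out mass escaping to $0$ or $\infty$. Once that is supplied, the rest is routine exactly as you say. A separate minor slip: with the drift in $\mathbb{V}_1(g_1,g_2)=\G(g_1,g_2)/(g_1+g_2)+(g_2-g_1)$ having unit coefficient, the balancing bandwidth is $r_n=(4f_0(x_0)/(n|f_0'(x_0)|^2))^{1/3}$ rather than $(f_0(x_0)/(n|f_0'(x_0)|^2))^{1/3}$; your choice leaves an extraneous $2^{-1/3}$ in the final cancellation. The paper avoids this bookkeeping entirely by taking $r_n=n^{-1/3}$ and keeping the nuisance constants in the local process, then factoring them out algebraically as in the proof of Theorem \ref{thm:pivotal_limit_distribution}.
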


It is also possible to consider adaptive CIs by calibrating the critical value $c_\delta$ similarly as in Theorem \ref{thm:adaptive_confidence_interval}. We omit the details.

\subsection{Current status data: interval censoring model}

Let $X_1,\ldots,X_n$ and $T_1,\ldots,T_n$ be independent i.i.d. samples from distribution functions $F_0$ and $G_0$ supported on $[0,\infty)$. Let $\Delta_i \equiv \bm{1}_{X_i\leq T_i}$. We observe $(\Delta_1,T_1),\ldots,(\Delta_n,T_n)$ and want to estimate $F_0$, the distribution of unobserved $X_1,\ldots,X_n$. Consider the maximum likelihood estimator $\hat{F}_n$ that maximizes 
\begin{align}\label{ineq:likelihood_interval_censoring}
F \mapsto \sum_{i=1}^n \big( \Delta_i \log F(T_i) + (1-\Delta_i) \log(1-F(T_i))\big).
\end{align}
Let $T_{(1)}\leq \ldots \leq T_{(n)}$ be the order statistics of $T_1,\ldots,T_n$. It is well-known (see e.g., \cite{groeneboom1992information,van1996weak}) that the solutions $(\hat{F}_n(T_{(1)}),\ldots,\hat{F}_n(T_{(n)}))$ is given by the isotonic regression over $(\Delta_{(i)} = \bm{1}_{X_{(i)}\leq T_{(i)}} )_{i=1}^n$. In other words, for any $t_0 \in (0,\infty)$,
\begin{align*}
\hat{F}_n(t_0) = \max_{i: T_i\leq t_0} \min_{j: T_j\geq t_0} \frac{\sum_{k=i}^j \Delta_{(k)} }{j-i+1} \equiv \max_{u\leq t_0} \min_{v\geq t_0} \bar{\Delta}_{(\cdot)}|_{[u,v]}= \bar{\Delta}_{(\cdot)}|_{[\hat{u}(t_0),\hat{v}(t_0)]},
\end{align*} 
where $(\hat{u}(t_0),\hat{v}(t_0))$ is any pair for which the last equality in the above display holds. It is also well-known (see e.g., \cite{groeneboom1992information,van1996weak}) that if $F_0,G_0$ has positive and locally continuous density $f_0,g_0$ at $t_0$, then
\begin{align*}
&n^{1/3}\big(\hat{F}_n(t_0)-F_0(t_0)\big)\\
&\rightsquigarrow \sup_{h_1>0}\inf_{h_2>0} \bigg[\sqrt{F_0(t_0)(1-F_0(t_0))/g_0(t_0)}\cdot \frac{\G(h_1,h_2)}{h_1+h_2}+\frac{1}{2}f_0(t_0) (h_2-h_1)\bigg]\\
& =_d  \big(F_0(t_0)(1-F_0(t_0))f_0(t_0)/2g_0(t_0)\big)^{1/3}\cdot  \mathbb{D}_1.
\end{align*}
The inference problem in the current status model is investigated in \cite{banerjee2001likelihood,groeneboom2015nonparametric} using likelihood ratio methods; see \cite{banerjee2008estimating} for similar likelihood ratio based inference methods in the context of monotone, uni-modal and U–shaped failure rates under a right–censoring mechanism. Here we take a different approach, similar to our proposal in the regression setting. Note that the difficult nuisance parameter in this problem is $f_0(t_0)$ since $X_1,\ldots,X_n$ are unobserved, while $F_0(t_0)$ and $g_0(t_0)$ are easy to estimate. For instance, we may use $\hat{F}_n(t_0)$ to estimate $F_0(t_0)$, and 
\begin{align*}
\hat{g}_n(t_0) \equiv \sum_i \bm{1}_{T_i \in [\hat{u}(t_0),\hat{v}(t_0)]}\big/\{ n(\hat{v}(t_0)-\hat{u}(t_0)) \}
\end{align*}
to estimate $g_0(t_0)$. Now consider the following CI for $F_0(t_0)$:
\begin{align*}
\mathcal{I}_n^{\textrm{cur}}(c_\delta) 
&\equiv \bigg[ \hat{F}_n(t_0)\pm c_\delta \cdot \sqrt{\hat{F}_n(t_0)(1-\hat{F}_n(t_0))/\hat{g}_n(t_0) } \big/ \sqrt{n(\hat{v}(t_0)-\hat{u}(t_0))} \bigg] \cap [0,1]
\\ \nonumber
&= \textstyle \bigg[ \hat{F}_n(t_0)\pm  c_\delta \cdot \sqrt{\hat{F}_n(t_0)(1-\hat{F}_n(t_0)) }  \Big/ \sqrt{\sum_i \bm{1}_{T_i \in [\hat{u}(t_0),\hat{v}(t_0)]} } \bigg] \cap [0,1].
\end{align*}

\begin{theorem}\label{thm:CI_interval_censoring}
	Suppose $F_0,G_0$ has positive and locally continuous density $f_0,g_0$ at $t_0$. Let $c_\delta>0$ be a continuity point of the d.f. of $\abs{\mathbb{L}_{1}}$ such that $
	\Prob\big(\abs{\mathbb{L}_1}>c_\delta\big)=\delta$. Then 
	\begin{align*}
	\lim_{n \to \infty} \Prob_{F_0,G_0}\big(F_0(t_0) \in \mathcal{I}_n^{\textrm{cur}}(c_\delta)\big) = 1-\delta.
	\end{align*}
	Furthermore, for any $\epsilon>0$,
	\begin{align*}
	&\liminf_{n \to \infty} \Prob_{F_0,G_0} \bigg(\abs{\mathcal{I}_n^{\textrm{cur}}(c_\delta)}\\
	&\qquad \qquad < 2c_\delta \mathfrak{g}_\epsilon\cdot  n^{-1/3}\big(F_0(t_0)(1-F_0(t_0))f_0(t_0)/2g_0(t_0)\big)^{1/3} \bigg) \geq 1-\epsilon.
	\end{align*}
	Here $\mathfrak{g}_\epsilon\in (0,\infty)$ is such that $
	\Prob\big(\mathbb{S}_1^{-1}\geq \mathfrak{g}_\epsilon\big)\leq\epsilon$.
\end{theorem}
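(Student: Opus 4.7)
The plan is to mirror the proof of Theorem \ref{thm:CI_exact}: first establish a pivotal limit distribution for the current status MLE at $t_0$, then use consistency of the plug-in estimators and the continuous mapping theorem to conclude. The Bernoulli variance $F_0(t_0)(1-F_0(t_0))$ plays the role of the $\sigma^2$ in the regression setting, and it is estimated automatically by the plug-in $\hat F_n(t_0)(1-\hat F_n(t_0))$, so no separate variance estimator is needed.

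\emph{Step 1 (Joint weak convergence).} Following the Groeneboom-type localization of the current status MLE (see, e.g., \cite{groeneboom1992information,van1996weak}), introduce the local rescaling $u=t_0-Ln^{-1/3}h_1$, $v=t_0+Ln^{-1/3}h_2$ with the bias–variance balancing scale
$L=\bigl((F_0(t_0)(1-F_0(t_0))/g_0(t_0))/(f_0(t_0)/2)^2\bigr)^{1/3}$.
The rescaled max–min representation of $\hat F_n(t_0)$ converges on compacta of $(h_1,h_2)$ to the Gaussian drift process $\mathbb{V}_{1}(h_1,h_2)$ of Theorem \ref{thm:limit_distribution_pointwise}: the binomial sampling noise converges to $a\,\G$ with $a=\sqrt{F_0(t_0)(1-F_0(t_0))/g_0(t_0)}$, while the bias term converges to $(f_0(t_0)/2)(h_2-h_1)$. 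Combined with the almost-sure uniqueness of the argmax–argmin on $(0,\infty)^2$ in the Gaussian white-noise limit (the one-dimensional analogue of Lemma \ref{lem:uniqueness_supinf} cited in the introduction), an argmax continuous mapping argument gives the joint convergence
\begin{align*}
\bigl(n^{1/3}(\hat F_n(t_0)-F_0(t_0)),\ n^{1/3}(t_0-\hat u(t_0)),\ n^{1/3}(\hat v(t_0)-t_0)\bigr) \rightsquigarrow \bigl(K_{\mathrm{cur}}\cdot \mathbb{D}_1,\ L\,g_{1,1}^\ast,\ L\,g_{2,1}^\ast\bigr),
\end{align*}
where $K_{\mathrm{cur}}=(F_0(t_0)(1-F_0(t_0))f_0(t_0)/2g_0(t_0))^{1/3}$ and $(g_{1,1}^\ast,g_{2,1}^\ast)$ is the a.s.\ unique argmax–argmin used in Theorem \ref{thm:pivotal_limit_distribution} for $\bm{\alpha}=1$.

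\emph{Step 2 (Pivotal limit).} A local Glivenko–Cantelli step for the empirical measure of $\{T_i\}$ on the shrinking block $[\hat u(t_0),\hat v(t_0)]$, which contains order $n^{2/3}$ observations, gives $N/\bigl(n(\hat v(t_0)-\hat u(t_0))\bigr)\to_p g_0(t_0)$, where $N\equiv\sum_i \bm 1_{T_i\in[\hat u(t_0),\hat v(t_0)]}$. Combining with Step 1 and a direct arithmetic verification of the exponents in $K_{\mathrm{cur}}$, $L$, $g_0(t_0)$ and $F_0(t_0)(1-F_0(t_0))$ yields
\begin{align*}
\sqrt{N}\bigl(\hat F_n(t_0)-F_0(t_0)\bigr) \rightsquigarrow \sqrt{F_0(t_0)(1-F_0(t_0))}\cdot \mathbb{S}_1\,\mathbb{D}_1 = \sqrt{F_0(t_0)(1-F_0(t_0))}\cdot\mathbb{L}_1,
\end{align*}
the precise analogue of \eqref{eqn:pivotal_limit_distribution} in the current status setting.

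\emph{Step 3 (Coverage and length).} Step 1 implies $\hat F_n(t_0)\to_p F_0(t_0)$, hence $\sqrt{\hat F_n(t_0)(1-\hat F_n(t_0))}\to_p\sqrt{F_0(t_0)(1-F_0(t_0))}$. Slutsky combined with the continuous mapping theorem at the continuity point $c_\delta$ of the d.f.\ of $|\mathbb{L}_1|$ converts Step 2 into $\Prob_{F_0,G_0}(F_0(t_0)\in\mathcal I_n^{\mathrm{cur}}(c_\delta))\to 1-\delta$. For the length, note $|\mathcal I_n^{\mathrm{cur}}(c_\delta)|=2c_\delta\sqrt{\hat F_n(t_0)(1-\hat F_n(t_0))}/\sqrt{N}$; substituting the joint weak limit for $\sqrt{N}$ from Step 1 shows that $n^{1/3}|\mathcal I_n^{\mathrm{cur}}(c_\delta)|$ converges in distribution to $2c_\delta K_{\mathrm{cur}}\mathbb{S}_1^{-1}$, and the tail condition $\Prob(\mathbb{S}_1^{-1}\geq\mathfrak g_\epsilon)\leq\epsilon$ produces the stated length bound exactly as in the proof of Theorem \ref{thm:CI_exact}.

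\emph{Main obstacle.} The crux is Step 1: what is novel here (beyond the classical marginal limit for $\hat F_n(t_0)$) is the joint weak convergence of $(\hat F_n(t_0),\hat u(t_0),\hat v(t_0))$ to the limit triple. This relies on the almost-sure uniqueness of $(g_{1,1}^\ast,g_{2,1}^\ast)$ and on tightness of $n^{1/3}(t_0-\hat u(t_0),\hat v(t_0)-t_0)$ bounded away from $0$ and $\infty$, which together justify the application of the argmax continuous mapping theorem to the rescaled max–min functional. Replacing the binomial sampling noise by its Gaussian limit in the pre-limit requires a Hungarian-type coupling, but this is standard in the current status literature.
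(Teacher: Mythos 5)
Your proposal matches the paper's proof in substance: the key steps are the joint weak convergence of the rescaled estimator and endpoints via the Theorem~\ref{thm:CI_grenander}-type localization and argmax-uniqueness, a local uniform law of large numbers giving $\hat g_n(t_0)\to_p g_0(t_0)$ (equivalently your $N/(n(\hat v-\hat u))\to_p g_0(t_0)$), and the plug-in consistency $\hat F_n(t_0)\to_p F_0(t_0)$ followed by Slutsky and continuous mapping. Your presentation folds the $\hat g_n$-consistency into the $\sqrt{N}$-normalization while the paper states it separately, but this is only an organizational difference, not a different route.
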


\subsection{Panel count data: counting process model}

The examples in previous subsections are amongst the `classical' ones in the field of monotonicity-constrained estimation. Below we consider one further example, in the context of \emph{panel count data}, that is less `classical' due to its increased complexity. The inference problem for this model is previous studied in \cite{sen2008pseudo} using likelihood ratio methods.

Here is the setup. We follow the notation in \cite{wellner2000two}. Suppose that $N = \{N(t):t\geq 0\}$ is a counting process with mean function $\Lambda_0(t) = \E N(t)$. Let $K$ be an integer-valued random variable, and $T=\{T_{k,j}: 1\leq j\leq k, k\geq 1\}$ be an triangular array of observation times. We assume that $N$ and $(K,T)$ are independent and $T_{k,j-1}\leq T_{k,j}$. 
Let $X=(N_K,T_K,K)$, where $T_K =(T_{K,1},\ldots,T_{K,K})$ and $N_K = (N(T_{K,1}),\ldots,N(T_{K,K}))$. We observe i.i.d. copies $X_1,\ldots,X_n$ of $X$, where $X_i = (N_{K_i}^{(i)}, T_{K_i}^{(i)},K_i)$. The problem is to estimate $\Lambda_0(t)$.
By building a Poisson model for $N(t)\sim_d \mathrm{Poisson}(\Lambda_0(t))$, and pretending independence of the events 
within each sample $X_i$, we may consider the estimator $\hat{\Lambda}_n$ that 
maximizes the pseudo log-likelihood
\begin{align}\label{ineq:likelihood_panel_count}
\Lambda \mapsto \sum_{i=1}^n \sum_{j=1}^{K_i} \big[N_{K_i,j}^{(i)} \log \Lambda(T_{K_i,j}^{(i)})- \Lambda(T_{K_i,j}^{(i)}) \big].
\end{align}
Let $s_1<s_2<\ldots<s_m$ be the ordered distinct observation time points in the set $\{T_{K_i,j}^{(i)}: 1\leq j\leq K_i, i=1,\ldots,n\}$. For $1\leq \ell\leq m$, define
\begin{align*}
w_\ell \equiv \sum_{i=1}^n \sum_{j=1}^{K_i} \bm{1}_{T_{K_i,j}^{(i)} = s_\ell},\quad \bar{N}_\ell \equiv \frac{1}{w_\ell}\sum_{i=1}^n \sum_{j=1}^{K_i} N_{K_i,j}^{(i)}\bm{1}_{T_{K_i,j}^{(i)} = s_\ell}.
\end{align*}
It is known (see e.g., \cite{sun1995estimation,wellner2000two}) that
\begin{align*}
\hat{\Lambda}_n(t_0) =\max_{s_i\leq t_0}\min_{s_j\geq t_0} \frac{\sum_{p=i}^j w_p \bar{N}_p}{\sum_{p=i}^j w_p} = \frac{\sum_{p: \hat{u}(t_0)\leq s_p\leq \hat{v}(t_0)} w_p \bar{N}_p}{\sum_{p: \hat{u}(t_0)\leq s_p\leq \hat{v}(t_0)} w_p},
\end{align*}
where $(\hat{u}(t_0),\hat{v}(t_0))$ is any pair in $\{s_1,\ldots,s_m\}^2$ such that the right hand side of the above display holds. Under the assumption that $\Lambda_0$ is non-decreasing and locally $C^1$ with $\Lambda'_0(t_0)>0$ and further regularity conditions, \cite{wellner2000two} proved the following limit distribution theory for $\hat{\Lambda}_n(t_0)$:
\begin{align*}
&n^{1/3}\big(\hat{\Lambda}_n(t_0)-\Lambda_0(t_0)\big)\\
&\rightsquigarrow \sup_{h_1>0}\inf_{h_2>0} \bigg[\sqrt{\sigma^2(t_0)/g(t_0)}\cdot \frac{\G(h_1,h_2)}{h_1+h_2}+\frac{1}{2}\Lambda'_0(t_0) (h_2-h_1)\bigg]\\
& =_d  \big(\sigma^2(t_0)\Lambda_0'(t_0)/2g(t_0)\big)^{1/3}\cdot  \mathbb{D}_1.
\end{align*}
Here $\sigma^2(t_0) \equiv \mathrm{Var}(N(t_0))$ and $g(t_0) \equiv \sum_{k=1}^\infty \Prob(K=k) \sum_{j=1}^k g_{k,j}(t_0)$ with $g_{k,j}$ denoting the Lebesgue density of $T_{k,j}$. 

The difficult nuisance parameter in this problem is $\Lambda_0'(t_0)$, and easier ones are $\sigma^2(t_0)$ and $g(t_0)$. For instance, with $n_{\hat{u}, \hat{v}}(t_0)$ being the number of $\{T_{K_i,j}^{(i)}\}$ in the interval $[\hat{u}(t_0),\hat{v}(t_0)]$, i.e.,
\begin{align*}
\textstyle n_{\hat{u}, \hat{v}}(t_0) = \sum_{i=1}^n \sum_{j=1}^{K_i}  \bm{1}_{T_{K_i,j}^{(i)} \in [\hat{u}(t_0),\hat{v}(t_0)] },
\end{align*}
let $\hat{g}_n(t_0) \equiv n_{\hat{u}, \hat{v}}(t_0) \big/ \{ n(\hat{v}(t_0)-\hat{u}(t_0)) \}$ and 
 \begin{align*}
\hat{\sigma}_n^2(t_0) \equiv \frac{1}{n_{\hat{u}, \hat{v}}(t_0)} \sum_{i=1}^n \sum_{j=1}^{K_i} \big(N_{K_i,j}^{(i)}-\hat{\Lambda}_n(t_0)\big)^2 \bm{1}_{T_{K_i,j}^{(i)} \in [\hat{u}(t_0),\hat{v}(t_0)]}.
 \end{align*}
 Consider the following CI for $\Lambda_0(t_0)$:
 \begin{align*}
\mathcal{I}_n^{\textrm{pan}}(c_\delta) 
&\equiv \textstyle  \Big[ \hat{\Lambda}_n(t_0)\pm c_\delta \cdot \sqrt{\hat{\sigma}_n^2(t_0)/\hat{g}_n(t_0)} \big/ \sqrt{n(\hat{v}(t_0)-\hat{u}(t_0))} \Big] \cap [0,\infty)
\\ \nonumber
&= \textstyle  \Big[ \hat{\Lambda}_n(t_0)\pm c_\delta \cdot \hat{\sigma}_n(t_0) \big/ \sqrt{n_{\hat{u}, \hat{v}}(t_0) } \Big] \cap [0,\infty).
 \end{align*}

\begin{theorem}\label{thm:CI_panel_count}
	Suppose $\Lambda_0$ is non-decreasing and locally $C^1$ with $\Lambda'_0(t_0)>0$ and further regularity conditions (as specified in Theorem 4.3 of \cite{wellner2000two}) hold. Let $c_\delta>0$ be a continuity point of the d.f. of $\abs{\mathbb{L}_{1}}$ such that $
	\Prob\big(\abs{\mathbb{L}_1}>c_\delta\big)=\delta$. Then 
	\begin{align*}
	\lim_{n \to \infty} \Prob_{\Lambda_0}\big(\Lambda_0(t_0) \in \mathcal{I}_n^{\textrm{pan}}(c_\delta)\big) = 1-\delta.
	\end{align*}
	Furthermore, for any $\epsilon>0$,
	\begin{align*}
	&\liminf_{n \to \infty} \Prob_{\Lambda_0} \bigg(\abs{\mathcal{I}_n^{\textrm{pan}}(c_\delta)} < 2c_\delta \mathfrak{g}_\epsilon\cdot  n^{-1/3}\big(\sigma^2(t_0)\Lambda_0'(t_0)/2g(t_0)\big)^{1/3} \bigg) \geq 1-\epsilon.
	\end{align*}
	Here $\mathfrak{g}_\epsilon\in (0,\infty)$ is such that $
	\Prob\big(\mathbb{S}_1^{-1}\geq \mathfrak{g}_\epsilon\big)\leq\epsilon$.
\end{theorem}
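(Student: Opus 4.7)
The plan is to follow the generic recipe laid out in Section~\ref{section:more_examples} and already instantiated for Grenander estimation (Theorem~\ref{thm:CI_grenander}) and the interval-censoring MLE (Theorem~\ref{thm:CI_interval_censoring}). The central step is to establish the pivotal limit
\[
\sqrt{n_{\hat u,\hat v}(t_0)}\bigl(\hat\Lambda_n(t_0)-\Lambda_0(t_0)\bigr)\rightsquigarrow \sigma(t_0)\cdot \mathbb{L}_1,
\]
with $\sigma(t_0)=\sqrt{\sigma^2(t_0)}$; once this is in hand, the coverage statement follows by a Slutsky argument using the critical-value choice, and the length bound follows from the identity $|\mathcal{I}_n^{\mathrm{pan}}(c_\delta)|=2c_\delta\hat\sigma_n(t_0)/\sqrt{n_{\hat u,\hat v}(t_0)}$, exactly as in the proofs of Theorems~\ref{thm:CI_exact} and~\ref{thm:CI_interval_censoring}.

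The first and most important task is to upgrade the point-wise theorem of \cite{wellner2000two} (their Theorem~4.3) to a \emph{joint} limit for the triple
\[
\bigl(n^{1/3}(\hat\Lambda_n(t_0)-\Lambda_0(t_0)),\ n^{1/3}(\hat u(t_0)-t_0),\ n^{1/3}(\hat v(t_0)-t_0)\bigr).
\]
The localisation $t=t_0+n^{-1/3}h$ reduces the pseudo-likelihood (\ref{ineq:likelihood_panel_count}) to a drifted Gaussian criterion with noise scale $\sqrt{\sigma^2(t_0)/g(t_0)}$ and quadratic drift $\tfrac{1}{2}\Lambda_0'(t_0)h^2$, which is exactly the mechanism behind the Wellner--Zhang limit $(\sigma^2(t_0)\Lambda_0'(t_0)/2g(t_0))^{1/3}\mathbb{D}_1$. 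I would apply the max--min/min--max continuous mapping to the entire localised process, not only its value at zero, and invoke the almost sure uniqueness of the optimisers supplied by Lemma~\ref{lem:uniqueness_supinf}. This yields joint convergence, and in particular
\[
n^{1/3}\bigl(\hat v(t_0)-\hat u(t_0)\bigr)\rightsquigarrow \bigl(\sigma^2(t_0)/(g(t_0)\Lambda_0'(t_0)^2)\bigr)^{1/3}\cdot (g_{1,1}^\ast+g_{2,1}^\ast).
\]

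Next I would verify the two auxiliary consistency statements. Since $[\hat u(t_0),\hat v(t_0)]$ collapses to $\{t_0\}$ at rate $n^{-1/3}$ and $g$ is continuous at $t_0$, a uniform law of large numbers for indicators of intervals near $t_0$ gives $\hat g_n(t_0)\to_p g(t_0)$. For the variance estimator, expanding
\[
\hat\sigma_n^2(t_0) = \frac{1}{n_{\hat u,\hat v}(t_0)}\sum_{i,j}(N_{K_i,j}^{(i)})^2\bm{1}_{T_{K_i,j}^{(i)}\in[\hat u,\hat v]} - 2\hat\Lambda_n(t_0)\,\overline N|_{[\hat u,\hat v]} + \hat\Lambda_n(t_0)^2,
\]
using $\hat\Lambda_n(t_0)\to_p\Lambda_0(t_0)$, $\overline N|_{[\hat u,\hat v]}\to_p\Lambda_0(t_0)$, and the local continuity of $t\mapsto\E N(t)^2$ together with the collapse of the window, one identifies the limit as $\E N(t_0)^2-\Lambda_0(t_0)^2=\sigma^2(t_0)$. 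Combining the joint weak limit with these two consistency statements via Slutsky and continuous mapping, the factor $(\Lambda_0'(t_0))^{1/3}$ that appears in the cube-root scaling of $\hat\Lambda_n(t_0)-\Lambda_0(t_0)$ is cancelled by the factor $(\Lambda_0'(t_0))^{-1/6}$ hidden inside $\sqrt{n_{\hat u,\hat v}(t_0)}$, and $\sqrt{g(t_0)}$ is cancelled by $1/\sqrt{g(t_0)}$ inside the noise constant; the residual limit is $\sigma(t_0)\cdot\mathbb{L}_1$, exactly the algebraic bookkeeping carried out in the proof of Theorem~\ref{thm:pivotal_limit_distribution}. Plugging this into the definition of $\mathcal{I}_n^{\mathrm{pan}}(c_\delta)$ gives the coverage, and taking $\mathfrak{g}_\epsilon$ from the distribution of $\mathbb{S}_1^{-1}$ gives the length bound.

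The principal obstacle is the joint convergence in the first step: the dependence among observation times $T_{K_i,j}^{(i)}$ within a subject, the random number of observations $K_i$, and the fact that $\hat u,\hat v$ are constrained to lie on the grid $\{s_\ell\}$ mean that the standard argmax continuous mapping is not immediate from \cite{wellner2000two}. I expect this to be the one place requiring real work (a tightness argument for the rescaled cumulative sum process indexed by intervals around $t_0$, plus an argmax/argmin continuous mapping); the remainder of the argument is a routine calibration of the common scheme.
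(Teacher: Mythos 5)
Your proposal follows essentially the same route as the paper: transfer the max--min/argmax argument from Theorem~\ref{thm:CI_grenander} to get joint cube-root convergence of $\bigl(\hat\Lambda_n(t_0),\hat u(t_0),\hat v(t_0)\bigr)$, then establish $\hat g_n(t_0)\to_p g(t_0)$ and $\hat\sigma_n^2(t_0)\to_p\sigma^2(t_0)$ via uniform LLN/bracketing over a shrinking window together with the continuity conditions of \cite{wellner2000two}, and finish by Slutsky exactly as in Theorem~\ref{thm:pivotal_limit_distribution}. The only real difference is cosmetic: the paper centers the variance sum around $\Lambda_0\bigl(T_{K_i,j}^{(i)}\bigr)$ (so that independence of $N$ and $(K,T)$ immediately produces $\sigma^2(t)g_{k,j}(t)$ under the expectation) rather than expanding $(N-\hat\Lambda_n)^2$, but both work. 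One small bookkeeping slip: the exponent hidden in $\sqrt{n_{\hat u,\hat v}(t_0)}$ is $(\Lambda_0'(t_0))^{-1/3}$, not $(\Lambda_0'(t_0))^{-1/6}$, which is precisely what cancels the $(\Lambda_0'(t_0))^{1/3}$ from the cube-root rate.
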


\begin{remark}
Similar to \cite{wellner2000two}, we do not assume that the Poisson model for the counting process $N$, used in building the pseudo likelihood for the definition $\hat{\Lambda}_n$, need to be true. 
\end{remark}

\subsection{Generalized linear models and the i.n.i.d. (independent, not identically distributed) case}
The likelihoods in (\ref{ineq:likelihood_interval_censoring}) and (\ref{ineq:likelihood_panel_count}) hint that a similar idea could be taken further to the generalized linear models as follows. Suppose real-valued random variables $Y_i$'s ($i=1,\ldots,n$) are independent samples with density $f(\cdot;\theta_{0,i})$ (with respect to a $\sigma$-finite measure $\nu$ on the real line) from a canonical exponential family:
\begin{align}\label{GLM}
f(y;\theta)= \exp\big(y \cdot p(\theta) - q(\theta)\big),\ \theta\in\Theta,
\end{align}
where $\Theta = \{\theta\in \R: \int e^{y\cdot p(\theta)}\nu(\d{y})<\infty\}$ 
and $\nu$ does not put all the mass at a single point $y_0$. We assume that $\theta_{0,i} \equiv \theta_0(x_i)$, where $\theta_0:[0,1]\to \R$ is monotonically non-decreasing. Let $\Theta_0$ be the interior of $\Theta$. 
In general we may assume that the natural parameter $p(\theta)$ 
is a continuously differentiable and strictly increasing function of $\theta$. 
However, as the mean function $\mu(\theta) = \int yf(y;\theta)\nu(\d{y}) = \partial q(\theta)/\partial p(\theta)$ 
is always continuously differentiable and strictly increasing in $p(\theta)$ in $\Theta_0$, 
we consider for simplicity the parametrization $\theta = \mu(\theta)$, as alternative parametrizations can be easily handled by applying the delta-method to our results. 
In this setting, the variance is given by $\int (y-\theta)^2 f(y;\theta)\nu(\d{y}) = 1/p'(\theta)$ in $\Theta_0$. 
We shall assume that the variance is finite at  $\theta=\theta_0(0)$ and $\theta=\theta_0(1)$ even when they are on the boundary of the domain $\Theta$, 
e.g., $p^\prime(\theta_0(0) )=p^\prime(\theta_0(1))=\infty$ when $Y_i\in \{0,1\}$. 

We are interested in estimating $\theta_0$ by the maximum likelihood estimator $\hat{\theta}_n$ that maximizes
\begin{align*}
\theta \mapsto \sum_{i=1}^n \bigg(Y_i \cdot p\big(\theta(x_i)\big) - q\big(\theta(x_i)\big)\bigg)
\end{align*}
over $\theta \in \R^n$ such that $\theta_1\leq \ldots\leq \theta_n$. As $p(\theta)$ and $q(\theta)$ are analytic in $\Theta_0$, by \cite[Theorem 1.5.2]{robertson1988order} 
the solution $\hat{\theta}_n \in \R^n$ is given by the isotonic regression of $(Y_i)_{i=1}^n$. For any $x \in [0,1]$, let
\begin{align}\label{GLM-MLE}
\hat{\theta}_n(x)\equiv \max_{i: x_i\leq x}\min_{j:x_j\geq x} \frac{\sum_{k=i}^j Y_k}{j-i+1} \equiv \max_{u\leq x}\min_{v\geq x}\bar{Y}|_{[u,v]}.
\end{align}
Then we may identify $\hat{\theta}_{n,i} = \hat{\theta}_n(x_i), i=1,\ldots,n$.

From here the analysis of the maximum likelihood isotonic regression 
in the generalized linear model reduces to a special case of the analysis of 
the max-min estimator in the {i.n.i.d.} case where 
\begin{align}\label{inid}
\hbox{$Y_i$ are independent with $\E[Y_i] = \theta_0(x_i)$ and Var$(Y_i) = \sigma^2(x_i)$} 
\end{align}
under a Lindeberg condition on $Y_i$ and a smoothness condition on $\theta_0(x_i)$. 

Formally, let $x_1\leq \ldots\leq x_n$ with 
$x_{i_0-1} \leq x_0\leq x_{i_0}$ for some $i_0\in  \{2,\ldots,n-1\} $, $\alpha > 0$ be fixed and $\omega_n \equiv n^{-\alpha/(2\alpha+1)}$. For $(h_1,h_2)\in \R_{\ge 0}^2$ define 
$S_{n,h_1,h_2} \equiv \{i: i_0 - n\omega_n^{1/\alpha}h_1\le i\le i_0 + n\omega_n^{1/\alpha}h_2\}$. 
We assume that for some $g_0(x_0)>0$ 
\begin{align}\label{inid-cond}
& \bigg|\sum_{i\in S_{n,h_1,h_2}} \frac{\theta_0(x_i)-\theta_0(x_0)}{\omega_n|S_{n,h_1,h_2}|}  
   - g_0(x_0)\frac{h_2^{\alpha+1}-h_1^{\alpha+1}}{h_1+h_2}\bigg| = \mathfrak{o}(1),
\cr & \bigg|\sum_{i\in S_{n,h_1,h_2}} \frac{\sigma^2(x_i)/\sigma^2(x_0)}{|S_{n,h_1,h_2}|} -1\bigg| = \mathfrak{o}(1), 
\\ \nonumber & \sum_{i\in S_{n,h_1,h_2}} \frac{\E\big[(Y_i-\theta_0(x_i))^2 
\bm{1}_{(Y_i-\theta_0(x_i))^2>c^{-1}\sigma^2(x_0) |S_{n,h_1,h_2}|}\big] }{\sigma^2(x_0) |S_{n,h_1,h_2}|} = \mathfrak{o}(1), 
\end{align}
uniformly in $(h_1,h_2)\in [1/c,c]^2$ for every $c>1$. Under these conditions, we have the following:
\begin{theorem}\label{thm:CI_GLM} 
\begin{enumerate}
	\item Suppose \eqref{inid} and \eqref{inid-cond} hold with a non-decreasing function $\theta_0$ 
	and $\max_{1\le i\le n}\sigma^2(x_i)=\mathcal{O}(1)$. 
	Let $\hat{\theta}_n$ be as in \eqref{GLM-MLE}. 
	Then
	\begin{align*}
	& \omega_n^{-1}\big(\hat{\theta}_n(x_0)-\theta_0(x_0)\big)\\
	&\rightsquigarrow \sup_{h_1>0}\inf_{h_2>0}\bigg[
	\sigma(x_0) \cdot \frac{\G(h_1,h_2)}{h_1+h_2}
	+ g_0(x_0)\frac{h_2^{\alpha+1}-h_1^{\alpha+1}}{h_1+h_2}
	\bigg]\\
	&=_d \big((\sigma(x_0))^{2\alpha}g_0(x_0)\big)^{1/(2\alpha+1)}\cdot\mathbb{D}_\alpha.
	\end{align*}
	\item Suppose $\alpha\beta$ is a positive odd integer for some $\beta>0$ and that $\theta_0(\cdot)$ 
	has $\alpha\beta-1$ vanishing derivatives and positive the $(\alpha\beta)$-th derivative at $x_0$.
	Let $\pi(\cdot)$ be a density such that $\pi(x) = (1+\mathfrak{o}(1))\pi_0\beta\cdot |x-x_0|^{\beta-1}$ uniformly in a neighborhood of $x_0$.  
	Then, the first line of \eqref{inid-cond} holds (in probability) when $x_1\le\cdots \le x_n$ are the ordered independent samples 
	from $\pi(\cdot)$. Moreover, in the generalized linear model \eqref{GLM}, 
	the second and third lines of \eqref{inid-cond} and the uniform 
	boundedness condition on the variance always hold. 
\end{enumerate}	
\end{theorem}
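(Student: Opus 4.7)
\textbf{Proof proposal for Theorem~\ref{thm:CI_GLM}.} The plan is to handle Part (1) by adapting the cube-root argmax machinery of \cite{han2019limit} to the i.n.i.d.\ setting, and then to reduce the GLM setup of Part (2) to a verification of \eqref{inid-cond}.

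For Part (1), starting from the max-min representation \eqref{GLM-MLE}, I would introduce the local reparametrization $u = x_{i_0 - \lfloor n\omega_n^{1/\alpha} h_1\rfloor}$, $v = x_{i_0 + \lfloor n\omega_n^{1/\alpha} h_2\rfloor}$ and decompose
\begin{align*}
\omega_n^{-1}\big(\bar{Y}|_{[u,v]}-\theta_0(x_0)\big) = A_n(h_1,h_2)+B_n(h_1,h_2),
\end{align*}
where $A_n(h_1,h_2) \equiv \omega_n^{-1}|S_{n,h_1,h_2}|^{-1}\sum_{i\in S_{n,h_1,h_2}}(Y_i-\theta_0(x_i))$ and $B_n$ is the analogous bias average. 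The first line of \eqref{inid-cond} forces $B_n(h_1,h_2) \to g_0(x_0)(h_2^{\alpha+1}-h_1^{\alpha+1})/(h_1+h_2)$. For $A_n$, the second and third lines of \eqref{inid-cond} deliver a Lindeberg CLT along the triangular array $\{Y_i-\theta_0(x_i)\}_{i\in S_{n,h_1,h_2}}$ for every finite collection of $(h_1,h_2)$, and the covariance matching
\begin{align*}
\frac{|S_{n,h_1\wedge h_1',h_2\wedge h_2'}|}{\sqrt{|S_{n,h_1,h_2}||S_{n,h_1',h_2'}|}}\to \frac{(h_1\wedge h_1')+(h_2\wedge h_2')}{\sqrt{(h_1+h_2)(h_1'+h_2')}}
\end{align*}
identifies the finite-dimensional limit of $A_n$ as $\sigma(x_0)\G(h_1,h_2)/(h_1+h_2)$. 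To upgrade this into weak convergence of the sup-inf functional, I would combine the tail growth $B_n\asymp h_1^\alpha\vee h_2^\alpha$, which dominates the noise uniformly on large $h$, with an a.s.\ uniqueness argument for the minimax point of the limit process (in the spirit of Lemma~\ref{lem:uniqueness_supinf}) to localize the argmax/argmin to a compact rectangle $[0,M]^2$ and apply an argmax continuous mapping theorem as in \cite[Proposition~1]{han2019limit}. The explicit rescaling $((\sigma(x_0))^{2\alpha} g_0(x_0))^{1/(2\alpha+1)}\mathbb{D}_\alpha$ then comes from a Brownian scaling in $(h_1,h_2)$.

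For Part (2), to verify the first line of \eqref{inid-cond} I would first invert the local relation $F(x)-F(x_0)=\pi_0\cdot\mathrm{sign}(x-x_0)|x-x_0|^{\beta}(1+\mathfrak{o}(1))$ to obtain the quantile approximation $x_i-x_0 \approx \mathrm{sign}(i-i_0)\{|i-i_0|/(n\pi_0)\}^{1/\beta}$ on the relevant window, which can be made uniform via a classical uniform quantile process bound. A Taylor expansion $\theta_0(x)-\theta_0(x_0)= \{\theta_0^{(\alpha\beta)}(x_0)/(\alpha\beta)!\}(x-x_0)^{\alpha\beta}(1+\mathfrak{o}(1))$, followed by a Riemann-sum approximation for $\sum_{|k|\le n\omega_n^{1/\alpha}h}\mathrm{sign}(k)|k|^\alpha$, yields the first line of \eqref{inid-cond} with the explicit constant $g_0(x_0) = \theta_0^{(\alpha\beta)}(x_0)/\{(\alpha+1)(\alpha\beta)!\pi_0^\alpha\}$; the scaling $\omega_n=n^{-\alpha/(2\alpha+1)}$ falls out of balancing these two approximations. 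The second and third lines of \eqref{inid-cond} and the uniform variance bound are automatic in the GLM parametrization $\theta=\mu(\theta)$: $\sigma^2(x_i)=1/p'(\theta_0(x_i))$ depends continuously on $\theta_0$, which is continuous on $[0,1]$, while finite variance at the endpoints $\theta_0(0)$ and $\theta_0(1)$ combined with the convexity/continuity of the log moment generating function on $\Theta$ forces uniform sub-exponential tails of $Y_i-\theta_0(x_i)$, from which Lindeberg is immediate.

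The main obstacle is the argmax continuous mapping step in Part (1). The usual proof in \cite{han2019limit} works with i.i.d.\ observations; here the deterministic, spatially varying design and variance force all uniform estimates on $S_{n,h_1,h_2}$ to be rederived from \eqref{inid-cond} alone. In particular, showing tightness of the localized argmax $\hat h_1$ and argmin $\hat h_2$ requires uniform (rather than pointwise) control of $B_n$ and uniform two-sided maximal-type bounds on $A_n$, both of which must be deduced from the assumed Lindeberg condition and covariance structure without any stationarity. The quantile expansion in Part (2) under a possibly vanishing density (the $\beta>1$ case) is the other technical point, but it is handled by standard uniform-in-quantile inequalities.
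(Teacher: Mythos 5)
Your Part (1) follows essentially the same route as the paper's proof: the decomposition into a noise process $A_n$ and a deterministic drift $B_n$, with the functional CLT for $A_n$ derived from the second and third lines of \eqref{inid-cond}, and the drift limit from the first line. The paper's argument is more terse---it invokes the Donsker--Prokhorov invariance principle (or \cite[Theorem 2.11.9]{van1996weak}) directly instead of spelling out the fdd-plus-tightness steps, and it defers the localization and argmax continuous mapping steps by citing the i.i.d.\ proof in \cite{han2019limit}---but the substance is identical. Your normalization $\omega_n^{-1}|S_{n,h_1,h_2}|^{-1}\sum_{i\in S_{n,h_1,h_2}}(Y_i-\theta_0(x_i))\rightsquigarrow\sigma(x_0)\G(h_1,h_2)/(h_1+h_2)$ is the correct rewriting of the paper's normalization by $\omega_n|S_{n,0,1}|$, since $\omega_n|S_{n,0,1}|^{1/2}=1+\mathfrak{o}(1)$ and $|S_{n,h_1,h_2}|/|S_{n,0,1}|\to h_1+h_2$.

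For Part (2) the two arguments are also aligned in content: you invert the local CDF $F(x)-F(x_0)\approx\pi_0\,\mathrm{sign}(x-x_0)|x-x_0|^{\beta}$ to get the quantile expansion of $x_i$ and then Taylor-expand $\theta_0$, whereas the paper parametrizes the window endpoints as $u_n = x_0-(h_1\omega_n^{1/\alpha}/\pi_0)^{1/\beta}$, $v_n=x_0+(h_2\omega_n^{1/\alpha}/\pi_0)^{1/\beta}$ and evaluates the resulting integral directly; both produce the same $g_0(x_0)=\partial^{\alpha\beta}\theta_0(x_0)/\{(\alpha\beta)!(\alpha+1)\pi_0^\alpha\}$. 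One small correction to flag: for the Lindeberg condition in the GLM you claim ``uniform sub-exponential tails'' of $Y_i-\theta_0(x_i)$ via convexity of the log-MGF. This is stronger than what holds (or is needed) in the boundary cases the paper explicitly allows, e.g., $p'(\theta_0(0))=p'(\theta_0(1))=\infty$ with $Y_i\in\{0,1\}$, where the natural parameter is at the edge of $\Theta$ and a two-sided MGF bound may fail. The paper instead argues uniform integrability of $\{(Y_i-\theta_0(x_i))^2\}$ from finite variance at the two endpoints $\theta_0(0),\theta_0(1)$ together with monotonicity and continuity of $\theta_0$, which is the weaker property that actually implies the third line of \eqref{inid-cond} once $|S_{n,h_1,h_2}|\to\infty$. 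Your conclusion is correct, but you should replace the sub-exponential tail claim with the uniform-integrability argument.
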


In addition to providing a general limit distribution theory in the i.n.i.d. case, 
the above theorem specifies sufficient conditions under which the fast convergence rate with 
$\alpha >1$ can be achieved when more $x_i$ are sampled near $x_0$ than the usual 
$x_i=i/n$, and vice versa.

In Theorem \ref{thm:CI_GLM}, the difficult nuisance parameter is $g_0(x_0)$, and the easier one is $\sigma^2(x_0)$. Let $(\hat{u}(x_0), \hat{v}(x_0))$ be any pair such that $\hat{\theta}_n(x_0)  =\bar{Y}|_{[\hat{u}(x_0), \hat{v}(x_0)]}$. Consider the following CI for $\theta_0(x_0)$:
\begin{align*}
\textstyle
\mathcal{I}_n^{\textrm{GLM}}(c_\delta)
\equiv \Big[\hat{\theta}_n(x_0) \pm c_\delta \cdot \widehat{\sigma}_n \varrho_n  \big/ \sqrt{ n(\hat{v}(x_0)-\hat{u}(x_0)) } \Big],
\end{align*}
where $\hat\sigma_n^2 \equiv 1/p^\prime(\hat{\theta}_n(x_0))$ under \eqref{GLM} 
or $\hat\sigma_n^2 \equiv \sigma_{\hat{u},\hat{v}}^2$ as in \eqref{def:var_est} in general, and $\varrho_n  =1+\mathfrak{o}_{\mathbf{P}}(1)$. If we choose $\varrho_n \equiv \sqrt{n(\hat{v}(x_0)-\hat{u}(x_0))\big/\sum_i \bm{1}_{x_i \in [\hat{u}(x_0),\hat{v}(x_0)]} }$, the CI above reduces to $\big[\hat{\theta}_n(x_0) \pm c_\delta \cdot \hat{\sigma}_n/ \sqrt{\sum_i \bm{1}_{x_i \in [\hat{u}(x_0),\hat{v}(x_0)]}  }\big]$ in the similar form to (\ref{def:CI}).

\begin{theorem}\label{thm:CI_GLM_1}
Assume the same conditions as in Theorem \ref{thm:CI_GLM} with $\alpha=1$. Let $c_\delta>0$ be a continuity point of the d.f. of $\abs{\mathbb{L}_{1}}$ such that $
\Prob\big(\abs{\mathbb{L}_1}>c_\delta\big)=\delta$. Then 
\begin{align*}
\lim_{n \to \infty} \Prob_{\theta_0}\big(\theta_0(x_0) \in \mathcal{I}_n^{\textrm{GLM}}(c_\delta)\big) = 1-\delta.
\end{align*}
Furthermore, for any $\epsilon>0$,
\begin{align*}
&\liminf_{n \to \infty} \Prob_{\theta_0} \bigg(\abs{\mathcal{I}_n^{\textrm{GLM}}(c_\delta)} < 2c_\delta \mathfrak{g}_\epsilon\cdot  n^{-1/3}\big(\sigma^2(x_0)g_0(x_0)\big)^{1/3} \bigg) \geq 1-\epsilon.
\end{align*}
Here $\mathfrak{g}_\epsilon\in (0,\infty)$ is such that $
\Prob\big(\mathbb{S}_1^{-1}\geq \mathfrak{g}_\epsilon\big)\leq\epsilon$.
\end{theorem}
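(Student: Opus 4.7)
The plan is to reduce Theorem \ref{thm:CI_GLM_1} to the same pivotal framework as in Theorem \ref{thm:pivotal_limit_distribution} and Theorem \ref{thm:CI_exact}, specialized to the i.n.i.d. setting of Theorem \ref{thm:CI_GLM} with $\alpha=1$. The first step is to upgrade Theorem \ref{thm:CI_GLM}(1) to a joint limit distribution of the form
\begin{align*}
\Big(\omega_n^{-1}(\hat\theta_n(x_0)-\theta_0(x_0)),\; \omega_n^{-1}(x_0-\hat u(x_0)),\; \omega_n^{-1}(\hat v(x_0)-x_0)\Big)\rightsquigarrow \big(\mathbb{V}_1(g_1^\ast,g_2^\ast),\, g_1^\ast,\, g_2^\ast\big),
\end{align*}
where $(g_1^\ast,g_2^\ast)$ is the a.s.\ unique saddle-point of the limit problem in Theorem \ref{thm:CI_GLM}. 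In the one-dimensional case sup-inf and inf-sup agree, so both $\hat u(x_0)$ and $\hat v(x_0)$ are read off from the same optimizer, and the block average in (\ref{def:block_avg}) coincides with $\hat\theta_n(x_0)$; in particular Lemma \ref{lem:uniqueness_supinf} (cited in the regression proof of Theorem \ref{thm:pivotal_limit_distribution}) gives a.s.\ uniqueness of $(g_1^\ast,g_2^\ast)$ needed for the argmax-continuous mapping step. The i.n.i.d.\ conditions (\ref{inid-cond}) feed directly into a Lindeberg CLT to produce the Gaussian process $\G$ appearing in $\mathbb{V}_1$, while the local drift has the prescribed $g_0(x_0)\cdot(h_2^{\alpha+1}-h_1^{\alpha+1})/(h_1+h_2)$ form used by Theorem \ref{thm:CI_GLM}.

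The second step is the pivotal identity. Because $n\omega_n = \omega_n^{-2}$ when $\alpha=1$, the above joint convergence gives
\begin{align*}
\textstyle\sqrt{n(\hat v(x_0)-\hat u(x_0))}\cdot\big(\hat\theta_n(x_0)-\theta_0(x_0)\big)\rightsquigarrow \sqrt{g_1^\ast+g_2^\ast}\cdot\mathbb{V}_1(g_1^\ast,g_2^\ast),
\end{align*}
and the distributional identity already established inside the proof of Theorem \ref{thm:pivotal_limit_distribution}, together with Brownian scaling applied to $\G$, shows that the right-hand side equals $\sigma(x_0)\cdot\mathbb{L}_1$ in law. (Explicitly, the $g_0(x_0)$ and $\sigma(x_0)$ rescalings that enter $(g_1^\ast,g_2^\ast)$ exactly cancel with the rescalings of $\G$ and the drift in $\mathbb{V}_1$, leaving only $\sigma(x_0)$ as the multiplicative constant.)

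The third step handles the variance estimator and $\varrho_n$. Under the GLM parametrization, continuity of $p'$ on $\Theta_0$ and consistency of $\hat\theta_n(x_0)$ (which follows from Theorem \ref{thm:CI_GLM}(1)) give $\hat\sigma_n^2 = 1/p'(\hat\theta_n(x_0))\to_p 1/p'(\theta_0(x_0)) = \sigma^2(x_0)$ by continuous mapping. For the local-average variance estimator $\sigma^2_{\hat u,\hat v}$, the argument behind Proposition \ref{prop:consistent_var_est} carries over: the number of observations in $[\hat u(x_0),\hat v(x_0)]$ is of order $n\omega_n\to\infty$, $\hat\theta_n(x_0)\to_p\theta_0(x_0)$, and the second line of (\ref{inid-cond}) gives vanishing variation of $\sigma^2(x_i)$ inside the shrinking block, so a standard truncation plus Lindeberg argument yields $\sigma^2_{\hat u,\hat v}\to_p\sigma^2(x_0)$. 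Combined with $\varrho_n=1+\mathfrak{o}_{\mathbf{P}}(1)$, this gives $\hat\sigma_n\varrho_n\to_p\sigma(x_0)$. Slutsky's theorem then converts the pivot into
\begin{align*}
\textstyle\sqrt{n(\hat v(x_0)-\hat u(x_0))}\big(\hat\theta_n(x_0)-\theta_0(x_0)\big)\big/(\hat\sigma_n\varrho_n)\rightsquigarrow \mathbb{L}_1,
\end{align*}
and coverage follows since $f_0(x_0)\in\mathcal{I}_n^{\textrm{GLM}}(c_\delta)$ is precisely the event that the absolute value of the left-hand side is at most $c_\delta$.

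For the length bound, write $|\mathcal{I}_n^{\textrm{GLM}}(c_\delta)| = 2c_\delta\hat\sigma_n\varrho_n/\sqrt{n(\hat v-\hat u)}$. By the joint convergence in step one, $\sqrt{n(\hat v(x_0)-\hat u(x_0))}/\omega_n^{-1/2}\to_d \sqrt{g_1^\ast+g_2^\ast}=\mathbb{S}_1(g_1^\ast,g_2^\ast)$, so
\begin{align*}
|\mathcal{I}_n^{\textrm{GLM}}(c_\delta)|\big/\big(2c_\delta\cdot n^{-1/3}(\sigma^2(x_0)g_0(x_0))^{1/3}\big)\rightsquigarrow \mathbb{S}_1^{-1}(g_1^\ast,g_2^\ast),
\end{align*}
and the length bound is immediate from the definition of $\mathfrak{g}_\epsilon$, as in the proof of Theorem \ref{thm:CI_exact}. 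The main obstacle is not any of these combinatorial steps but step one: pushing the proof of Theorem \ref{thm:CI_GLM}(1) from marginal to joint convergence in $(\hat\theta_n(x_0),\hat u(x_0),\hat v(x_0))$, which requires uniform tightness of the argmin/argmax localization and a.s.\ uniqueness of the saddle-point in the i.n.i.d.\ limit; once that is secured the rest is bookkeeping around Slutsky and continuous mapping.
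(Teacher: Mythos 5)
Your proposal takes the same route the paper intends: reduce to the pivotal scheme of Theorem \ref{thm:pivotal_limit_distribution} via joint convergence of the estimator and the rescaled localization window (tightness above and below as in the proof of Theorem \ref{thm:CI_grenander}), absorb the nuisance scales $\sigma(x_0),g_0(x_0)$ by Brownian scaling, and close with Slutsky and the consistent variance estimate; the paper itself simply dispatches this by stating that the argument follows the proof of Theorem \ref{thm:CI_grenander}. Two cosmetic slips worth correcting: the physical limits of $\omega_n^{-1}(x_0-\hat u(x_0))$ and $\omega_n^{-1}(\hat v(x_0)-x_0)$ are the unrescaled saddle $(h_1^\ast,h_2^\ast)$ (with first marginal $\mathbb{U}(h_1^\ast,h_2^\ast)$), not $(g_1^\ast,g_2^\ast)$, which arise only after the nuisance-absorbing change of variable $g=\gamma h$; and in the length argument the normalizer should be $\omega_n^{-1}$ (so that $\omega_n\sqrt{n(\hat v(x_0)-\hat u(x_0))}\rightsquigarrow \sqrt{h_1^\ast+h_2^\ast}$), not $\omega_n^{-1/2}$ --- neither affects the final conclusion once the $g=\gamma h$ rescaling is carried through consistently.
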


The above theorem covers only the usual case of $\alpha=1$ for the cube-root rate. The critical value $c_\delta$ for general $\alpha$ can be handled as in Theorem \ref{thm:adaptive_confidence_interval}. 

\section{Simulation studies}\label{section:simulation}

\subsection{Critical values $c_\delta$ via simulations}

In this subsection, we discuss (i) simulation methods to approximate critical values $c_{\delta}$ of the pivotal limit distribution theory (with local smoothness $\bm{\alpha}=(1,\ldots,1)$ as in Theorem \ref{thm:CI_exact}), and (ii) data-driven adjustments to edge effect and small sample size.

\subsubsection{Simulated critical values $c_\delta$}

We use the following method to simulate critical values $c_\delta$:

\begin{enumerate}
	\item Specify a true mean function $f_0$ defined on a fixed lattice $\{X_i\}$ in $[0,1]^d$, and generate $B=10^6$ repeated observations $\{ \{Y_{i,b} = f_0(X_i)+\xi_{i,b}, i=1,\ldots,n\}: b =1,\ldots,B\}$ with i.i.d. $\xi_{i,b} \sim \mathcal{N}(0,\sigma^2)$ (we take $\sigma=1$ for this simulation).
	\item At design point $x_0$, we obtain $T(x_0;b)\equiv\{ \sqrt{n_{\hat{u},\hat{v}}(x_0;b) }\bigabs{\hat{f}_n(x_0;b)-f_0(x_0)}/\sigma: b=1,\ldots,B\}$, where $\hat{f}_n(x_0;b)$ and $n_{\hat{u},\hat{v}}(x_0;b)$ are calculated via (\ref{def:block_avg}) and (\ref{def:n_uv}). We note that the block $[\hat{u}(x_0), \hat{v}(x_0)]$ is specified by $\hat{u}(x_0)$ from the block max-min estimator and $\hat{v}(x_0)$ from the block min-max estimator. 
	\item Find critical values $c_\delta$ by the corresponding quantiles of $\{T(x_0;b):  b=1,\ldots,B \}$. 
\end{enumerate}

\noindent \textbf{($\bm{d=1}$).} 
The simulated critical values $c_\delta$ for $d=1$ are summarized in Table \ref{tab:cv-1} below. As the block max-min and min-max estimators (\ref{def:max_min_estimator}) are equivalent to the isotonic least squares estimator (LSE) at design points in $d=1$, we use \verb|isoreg| (based on the PAVA algorithm (\cite{robertson1988order,barlow1972statistical})) built in \verb|R|. Let $n=10^5$, so that $X_i = i/n$ for all $1 \le i \le n$ and $x_0 = 0.5$.

\setlength{\belowcaptionskip}{-10pt}
\setlength{\tabcolsep}{5pt}
\renewcommand{\arraystretch}{1.2}
\begin{table}[htb]
	\begin{tabular}{|c||c|c|c|c|c|c|c|}
			\hline 
			$\delta$ & .01  & .02 & .05 & .1 & .15 &.2 & .5\\
			\hline
			$f_0(x) = 2(x-0.5)$ & 3.04 & 2.65 & 2.11 & 1.68 & 1.42 & 1.23 & 0.59 \\
			\hline
			$f_0(x)=5(x -0.5)$ & 3.04 & 2.65 & 2.11 & 1.68 & 1.42 & 1.23 & 0.59 \\
			\hline
			$f_0(x) =10x^2$ & 3.04 & 2.66 & 2.11 & 1.68 & 1.42 & 1.23 & 0.59\\
			\hline
		\end{tabular}
	\vspace{1ex}
	\caption{Simulated critical values $c_\delta$ for $d=1$.}
	\label{tab:cv-1}
\end{table}

As the sample size ($n=10^5$) for $d=1$ is quite large, the estimates for different $f_0$'s remain the same at least up to two decimal places, with an exception for $c_{0.02}$. Such precision is typically sufficient for the purpose of inference.

\medskip
\noindent \textbf{($\bm{d\geq 2}$).} For multiple isotonic regression, we use brute force to compute the block max-min and min-max estimators, which seems to be the only algorithm readily available. With computational complexity $\mathcal{O}(n^3)$, the brute force algorithm is much more expensive than the linear-time PAVA algorithm specific to the univariate isotonic LSE. Thus, it is not computationally feasible to perform $B=10^6$ simulations on, 
say, a $10^5 \times 10^5$ (i.e., $n=10^{10}$) lattice. 

Nevertheless, we present below in Table \ref{tab:cv-2} ($d=2$) and Table \ref{tab:cv-3} ($d=3$) some encouraging simulation results by brute force computation over relatively small lattices: 
\begin{itemize}
	\item For $d=2$, we use a $50 \times 50$ lattice and take sample critical values at $x_0 = (0.5,0.5)$.
	\item  For $d=3$, we use a $16 \times 16 \times 16$ lattice and take sample critical values at $x_0 = (0.5,0.5,0.5)$.
\end{itemize}

\setlength{\belowcaptionskip}{-10pt}
\setlength{\tabcolsep}{5pt}
\renewcommand{\arraystretch}{1.2}
\begin{table}[hbt]
	\begin{tabular}{|c||c|c|c|c|c|c|c|}
			\hline 
			$\delta$ & .01  & .02 & .05 & .10 & .15 &.20 & .50 \\
			\hline\hline
			$f_0(x) = 2x_1 + 2x_2-2$ & 2.61 & 2.26 & 1.78 & 1.41 & 1.19 & 1.03 & 0.49 \\
			\hline
			$f_0(x) = 2x_1 + 5x_2-3.5$ & 2.63 & 2.27 & 1.80 & 1.43 & 1.21 & 1.04 & 0.50 \\
			\hline
			$f_0(x) = 5x_1 + 5x_2-5$ & 2.64 & 2.29 & 1.81 & 1.43 & 1.21 & 1.05 & 0.50 \\
			\hline
		\end{tabular}
	\vspace{0.5ex}
	\caption{Simulated critical values $c_\delta$ for $d=2$.}
	\label{tab:cv-2}
\end{table}

\setlength{\belowcaptionskip}{-10pt}
\setlength{\tabcolsep}{5pt}
\renewcommand{\arraystretch}{1.2}
\begin{table}[hbt]
	\begin{tabular}{|c||c|c|c|c|c|c|c|}
			\hline 
			$\delta$ & .01  & .02 & .05 & .10 & .15 &.20 & .50 \\
			\hline\hline
			$f_0(x) = 2x_1 + 2x_2 + 2x_3 - 3$ & 2.26 & 1.96 & 1.55 & 1.24 & 1.05 & 0.91 & 0.44 \\
			\hline
			$f_0(x) = 2x_1 + 5x_2 + 5x_3 - 6$ & 2.41 & 2.09 & 1.66 & 1.33 & 1.13 & 0.98 & 0.48 \\
			\hline
			$f_0(x) = 5x_1 + 5x_2 + 5x_3 - 7.5$ & 2.41 & 2.10 & 1.67 & 1.34 & 1.14 & 0.99 & 0.49 \\
			\hline
		\end{tabular}
	\vspace{0.5ex}
	\caption{Simulated critical values $c_\delta$ for $d=3$.}
	\label{tab:cv-3}
\end{table}

The simulated critical values $c_\delta$ in $d=2,3$, albeit of small sample size in each dimension, already support the pivotal limit distribution theory. Their concrete numeric values are, however, less stable compared with $d=1$ for different mean functions $f_0$, largely due to the curse of dimensionality that requires much larger sample size $n$ to achieve similar accuracy as in $d=1$.  Unfortunately, brute force seems not ideal for this task. It is therefore of great interest to develop fast algorithms for the block max-min and min-max estimators (\ref{def:max_min_estimator}) in view of their theoretically attractive properties.

By taking average, we give a few suggested critical values as follows.

\begin{table}[htb]
	\begin{tabular}{|c||c|c|c|}
			\hline 
			$\delta$ & $d=1$ & $d=2$ & $d=3$ \\
			\hline
			$0.05$ & $2.11$ & $1.80^*$ & $1.63^{*}$ \\
			\hline
			$0.10$ & $1.68$ & $1.42^*$ & $1.30^{*}$ \\
			\hline
		\end{tabular}
	\vspace{0.5ex}
	\caption{Suggested critical values $c_\delta$ ( $^*$: use with caution).} 
	\label{tab:cv-final}
\end{table}

\subsubsection{Data-driven adjustments} 

As the pivotal limit distribution theory relies on local smoothness of $f_0$ and a large sample size, it is not surprising that for outskirt design points or when the sample size is relatively small, CIs constructed via (\ref{def:CI}) with the critical values suggested in Table \ref{tab:cv-final} would be less accurate. See for instance the plots given below in subsection 4.2 for demonstration. This is particularly relevant for $d\geq 2$, since a lot more points are present on the outskirts and in practice the sample size in each dimension is usually not as large as in the univariate case. These issues call for critical value adjustments to improve accuracy in inference. 

Our proposal is to adjust the critical values based on the observed $\{Y_i\}$ and the sample size. More specifically, we propose the use of critical values simulated through a smooth proxy $\hat{f}_{smooth}$ of the block average estimator $\hat{f}_n$. In order to match the noise level of $\{Y_i\}$, the variance $\sigma^2$ in simulation can be chosen to be the variance estimate $\hat{\sigma}^2$ of $\{Y_i\}$. A simple smoothing method to get $\hat{f}_{smooth}$ is the isotonization of the \texttt{LOESS} fit $\hat{f}_{loess}$ (with default smoothing parameter built in \texttt{R}) of $\hat{f}_n$, i.e., $\hat{f}_{smooth}=$ the block average estimate for $\{\hat{f}_{loess}(X_i)\}$. 
See \cite{mammen2001general} for more details on constrained smoothing. 
As $\hat{f}_{smooth}$ is expected to be `close' to the true mean function of interest, it is reasonable to expect the simulated critical values for $\hat{f}_{smooth}$ to better mimic those for $f_0$ for design points on the outskirts and when the sample size is relatively small. We call the critical values simulated from $\hat{f}_{smooth}$ the {\it adjusted critical values}.

As will be clear from the next subsection, the {\it adjusted critical values} improve the inference accuracy both for design points on the outskirts and for smaller samples.

\subsection{Numerical performance of the proposed confidence intervals}

In this subsection, we investigate the numerical performance of the proposed CIs, exclusively in the multiple isotonic regression model. More specifically, we construct CIs for $f_0(x)$ at each design point $x \in \{X_i\}$ and compute their corresponding coverage probabilities as follows:

\begin{enumerate}
	\item For each specified mean function $f_0$, generate $B=10^4$ repeated observations $\{ \{Y_{i,b} = f_0(X_i)+\xi_{i,b}, i=1,\ldots,n\}: b =1,\ldots,B\}$ with i.i.d. $\xi_{i,b} \sim \mathcal{N}(0,\sigma^2)$.
	\item For each $b=1,\ldots,B$, construct the CI $\mathcal{I}_n(x;c_\delta,b)$ for each design point $x$ via (\ref{def:CI}). The CIs with the suggested $c_{\delta}$ in Table \ref{tab:cv-final} are referred to as \emph{vanilla CIs}, and the CIs with adjusted $c_{\delta}$ (as described in the proceeding subsection) as \emph{CV-adjusted CIs}. 
	\item Report $B^{-1}\sum_{b=1}^B \bm{1}\big(f_0(x) \in \mathcal{I}_n(x;c_\delta,b) \big)$ as the 
	coverage probability at design point $x$, i.e., the proportion of the CIs $\{\mathcal{I}_n(x;c_\delta,b): b=1,\ldots,B\}$ that successfully cover the truth $f_0(x)$ out of $B=10^4$ repeated observations. We focus on $95\%$ CIs, i.e., $\delta = 0.05$.
\end{enumerate}

For variance estimation, we use the class of difference estimators (\cite{rice1984bandwidth,hall1991estimation,munk2005difference}) rather than the principled estimator in (\ref{def:var_est}), as the latter requires large samples that are computationally expensive for $d\geq 2$. Specifically, we use the following variance estimator $\hat{\sigma}^2$:
\begin{align}\label{def:diff_est}
\hat{\sigma}^2 = 
\begin{cases} 
\sum_{i} \big( 2Y_i - Y_{i-1} - Y_{i+1} \big)^2/(6(n-2)), & d=1, \\
\sum_{i,j} \big( 4 Y_{i,j} - Y_{i-1, j} - Y_{i+1, j}  - Y_{i, j-1} - Y_{i, j+1} \big)^2/\big(20 \times 
\cr
\qquad \qquad  (n_1-2)(n_2-2)\big), & d=2, \\
\sum_{i,j,k} \big( 6 Y_{i,j,k} - Y_{i-1, j,k} - Y_{i+1, j,k}  - Y_{i, j-1,k} - Y_{i, j+1,k}
\cr
\qquad  - Y_{i, j,k-1} - Y_{i, j,k+1} \big)^2/\big(42(n_1-2)(n_2-2)(n_3-2)\big), & d=3,
\end{cases}
\end{align}
where, with slight abuse of notation, the observations are $(Y_i)_{1 \le i \le n}$ for $d=1$, $(Y_{i,j})_{1\le i\le n_1, 1\le j \le n_2}$ for $d=2$, and $(Y_{i,j,k})_{1\le i \le n_1, 1\le j \le n_2, 1\le k \le n_3}$ for $d=3$.

\subsubsection{Coverage probability}

\begin{figure}[hbt]
	\centering
	\subfigure[$\sigma^2 =1$ is known]{
		\label{fig:1d-known} 
		\includegraphics[width=0.48\textwidth]{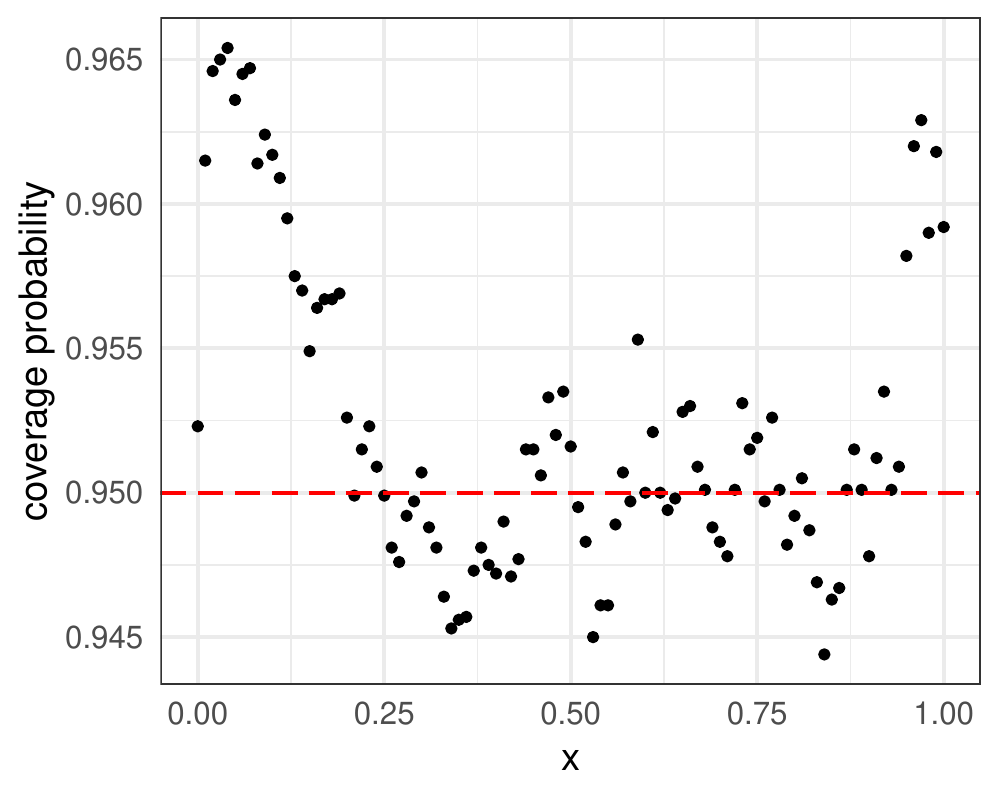}}
	\hspace{0\textwidth}
	\subfigure[$\sigma^2=1$ is unknown]{
		\label{fig:1d-unknown} 
		\includegraphics[width=0.48\textwidth]{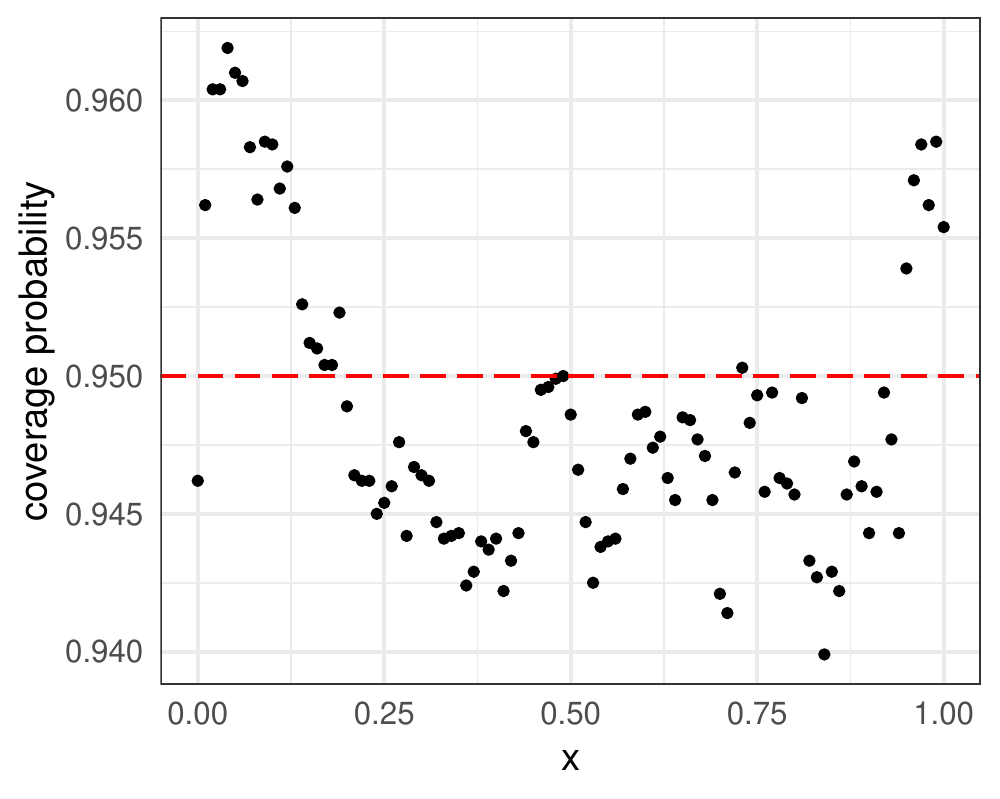}}
	\caption{Scatter plots for the coverage probabilities of the 95\% CIs in $d=1$, where $f_0(x) = e^{2x}$ and $n=100$.}
	\label{fig:1d}
\end{figure}

The scatter plots of coverage probabilities at all design points in $d=1$ are given in Figure \ref{fig:1d}. 
In $d=1$, we consider $f_0(x) = e^{2x}$ and $n=100$, so that $x \in \{0.01, 0.02, \ldots, 1.00\}$. 
We observe slightly larger errors of the coverage probabilities of the vanilla CIs at points near $x=0$ or $x=1$, but overall the coverage errors are small for the small sample size $n=100$. When $\sigma^2$ is unknown and estimated by the difference estimator (\ref{def:diff_est}), the errors are slightly inflated at most of the design points, but are still controlled within $1\%$. The CIs are overall biased slightly towards under-coverage, which is possibly due to the bias in variance estimation: 
the median of $\hat{\sigma}^2$'s from $10^4$ simulations is $0.9803$, slightly smaller than the true $\sigma^2=1$.

The scatter plots of coverage probabilities in $d=2$ are given in Figure \ref{fig:2d}. We consider test function $f_0(x) = e^{x_1 + x_2}$ on a $25 \times 25$ lattice on $[0,1]^2$ so that $(x_1, x_2) \in \{(i/25, j/25): i=1, \ldots, 25, j=1, \ldots, 25\}$. We call design points in the inner $17 \times 17$ lattice (i.e., $\{x: 5/25 \le x_1 \le 21/25, 5/25 \le x_2 \le 21/25\}$) inner points, and the rest outskirt points. 
We can clearly identify edge effect in Figure \ref{fig:2d} when using the approximated universal critical value $1.80$ in Table \ref{tab:cv-final}; the coverage probabilities at outskirt points are more biased as shown 
in Figures \ref{fig:2d-known-vanilla} and \ref{fig:2d-unknown-vanilla}.  
The CV-adjusted CIs significantly reduce the edge effect and improve the coverage accuracy, 
as shown in Figures \ref{fig:2d-known-cvadj} and \ref{fig:2d-unknown-cvadj}. Figure \ref{fig:2d-bp} shows the boxplots for the coverage probabilities at all design points using the aforementioned two types of CIs (vanilla and CV-adjusted) and under both known and unknown $\sigma^2$. The CV-adjusted CIs clearly have more accurate coverage.

\begin{figure}[hbt]
	\centering
	\subfigure[Vanilla CIs, $\sigma^2 =1$ is known]{
		\label{fig:2d-known-vanilla} 
		\includegraphics[width=0.48\textwidth]{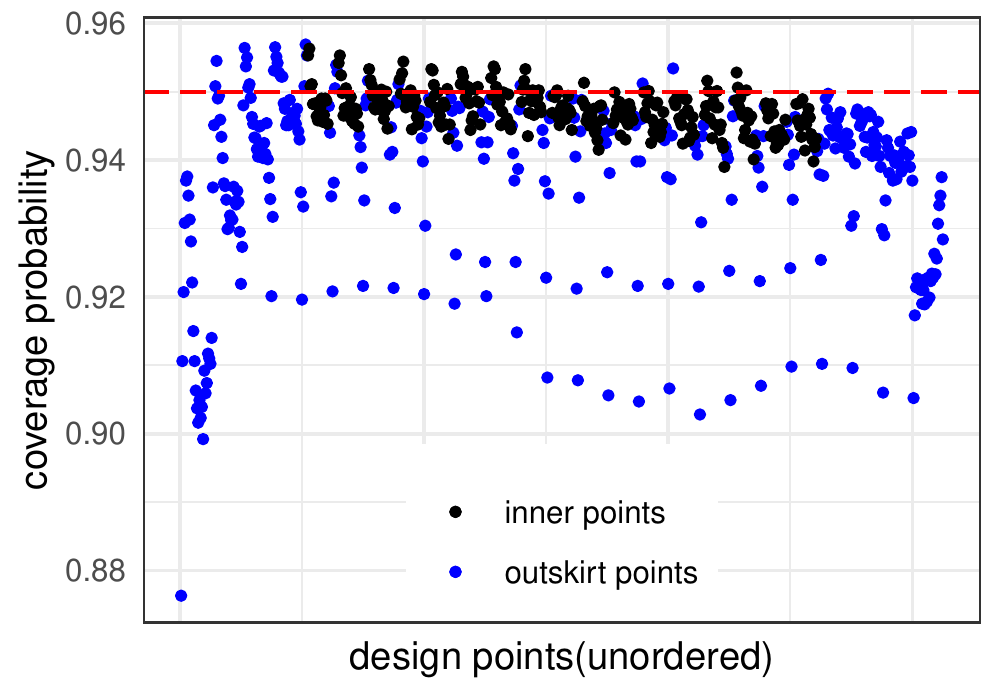}}
	\hspace{0\textwidth}
	\subfigure[CV-adjusted CIs, $\sigma^2=1$ is known]{
		\label{fig:2d-known-cvadj} 
		\includegraphics[width=0.48\textwidth]{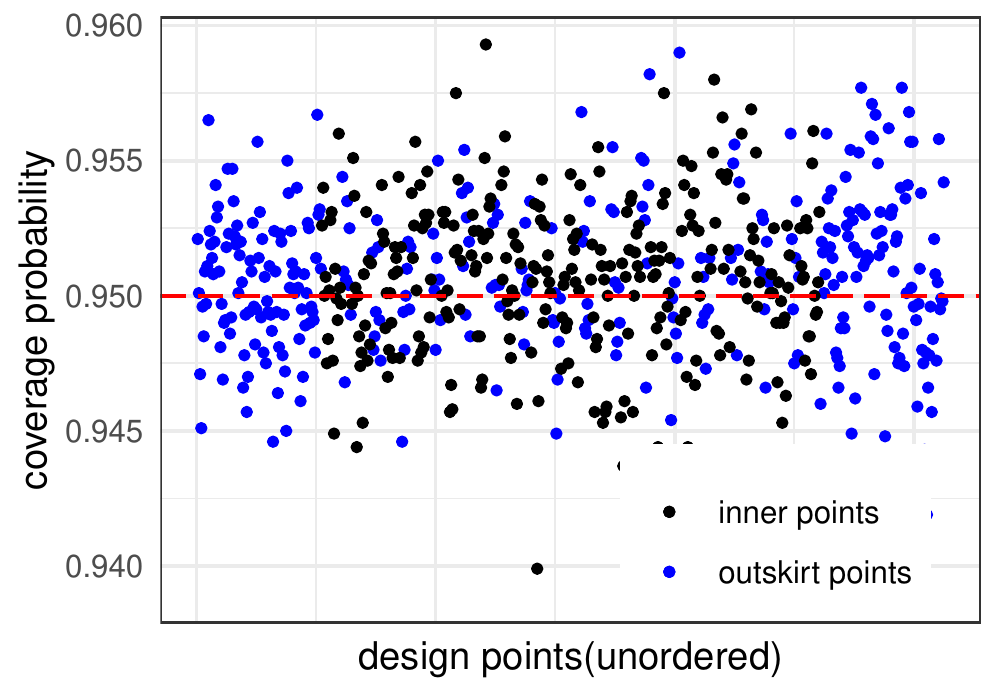}}
	\hspace{0\textwidth}
	\subfigure[Vanilla CIs, $\sigma^2 =1$ is unknown]{
		\label{fig:2d-unknown-vanilla} 
		\includegraphics[width=0.48\textwidth]{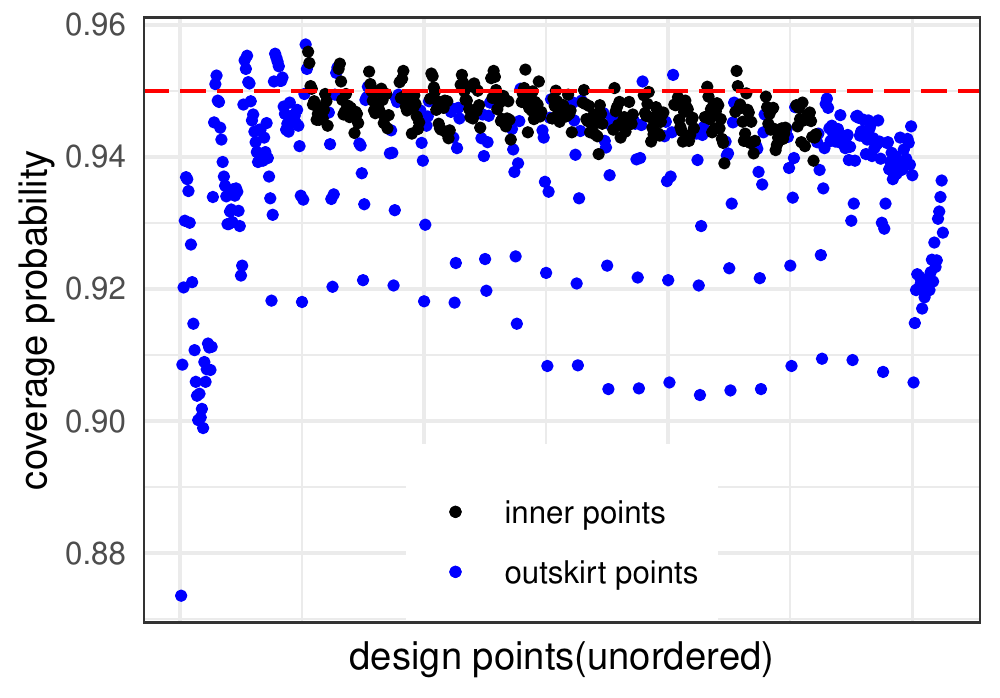}}
	\hspace{0\textwidth}
	\subfigure[CV-adjusted CIs, $\sigma^2=1$ is unknown]{
		\label{fig:2d-unknown-cvadj} 
		\includegraphics[width=0.48\textwidth]{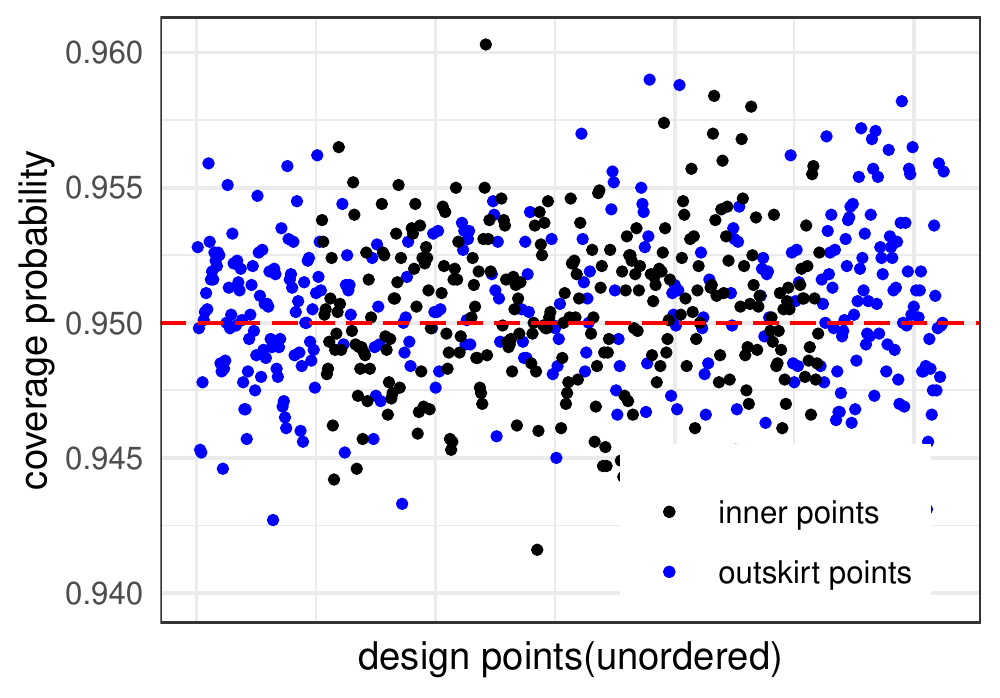}}
	\caption{Scatter plots for the coverage probabilities of the 95\% CIs in $d=2$, where $f_0(x) = e^{x_1 + x_2}$ in $25 \times 25$ lattice.}
	\label{fig:2d}
\end{figure}

\begin{figure}[hbt]
	\centering
	\includegraphics[width=0.85\textwidth]{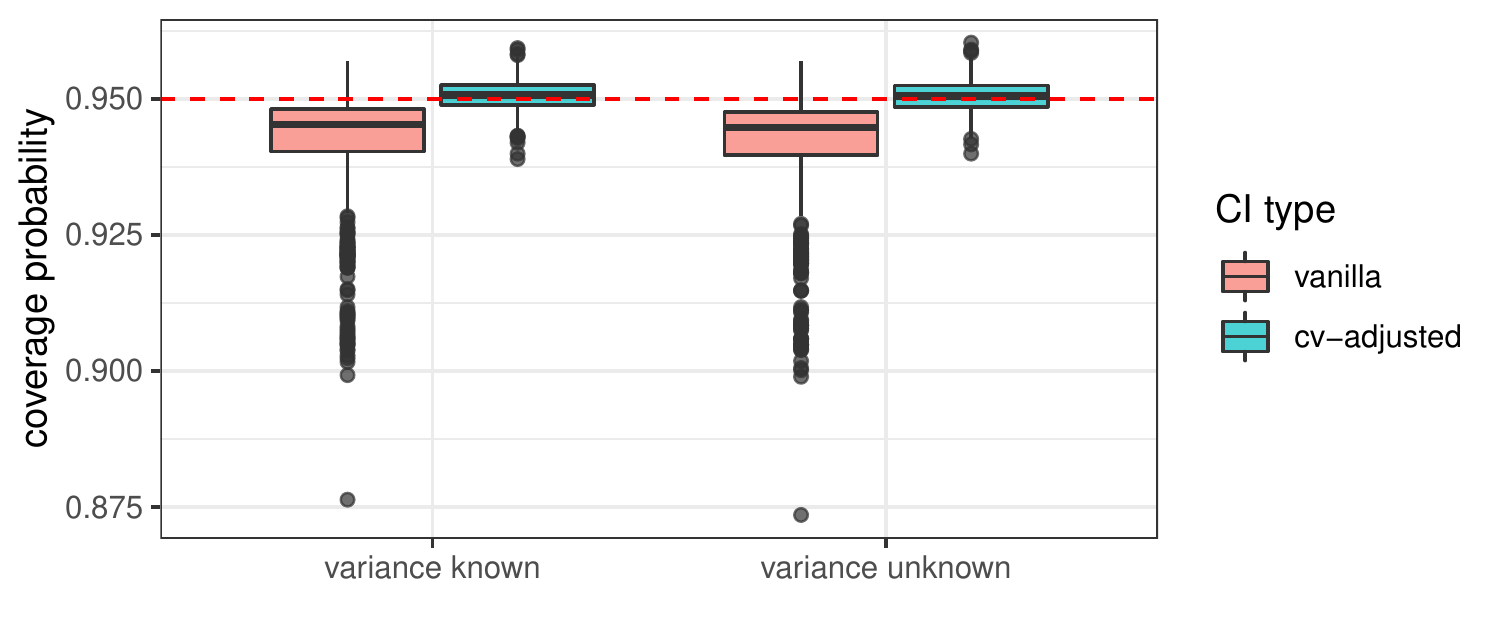}
	\caption{ Boxplots for the coverage probabilities of the 95\% CIs at all design points in $d=2$ under known and unknown variance $\sigma^2 =1$, 
		where $f_0(x) = e^{x_1 + x_2}$ on a $25\times 25$ lattice. 
		} 
	\label{fig:2d-bp}
\end{figure}

Our simulation results for $d=3$ exhibit similar phenomena to the case $d=2$. See the scatter plots in Figure \ref{fig:3d} and the boxplots in Figure \ref{fig:3d-bp}, where we designate the design points in the inner $5 \times 5 \times 5$ lattice as inner points and the rest outskirt points. For the vanilla CIs, we use the approximated universal critical value $1.63$ in Table \ref{tab:cv-final}. The lattice of size $9 \times 9 \times 9$ has too few points in each dimension, so distributional approximation to the pivot limit is less accurate, as shown in Figures \ref{fig:3d-known-vanilla} and \ref{fig:3d-unknown-vanilla}. 
On the other hand, the CV-adjusted CIs yield much better empirical results, 
as shown in Figures \ref{fig:3d-known-cvadj} and \ref{fig:3d-unknown-cvadj}.

\begin{figure}[hbt]
	\centering
	\subfigure[Vanilla CIs, $\sigma^2 =1$ is known]{
		\label{fig:3d-known-vanilla} 
		\includegraphics[width=0.48\textwidth]{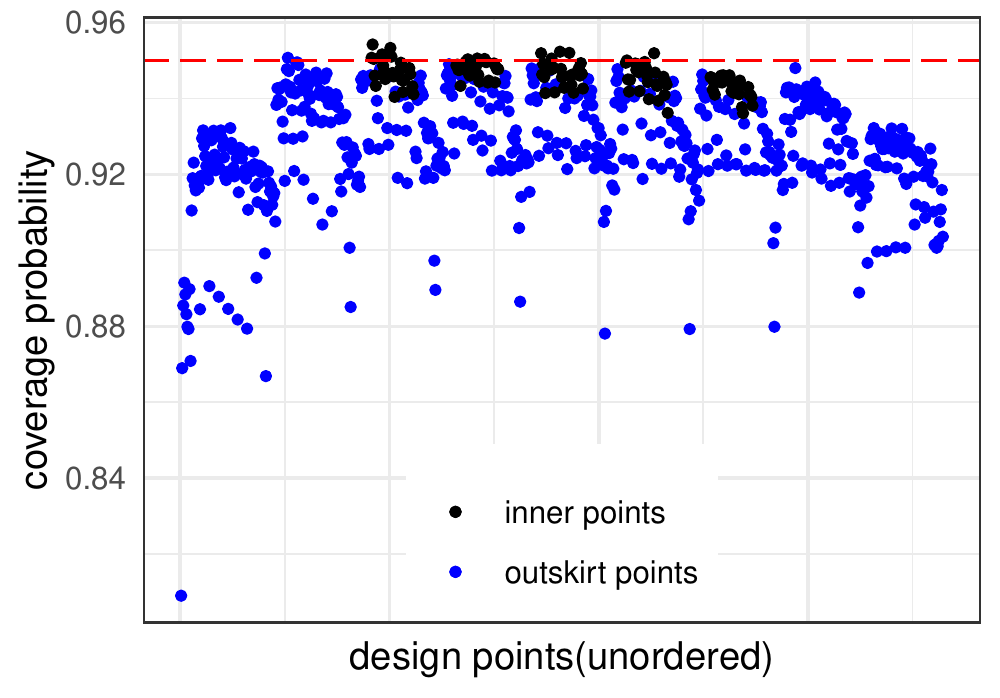}}
	\hspace{0\textwidth}
	\subfigure[CV-adjusted CIs, $\sigma^2 =1$ is known]{
		\label{fig:3d-known-cvadj} 
		\includegraphics[width=0.48\textwidth]{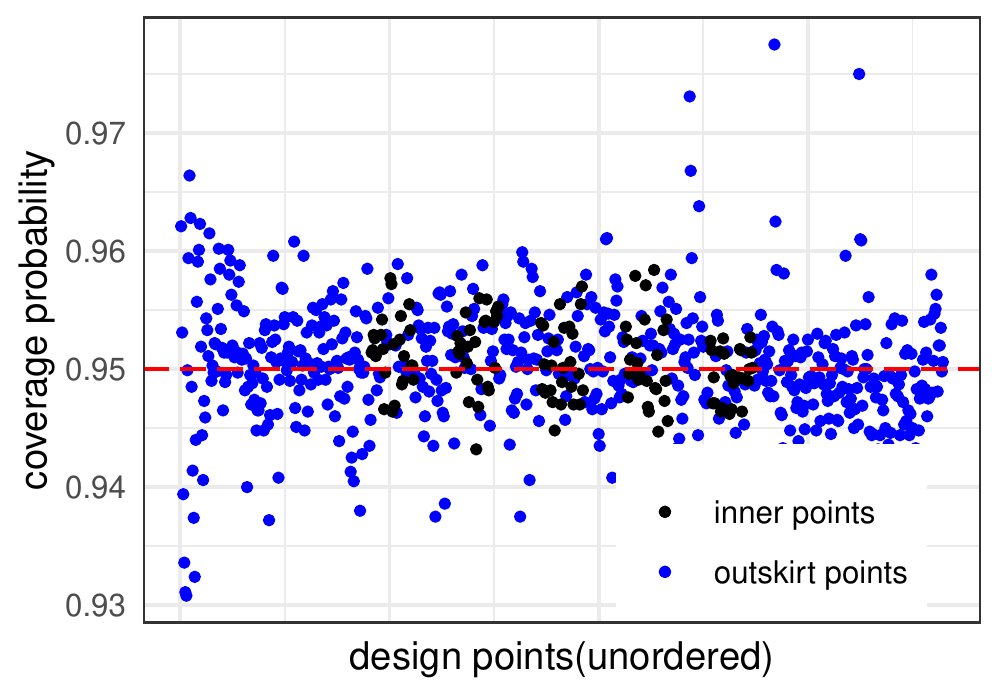}}
	\hspace{0\textwidth}
	\subfigure[Vanilla CIs, $\sigma^2 =1$ is unknown]{
		\label{fig:3d-unknown-vanilla} 
		\includegraphics[width=0.48\textwidth]{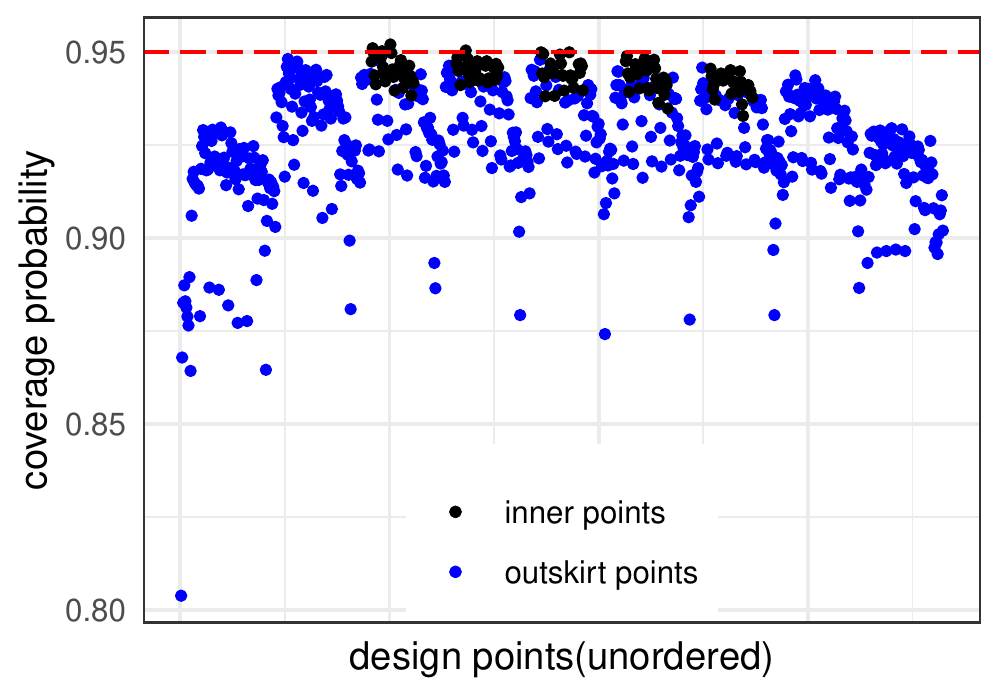}}
	\hspace{0\textwidth}
	\subfigure[CV-adjusted CIs, $\sigma^2 =1$ is unknown]{
		\label{fig:3d-unknown-cvadj} 
		\includegraphics[width=0.48\textwidth]{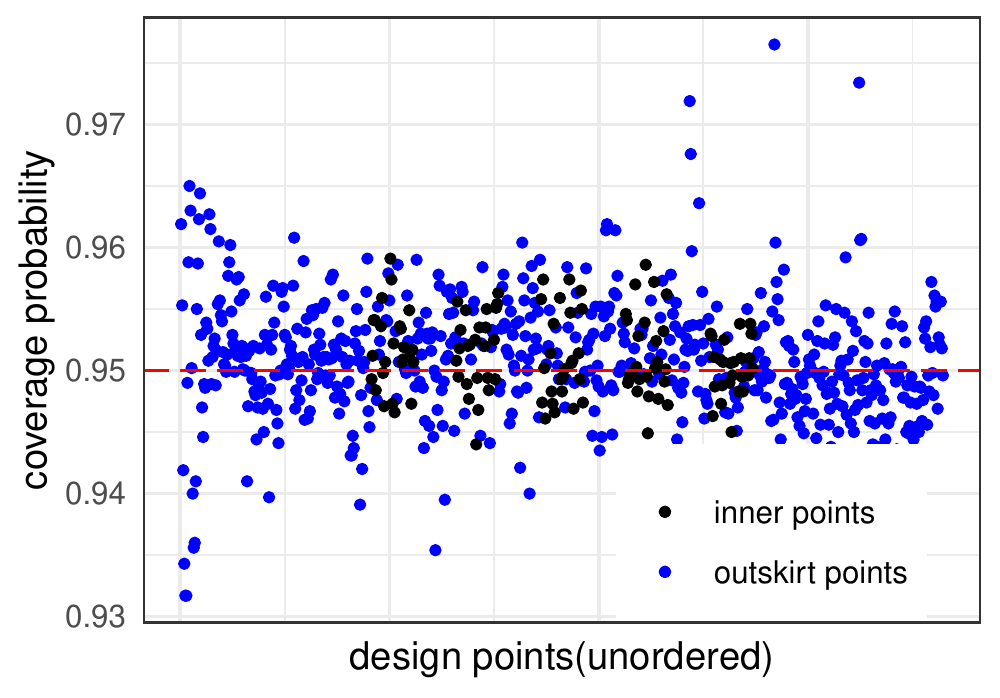}}
	
	\caption{Scatter plots for the coverage probabilities of the 95\% CIs in $d=3$, where $f_0(x) = e^{2(x_1+x_2 + x_3)/3}$ on a $9 \times 9 \times 9$ lattice.}
	\label{fig:3d}
\end{figure}

\begin{figure}[hbt]
	\centering
	\includegraphics[width=0.85\textwidth]{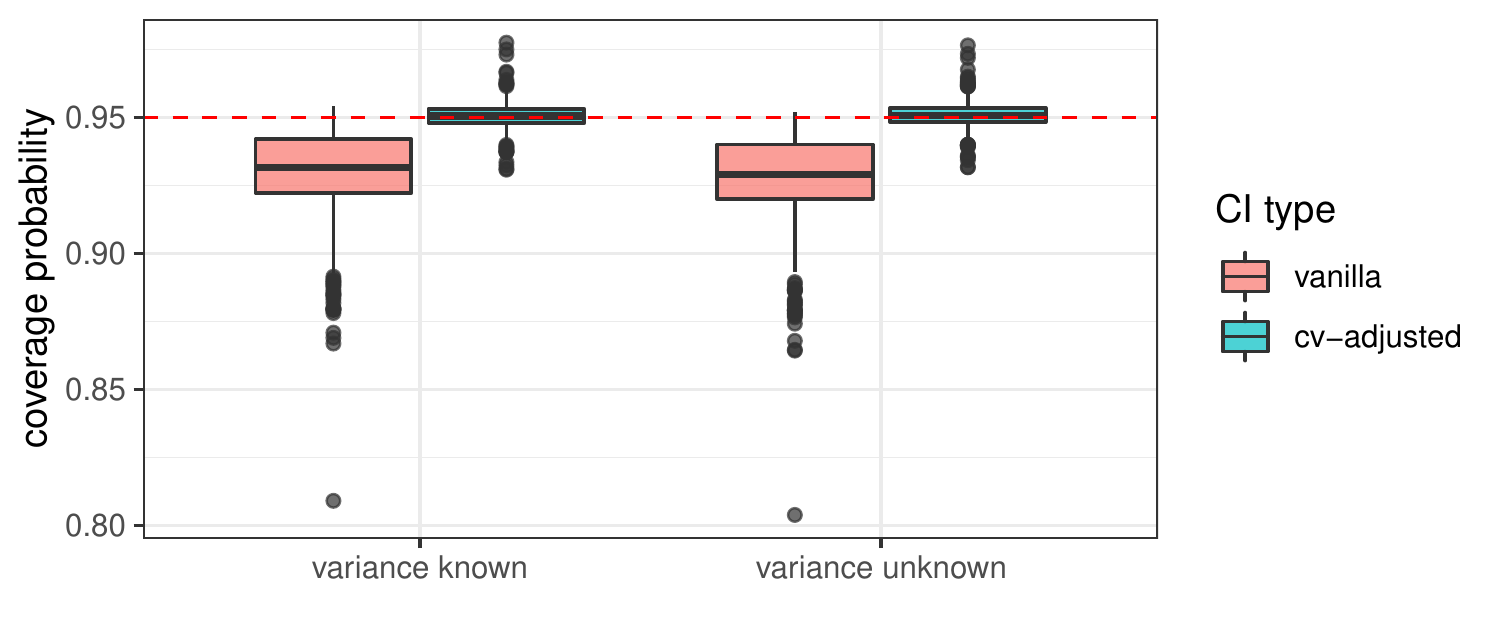}
	\caption{
	Boxplots for the coverage probabilities of the 95\% CIs at all design points in $d=3$ under known and unknown variance $\sigma^2 =1$, where $f_0(x) = e^{2(x_1+x_2 + x_3)/3}$ on a $9 \times 9 \times 9$ lattice.
		} 
	\label{fig:3d-bp}
\end{figure}

In conclusion, the above simulations support our proposed inference procedure via the pivotal limit distribution theory. In situations when sample size (in terms of each dimension) is relatively small, or the design points are on the outskirts, CV-adjusted CIs are shown to significantly improve the inference accuracy compared with the vanilla CIs that use universal critical values in Table \ref{tab:cv-final}.

\subsubsection{CI lengths} 
Another important theoretical property stated in Theorem \ref{thm:CI_exact} is that the proposed CI \eqref{def:CI} shrinks at the oracle rate. Suppose we know the partial derivatives $\{\partial_k f_0(x_0), 1\le k \le d\}$, the limit distribution theory in Corollary \ref{coro:simple} implies an oracle CI 
\begin{align}\label{def:oracle_CI}
\textstyle \Big[ \hat{f}_n(x_0) \pm c_{\delta} \cdot (n/\sigma^2)^{-1/(d+2)} \big\{ \prod_{k=1}^d (\partial_k f_0(x_0)/2)\big\}^{1/(2+d)} \Big],
\end{align}
where $c_{\delta}$ is the $1 - \delta$ quantile of $|\mathbb{D}| = |(\mathbb{D}_{\bm{1}_d}^- + \mathbb{D}_{\bm{1}_d}^+)/2|$ which can be similarly simulated as in Section 4.1.1. Recall $\mathbb{D}_1^{\mp}/2$ follows Chernoff distribution, so $c_{.05} = 1.9964$ in $d=1$; see e.g., \cite[Table 3.1]{groeneboom2014nonparametric}. Our simulation suggests that $c_{.05}$ is approximately $1.85$ in $d=2$ and $1.78$ in $d=3$. Then, Theorem \ref{thm:CI_exact} \eqref{eqn:length} asserts that the length of the proposed CI should shrink at the same rate as the length of the oracle CI in \eqref{def:oracle_CI}.

To see this in finite samples, we carry out a simulation that follows the same procedure as before but with varying sample sizes $n$. Only balanced fixed lattice design is considered in this simulation, so sample size $n$ indicates an $n^{1/d} \times \cdots \times n^{1/d}$ lattice. See Figure \ref{fig:length-bp} for the boxplots for the lengths of the proposed CI based on $10^4$ simulations, where the lengths of the oracle CIs in \eqref{def:oracle_CI} are given in red dashed lines. It clearly shows that the proposed CI indeed shrinks at the oracle length.

\begin{figure}[hbt]
	\centering
	\includegraphics[width=\textwidth]{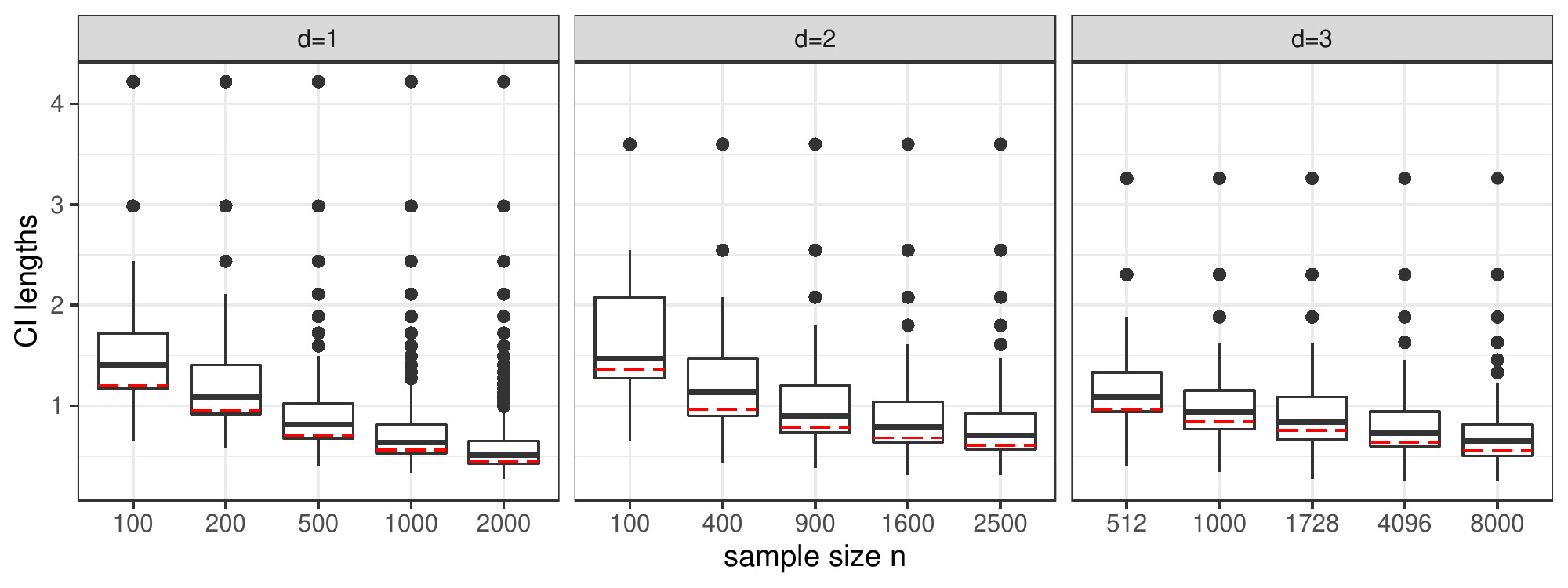}
	\caption{Boxplots for the lengths of the 95\% CIs of: (i) $f_0(x) = e^x$ at $x_0 = 0.5$ in $d=1$, (ii) $f_0(x) = e^{x_1+x_2}$ at $x_0 = (0.5,0.5)$ in $d=2$ and (iii) $f_0(x) = e^{2(x_1+x_2+x_3)/3}$ at $x_0 = (0.5,0.5,0.5)$ in $d=3$ under different sample sizes $n$. Red dashed lines represent the lengths of the oracle CIs. Here $\sigma=1$ is known.} 
	\label{fig:length-bp}
\end{figure}

\subsection{The merit of using the block average estimator}
\label{subsec:blocl-average}
In \eqref{def:CI} and \eqref{eqn:pivotal_limit_distribution}, we propose to use the block average 
estimator \eqref{def:block_avg} to carry out statistical inference about $f_0(x_0)$.
It is of natural interest to ask if using either the block max-min or min-max estimator alone in the proposed procedure is adequate as this would almost reduce the computational cost by half. In this subsection, we will show empirically the benefits of using \eqref{def:block_avg}. Specifically, while 
of the block average estimator improves upon the block max-min estimators only slightly in 
the mean squared error (numerical results omitted),
the proposed CI \eqref{def:CI} which uses the block average estimator 
outperforms the CIs that uses the block max-min estimator alone 
by a fairly considerable amount in terms of accuracy for the coverage of the CIs 
(the situation for block min-max estimator is analogous). 

More precisely, for inference based on the block max-min estimator alone,
we consider the following form of the CIs:
\begin{align}\label{def:CI_max_min}
\textstyle \mathcal{I}_n^{-}(c_\delta^{-})\equiv \Big[\hat{f}_n^{-}(x_0) \pm c_\delta^{-}\cdot \hat{\sigma} \big/ \textstyle \sqrt{n_{\hat{u},\hat{v}}^{-}(x_0)} \, \Big],
\end{align}
where $n_{\hat{u},\hat{v}}^{-}(x_0) \equiv \sum_{i} \bm{1}_{X_i \in [\hat{u}^{-}(x_0), \hat{v}^{-}(x_0)]}$ with $\hat{u}^{-}(x_0),\hat{v}^{-}(x_0)$ defined as any pair such that
\begin{align*}
\hat{f}_n^{-}(x_0) 
&\equiv \max_{u\leq x_0} \min_{ \substack{v\geq x_0\\ [u,v]\cap \{X_i\}\neq \emptyset}} \bar{Y}|_{[u,v]} 
=   \bar{Y}|_{[\hat{u}^{-}(x_0),\hat{v}^{-}(x_0)]}.
\end{align*}
We conjecture that
\begin{conjecture}\label{conj:pivotal_max_min}
Under the same settings as in Theorem \ref{thm:pivotal_limit_distribution}, for some finite random variable $\mathbb{L}_{\bm{\alpha}}^{-}$ (that does not depend on $K(f_0,x_0)$),
\begin{align*}
\sqrt{n_{\hat{u},\hat{v}}^{-}(x_0)}\big(\hat{f}_n^{-}(x_0)-f_0(x_0)\big)\rightsquigarrow \sigma\cdot \mathbb{L}_{\bm{\alpha}}^{-}.
\end{align*}
\end{conjecture}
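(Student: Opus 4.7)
The plan is to mirror the proof of Theorem \ref{thm:pivotal_limit_distribution}, but to retain both optimizing vertices from the block max-min side rather than pairing $\hat{u}^-$ with $\hat{v}^+$ from the min-max side. The first step is to extend the local Gaussian white-noise reduction used in \cite{han2019limit} to produce a joint weak convergence of the triple
\begin{align*}
\Big(\omega_n^{-1}(\bm{\alpha})\big(\hat{f}_n^{-}(x_0)-f_0(x_0)\big),\; \tilde{u}_n^{-},\; \tilde{v}_n^{-}\Big) \rightsquigarrow \big(K(f_0,x_0)\cdot \mathbb{D}^{-}_{\bm{\alpha}},\; g_{1,\bm{\alpha}}^{*,-},\; g_{2,\bm{\alpha}}^{*,-}\big),
\end{align*}
where $\tilde{u}_n^{-},\tilde{v}_n^{-}$ denote appropriately rescaled and centered versions of $\hat{u}^-(x_0),\hat{v}^-(x_0)$, and $(g_{1,\bm{\alpha}}^{*,-},g_{2,\bm{\alpha}}^{*,-})$ is the argmax-argmin pair for $\sup_{g_1\in\mathscr{G}_1}\inf_{g_2\in\mathscr{G}_2} \mathbb{V}_{\bm{\alpha}}(g_1,g_2)$, with $g_{1,\bm{\alpha}}^{*,-}$ the outer argmax and $g_{2,\bm{\alpha}}^{*,-}$ the inner argmin at $g_{1,\bm{\alpha}}^{*,-}$. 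Compared to \cite{han2019limit}, the new ingredient is the simultaneous convergence of \emph{both} vertices from the same sup-inf problem; I would obtain this via an argmax continuous mapping argument applied to the localized sup-inf process after the Gaussian approximation of $\hat{f}_n^-$ is established.

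Given this joint convergence, the remainder reduces to the algebra already present in the proof of Theorem \ref{thm:pivotal_limit_distribution}. One shows that, up to a common normalization factor depending only on $n_\ast$ and $r_n$,
\begin{align*}
n_{\hat{u},\hat{v}}^{-}(x_0) \rightsquigarrow \mathrm{const.}\cdot \prod_{k=\kappa_\ast}^d \big(g_{1,\bm{\alpha}}^{*,-}+g_{2,\bm{\alpha}}^{*,-}\big)_k,
\end{align*}
jointly with $\omega_n^{-1}(\bm{\alpha})(\hat{f}_n^{-}(x_0)-f_0(x_0))\rightsquigarrow K(f_0,x_0)\cdot \mathbb{D}^{-}_{\bm{\alpha}}$. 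Forming the product $\sqrt{n_{\hat{u},\hat{v}}^{-}(x_0)}(\hat{f}_n^{-}(x_0)-f_0(x_0))$ and using the scaling identity $\omega_n(\bm{\alpha}) = (n_\ast/\sigma^2)^{-1/(2+\sum_{k=\kappa_\ast}^s \alpha_k^{-1})}$, the exponent arithmetic forces the nuisance factor $K(f_0,x_0)$ to cancel exactly, yielding by the continuous mapping theorem
\begin{align*}
\sqrt{n_{\hat{u},\hat{v}}^{-}(x_0)}\big(\hat{f}_n^{-}(x_0)-f_0(x_0)\big)\rightsquigarrow \sigma\cdot \sqrt{\prod_{k=\kappa_\ast}^s \big(g_{1,\bm{\alpha}}^{*,-}+g_{2,\bm{\alpha}}^{*,-}\big)_k}\cdot \mathbb{D}^{-}_{\bm{\alpha}} \equiv \sigma\cdot \mathbb{L}^{-}_{\bm{\alpha}},
\end{align*}
which is pivotal in the sense that its law does not depend on $K(f_0,x_0)$.

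The hard part will be a strengthening of Lemma \ref{lem:uniqueness_supinf}. For Theorem \ref{thm:pivotal_limit_distribution} one only needs a.s. uniqueness of the outer argmax of the sup-inf problem (matched to $\hat{u}^-$) and of the outer argmin of the separate inf-sup problem (matched to $\hat{v}^+$), and the two vertices live on opposite sides of a minimax gap that does not have to close. For Conjecture \ref{conj:pivotal_max_min}, however, we need the additional a.s. uniqueness of the \emph{inner} argmin $g_{2,\bm{\alpha}}^{*,-}$ at the random point $g_{1,\bm{\alpha}}^{*,-}$, so that $\tilde{v}_n^{-}$ has a well-defined weak limit under the continuous mapping. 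This is delicate because $g_{1,\bm{\alpha}}^{*,-}$ is itself a functional of the underlying Gaussian process $\G$, so uniqueness arguments for Gaussian-plus-drift minimizers at a deterministic $g_1$ do not translate directly. A plausible route is to condition on the $\sigma$-field generated by $g_{1,\bm{\alpha}}^{*,-}$, decompose $\G(g_{1,\bm{\alpha}}^{*,-},\cdot)$ via a Cameron-Martin-type slice, and exploit strict convexity of the deterministic drift term in each coordinate of $g_2$; alternatively, one could try to embed the pair into a joint saddle formulation and apply a Kim-Pollard style argument to the joint argmax in a two-player minimax. I expect this uniqueness step to be the main technical bottleneck, and it is presumably why the statement was left as a conjecture.
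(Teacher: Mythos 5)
You have correctly recognized something the task phrasing obscures: the statement is left as a \emph{Conjecture} in the paper, with no proof supplied. The paper itself offers only the numerical evidence of Section~\ref{subsec:blocl-average} in support, and explicitly observes that the statement reduces to Theorem~\ref{thm:pivotal_limit_distribution} only in $d=1$, where the block max-min and min-max estimators coincide. Your sketch puts its finger on exactly why no proof is given. In the proof of Theorem~\ref{thm:pivotal_limit_distribution}, the block $[\hat{u},\hat{v}]$ pairs $\hat{u}^-$ (the outer argmax of the $\sup\inf$ problem) with $\hat{v}^+$ (the outer argmin of the $\inf\sup$ problem), so both rescaled quantities $\hat{h}_1,\hat{h}_2$ are \emph{outer} optimizers of separate variational problems, and Lemma~\ref{lem:uniqueness_supinf} supplies almost-sure uniqueness for outer optimizers directly, which is what the continuous-mapping step needs. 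For the conjecture, $\hat{v}^-$ is the \emph{inner} minimizer $\inf_{g_2}\mathbb{V}_{\bm{\alpha}}(g_1^{*,-},g_2)$ evaluated at the \emph{random} outer argmax $g_1^{*,-}$, and the paper has no analogue of Lemma~\ref{lem:uniqueness_supinf} for such a nested random optimizer. Without almost-sure well-definedness of $g_2^{*,-}$ one cannot even assert that $\mathbb{L}_{\bm{\alpha}}^-$ is a genuine random variable, let alone obtain the joint weak convergence $(\hat{h}_1^-,\hat{h}_2^-)\rightsquigarrow(g_1^{*,-},g_2^{*,-})$.

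Because that uniqueness step is acknowledged but not executed, what you have written is a correct diagnosis rather than a proof. The routes you gesture at do not close the gap as stated. Conditioning on the $\sigma$-field generated by $g_1^{*,-}$ is problematic because $g_1^{*,-}$ is a measurable functional of the full field $\mathbb{G}$, so the conditional law of $\mathbb{G}(g_1^{*,-},\cdot)$ given $g_1^{*,-}$ is not that of a Gaussian-plus-drift process; the Hahn--Banach separation of convex hulls of increments that drives the proof of Lemma~\ref{lem:uniqueness_supinf} therefore cannot simply be applied slice-by-slice. Likewise, a Kim--Pollard joint-argmax argument still requires a well-separation/uniqueness hypothesis on the limit pair as input, so invoking it is circular without first proving the uniqueness. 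A genuine proof would need a new probabilistic lemma establishing almost-sure uniqueness of the inner argmin at a random outer argmax of a Gaussian-plus-drift saddle problem, and that is precisely the missing ingredient that keeps this a conjecture in the paper.
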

Note that in $d=1$ the block max-min and min-max estimators are equivalent, so the above conjecture reduces to Theorem \ref{thm:pivotal_limit_distribution}. Below we consider $d=2,3$, and provide some numerical evidence that the CIs using the block average estimator could provide better probability coverage than using only the block max-min estimator based on Conjecture \ref{conj:pivotal_max_min}.

The summary of statistics for the coverage probabilities of the 95\% CIs in $d=2$ is listed in Table \ref{tab:2d-average-max-min}. The mean squared errors of the coverage probabilities of the 95\% CIs by the block average estimator are $1.67 \times 10^{-4}$ under known $\sigma^2$, and $1.84 \times 10^{-4}$ under unknown $\sigma^2$ for vanilla CIs, reducing about 18\% of those by the block max-min estimator which are $2.05\times 10^{-4}$ and $2.23\times 10^{-4}$ respectively. Similar conclusion can be made for CV-adjusted CIs.

The summary of statistics for the coverage probabilities of the 95\% CIs in $d=3$ is listed in Table \ref{tab:3d-average-max-min}. As the simulated critical values used in this simulation
are less accurate due to the relatively small sample size in $d=3$, the vanilla CIs for both the block average estimator and the block max-min estimator suffer from slight under-coverage. However, when using CV-adjusted CIs with improved accuracy in the mean of the coverage probabilities, similar reduction in the mean squared errors of the coverage probabilities can be observed for the  block average estimators.

\begin{table}[htb]
    \centering
    \label{fig:2d-average-max-min-3}
    \resizebox{\textwidth}{!}{
    \begin{tabular}{c | c c | c c | c c | c c }\hline
         & \multicolumn{4}{c}{vanilla CIs} & \multicolumn{4}{c}{CV-adjusted CIs} \\
         \hline
         & \multicolumn{2}{c}{$\sigma$ known} & \multicolumn{2}{c}{$\sigma$ unknown} & \multicolumn{2}{c}{$\sigma$ known} & \multicolumn{2}{c}{$\sigma$ unknown} \\
        \hline
        & average & max-min & average & max-min & average & max-min & average & max-min \\ \hline
        mean & 0.9414 & 0.9405 & 0.9405 & 0.9397 & 0.9503 & 0.9503 & 0.9503 & 0.9503 \\
       median & 0.9444 & 0.9440 & 0.9438 & 0.9432 & 0.9503 & 0.9503 & 0.9503 & 0.9503 \\
        s.d. & 0.00961 & 0.01072 & 0.00973 & 0.01080 & 0.00146 & 0.00156 & 0.00140 & 0.00155 \\
        \hline
        \end{tabular}
        }
    \vspace{0.5ex}
   \caption{Summary of statistics for the coverage probabilities of the 95\% CIs in $d=2$ by the block average and the block max-min estimators.}
    \label{tab:2d-average-max-min}
\end{table}

\begin{table}[htb]
	\resizebox{\textwidth}{!}{
    \begin{tabular}{c | c c | c c | c c | c c }\hline
         & \multicolumn{4}{c}{vanilla CIs} & \multicolumn{4}{c}{CV-adjusted CIs} \\
         \hline
         & \multicolumn{2}{c}{$\sigma$ known} & \multicolumn{2}{c}{$\sigma$ unknown} & \multicolumn{2}{c}{$\sigma$ known} & \multicolumn{2}{c}{$\sigma$ unknown} \\
        \hline
        & average & max-min & average & max-min & average & max-min & average & max-min \\ \hline
        mean & 0.9273 & 0.9287 & 0.9232 & 0.9246 & 0.9513 & 0.9510 & 0.9512 & 0.9511 \\
       median & 0.9277 & 0.9293 & 0.9236 & 0.9247 & 0.9516 & 0.9513 & 0.9515 & 0.9515 \\
        s.d. & 0.01521 & 0.01450 & 0.01563 & 0.01497 & 0.00342 & 0.00428 & 0.00334 & 0.00402 \\
        \hline
        \end{tabular}
        }
	\vspace{0.5ex}
	\caption{Summary of statistics for the coverage probabilities of the 95\% CIs in $d=3$ by the block average and the block max-min estimators. }
	\label{tab:3d-average-max-min}
\end{table}

\subsection{Numerical comparison with Banerjee-Wellner (BW) CIs in $d=1$} 

\begin{figure}[hbt]
	\centering
	\subfigure[$f_0(x) = e^{2x}$]{
		\label{fig:comp-1} 
		\includegraphics[width=0.48\textwidth]{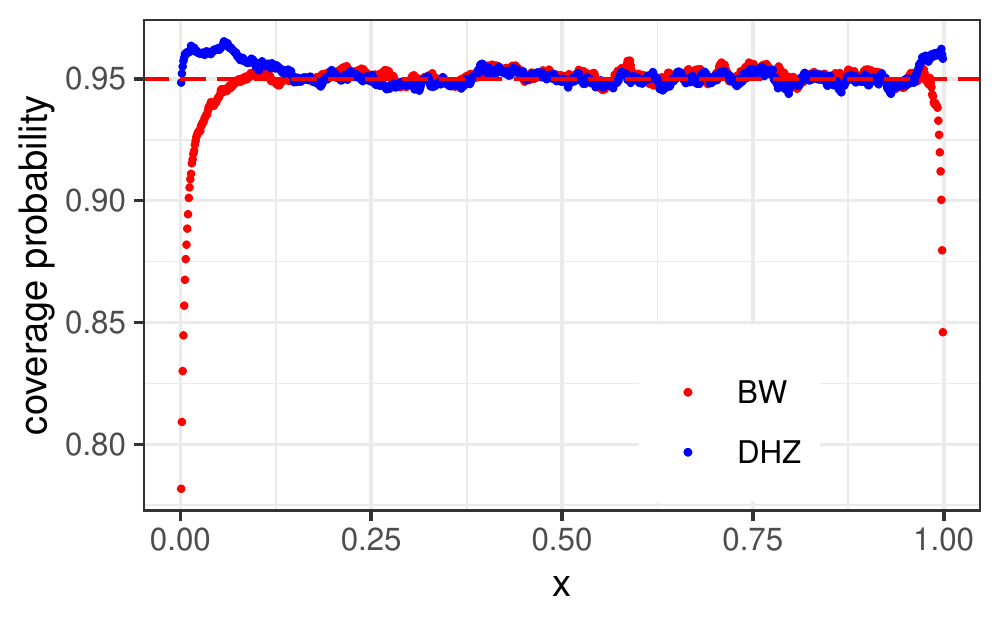}}
	\hspace{0\textwidth}
	\subfigure[$f_0(x) = 10x^5$]{
		\label{fig:comp-2} 
		\includegraphics[width=0.48\textwidth]{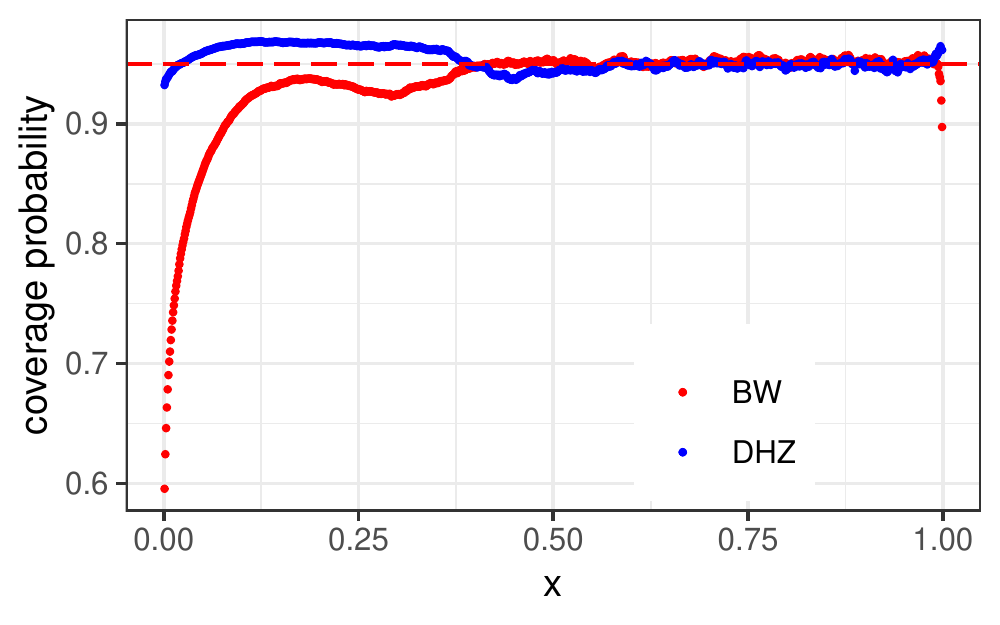}}
	\hspace{0\textwidth}
	\subfigure[$f_0(x) = (4 \pi x + \sin (4 \pi x))/2$]{
		\label{fig:comp-3} 
		\includegraphics[width=0.48\textwidth]{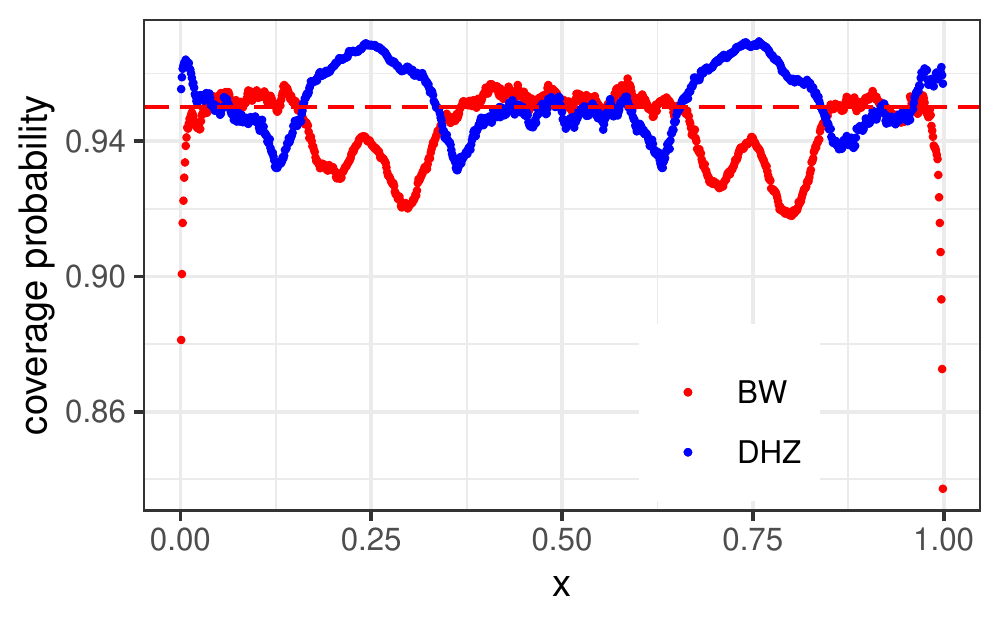}}
	\hspace{0\textwidth}
	\subfigure[$f_0(x) = \log(x)$]{
		\label{fig:comp-4} 
		\includegraphics[width=0.48\textwidth]{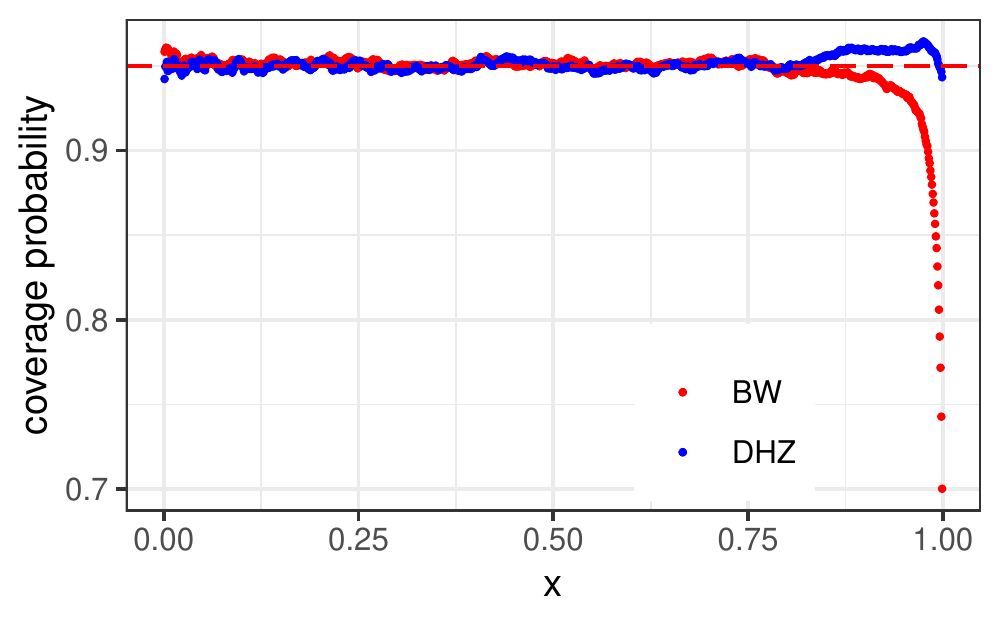}}
	
	\caption{Scatter plots for the coverage probabilities of the 95\% BW CIs and proposed CIs.}
	\label{fig:comp}
\end{figure}

\begin{figure}[hbt]
	\centering
	\includegraphics[width=0.9\textwidth]{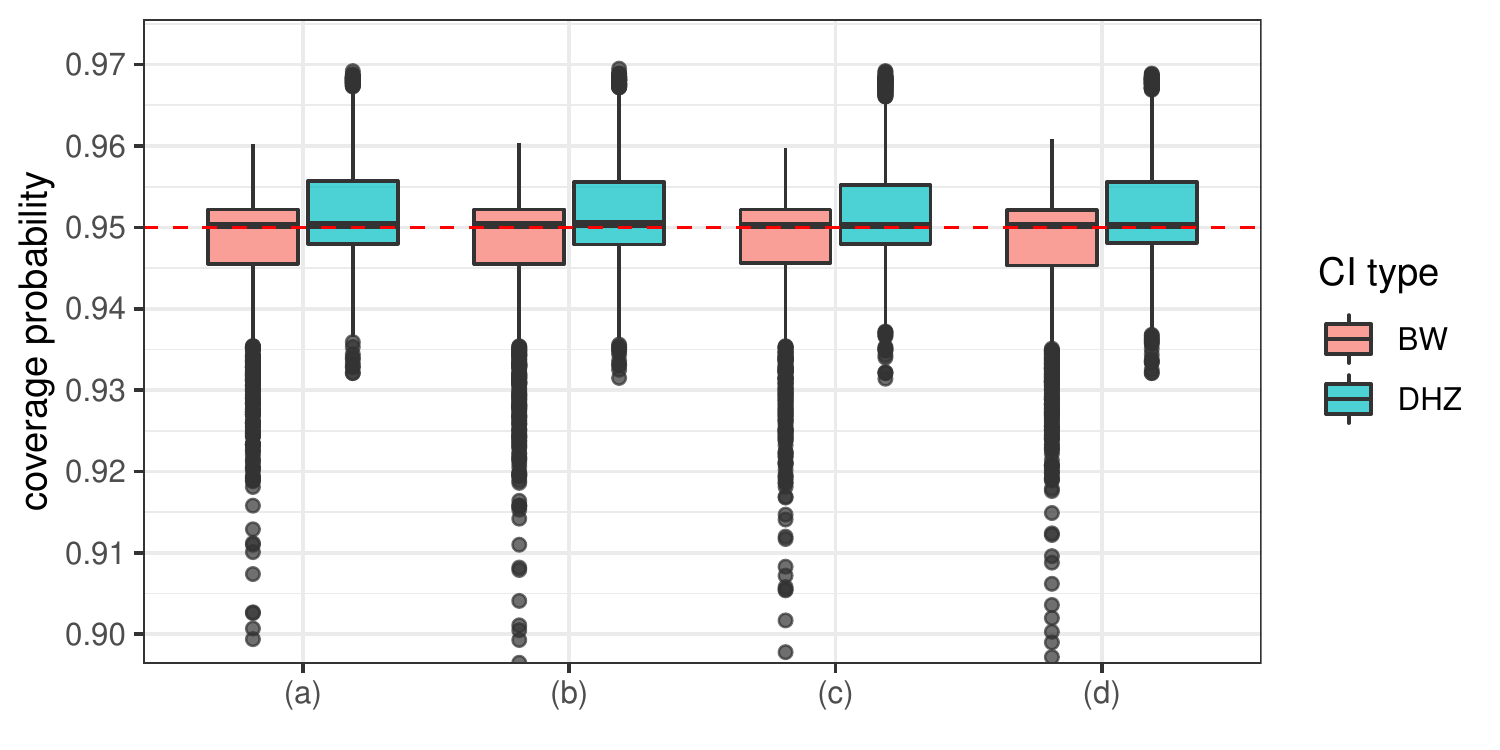}
	\caption{Boxplots for the coverage probabilities of the 95\% BW CIs and proposed CIs. (a), (b), (c) and (d) correspond to the functions in Figure \ref{fig:comp}. Here some outliers of the boxplots for BW CI are removed as they can be as small as $0.6$. }
	\label{fig:comp-bp}
\end{figure}

In this subsection, we compare numerically the coverage probabilities and the lengths of the CIs in \cite{banerjee2001likelihood} (cf. Section \ref{section:compare_BW_theory}), hereafter referred as \emph{BW CIs}, with the CIs proposed in this paper, labelled as \emph{DHZ} in the plots. 

\subsubsection{Coverage probability}\label{subsubsection:coverage} For the performance on coverage, we consider four different mean functions $f_0(x)$'s: $e^{2x}$, $10x^5$, $(4\pi x + \sin (4\pi x))/2$ and $\log (x + 0.001)$ on $[0,1]$. We let $n=10^3$ and $x \in \{1/n, \ldots, (n-1)/n, n/n\}$, and assume $\sigma=1$ is known. For fair comparison, we use the vanilla CIs with the universal critical value $2.11$ for the proposed CIs, and the recommended critical value 2.26916 in \cite[Method 2, Table 2]{banerjee2001likelihood} for the BW CIs, both for $95\%$ coverage. The scatter plots and boxplots for the coverage probabilities at design points $\{1/n, \ldots, (n-1)/n\}$ are given in Figure \ref{fig:comp} and Figure \ref{fig:comp-bp}. The coverage probabilities are approximated by the relative frequencies of the successful coverage of the corresponding CIs out of $B = 10^4$ simulations.

Both types of CIs have rather accurate coverage probabilities at design points that are far from the outskirts for functions with non-extreme derivatives; see Figure \ref{fig:comp-1} and Figure \ref{fig:comp-4}. 
Nevertheless, the proposed CIs appear to have two advantages. First, the edge effect in the BW CIs is much more severe than in the proposed CIs; many more outskirt points suffer under-coverage for the BW CIs. A similar phenomenon is observed in the related current status model in \cite[Figure 9.16, pp. 270]{groeneboom2014nonparametric}. Note that as the LSE is probably inconsistent near the edge, we do not expect the BW or the proposed CIs to provide accurate coverage in theory. However, it turns out the proposed CIs are able to give some reasonably good coverage for outskirt points numerically.

Second, the coverage probabilities of the proposed CIs over flat regions (where derivatives of the mean functions are small) are more biased towards over-coverage, while the BW CIs are likely to suffer again under-coverage; see Figure \ref{fig:comp-2} when $x \in [0.1, 0.5]$, and Figure \ref{fig:comp-3} when $x$ is around $0.25$ and $0.75$.

\subsubsection{CI lengths} 
	
We compare the lengths of the BW CIs and the proposed CIs. 

We may first have a glance at the BW and the proposed CIs. Let $f_0(x) = e^{2x}$ and continue the above setting in Section \ref{subsubsection:coverage} but with $n = 10^2$. For one observation $\{(X_i, Y_i), X_i = i/n, 1 \le i \le n\}$, we compute both CIs for the function values at design points $\{1/n, \ldots, (n-1)/n\}$ and plot them in Figure \ref{fig:comp_CI}. 

\begin{figure}[hbt]
	\centering
	\subfigure[BW CI]{
		\label{fig:comp_CI_BW} 
		\includegraphics[width=0.48\textwidth]{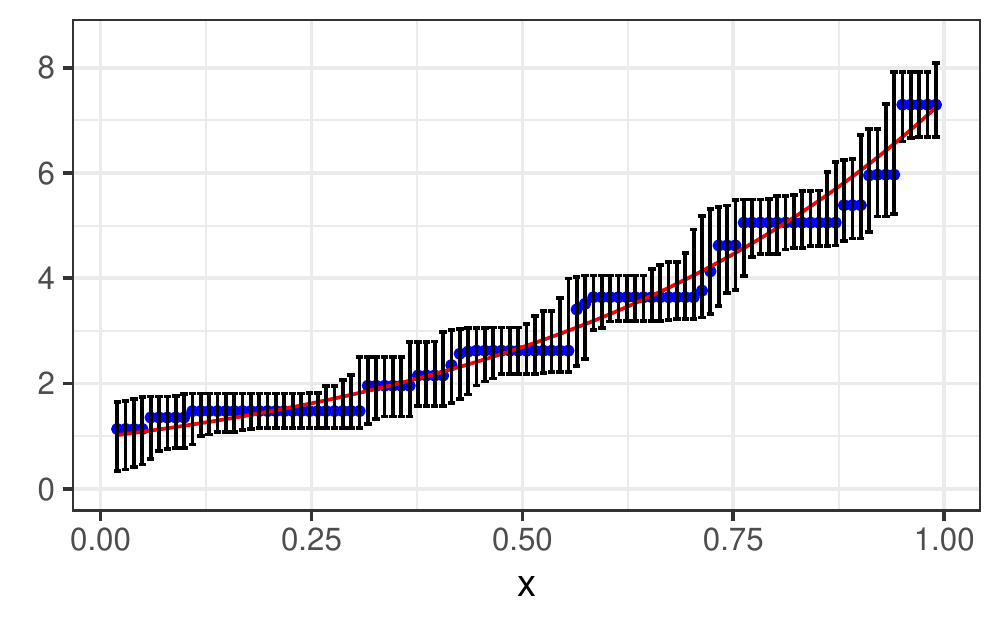}}
	\subfigure[proposed CI (DHZ)]{
		\label{fig:comp_CI_DHZ} 
		\includegraphics[width=0.48\textwidth]{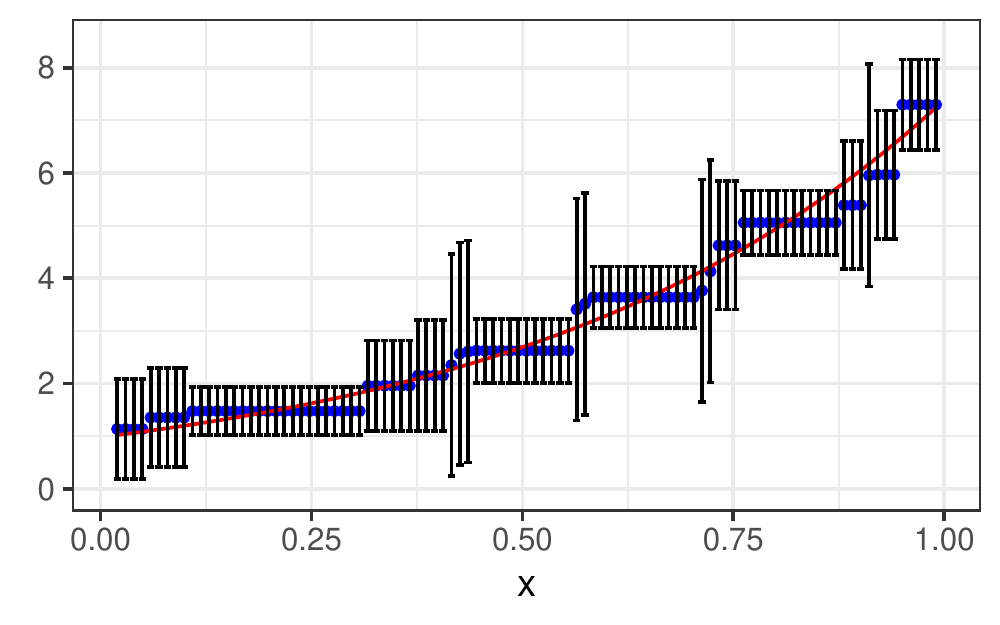}}
	\caption{95\% BW CIs and proposed CIs for $f_0(x) = e^{2x}$ at $\{0.01, \ldots, 0.99\}$. Here $n=10^2$ and $\sigma=1$ is known. Red line represents $f_0(x) = e^{2x}$ and blue dots are $\widehat{f}_n(X_i)$.}
	\label{fig:comp_CI}
\end{figure}

From Figure \ref{fig:comp_CI}, we notice an interesting difference between the BW and the proposed CIs: The lower and upper boundaries of the BW CIs seem to be monotone, but, because it is possible to have small $n_{\widehat{u}, \widehat{v}}(x_0)$ for any $x_0$, the proposed CIs at certain locations could be quite large.  We may employ some practical remedies for the proposed CI, e.g., adding an extra size constraint $n_{\widehat{u}, \widehat{v}}(x_0) \ge 5$ in the maximization and minimization of \eqref{def:block_estiamtors}, but the improvement may not be as  substantial as in Figure \ref{fig:comp_CI_BW}.

We also observe in this simulation that the BW CIs are narrower in Figure \ref{fig:comp_CI}. To investigate this phenomenon more carefully, we continue the setting in Section \ref{subsubsection:coverage} with $n=10^3$ and compute the lengths of both CIs for design points $\{1/n, \ldots, (n-1)/n\}$. We run $10^4$ simulations, so that, at each design point, we obtain $10^4$ BW CIs and the proposed CIs. The lengths of the CIs for $f_0(x) = e^{2x}$ and $f_0(x) = 10x^5$ are given in Figure \ref{fig:CI_length}. In each subfigure of Figure \ref{fig:CI_length}, the lower (resp. upper) boundary of the shaded area represents the 1st (resp. 3rd) quantile of the lengths of $10^4$ CIs, the black line is the median of the lengths, and the red line represents the length of the oracle CI defined in \eqref{def:oracle_CI}. Although the lengths of the proposed CI shrink at the oracle rate, which supports Theorem \ref{thm:CI_exact} \eqref{eqn:length} in $d=1$, it seems that the BW CIs based on LRT are usually narrower than the proposed CIs. 

\begin{figure}[hbt]
	\centering
	\subfigure[$f_0(x) = e^{2x}$, DHZ]{
		\label{fig:CI_length-1} 
		\includegraphics[width=0.48\textwidth]{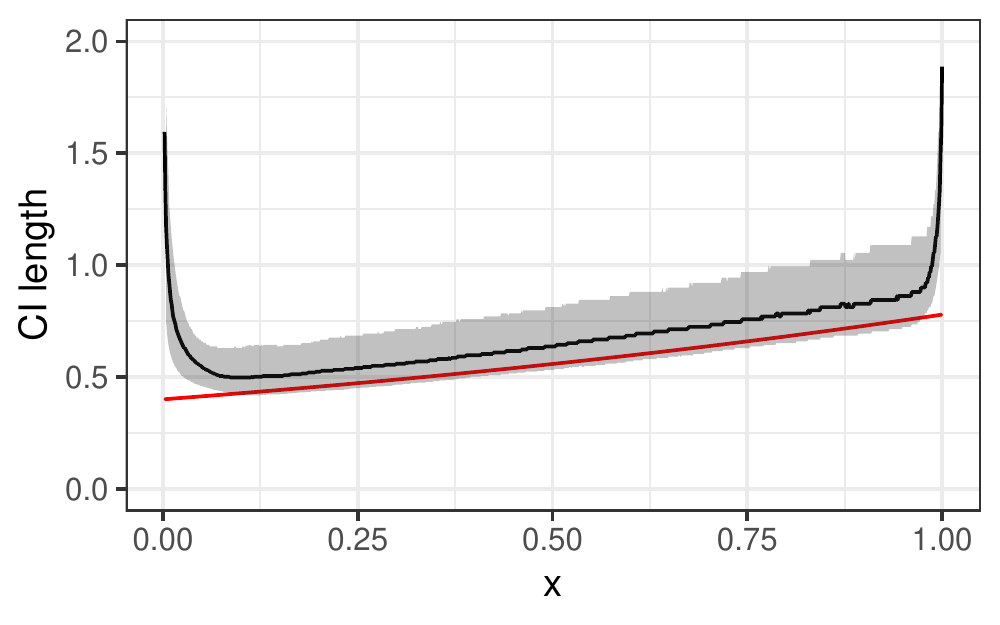}}
	\subfigure[$f_0(x) = e^{2x}$, BW]{
		\label{fig:CI_length-1} 
		\includegraphics[width=0.48\textwidth]{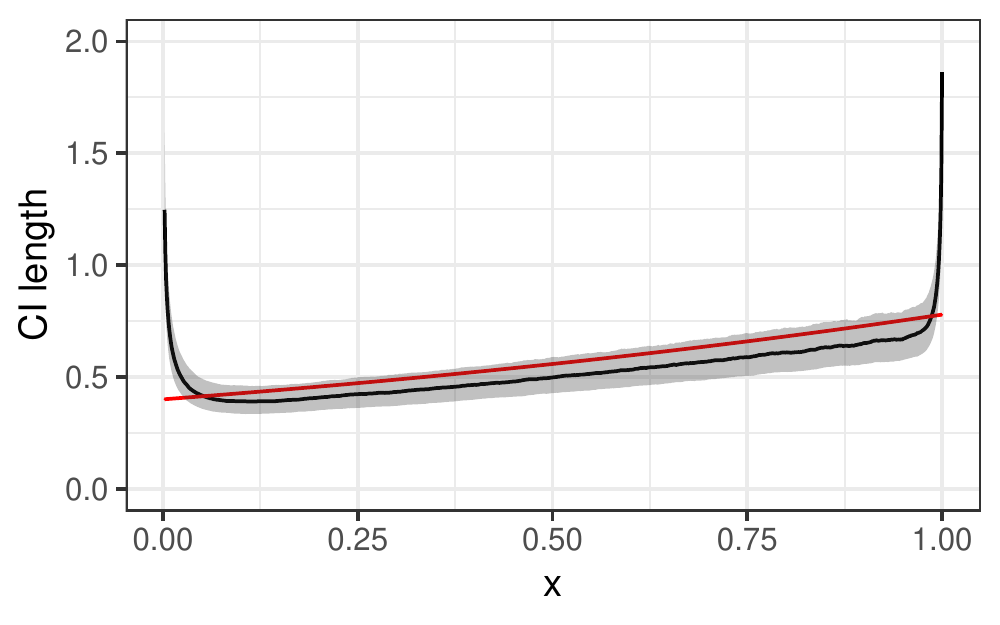}}
	\subfigure[$f_0(x) = 10x^5$, DHZ]{
		\label{fig:CI_length-2} 
		\includegraphics[width=0.48\textwidth]{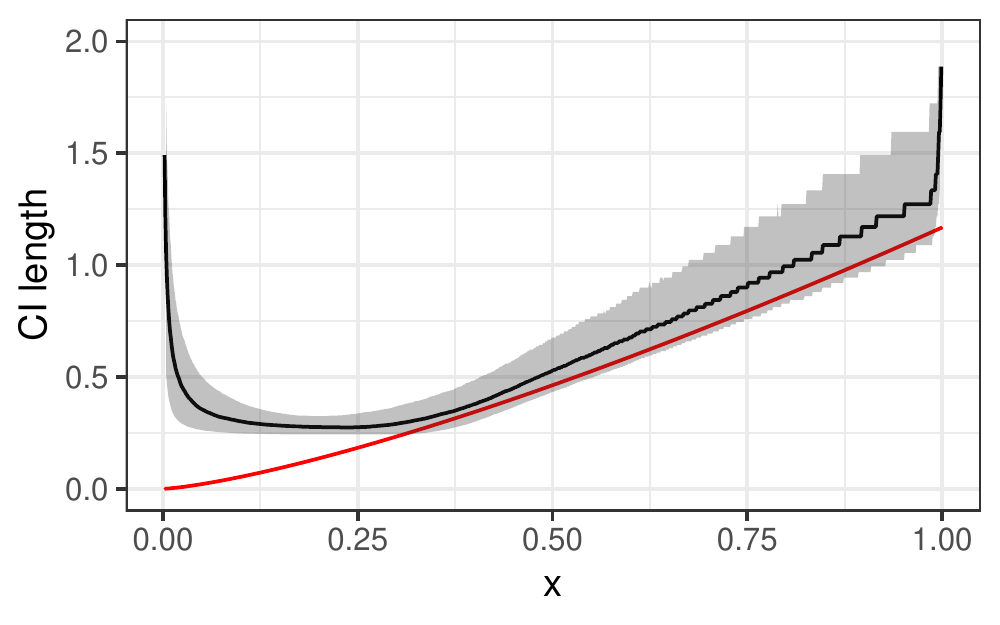}}
	\subfigure[$f_0(x) = 10x^5$, BW]{
		\label{fig:CI_length-2} 
		\includegraphics[width=0.48\textwidth]{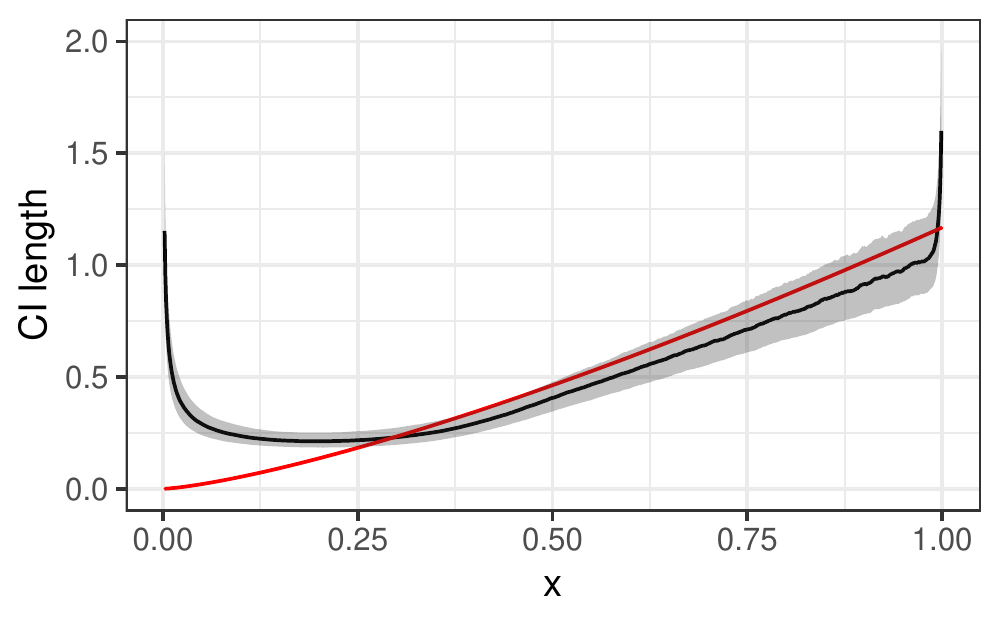}}
	\caption{Lengths of the 95\% BW CIs and proposed CIs.}
	\label{fig:CI_length}
\end{figure}

\appendix

\section{Proofs in Section \ref{section:CI_isotonic_regression}}

\subsection{Proof of Theorem \ref{thm:pivotal_limit_distribution}} 

\begin{lemma}\label{lem:uniqueness_supinf}
	Let $\{G(h_1,h_2): h_1\in T_1, h_2 \in T_2\}$ be a Gaussian process with continuous sample path defined on $T_1\times T_2$, where $T_j\subset \R^d (j=1,2)$ are compact. Further suppose for any $h_1\neq h_1^\prime$ in $T_1$ there exists $\epsilon>0$ such that
  \begin{align*}
  \inf_{ \substack{ k \in \N, \bm{a},\bm{a}' \in \Delta_k,\\ 
  h_{1,j}\in H_1, h_{1,j}^\prime \in H_1^\prime}}
  \mathrm{Var}\bigg(\sum_{j=1}^k a_j G(h_{1,j},h_{2,j})-\sum_{j=1}^k a_j' G(h_{1,j}',h_{2,j}')\bigg)>0,
  \end{align*}
  where $\Delta_k\equiv \{(a_1,\ldots,a_k) \in \R_{\geq 0}^k: \sum_{j=1}^k a_j =1\}$, 
  $H_1=\{\bar{h}_1\in T_1:\|\bar{h}_1-h_1\|\le \epsilon\}$, 
  $H_1^\prime=\{\bar{h}_1\in T_1:\|\bar{h}_1-h_1^\prime\|\le \epsilon\}$, 
  and $\pnorm{\cdot}{}$ is the Euclidean norm on $\R^d$. Then with probability one, 
	\begin{align*}
	\inf_{h_2 \in T_2} G(h_1^\ast,h_2)&= \sup_{h_1 \in T_1}\ \inf_{h_2 \in T_2} G(h_1,h_2)
	\end{align*}
	has exactly one solution $h_1^*$.
\end{lemma}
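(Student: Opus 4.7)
The plan is to prove almost-sure uniqueness of the maximizer of $F(h_1):=\inf_{h_2\in T_2}G(h_1,h_2)$ by reducing the event ``two distinct maximizers exist'' to a countable union of events, each of which is killed by Gaussian anti-concentration of pairwise differences of $G$. First I would observe that, because $G$ has continuous sample paths on the compact product $T_1\times T_2$, each realization is uniformly continuous there, so $F$ is continuous on $T_1$ and the infimum in $h_2$ is attained for every $h_1$. Consequently $\mathrm{argmax}_{h_1\in T_1}F(h_1)$ is a nonempty compact subset of $T_1$, and the task is to show it is a.s.\ a singleton.

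Second, by separability of $T_1$, I would fix a countable dense subset $D\subset T_1$ and note that if two distinct maximizers $h_1^{\ast,1}\neq h_1^{\ast,2}$ exist, one can choose $y,z\in D$ and shrink the $\epsilon=\epsilon(y,z)$ from the hypothesis so that the resulting neighborhoods $H_1=H_1(y)$ and $H_1'=H_1(z)$ are disjoint and contain $h_1^{\ast,1}$ and $h_1^{\ast,2}$ respectively. It therefore suffices to show $\Prob(\mathcal{E}_{y,z})=0$ for each such pair, where
$$\mathcal{E}_{y,z}=\Big\{\exists\, \bar{h}_1\in H_1\cap T_1,\ \bar{h}_1'\in H_1'\cap T_1:\ F(\bar{h}_1)=F(\bar{h}_1')=\sup_{T_1}F\Big\},$$
and then take a countable union over $(y,z)\in D\times D$ with $y\neq z$.

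Third, I would analyze $\mathcal{E}_{y,z}$ by selecting minimizers $\bar{h}_2\in\mathrm{argmin}_{h_2\in T_2}G(\bar{h}_1,h_2)$ and $\bar{h}_2'\in\mathrm{argmin}_{h_2\in T_2}G(\bar{h}_1',h_2)$, which reduces the event to the existence of a random quadruple $(\bar{h}_1,\bar{h}_2,\bar{h}_1',\bar{h}_2')\in H_1\times T_2\times H_1'\times T_2$ with $G(\bar{h}_1,\bar{h}_2)=G(\bar{h}_1',\bar{h}_2')$. Discretizing $T_1\times T_2$ by a finite $\eta$-net $\mathcal{N}_\eta$ and exploiting uniform continuity, I would locate net points $(p,q),(p',q')\in\mathcal{N}_\eta$ with $p\in H_1$ and $p'\in H_1'$ satisfying $|G(p,q)-G(p',q')|\leq 2\omega_G(\eta)$, where $\omega_G$ is the random modulus of continuity of $G$. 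The assumption, specialized to $k=1$ and $a_1=a_1'=1$, then yields the uniform lower bound $\mathrm{Var}(G(p,q)-G(p',q'))\geq \sigma_0^2>0$ across all such pairs, so Gaussian anti-concentration gives $\Prob(|G(p,q)-G(p',q')|\leq 2t)\leq Ct/\sigma_0$. A union bound over the $\lesssim |\mathcal{N}_\eta|^2$ net pairs, together with standard tail estimates for $\omega_G(\eta)$ under continuous sample paths (Borell--TIS / Dudley), kills $\Prob(\mathcal{E}_{y,z})$ in the limit $\eta\to 0$.

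The main obstacle lies in this last limit: the polynomial growth of $|\mathcal{N}_\eta|$ in $\eta^{-1}$ has to be balanced against the decay of $\omega_G(\eta)$, which relies on quantitative entropy control for the specific covariance structure of $G$. A cleaner substitute is to invoke a density result for $1$-Lipschitz functionals of a Gaussian process, applied directly to $M_A-M_B:=\sup_A F-\sup_B F$ for disjoint closed sets $A\supset H_1$, $B\supset H_1'$: the nondegeneracy supplied by the hypothesis forces $M_A-M_B$ to be non-constant and absolutely continuous, eliminating the atom at zero and hence $\Prob(\mathcal{E}_{y,z})$ immediately. The stated convex-combination form of the assumption (with general $k$) is the robust version needed to accommodate any KKT-type characterization of the saddle quantities, even though for the pairwise-difference argument above only $k=1$ is invoked.
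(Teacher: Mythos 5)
Your proposal correctly identifies the right reduction (cover the off-diagonal $\{h_1\neq h_1'\}$ by countably many product neighborhoods $H_1\times H_1'$ satisfying the non-degeneracy hypothesis, then show that for each such pair the event $\{\sup_{H_1}\inf_{T_2}G=\sup_{H_1'}\inf_{T_2}G\}$ is null), and you correctly sense that the mechanism should be some kind of Gaussian anti-concentration. But neither of the two ways you propose to finish actually closes.

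The discretization-plus-union-bound route fails for the reason you yourself flag: the net size $|\mathcal{N}_\eta|^2$ grows polynomially in $\eta^{-1}$, while the random modulus $\omega_G(\eta)$, under mere sample-path continuity, has no quantitative rate of decay at all — and even if one imposes a rate, the modulus is correlated with the process values at the net points, so the naive union bound over $\Prob\bigl(|G(p,q)-G(p',q')|\le 2\omega_G(\eta)\bigr)$ is not licit. This version of the argument genuinely does not go through as stated, and it only uses $k=1$ of the hypothesis, which is a strong hint that it is not the intended route.

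The ``cleaner substitute'' — citing a black-box density theorem for $M_A-M_B$ where $M_A=\sup_A\inf_{T_2}G$ — is the right idea but is not a known off-the-shelf result. Absolute continuity of a single Gaussian supremum (Tsirelson-type) is classical, but absolute continuity of a \emph{difference of two sup-inf functionals} is not; one must build the Cameron--Martin direction that separates them by hand. This is precisely what the paper's proof supplies. Writing $\delta_{(\bar h_1,\bar h_2)}(G)=G(\bar h_1,\bar h_2)-\E G(\bar h_1,\bar h_2)$ as an element of the $L_2$-closed Gaussian space $\bar F$, the non-degeneracy hypothesis (with general $k$, and general convex weights $\bm a,\bm a'$) says exactly that the closed convex hulls of $\{\delta_{(\bar h_1,\bar h_2)}:(\bar h_1,\bar h_2)\in H_1\times T_2\}$ and $\{\delta_{(\bar h_1,\bar h_2)}:(\bar h_1,\bar h_2)\in H_1'\times T_2\}$ are disjoint compacts in $\bar F$. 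Hahn--Banach plus Riesz then yields a standard normal $\zeta\in\bar F$ with $\sup_{H_1\times T_2}\iprod{\zeta}{\delta}<\inf_{H_1'\times T_2}\iprod{\zeta}{\delta}$, and one defines $Q(g)$ and $Q'(g)$ by replacing $\zeta$ by $g$ in the $\zeta$-direction of $G$. The strict separation of the inner products forces the set $\{g:Q(g)=Q'(g)\}$ to be at most a singleton, so that, conditioning on $(Q,Q')$ and using independence and absolute continuity of $\zeta$, the coincidence event has probability zero. This is where general $k$ is indispensable: separating two families of sup-inf functionals is a statement about their convex hulls in the Gaussian space, not about pairwise variances, so the $k=1$ form of the hypothesis is strictly weaker than what the argument requires. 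Your proposal, as written, neither produces this separating direction nor explains why the difference functional inherits absolute continuity along it; those are the two missing ideas.
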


\begin{proof}
The set $\{(\bar{h}_1,\bar{h}_1^\prime): \bar{h}_1 \neq \bar{h}_1^\prime\}$ is covered 
by countably many product sets $H_1\times H_1^\prime$ such that the pair $\{H_1,H_1^\prime\}$ satisfies the 
non-degenerate variance assumption. Thus, it  
suffices to prove that for every such pair
\begin{align}\label{ineq:uniqueness_supinf_1}
\Prob\bigg(\sup_{\bar{h}_1 \in H_1} \inf_{\bar{h}_2 \in T_2}G(\bar{h}_1,\bar{h}_2) 
= \sup_{\bar{h}_1 \in H_1'} \inf_{\bar{h}_2 \in T_2} G(\bar{h}_1,\bar{h}_2)\bigg) = 0.
\end{align}
Any $(\bar{h}_1,\bar{h}_2) \in T_1\times T_2$ may be identified with $\delta_{(\bar{h}_1,\bar{h}_2)}(G)\equiv G(\bar{h}_1,\bar{h}_2)-\E G(\bar{h}_1,\bar{h}_2) \in L_2(\Omega,\mathcal{A},P)\equiv L_2$. Let $\bar{F}$ be smallest closed subspace of $L_2$ containing $\{\delta_{\bar{h}_1,\bar{h}_2}(G): (\bar{h}_1,\bar{h}_2)\in T_1\times  T_2\}$. Members of $\bar{F}$ are mean zero Gaussian variables. By the non-degenerate variance assumption, the closed convex hulls of the collections $\{\delta_{\bar{h}_1,\bar{h}_2}(G): (\bar{h}_1,\bar{h}_2)\in H_1\times  T_2\}$ and $\{\delta_{\bar{h}_1,\bar{h}_2}(G): (\bar{h}_1,\bar{h}_2)\in H_1'\times  T_2\}$ (which are compact) in $\bar{F}\subset L_2$ do not intersect, so by Hahn-Banach separation theorem (see \cite[Theorem 3.4(b)]{rudin1991functional}) and Riesz representation theorem, there exists some $\mathcal{N}(0,1)$ variable $\zeta \in  \bar{F}$ such that 
\begin{align}\label{ineq:uniqueness_supinf_2}
\sup_{(\bar{h}_1,\bar{h}_2)\in H_1\times  T_2} \iprod{\zeta}{\delta_{(\bar{h}_1,\bar{h}_2)}}_{L_2}<\inf_{(\bar{h}_1,\bar{h}_2) \in H_1'\times  T_2} \iprod{\zeta}{\delta_{(\bar{h}_1,\bar{h}_2)}}_{L_2}.
\end{align}
Let $Q(\cdot)$ and $Q'(\cdot)$ be stochastic processes defined by
\begin{align*}
Q(g)&\equiv \sup_{\bar{h}_1 \in H_1 } \inf_{\bar{h}_2 \in  T_2}\Big[G(\bar{h}_1,\bar{h}_2) - \iprod{\zeta}{\delta_{(\bar{h}_1,\bar{h}_2)}}_{L_2}(\zeta-g)\Big],\\ 
 Q^\prime(g)&\equiv \sup_{\bar{h}_1 \in H_1' } \inf_{\bar{h}_2 \in  T_2}
\Big[G(\bar{h}_1,\bar{h}_2) - \iprod{\zeta}{\delta_{(\bar{h}_1,\bar{h}_2)}}_{L_2}(\zeta-g)\Big].
\end{align*}
We aim to prove that $\Prob\{Q(\zeta)=Q^\prime(\zeta)|Q(\cdot),Q^\prime(\cdot)\}=0$ 
as \eqref{ineq:uniqueness_supinf_1} can be written as $\Prob\{Q(\zeta)=Q^\prime(\zeta)\}=0$. 
We note that $\zeta\sim \mathcal{N}(0,1)$ is independent of $Q(\cdot)$ and $Q^\prime(\cdot)$. 
Conditionally on $Q(\cdot)$ and $Q^\prime(\cdot)$ let $g_1\in \R$ be a solution of $Q(g_1)=Q^\prime(g_1)$ if exists. Then for $c>0$,
\begin{align*}
Q(g_1+c)&\leq   
Q(g_1) + c \sup_{\bar{h}_1 \in H_1  } \sup_{\bar{h}_2 \in  T_2} \iprod{\zeta}{\delta_{(\bar{h}_1,\bar{h}_2)}}_{L_2}\\
& < Q'(g_1) + c  \inf_{\bar{h}_1 \in H_1' } \inf_{\bar{h}_2 \in  T_2} \iprod{\zeta}{\delta_{(\bar{h}_1,\bar{h}_2)}}_{L_2} \leq Q'(g_1+c),
\end{align*}
where the first inequality follows from the definition of $Q(\cdot)$, the second from 
\eqref{ineq:uniqueness_supinf_2}, and the third from the definition of $Q^\prime(\cdot)$. Similarly one may show that $Q(g_1+c)>Q'(g_1+c)$ for $c<0$. Thus, $\Prob\{Q(\zeta)=Q^\prime(\zeta)\}=0$ 
as conditionally on $Q(\cdot)$ and $Q^\prime(\cdot)$, $\zeta\sim \mathcal{N}(0,1)$ and 
the set $\{g: Q(g)=Q^\prime(g)\}$ contains at most one point. This completes the proof of (\ref{ineq:uniqueness_supinf_1}).
\end{proof}

For notational convenience, let
\begin{align}\label{def:r_n}
\omega_n \equiv n_\ast^{-\frac{1}{2+\sum_{k=\kappa_\ast}^s \alpha_k^{-1}}},\quad r_n \equiv (\omega_n^{1/\alpha_1},\ldots,\omega_n^{1/\alpha_d})\bm{1}_{[\kappa_\ast:d]} \in \R^d.
\end{align}
Write $\hat{u}(x_0) = x_0 - \hat{h}_1(x_0) r_n$ and $\hat{v}(x_0)=x_0+\hat{h}_2(x_0)r_n$ for some $\hat{h}_1(x_0),\hat{h}_2(x_0) \in \R_{\geq 0}^d$. It is shown in \cite[Proposition 5]{han2019limit} that $\hat{h}_1(x_0),\hat{h}_2(x_0) \in \R_{\geq 0}^d$ are well-defined with high probability despite the fact that the first $\kappa_\ast-1$ coordinates of $r_n$ are $0$.

\begin{proof}[Proof of Theorem \ref{thm:pivotal_limit_distribution}]
For notational simplicity, we only prove the theorem in the case where $\bm{\alpha}=\bm{1}_d$, $\kappa_\ast=1$ and $s=d$; 
other cases follow from minor modifications. Let 
\begin{align*}
\mathbb{U}(h_1,h_2)\equiv \frac{\sigma\cdot \G(h_1,h_2)}{\prod_{k=1}^d (h_1+h_2)_k}+\frac{1}{2}\sum_{k=1}^d \partial_ k f_0(x_0) (h_2-h_1)_k.
\end{align*}
Then 
\begin{align*}
&\omega_n^{-1}\big(\hat{f}_n^{-}(x_0)-f_0(x_0)\big)\rightsquigarrow \sup_{h_1> 0}\inf_{h_2> 0} \mathbb{U}(h_1,h_2),\\
&\omega_n^{-1}\big(\hat{f}_n^{+}(x_0)-f_0(x_0)\big)\rightsquigarrow \inf_{h_2> 0}\sup_{h_1> 0} \mathbb{U}(h_1,h_2).
\end{align*}
Let $(h_1^\ast(x_0),h_2^\ast(x_0))$ be any pair defined by
\begin{align*}
\inf_{h_2> 0} \mathbb{U}(h_1^\ast(x_0), h_2) &= \sup_{h_1> 0}\inf_{h_2> 0} \mathbb{U}(h_1,h_2), \\
\sup_{h_1> 0} \mathbb{U}(h_1, h_2^\ast(x_0)) &= \inf_{h_2> 0}\sup_{h_1> 0} \mathbb{U}(h_1,h_2).
\end{align*} 
For notational convenience, we will omit the dependence on $x_0$ if no confusion could arise. Let 
\begin{align*}
\mathbb{U}_n(h_1,h_2) 
\equiv \omega_n^{-1}\big\{\bar{Y}|_{[x_0-r_nh_1,x_0+r_nh_2]} - f_0(x_0)\big\}. 
\end{align*}
By \cite{han2019limit}, for any $\epsilon>0$, we may find $c=c(\epsilon)>1$ such that for $n$ large enough, there exists an event $\Omega_c$ with $\Prob(\Omega_c)\ge 1-\epsilon$ in which
\begin{align*}
\sup_{h_1 \in [c^{-1}\bm{1},c\bm{1}]}\inf_{h_2 \in [c^{-1}\bm{1},c\bm{1}]} \mathbb{U}(h_1,h_2) &= \sup_{h_1>0}\inf_{h_2>0} \mathbb{U}(h_1,h_2),\\
\sup_{h_1 \in [c^{-1}\bm{1},c\bm{1}]}\inf_{h_2 \in [c^{-1}\bm{1},c\bm{1}]} \mathbb{U}_n(h_1,h_2) & =\sup_{h_1>0}\inf_{h_2>0} \mathbb{U}_n(h_1,h_2),\\
\inf_{h_2 \in [c^{-1}\bm{1},c\bm{1}]} \sup_{h_1 \in [c^{-1}\bm{1},c\bm{1}]}\mathbb{U}(h_1,h_2)& = \inf_{h_2>0}\sup_{h_1>0} \mathbb{U}(h_1,h_2),\\
\inf_{h_2 \in [c^{-1}\bm{1},c\bm{1}]} \sup_{h_1 \in [c^{-1}\bm{1},c\bm{1}]}\mathbb{U}_n(h_1,h_2) & = \inf_{h_2>0}\sup_{h_1>0} \mathbb{U}_n(h_1,h_2). 
\end{align*}
In particular, $\hat{h}_1,\hat{h}_2 \in [c^{-1}\bm{1},c\bm{1}]$ and $h_1^\ast,h_2^\ast  \in [c^{-1}\bm{1},c\bm{1}]$ hold on $\Omega_c$. The non-degenerate variance assumption in Lemma~\ref{lem:uniqueness_supinf} holds for  
the Gaussian process $\mathbb{U}(h_1,h_2)$ 
as the process $\mathbb{G}(h_1,h_2)$ 
has independent increments in each coordinate when other coordinates are held fixed. Thus, by  Lemma~\ref{lem:uniqueness_supinf}, we may also assume that 
$h_1^\ast$ and $h_2^\ast$ are uniquely defined on $\Omega_c$. On the event $\Omega_c$, we have
\begin{align*}
&\omega_n^{-1}\big(\hat{f}_n^{-}(x_0)-f_0(x_0)\big)\\
&\equiv \sup_{c^{-1}\bm{1}\leq h_1\leq c\bm{1} }\inf_{c^{-1}\bm{1}\leq h_2\leq c\bm{1}} \mathbb{U}_n(h_1,h_2)\\
& \equiv \sup_{c^{-1}\bm{1}\leq h_1\leq c\bm{1} }\inf_{c^{-1}\bm{1}\leq h_2\leq c\bm{1}} \bigg[\frac{\G_n(h_1,h_2)}{\prod_{k=1}^d \big((h_1)_k+(h_2)_k\big)}\cdot (1+\mathfrak{o}(1))\\
&\qquad\qquad\qquad\qquad 
+(1+\mathfrak{o}(1))\sum_{k=1}^d \frac{ \partial_k f_0(x_0)}{2}\big((h_2)_k-(h_1)_k\big)\bigg],
\end{align*}
where for any $h_1,h_2>0$, $
\G_n(h_1,h_2)\equiv \omega_n \sum_{i: x_0-h_1 r_n\leq X_i\leq x_0+h_2 r_n} \xi_i$. 
A similar identity holds for $\omega_n^{-1}\big(\hat{f}_n^{+}(x_0)-f_0(x_0)\big)$ by switching sup and inf. 
On the other hand, on the event $\Omega_c$, $\hat{h}_1\leq t \le c{\bf 1}$ if and only if 
\begin{align*}
\sup_{h_1 \in [c^{-1}\bm{1},t]}\inf_{h_2 \in [c^{-1}\bm{1},c\bm{1}]} \mathbb{U}_n(h_1,h_2) =  \sup_{h_1 \in [c^{-1}\bm{1},c\bm{1}]}\inf_{h_2 \in [c^{-1}\bm{1},c\bm{1}]} \mathbb{U}_n(h_1,h_2),
\end{align*}
and similarly $\hat{h}_2\leq s\le c{\bf 1}$ if and only if 
\begin{align*}
\inf_{h_2 \in [c^{-1}\bm{1},s]}\sup_{h_1 \in [c^{-1}\bm{1},c\bm{1}]} \mathbb{U}_n(h_1,h_2) 
=  \inf_{h_2 \in [c^{-1}\bm{1},c\bm{1}]}\sup_{h_1 \in [c^{-1}\bm{1},c\bm{1}]} \mathbb{U}_n(h_1,h_2).
\end{align*}
Using that $\mathbb{U}_n\rightsquigarrow \mathbb{U}$ in $\ell^\infty([c^{-1}\bm{1},c\bm{1}]\times [c^{-1}\bm{1},c\bm{1}])$, it follows from the continuous mapping theorem that for any measurable subset $A \subset [c^{-1}\bm{1},c\bm{1}]$,
we have $\sup_{h_1 \in A}\inf_{c^{-1}\bm{1}\leq h_2\leq c\bm{1}} \mathbb{U}_n(h_1,h_2) \rightsquigarrow \sup_{h_1 \in A}\inf_{c^{-1}\bm{1}\leq h_2\leq c\bm{1}} \mathbb{U}(h_1,h_2)$, and a similar conclusion holds by switching sup and inf. 
Hence for any $(u,v,t,s) \in \R\times \R\times  \R_{\geq 0}^d \times \R_{\geq 0}^d$, 
\begin{align*}
&\limsup_n\Prob\big( \big\{\omega_n^{-1}(\hat{f}_n^{-}(x_0)-f_0(x_0))\leq u,\\
&\qquad\qquad  \omega_n^{-1}(\hat{f}_n^{+}(x_0)-f_0(x_0))\leq v,\hat{h}_1\leq t,\hat{h}_2\leq s\big\}\cap \Omega_c\big)\\
&\leq \Prob\bigg(\bigg\{\sup_{h_1 \in [c^{-1}\bm{1},c\bm{1}]}\inf_{h_2 \in [c^{-1}\bm{1},c\bm{1}]} \mathbb{U}(h_1,h_2)\leq u, \inf_{h_2 \in [c^{-1}\bm{1},c\bm{1}]}\sup_{h_1 \in [c^{-1}\bm{1},c\bm{1}]} \mathbb{U}(h_1,h_2)\leq v\\
&\qquad \sup_{h_1 \in [c^{-1}\bm{1},t]}\inf_{h_2 \in [c^{-1}\bm{1},c\bm{1}]} \mathbb{U}(h_1,h_2) =  \sup_{h_1 \in [c^{-1}\bm{1},c\bm{1}]}\inf_{h_2 \in [c^{-1}\bm{1},c\bm{1}]} \mathbb{U}(h_1,h_2),\\
&\qquad \inf_{h_2 \in [c^{-1}\bm{1},s]}\sup_{h_1 \in [c^{-1}\bm{1},c\bm{1}]} \mathbb{U}_n(h_1,h_2) =  \inf_{h_2 \in [c^{-1}\bm{1},c\bm{1}]}\sup_{h_1 \in [c^{-1}\bm{1},c\bm{1}]} \mathbb{U}_n(h_1,h_2)\bigg\} \cap \Omega_c \bigg)\\
& \leq \Prob\bigg(\sup_{h_1\geq 0}\inf_{h_2\geq 0} \mathbb{U}(h_1,h_2)\leq u, \inf_{h_2\geq 0}\sup_{h_1\geq 0} \mathbb{U}(h_1,h_2)\leq v, h_1^\ast \leq t, h_2^\ast \leq s\bigg).
\end{align*}
This means that
\begin{align*}
& \limsup_n\Prob\big(\omega_n^{-1}(\hat{f}_n^{-}(x_0)-f_0(x_0))\leq u,\\
&\qquad\qquad \omega_n^{-1}(\hat{f}_n^{+}(x_0)-f_0(x_0))\leq v, \hat{h}_1\leq t,\hat{h}_2\leq s\big)\\
&\leq \Prob\bigg(\sup_{h_1\geq 0}\inf_{h_2\geq 0} \mathbb{U}(h_1,h_2)\leq u, \inf_{h_2\geq 0}\sup_{h_1\geq 0} \mathbb{U}(h_1,h_2)\leq v, h_1^\ast \leq t, h_2^\ast \leq s\bigg)+\epsilon.
\end{align*} 
Taking $\epsilon \downarrow 0$ we obtain the one-sided estimate. For the other side, we may proceed similarly, but only need to check the convergence for a continuity point $(u,v,t,s) \in \R\times \R\times \R_{\geq 0}^d \times \R_{\geq 0}^d$ of the map $(u,v,t,s)\mapsto \Prob\big(\sup_{h_1\geq 0}\inf_{h_2\geq 0} \mathbb{U}(h_1,h_2)\leq u, \inf_{h_2\geq 0}\sup_{h_1\geq 0} \mathbb{U}(h_1,h_2)\leq v, h_1^\ast \leq t, h_2^\ast \leq s\big)$. More concretely, 
\begin{align*}
&\liminf_n\Prob\big( \big\{\omega_n^{-1}(\hat{f}_n^{-}(x_0)-f_0(x_0))\leq u,\\
&\qquad\qquad  \omega_n^{-1}(\hat{f}_n^{+}(x_0)-f_0(x_0))\leq v,\hat{h}_1\leq t,\hat{h}_2\leq s\big\}\cap \Omega_c\big)\\
&\geq \Prob\bigg(\bigg\{\sup_{h_1 \in [c^{-1}\bm{1},c\bm{1}]}\inf_{h_2 \in [c^{-1}\bm{1},c\bm{1}]} \mathbb{U}(h_1,h_2)< u-\epsilon, \\
&\qquad \inf_{h_1 \in [c^{-1}\bm{1},c\bm{1}]}\sup_{h_2 \in [c^{-1}\bm{1},c\bm{1}]} \mathbb{U}(h_1,h_2)< v-\epsilon,\\
&\qquad \sup_{h_1 \in [c^{-1}\bm{1},c\bm{1}] \setminus [c^{-1}\bm{1},t-\epsilon)}\inf_{h_2 \in [c^{-1}\bm{1},c\bm{1}]} \mathbb{U}(h_1,h_2) <  \sup_{h_1 \in [c^{-1}\bm{1},c\bm{1}]}\inf_{h_2 \in [c^{-1}\bm{1},c\bm{1}]} \mathbb{U}(h_1,h_2),\\
&\qquad \inf_{h_2 \in [c^{-1}\bm{1},c\bm{1}]\setminus [c^{-1}\bm{1},t-\epsilon)}\sup_{h_1 \in [c^{-1}\bm{1},c\bm{1}]} \mathbb{U}(h_1,h_2) 
>  \inf_{h_2 \in [c^{-1}\bm{1},c\bm{1}]}\sup_{h_1 \in [c^{-1}\bm{1},c\bm{1}]} 
\mathbb{U}(h_1,h_2) \bigg\} \cap \Omega_c\bigg)\\
& \geq \Prob\bigg(\sup_{h_1\geq 0}\inf_{h_2\geq 0} \mathbb{U}(h_1,h_2)<u-\epsilon, \inf_{h_2\geq 0}\sup_{h_1\geq 0} \mathbb{U}(h_1,h_2)<v-\epsilon, \\
&\qquad\qquad h_1^\ast < t-\epsilon, h_2^\ast < s-\epsilon\bigg)-\epsilon,
\end{align*}
and taking $\epsilon \downarrow 0$ to conclude. Hence we have proved the joint convergence in distribution of
\begin{align}\label{ineq:local_pivotal_1}
&\big(\omega_n^{-1}(\hat{f}_n^{-}(x_0)-f_0(x_0)), \omega_n^{-1}(\hat{f}_n^{+}(x_0)-f_0(x_0)), \hat{h}_1,\hat{h}_2\big)\\
&\rightsquigarrow \big(\sup_{h_1\geq 0}\inf_{h_2\geq 0} \mathbb{U}(h_1,h_2), 
\inf_{h_2\geq 0}\sup_{h_1\geq 0} \mathbb{U}(h_1,h_2),h_1^\ast, h_2^\ast \big).\nonumber
\end{align}
 Now by continuous mapping, 
\begin{align*}
&\sqrt{n_{\hat{u},\hat{v}}(x_0)}\big(\hat{f}_n(x_0)-f_0(x_0)\big)\\
& = \sqrt{(1+\mathfrak{o}(1))\prod_{k=1}^d (\hat{h}_2+\hat{h}_1)_k}\cdot\frac{1}{2}\bigg\{ \omega_n^{-1}(\hat{f}_n^{-}(x_0)-f_0(x_0))+ \omega_n^{-1}(\hat{f}_n^{+}(x_0)-f_0(x_0)) \bigg\}\\
&\rightsquigarrow \frac{1}{2} \sqrt{\prod_{k=1}^d (h_2^\ast+h_1^\ast)_k} \cdot \bigg( \sup_{h_1\geq 0}\inf_{h_2\geq 0} \mathbb{U}(h_1,h_2)+\inf_{h_2\geq 0}\sup_{h_1\geq 0} \mathbb{U}(h_1,h_2)  \bigg).
\end{align*}
So the remaining task is to verify that the right hand side is free of nuisance parameters $\{\partial_k f_0(x_0): k=1,\ldots,d\}$. To this end, let
\begin{align*}
\mathbb{V}(g_1,g_2)\equiv \frac{ \G(g_1,g_2)}{\prod_{k=1}^d (g_1+g_2)_k}+\sum_{k=1}^d  (g_2-g_1)_k,
\end{align*}
and $(g_1^\ast,g_2^\ast)$ be (a.s. uniquely) defined by 
\begin{align*}
&\inf_{g_2\geq 0} \mathbb{V}(g_1^\ast,g_2) = \sup_{g_1\geq 0}\inf_{g_2\geq 0} \mathbb{V}(g_1,g_2), \quad \sup_{g_1\geq 0} \mathbb{V}(g_1,g_2^\ast) = \inf_{g_2\geq 0}\sup_{g_1\geq 0} \mathbb{V}(g_1,g_2).
\end{align*} 
We wish to relate $g_i^\ast$ to $h_i^\ast$. Let $\gamma_0,\gamma_1,\ldots,\gamma_d>0$ be such that
\begin{align*}
\gamma_0\bigg(\prod_{k=1}^d \gamma_k\bigg)^{-1/2}  = \sigma,\quad \gamma_0 \gamma_k = \frac{1}{2} \partial_k f_0(x_0),
\end{align*}
and let $g_i = \gamma h_i$ where $\gamma=(\gamma_1,\ldots,\gamma_d)$. Then
\begin{align*}
\gamma_0 \mathbb{V}(g_1,g_2) &=\gamma_0 \frac{\G(\gamma h_1,\gamma h_2)}{\prod_{k=1}^d \gamma_k(h_1+h_2)_k}+ \sum_{k=1}^d \gamma_0 \gamma_k (h_2-h_1)_k\\
& =_d \gamma_0\bigg(\prod_{k=1}^d \gamma_k\bigg)^{-1/2} \frac{\G(h_1, h_2)}{\prod_{k=1}^d (h_1+h_2)_k}+\sum_{k=1}^d  \gamma_0\gamma_k(h_2-h_1)_k\\
& = \mathbb{U}(h_1,h_2).
\end{align*}
Therefore $(g_i^\ast)_k = \gamma_k (h_i^\ast)_k$ for $k=1,\ldots,d$, and the limit distribution becomes
\begin{align*}
&\frac{1}{2}\sqrt{\prod_{k=1}^d (g_2^\ast+g_1^\ast)_k}\cdot \bigg(\prod_{k=1}^d \gamma_k\bigg)^{-1/2} \cdot \gamma_0 \bigg( \sup_{g_1\geq 0}\inf_{g_2\geq 0} \mathbb{V}(g_1,g_2)+\inf_{g_2\geq 0}\sup_{g_1\geq 0} \mathbb{V}(g_1,g_2) \bigg)\\
& = \frac{\sigma}{2} \sqrt{\prod_{k=1}^d (g_2^\ast+g_1^\ast)_k}\cdot \bigg( \sup_{g_1\geq 0}\inf_{g_2\geq 0} \mathbb{V}(g_1,g_2)+\inf_{g_2\geq 0}\sup_{g_1\geq 0} \mathbb{V}(g_1,g_2) \bigg)\equiv \sigma\cdot \mathbb{L}_{\bm{1}_d}.
\end{align*}
The distribution of $\mathbb{L}_{\bm{1}_d}$ now is free of $K(f_0,x_0)$, as desired. 
\end{proof}

\subsection{Proof of Theorems \ref{thm:CI_exact} and \ref{thm:adaptive_confidence_interval}}

\begin{proof}[Proof of Theorems \ref{thm:CI_exact} and \ref{thm:adaptive_confidence_interval}]
We only prove Theorem \ref{thm:CI_exact}; Theorem \ref{thm:adaptive_confidence_interval} follows easily. The claim that $\Prob_{f_0}\big(f_0(x_0) \in \mathcal{I}_n(\delta)\big) \to 1-\delta$ follows immediately. For the other claim, note by (\ref{ineq:local_pivotal_1}), we have by continuous mapping that
\begin{align*}
\omega_n \sqrt{n_{\hat{u},\hat{v}}(x_0)} &= \sqrt{(1+\mathfrak{o}(1))\prod_{k=1}^d (\hat{h}_2+\hat{h}_1)_k}\\
&\rightsquigarrow \sqrt{\prod_{k=1}^d (h_2^\ast+h_1^\ast)_k} = \sqrt{\prod_{k=1}^d (g_2^\ast+g_1^\ast)_k}\cdot \bigg(\prod_{k=1}^d \gamma_k\bigg)^{-1/2}
\end{align*}
where $\gamma_k$ is as in the proof of Theorem \ref{thm:pivotal_limit_distribution}.
Equivalently, for any $\hat{\sigma}^2\to_p \sigma^2$,
\begin{align*}
\bigg(\sigma^2 \prod_{k=1}^d \frac{\partial_k f_0(x_0)}{2}\bigg)^{1/(d+2)} \omega_n  \bigg(\sqrt{n_{\hat{u},\hat{v}}(x_0)}\big/\hat{\sigma}\bigg) \rightsquigarrow \sqrt{\prod_{k=1}^d (g_2^\ast+g_1^\ast)_k} = \mathbb{S}_{\bm{1}_d}.
\end{align*} 
Combined with $
\abs{\mathcal{I}_n(c_\delta)} = 2 c_\delta{\hat{\sigma}}/{ \sqrt{n_{\hat{u},\hat{v}}(x_0)} }$, it follows that
\begin{align*}
{\abs{\mathcal{I}_n(c_\delta)} } \bigg/{2c_\delta \omega_n \bigg(\sigma^2 \prod_{k=1}^d \frac{\partial_k f_0(x_0)}{2}\bigg)^{1/(d+2)} }\rightsquigarrow \mathbb{S}_{\bm{1}_d}^{-1}.
\end{align*}
The fact that $\mathbb{S}_{\bm{1}_d}^{-1} = \mathcal{O}_{\mathbf{P}}(1)$ follows from the proof of \cite[Proposition 7]{han2019limit}. 
\end{proof}

\subsection{Proof of Proposition \ref{prop:consistent_var_est}}

\begin{lemma}\label{lem:rate_estimator}
	Assume the same conditions as in Theorem \ref{thm:limit_distribution_pointwise}. Then for any $c>1$, $\sup_{\pnorm{h}{}\leq c} \abs{
		\hat{f}_n(x_0)-f_0(x_0+hr_n)} = \mathcal{O}_{\mathbf{P}}(\omega_n)$.
\end{lemma}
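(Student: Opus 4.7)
The plan is to apply the triangle inequality
\begin{align*}
|\hat{f}_n(x_0) - f_0(x_0 + h r_n)| \le |\hat{f}_n(x_0) - f_0(x_0)| + |f_0(x_0 + h r_n) - f_0(x_0)|
\end{align*}
and handle each summand separately after taking $\sup_{\pnorm{h}{} \le c}$. The first summand is $h$-free, and by Theorem \ref{thm:limit_distribution_pointwise} applied to each of $\hat{f}_n^{\mp}(x_0) - f_0(x_0)$, together with $\hat{f}_n = \tfrac{1}{2}(\hat{f}_n^- + \hat{f}_n^+)$, it equals $\mathcal{O}_{\mathbf{P}}(\omega_n)$.

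For the deterministic second summand, I would first exploit the coordinate-wise monotonicity of $f_0$ and the fact that $r_n \ge 0$ to sandwich $x_0 + h r_n$ between $x_0 - c r_n$ and $x_0 + c r_n$ for all $\pnorm{h}{} \le c$, so that
\begin{align*}
\sup_{\pnorm{h}{} \le c} |f_0(x_0 + h r_n) - f_0(x_0)| \le \big(f_0(x_0 + c r_n) - f_0(x_0)\big) \vee \big(f_0(x_0) - f_0(x_0 - c r_n)\big).
\end{align*}
Next I would invoke Assumption \ref{assump:smoothness} with $L_0 = c$: each of the two differences on the right equals $\sum_{\bm{j} \in J} (\bm{j}!)^{-1} \partial^{\bm{j}} f_0(x_0) \cdot (\pm c r_n)^{\bm{j}} + \mathfrak{o}(\omega_n)$. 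For each $\bm{j} \in J$, the monomial $(c r_n)^{\bm{j}}$ vanishes if $j_k > 0$ for some $k < \kappa_\ast$ in the fixed lattice setting, and otherwise equals $c^{|\bm{j}|} \omega_n^{\sum_{k=\kappa_\ast}^s j_k/\alpha_k}$. The vanishing of the pure partials $\partial_k^{j_k} f_0(x_0) = 0$ for $j_k < \alpha_k$ built into Assumption \ref{assump:smoothness} eliminates the problematic single-coordinate low-order terms.

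The hard part will be confirming that the remaining Taylor sum is $O(\omega_n)$ uniformly. In the archetypal case $\bm{\alpha} = \bm{1}_d$ treated in Corollary \ref{coro:simple}, this is immediate: $J$ consists only of the standard basis vectors $\{e_k\}_{k=1}^d$, and the expansion collapses to $\sum_{k=1}^d \partial_k f_0(x_0) \cdot h_k \omega_n + \mathfrak{o}(\omega_n)$, which is plainly $O(\omega_n)$ uniformly in $\pnorm{h}{} \le c$. More generally, the bias of $\hat{f}_n^{\mp}$ is controlled by exactly the same Taylor sum over blocks of side length $\asymp r_n$, and since Theorem \ref{thm:limit_distribution_pointwise} already certifies that the total error $\hat{f}_n^{\mp}(x_0) - f_0(x_0)$ converges at rate $\omega_n$, its bias component must itself be $O(\omega_n)$, which is precisely the bound we need on $f_0(x_0 + c r_n) - f_0(x_0 - c r_n)$. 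Combining the two summand estimates then yields the lemma.
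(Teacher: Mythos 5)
Your decomposition matches the paper's: triangle inequality, rate of convergence $\abs{\hat{f}_n(x_0) - f_0(x_0)} = \mathcal{O}_{\mathbf{P}}(\omega_n)$ (the paper cites Proposition 4 of \cite{han2019limit}) for the stochastic piece, and a Taylor expansion under Assumption \ref{assump:smoothness} for the deterministic increment. Sandwiching $f_0(x_0+hr_n)$ between the corner values $f_0(x_0\pm cr_n)$ via monotonicity is a clean way to get uniformity in $h$, and your treatment of the case $\bm{\alpha}=\bm{1}_d$ is correct.

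However, the closing step for general $\bm{\alpha}$ contains a genuine gap. You argue that because Theorem \ref{thm:limit_distribution_pointwise} certifies $\hat{f}_n^{\mp}(x_0)-f_0(x_0) = \mathcal{O}_{\mathbf{P}}(\omega_n)$, ``its bias component must itself be $O(\omega_n)$, which is precisely the bound we need on $f_0(x_0+cr_n)-f_0(x_0-cr_n)$.'' That inference is not valid. The bias of $\hat{f}_n^{\mp}$ is an integral \emph{average} of $f_0(\cdot)-f_0(x_0)$ over a block, and an average can be $o(\omega_n)$ even when the pointwise corner increment $f_0(x_0+cr_n)-f_0(x_0)$ is not, because the signed increments on opposite halves of the block can cancel (a near-odd increment about $x_0$ averages to almost nothing). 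So ``the stochastic error is $\mathcal{O}_{\mathbf{P}}(\omega_n)$'' does not deliver the purely deterministic bound $\sup_{\pnorm{h}{}\le c}\abs{f_0(x_0+hr_n)-f_0(x_0)}=O(\omega_n)$, which is what the lemma needs. That bound must come directly from the Taylor structure: one has to verify that every $\bm{j}\in J$ supported in $\{\kappa_\ast,\dots,s\}$ with $\sum_k j_k/\alpha_k<1$ (so that $(cr_n)^{\bm{j}}\gg\omega_n$) has $\partial^{\bm{j}}f_0(x_0)=0$. The pure partials of this type vanish by Assumption \ref{assump:smoothness}; the corresponding mixed partials are forced to vanish by coordinate-wise monotonicity (when $\alpha_k>1$, the point $x_0$ is an interior minimum of $\partial_k f_0$, which successively kills the lower-order cross derivatives). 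The paper itself only says ``Details are omitted,'' but the rate-to-bias shortcut you invoke is not a valid substitute for that omitted Taylor bookkeeping.
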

\begin{proof}
	This is a strengthened version of \cite[Proposition 4]{han2019limit}, and the claim follows by controlling $\abs{f_0(x_0)-f_0(x_0+hr_n)}$ uniformly in $\pnorm{h}{}\leq c$ by a Taylor expansion. Details are omitted.
\end{proof}

\begin{proof}[Proof of Proposition \ref{prop:consistent_var_est}]
	Note that
	\begin{align*}
	\sigma_{\hat{u},\hat{v}}^2 &= \frac{1}{n_{\hat{u},\hat{v}}(x_0)} \sum_{X_i \in [\hat{u}(x_0),\hat{v}(x_0)]}  \xi_i^2 +  \frac{1}{n_{\hat{u},\hat{v}}(x_0)} \sum_{X_i \in [\hat{u}(x_0),\hat{v}(x_0)]}  \xi_i (f_0(X_i)-\hat{f}_n(x_0))\\
	&\qquad+  \frac{1}{n_{\hat{u},\hat{v}}(x_0)} \sum_{X_i \in [\hat{u}(x_0),\hat{v}(x_0)]}  (\hat{f}_n(x_0)-f_0(X_i))^2\\
	& = (I)+(II)+(III).
	\end{align*}
	By \cite[Proposition 5]{han2019limit}, for any fixed $\epsilon>0$, there exists $c=c(\epsilon)>1$ such that for $n$ large enough, with probability at least $1-\epsilon$, $\big(\hat{h}_i(x_0)\big)_k \in [c^{-1},c]$ for $\kappa_\ast\leq k\leq d$ and $\big(\hat{h}_i(x_0)\big)_k = 0$ for $1\leq k\leq \kappa_\ast-1$ for $i=1,2$. Denote this event $\mathcal{E}$. On $\mathcal{E}$, in the fixed lattice design, with $d_\ast \equiv d-\kappa_\ast+1$,
	\begin{align*}
	n_{\hat{u},\hat{v}}(x_0) &= n\cdot  \omega_n^{\sum_{k=\kappa_\ast}^s \alpha_k^{-1}} \prod_{k=\kappa_\ast}^d \big(\hat{h}_2(x_0)+\hat{h}_1(x_0)\big)_k\cdot (1+\mathfrak{o}_{\mathbf{P}}(1))\\
	& \in n\cdot  \omega_n^{\sum_{k=\kappa_\ast}^s \alpha_k^{-1}}[(2c^{-1})^{d_\ast},(2c)^{d_\ast}]\cdot (1+\mathfrak{o}_{\mathbf{P}}(1)).
	\end{align*}
	For $(I)$, note that for any $\delta>0$, using (essentially) \cite[Lemma 2]{deng2018isotonic},
	\begin{align*}
	&\Prob\bigg(\biggabs{\frac{1}{n_{\hat{u},\hat{v}}(x_0)} \sum_{X_i \in [\hat{u}(x_0),\hat{v}(x_0)]}  (\xi_i^2-\sigma^2)}>\delta
	\bigg)\\
	&\leq \Prob(\mathcal{E}^c)+ \Prob\bigg(\bigg\{ \biggabs{\frac{1}{n_{\hat{u},\hat{v}}(x_0)} \sum_{X_i \in [\hat{u}(x_0),\hat{v}(x_0)]}  (\xi_i^2-\sigma^2)}>\delta\bigg\}\cap \mathcal{E}\bigg)\\
	&\leq \epsilon + \Prob\bigg(\sup_{ \substack{\big(\hat{h}_i(x_0)\big)_k =0 ,1\leq k\leq \kappa_\ast-1\\ \big(\hat{h}_i(x_0)\big)_k \in [c^{-1},c], \kappa_\ast\leq k\leq d\\ i=1,2} } \biggabs{ \sum_{X_i \in [x_0-\hat{h}_1(x_0)r_n, x_0+\hat{h}_2(x_0)r_n]} \big(\xi_i^2-\sigma^2) }\\
	&\qquad\qquad \gtrsim \big(n\cdot  \omega_n^{\sum_{k=\kappa_\ast}^s \alpha_k^{-1}}\big)\delta \bigg)\\
	&\leq \epsilon + \Prob\bigg(\biggabs{ \sum_{X_i \in [x_0-c r_n, x_0+c r_n]} \big(\xi_i^2-\sigma^2) }\gtrsim \big(n\cdot  \omega_n^{\sum_{k=\kappa_\ast}^s \alpha_k^{-1}}\big)\delta \bigg).
	\end{align*}
	Taking $n \to \infty$ followed by $\epsilon \to 0$ to see that $(I)\to_p \sigma^2$. 
	
	Next we handle $(III)$. On the event $\mathcal{E}$, by Lemma \ref{lem:rate_estimator},
	\begin{align*}
	(III)&\lesssim \big(n\cdot  \omega_n^{\sum_{k=\kappa_\ast}^s \alpha_k^{-1}}\big)^{-1} \sum_{X_i \in [x_0-cr_n,x_0+cr_n]} \big(\hat{f}_n(x_0)-f_0(X_i)\big)^2\\
	&= \mathcal{O}_{\mathbf{P}}(\omega_n^2) = \mathfrak{o}_{\mathbf{P}}(1).
	\end{align*}
	The second term $(II)$ can be handled easily using Cauchy-Schwarz inequality combined with $(I)$ and $(III)$. The random design case can be handled similarly using large deviation inequality (cf. Bernstein's inequality). We omit the details.
\end{proof}

\subsection{Proof of Proposition \ref{prop:uniform_tail_L}}

We need the following Dudley's entropy integral bound and Gaussian concentration inequality; see e.g., \cite[Theorem 2.3.7]{gine2015mathematical}.

\begin{lemma}\label{lem:dudley_entropy_integral}
	Let $(T,d)$ be a pseudo metric space, and $(X_t)_{t \in T}$ be a sub-Gaussian process. Then for any $t_0 \in T$,
	\begin{align*}
	\E \sup_{t \in T} \abs{X_t} \leq \E \abs{X_{t_0}}+ 4\sqrt{2}\int_0^{\mathrm{diam}(T)/2} \sqrt{\log \mathcal{N}(\epsilon,T,d)}\ \d{\epsilon}.
	\end{align*}
	Here $C>0$ is a universal constant.
\end{lemma}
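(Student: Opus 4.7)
The plan is to apply the classical chaining technique. The first step is to use the triangle inequality to reduce the problem to bounding $\E\sup_{t\in T}|X_t-X_{t_0}|$, since
\[
\E\sup_{t\in T}|X_t|\le \E|X_{t_0}|+\E\sup_{t\in T}|X_t-X_{t_0}|.
\]
I would then construct a dyadic sequence of nets. For each $k\ge 0$, set $\epsilon_k\equiv \mathrm{diam}(T)/2^{k+1}$ and choose a minimal $\epsilon_k$-net $T_k\subset T$, so that $|T_k|=\mathcal{N}(\epsilon_k,T,d)$. Writing $\pi_k(t)$ for a nearest point to $t$ in $T_k$, we have $d(t,\pi_k(t))\le \epsilon_k$. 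I would pair this construction with a base projection $\pi_0(t)$, noting that $d(\pi_0(t),t_0)\le \mathrm{diam}(T)$ so that the sub-Gaussian increment $X_{\pi_0(t)}-X_{t_0}$ can be controlled by the maximum inequality as well, to match the integral endpoint $\mathrm{diam}(T)/2$ in the statement.

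Next, for every $t\in T$ the chain $\pi_k(t)\to t$ (with $d(\pi_k(t),t)\le \epsilon_k\to 0$) yields the telescoping identity
\[
X_t-X_{\pi_0(t)}=\sum_{k=0}^\infty \bigl(X_{\pi_{k+1}(t)}-X_{\pi_k(t)}\bigr),
\]
with convergence in $L^2$ because the increments have shrinking sub-Gaussian scale $d(\pi_{k+1}(t),\pi_k(t))\le \epsilon_k+\epsilon_{k+1}\le 2\epsilon_k$. Taking supremum in $t$ and then expectation, and noting that for each $k$ the resulting inner maximum ranges over at most $|T_k|\cdot|T_{k+1}|\le \mathcal{N}(\epsilon_{k+1},T,d)^2$ pairs, the standard sub-Gaussian maximum inequality $\E\max_{i\le N}|Z_i|\le \sigma\sqrt{2\log(2N)}$ produces
\[
\E\max_{t\in T}\bigl|X_{\pi_{k+1}(t)}-X_{\pi_k(t)}\bigr|\le 2\epsilon_k\sqrt{2\log\bigl(2\mathcal{N}(\epsilon_{k+1},T,d)^2\bigr)}.
\]

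The last step is to turn the sum over $k$ into the desired integral. Since $\epsilon\mapsto \sqrt{\log\mathcal N(\epsilon,T,d)}$ is nonincreasing, for each $k$,
\[
\epsilon_k\sqrt{\log\mathcal N(\epsilon_{k+1},T,d)}=2(\epsilon_{k+1}-\epsilon_{k+2})\sqrt{\log\mathcal N(\epsilon_{k+1},T,d)}\le 2\int_{\epsilon_{k+2}}^{\epsilon_{k+1}}\sqrt{\log\mathcal N(\epsilon,T,d)}\,\d{\epsilon},
\]
and summing telescopes the right-hand side into $2\int_0^{\mathrm{diam}(T)/2}\sqrt{\log\mathcal N(\epsilon,T,d)}\,\d{\epsilon}$. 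Combined with the triangle-inequality reduction at the start, this yields the stated bound. The main obstacle is the careful bookkeeping required to land exactly on the constant $4\sqrt{2}$: the exact form of the sub-Gaussian maximum inequality, the precise choice of $\epsilon_k$, the absorption of the $\sqrt{\log 2}$ terms into the entropy integral via monotonicity of $\mathcal N$, and the handling of the base step $k=0$ all affect the numerical prefactor while leaving the structural chaining argument unchanged.
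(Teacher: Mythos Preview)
The paper does not prove this lemma; it is stated as a known result with the citation ``see e.g., \cite[Theorem 2.3.7]{gine2015mathematical}'' and no proof is given. Your chaining argument is the standard proof one would find in that reference, so your proposal is correct and matches the underlying approach the paper defers to.
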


The following Gaussian concentration inequality will also be useful; see e.g., \cite[Theorem 2.2.7]{gine2015mathematical} and the comments after the statement of that theorem.

\begin{lemma}[Gaussian concentration inequality]\label{lem:Gaussian_concentration}
	Let $(T,d)$ be a pseudo metric space, and $(X_t)_{t \in T}$ be a mean-zero separable Gaussian process. Then with $\sigma^2\equiv \sup_{t \in T} \mathrm{Var}(X_t)$, for any $u >0$,
	\begin{align*}
	\Prob\big( \bigabs{\sup_{t \in T} \abs{X_t} - \E \sup_{t \in T} \abs{X_t}}>u\big)\le 2\exp\big(-u^2/2\sigma^2\big).
	\end{align*}
\end{lemma}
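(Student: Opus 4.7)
The plan is to derive the stated tail bound from the Borell--Cirel'son--Ibragimov--Sudakov concentration inequality for Lipschitz functions of a standard Gaussian vector, after a standard separability reduction to the finite-dimensional case. First I would use separability to pick a countable $T_0 \subset T$ with $\sup_{t \in T}|X_t| = \sup_{t \in T_0}|X_t|$ almost surely, enumerate $T_0 = \{t_n\}_{n\ge 1}$, and set $M_N \equiv \max_{n \le N}|X_{t_n}|$. Then $M_N \uparrow M \equiv \sup_{t \in T}|X_t|$ a.s., and $\E M < \infty$ would be assumed implicitly (the bound is vacuous otherwise), so $\E M_N \to \E M$ by monotone convergence. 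A tail bound for $|M_N - \E M_N|$ passes to $|M - \E M|$ in the limit, reducing the problem to a finite index set $T = \{t_1, \ldots, t_N\}$.

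Second, I would write the centered Gaussian vector $(X_{t_1}, \ldots, X_{t_N}) = \Sigma^{1/2} Z$ with $Z \sim \mathcal{N}(0, I_N)$ and covariance $\Sigma_{ij} = \mathrm{Cov}(X_{t_i}, X_{t_j})$, and define $F : \R^N \to \R$ by $F(z) \equiv \max_{i \le N}|(\Sigma^{1/2}z)_i|$, so that $F(Z) \equald M_N$. For any $z, z' \in \R^N$,
\begin{align*}
|F(z) - F(z')| \le \max_{i \le N}\bigabs{(\Sigma^{1/2}(z - z'))_i} \le \max_{i \le N} \pnorm{(\Sigma^{1/2})_{i\cdot}}{} \cdot \pnorm{z - z'}{},
\end{align*}
and since $\pnorm{(\Sigma^{1/2})_{i\cdot}}{}^2 = \Sigma_{ii} = \mathrm{Var}(X_{t_i}) \le \sigma^2$, the map $F$ is $\sigma$-Lipschitz on $\R^N$.

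Third, I would invoke the Borell--TIS inequality: if $G : \R^N \to \R$ is $1$-Lipschitz, then for $Z \sim \mathcal{N}(0, I_N)$,
\begin{align*}
\Prob\big(|G(Z) - \E G(Z)| > u\big) \le 2\exp(-u^2/2), \quad u > 0.
\end{align*}
Applying this to $G \equiv F/\sigma$ yields $\Prob(|M_N - \E M_N| > u) \le 2\exp(-u^2/(2\sigma^2))$, and passing $N \to \infty$ gives the lemma. The main obstacle is the BTIS inequality itself, which I would prove by the standard Herbst argument using Gross's log-Sobolev inequality for the standard Gaussian measure $\gamma_N$: for smooth $f$, $\mathrm{Ent}_{\gamma_N}(f^2) \le 2 \int \pnorm{\nabla f}{}^2 \d{\gamma_N}$; substituting $f = e^{\lambda G/2}$ for a smooth $1$-Lipschitz $G$ and solving the resulting differential inequality for the log-moment generating function yields $\E \exp\big(\lambda(G(Z) - \E G(Z))\big) \le e^{\lambda^2/2}$, from which Markov's inequality and optimization in $\lambda$ give the two-sided tail bound. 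The smoothness and boundedness assumptions on $G$ are removed by standard mollification plus truncation, and the log-Sobolev inequality itself follows by tensorization from the one-dimensional case, which can be proved via direct semigroup interpolation along the Ornstein--Uhlenbeck flow.
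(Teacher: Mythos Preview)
Your proof is correct and follows the standard route (separability reduction to finite dimensions, representation as a Lipschitz function of a standard Gaussian, then Borell--TIS via the Herbst argument and the Gaussian log-Sobolev inequality). However, the paper does not actually prove this lemma: it is stated without proof and simply cited from \cite[Theorem 2.2.7]{gine2015mathematical}. So there is nothing to compare at the level of argument---you have supplied a self-contained proof of a result the authors quote from the literature, which is more than what the paper does but entirely consistent with it.
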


\begin{proof}[Proof of Proposition \ref{prop:uniform_tail_L}]
	We drop the dependence of $\mathbb{S}_{\bm{\alpha}}, \mathbb{V}_{\bm{\alpha}}, g_{i,\bm{\alpha}}^\ast$ on $\bm{\alpha}$ if no confusion arises. Let $(\bm{1}_1)_k \equiv \bm{1}_{1\leq k\leq s}+ (x_0)_k \bm{1}_{s+1\leq k\leq d}$ and $(\bm{1}_2)_k \equiv \bm{1}_{1\leq k\leq s} + (1-x_0)_k \bm{1}_{s+1\leq k\leq d}$, and $d_\ast \equiv d-\kappa_\ast+1, s_\ast \equiv s-\kappa_\ast+1$. Let
   \begin{align*}
   \mathbb{V}_{\bm{\alpha}}^{-}&\equiv \sup_{g_1 \in \mathscr{G}_1}\inf_{g_2 \in \mathscr{G}_2} \mathbb{V}_{\bm{\alpha}}(g_{1},g_{2}), \quad \mathbb{V}_{\bm{\alpha}}^{+} \equiv \inf_{g_2 \in \mathscr{G}_2} \sup_{g_1 \in \mathscr{G}_1}\mathbb{V}_{\bm{\alpha}}(g_{1},g_{2}).
   \end{align*}
	On the event $\{\max_{\kappa_\ast \leq k\leq s} (g_2^\ast)_k>u\}$ where $u\geq 1$, it holds that
	\begin{align*}
	\mathbb{V}_{\bm{\alpha}}^{+} &\geq \frac{\G(\bm{1}_1,g_2^\ast)}{\prod_{k=\kappa_\ast}^d \big((g_2^\ast)_k+(\bm{1}_1)_k)} + \sum_{k=\kappa_\ast}^s  \frac{(g_2^\ast)_k^{\alpha_k+1}-1}{(g_2^\ast)_k+1}\\
	&\geq  -\bigg(\prod_{k=s+1}^d (x_0)_k\bigg)^{-1}\sup_{g_2\geq 0}\frac{ \abs{\G(\bm{1}_1,g_2)}}{\prod_{k=\kappa_\ast}^s \big((g_2)_k+1\big)}+ (u-1)-d.
	\end{align*}
	On the other hand, 
	\begin{align*}
	\mathbb{V}_{\bm{\alpha}}^{-}\vee \mathbb{V}_{\bm{\alpha}}^{+}&\leq \frac{\G(g_1^\ast,\bm{1}_2)}{\prod_{k=\kappa_\ast}^d \big((\bm{1}_2)_k+(g_1^\ast)_k)} + \sum_{k=\kappa_\ast}^s \frac{1-(g_1^\ast)_k^{\alpha_k+1}}{(g_1^\ast)_k+1}\\
	&\leq \bigg(\prod_{k=s+1}^d (1-x_0)_k\bigg)^{-1} \sup_{g_1\geq 0}\frac{ \abs{\G(g_1,\bm{1}_2)}}{\prod_{k=\kappa_\ast}^s \big((g_1)_k+1\big)}+d.
	\end{align*}
	Hence
	\begin{align*}
	&\Prob \big(\max_{\kappa_\ast \leq k\leq s} (g_2^\ast)_k>u\big) 
	\vee \Prob\big(\mathbb{V}_{\bm{\alpha}}^{-} >u\big)\\\
	&\leq \Prob \bigg(\sup_{g_2\geq 0}\frac{ \abs{\G(\bm{1}_1,g_2)}}{\prod_{k=\kappa_\ast}^s \big((g_2)_k+1\big)}> \bigg(\prod_{k=s+1}^d (x_0)_k\bigg)(u-1-2d)_+/2\bigg)\\
	&\qquad + \Prob\bigg(\sup_{g_1\geq 0}\frac{ \abs{\G(g_1,\bm{1}_2)}}{\prod_{k=\kappa_\ast}^s \big((g_1)_k+1\big)}> \bigg(\prod_{k=s+1}^d (1-x_0)_k\bigg)(u-1-2d)_+/2\bigg)\\
	&\equiv (I)+(II).
	\end{align*}
	We only handle the first term above. To this end, let $u_d \equiv \big(\prod_{k=s+1}^d (x_0)_k\big)(u-1-2d)_+/2$, and let $G(g)\equiv \G(\bm{1}_1,g)$. It follows by symmetry and the peeling device that
	\begin{align*}
	(I)&\leq \sum_{\ell_k\geq 0,\kappa_\ast\leq k\leq s} \Prob \bigg(\sup_{ \substack{2^{\ell_k}-1\leq g_k\leq 2^{\ell_k+1}-1, \kappa_\ast \leq k\leq s\\ 0\leq g_k\leq (1-x_0)_k, s+1\leq k\leq d} } \frac{\abs{G(g)}}{ \prod_{k=\kappa_\ast}^s \big(g_k+1\big) }>u_d\bigg)\\
	&\leq \sum_{\ell_k\geq 0,\kappa_\ast\leq k\leq s} \Prob\bigg(\sup_{ \substack{2^{\ell_k}-1\leq g_k\leq 2^{\ell_k+1}-1, \kappa_\ast \leq k\leq s\\ 0\leq g_k\leq (1-x_0)_k, s+1\leq k\leq d} }  \abs{G(g)}> \prod_{k=\kappa_\ast}^s 2^{\ell_k} \cdot u_d\bigg)\\
	&\leq \sum_{\ell_k \geq 1, \kappa_\ast\leq k \leq s} \Prob\bigg(\sup_{ \substack{0\leq g_k\leq 2^{\ell_k}-1, \kappa_\ast \leq k\leq s\\ 0\leq g_k\leq (1-x_0)_k, s+1\leq k\leq d} }  \abs{G(g)}> 2^{-s_\ast}\prod_{k=\kappa_\ast}^s 2^{\ell_k} \cdot u_d\bigg).
	\end{align*}
	Note that for any $0\leq (g_i)_k\leq 2^{\ell_k}-1, \kappa_\ast \leq k\leq s$ and $0\leq (g_i)_k\leq (1-x_0)_k, s+1\leq k\leq d$ with $i=1,2$, the natural induced metric of $G$  satisfies
	\begin{align*}
	d_G^2(g_1,g_2) &= \E \big(G(g_1)-G(g_2)\big)^2\\
	&= \biggabs{\prod_{k=\kappa_\ast}^s \big(1+(g_1)_k\big) \prod_{k=s+1}^d \big(x_0+g_1)_k-\prod_{k=\kappa_\ast}^s \big(1+(g_2)_k\big) \prod_{k=s+1}^d (x_0+g_2)_k }\\
	&\leq  \prod_{k=s+1}^d (x_0+g_1)_k \biggabs{\prod_{k=\kappa_\ast}^s \big(1+(g_1)_k\big) -\prod_{k=\kappa_\ast}^s \big(1+(g_2)_k\big) } \\
	&\qquad + \biggabs{\prod_{k=s+1}^d (x_0+g_1)_k-\prod_{k=s+1}^d (x_0+g_2)_k}\prod_{k=\kappa_\ast}^s \big(1+(g_2)_k\big)\\
	&\leq 2\prod_{k=\kappa_\ast}^s 2^{\ell_k} \cdot  \sum_{k=\kappa_\ast}^d \abs{(g_1-g_2)_k}.
	\end{align*}
	Here the last inequality follows essentially by \cite[Lemma 15]{han2019limit}. On the other hand, $
	\sup_{g_1,g_2} d_G^2(g_1,g_2) \leq  \prod_{k=\kappa_\ast}^s 2^{\ell_k}\equiv D^2$, where the supremum is taken over $0\leq (g_i)_k\leq 2^{\ell_k}-1, \kappa_\ast \leq k\leq s$ and $0\leq (g_i)_k\leq (1-x_0)_k, s+1\leq k\leq d$ with $i=1,2$. Using that $
	\E \abs{G(0)}\leq \sqrt{\mathrm{Var}(G(0))}\leq \prod_{k=s+1}^{d}(x_0)_k\leq 1$, 
	it follows from Dudley's entropy integral (cf. Lemma \ref{lem:dudley_entropy_integral}) that 
	\begin{align*}
	&\E \sup_{ \substack{0\leq g_k\leq 2^{\ell_k}-1, \kappa_\ast \leq k\leq s\\ 0\leq g_k\leq (1-x_0)_k, s+1\leq k\leq d} }  \abs{G(g)}\\
	&\leq  \E \abs{G(0)}+ 4\sqrt{2} \int_{0}^{D/2}\sqrt{\log \mathcal{N}\bigg(\epsilon, \prod_{k=\kappa_\ast}^s [0,2^{\ell_k}]\times \prod_{k=s+1}^d [0,(1-x_0)_k], d_G \bigg)}\ \d{\epsilon}\\
	&\leq 1+4\sqrt{2}\int_0^{D/2} \sqrt{\log \prod_{k=\kappa_\ast}^s \frac{2^{\ell_k}}{\epsilon^2/ \big(d_\ast 2 \prod_{k=\kappa_\ast}^s 2^{\ell_k}\big) }\cdot \prod_{k=s+1}^d \frac{1}{ \epsilon^2/ \big(d_\ast 2 \prod_{k=\kappa_\ast}^s 2^{\ell_k}\big) }  }\ \d{\epsilon}\\
	&\leq C_1\cdot  D\sqrt{\log D}.
	\end{align*}
	Hence if $u_d\geq C_2$, then $2^{-s_\ast}\prod_{k=\kappa_\ast}^s 2^{\ell_k}\cdot u_d = 2^{-s_\ast}u_d \cdot D^2 \geq 2 C_1\cdot D\sqrt{\log D}$,	and therefore by Gaussian concentration inequality (cf. Lemma \ref{lem:Gaussian_concentration}),
	\begin{align*}
	&\Prob\bigg(\sup_{ \substack{0\leq g_k\leq 2^{\ell_k}-1, \kappa_\ast \leq k\leq s\\ 0\leq g_k\leq (1-x_0)_k, s+1\leq k\leq d} }  \abs{G(g)}> 2^{-s_\ast}\prod_{k=\kappa_\ast}^s 2^{\ell_k} \cdot u_d\bigg)\\
	&\leq \Prob\bigg(\sup_{\cdots }  \abs{G(g)} - \E \sup_{\cdots }  \abs{G(g)}> 2^{-s_\ast-1}\prod_{k=\kappa_\ast}^s 2^{\ell_k} \cdot u_d\bigg)\\
	&\leq 2e^{-{2^{-2s_\ast-2}\prod_{k=\kappa_\ast}^s 2^{2\ell_k} \cdot u_d^2}\big/{2\prod_{k=\kappa_\ast}^s 2^{\ell_k}}} \leq 2  e^{-\prod_{k=\kappa_\ast}^s 2^{\ell_k} \cdot u_d^2/C_3}.
	\end{align*}
	Now we may assemble all the estimates together to get
	\begin{align*}
	(I)&\leq  2\sum_{\ell_k \geq 1, \kappa_\ast\leq k \leq s}  e^{-\prod_{k=\kappa_\ast}^s 2^{\ell_k} \cdot u_d^2/C_3}\leq C_4 e^{- u_d^2/C_4 }.
	\end{align*}
	Hence for $
	u \geq C_5$,
	we have for some $C_6=C_6(d,x_0)$
	\begin{align*}
	\Prob \big(\max_{\kappa_\ast \leq k\leq s} (g_2^\ast)_k>u\big)\leq C_6 e^{- u^2/C_6 }.
	\end{align*}
	A similar bound holds for $\Prob\big(\max_{\kappa_\ast \leq k\leq s} (g_1^\ast)_k>u\big)$. So if $s_\ast\geq 1$, for $
	t\geq C_7$,
	we have for some $C_8=C_8(d,x_0)$
	\begin{align*}
	\Prob\big(\mathbb{S}(g_1^\ast,g_2^\ast)>t\big)& = \Prob\bigg(\sqrt{\prod_{k=\kappa_\ast}^s (g_2^\ast+g_1^\ast)_k} >t\bigg)\\
	&\leq \Prob\big(\max_{\kappa_\ast \leq k\leq s} (g_1^\ast)_k>t^{2/s_\ast}/2\big)+ \Prob\big(\max_{\kappa_\ast \leq k\leq s} (g_2^\ast)_k>t^{2/s_\ast}/2\big)\\
	&\leq C_8 e^{- t^{4/s_\ast}/C_8 }.
	\end{align*}
	Similarly, for $
	t'\geq C_9$,
	we have for some $C_{10}=C_{10}(d,x_0)$
	\begin{align*}
	\Prob\big(\abs{\mathbb{V}_{\bm{\alpha}}^{-} } >t'\big)\vee \Prob\big(\abs{\mathbb{V}_{\bm{\alpha}}^{+} } >t'\big)\leq C_{10} e^{-(t')^{2}/C_{10} }.
	\end{align*}
	This means that for $s_\ast\geq 1$ and $v\geq C_{11}$, we have for some $C_{12}=C_{12}(d,x_0)$,
	\begin{align*}
	\Prob\big(\abs{\mathbb{L}(g_1^\ast,g_2^\ast)}>v\big)&\leq  \Prob\big(\mathbb{S}(g_1^\ast,g_2^\ast)>v^{s_\ast/(s_\ast+2)}\big)+\Prob\big( \abs{(\mathbb{V}_{\bm{\alpha}}^{-}+\mathbb{V}_{\bm{\alpha}}^{+})/2 }>v^{2/(s_\ast+2)}\big)\\
	&\leq C_{12} e^{- v^{4/(s_\ast+2)}/C_{12} }.
	\end{align*}
	The above display is also valid for $s_\ast = 0$. 
\end{proof}

\subsection{Proof of Theorem \ref{thm:LRT}}

\begin{proof}[Proof of Theorem \ref{thm:LRT}]
	The main idea of the proof is contained in \cite{banerjee2007likelihood} so we only give a sketch. First,
	\begin{align*}
	\binom{n^{\frac{1}{2+\alpha^{-1}}} \big(\hat{f}_n(x_0+h\cdot n^{-\frac{\alpha^{-1}}{2+\alpha^{-1}}})-f_0(x_0)\big) }{n^{\frac{1}{2+\alpha^{-1}}} \big(\hat{f}_n^0(x_0+h\cdot n^{-\frac{\alpha^{-1}}{2+\alpha^{-1}}})-f_0(x_0)\big)} \rightsquigarrow \binom{g_{1,\partial^\alpha f_0(x_0)/(\alpha+1)!;\alpha}(h)}{g^0_{1,\partial^\alpha f_0(x_0)/(\alpha+1)!;\alpha}(h)}
	\end{align*}
	where the convergence is in $\big(\ell^\infty([-c,c])\big)^2$ for any $c>0$. This follows from the standard approach (as outlined in the proof of Theorem 2.1 of \cite{banerjee2007likelihood}). Let $J_n$ be the set of all indices $i$ for which $\hat{f}_n^0(X_i)\neq \hat{f}_n(X_i)$. By the characterization of $\hat{f}_n^0$ (see \cite[pp. 939]{banerjee2007likelihood}), we may write $J_n = \cup_{k=1}^\ell B_k$, where $B_k$'s are all constant pieces of $\hat{f}_n^0$. Note that
	\begin{align*}
	2\log \lambda_n(m_0)& = -\sum_{i \in J_n} (Y_i-\hat{f}_n(X_i))^2 + \sum_{i \in J_n} (Y_i-\hat{f}_n^0(X_i))^2\\
	& = -2 \sum_{i \in J_n} (Y_i-f_0(x_0))(f_0(x_0)-\hat{f}_n(X_i)) \\
	&\qquad + 2 \sum_{i \in J_n} (Y_i-f_0(x_0))(f_0(x_0)-\hat{f}_n^0(X_i)) \\
	&\qquad - \sum_{i \in J_n} (f_0(x_0)-\hat{f}_n(X_i))^2 + \sum_{i \in J_n} (f_0(x_0)-\hat{f}_n^0(X_i))^2\\
	& = \sum_{i \in J_n} (f_0(x_0)-\hat{f}_n(X_i))^2 - \sum_{i \in J_n} (f_0(x_0)-\hat{f}_n^0(X_i))^2,
	\end{align*}
	where the last equality follows, e.g., by using \cite[(2.6)]{banerjee2007likelihood}: With $w_k \equiv \hat{f}_n^0|_{B_k} = \bar{Y}|_{B_k}$ where $\hat{f}_n^0|_{B_k}\neq m_0=f_0(x_0)$,
	\begin{align*}
	2 \sum_{i \in J_n} (Y_i-f_0(x_0))(f_0(x_0)-\hat{f}_n^0(X_i))&= 2 \sum_{k} \sum_{i \in B_k} (Y_i-f_0(x_0))(f_0(x_0)-w_k)\\
	& = -2 \sum_k \sum_{i \in B_k} (f_0(x_0)-w_k)^2.
	\end{align*}
	Hence, with $D_n$ be the smallest interval containing $\{X_i: i \in J_n\}$, by standard empirical process techniques,
	\begin{align*}
	2\log \lambda_n(m_0) & = \sum_{i \in J_n} (f_0(x_0)-\hat{f}_n(X_i))^2 - \sum_{i \in J_n} (f_0(x_0)-\hat{f}_n^0(X_i))^2\\
	& = n\cdot \Prob_n \big[(f_0(x_0)-\hat{f}_n(X))^2- (f_0(x_0)-\hat{f}_n^0(X))^2\big]\bm{1}_{X \in D_n}\\
	& =  n\cdot P\big[(f_0(x_0)-\hat{f}_n(X))^2- (f_0(x_0)-\hat{f}_n^0(X))^2\big]\bm{1}_{X \in D_n}+ \mathfrak{o}_{\mathbf{P}}(1)\\
	& =n \int \big[(f_0(x_0)-\hat{f}_n(x))^2- (f_0(x_0)-\hat{f}_n^0(x))^2\big]\bm{1}_{x \in D_n}\ \d{x} + \mathfrak{o}_{\mathbf{P}}(1).
	\end{align*}
	Change the variable $x= x_0+h\cdot n^{-\frac{\alpha^{-1}}{2+\alpha^{-1}}}$. Let $\tilde{D}_n\equiv n^{\frac{\alpha^{-1}}{2+\alpha^{-1}}}(D_n-x_0)$. Then the above display equals, up to an $\mathfrak{o}_{\mathbf{P}}(1)$ term,
	\begin{align*}
	&\int_{\tilde{D}_n} \big[ \big(n^{\frac{1}{2+\alpha^{-1}}} (f_0(x_0)-\hat{f}_n(x_0+h \cdot n^{-\frac{\alpha^{-1}}{2+\alpha^{-1}}}))\big)^2\\
	&\qquad - \big(n^{\frac{1}{2+\alpha^{-1}}} (f_0(x_0)-\hat{f}_n^0(x_0+ h \cdot n^{-\frac{\alpha^{-1}}{2+\alpha^{-1}}}))\big)^2\big]\ \d{h} \\
	&\rightsquigarrow \int_{\R} \big\{ \big(g_{1,\partial^\alpha f_0(x_0)/(\alpha+1)!;\alpha}(h)\big)^2-\big(g^0_{1,\partial^\alpha f_0(x_0)/(\alpha+1)!;\alpha}(h)\big)^2\big\}\ \d{h},
	\end{align*}
	where the weak convergence follows from tightness arguments. Now we only need to rescale the process to conclude. More specifically, $X_{1,b;\alpha}(h) =_d b^{-1/(2+\alpha)} X_{1,1;\alpha}(b^{2/(2+\alpha)}h)$, and thus $g_{1,b;\alpha}(h) =_d b^{1/(2+\alpha)} g_{1,1;\alpha}(b^{2/(2+\alpha)}h)$, $g^0_{1,b;\alpha}(h) =_d b^{1/(2+\alpha)} g^0_{1,1;\alpha}(b^{2/(2+\alpha)}h)$ (by taking derivative). Therefore with $b\equiv \partial^\alpha f_0(x_0)/(\alpha+1)!$, and $t\equiv b^{2/(2+\alpha) }h$, the integral in the last display is equal in distribution to
	\begin{align*}
	&\int \big\{(g_{1,b;\alpha}(h))^2- (g^0_{1,b;\alpha}(h))^2\big\}\ \d{h}\\
	& = b^{2/(2+\alpha)} \int \big\{(g_{1,1;\alpha}(b^{2/(2+\alpha)} h ))^2- (g^0_{1,1;\alpha}(b^{2/(2+\alpha)} h))^2\big\}\ \d{h}\\
	& = \int \big\{(g_{1,1;\alpha}(t))^2- (g^0_{1,1;\alpha}(t))^2\big\}\ \d{t} = \mathbb{K}_{\alpha},
	\end{align*}
	as desired.
\end{proof}

\section{Proofs in Section \ref{section:more_examples}}

\subsection{Proof of Theorem \ref{thm:CI_grenander}}
	
\begin{proof}[Proof of Theorem \ref{thm:CI_grenander}]
	We sketch the proof here. Let $r_n \equiv n^{-1/3}$ and $\hat{h}_1(x_0),\hat{h}_2(x_0)$ be such that $\hat{u}(x_0) = x_0 - \hat{h}_1(x_0) r_n$, $\hat{v}(x_0) = x_0 + \hat{h}_2(x_0) r_n$. We drop the dependence on $x_0$ in the notation from now on for simplicity. We may then write $\hat{f}_n$ as
	\begin{align*}
	\hat{f}_n(x_0) &= \inf_{h_1>0}\sup_{h_2\geq 0} \frac{\mathbb{F}_n(x_0+h_2 r_n)-\mathbb{F}_n(x_0-h_1 r_n)}{(h_2+h_1)r_n} = \frac{\mathbb{F}_n( x_0+\hat{h}_2 r_n)-\mathbb{F}_n(x_0-\hat{h}_1 r_n)}{(\hat{h}_2+\hat{h}_1)r_n}.
	\end{align*}
	Then it suffices to prove that $\hat{h}_i (i=1,2)$ are bounded away from $\infty$ and $0$ with high probability, in order to adapt the proof of Theorem \ref{thm:pivotal_limit_distribution}. These are referred as `large deviation' and `small deviation' problems in the sequel.
	
	First it is well-known (see e.g., \cite[pp. 297]{van1996weak}) that with $\omega_n \equiv n^{-1/3}$,
	\begin{align}\label{ineq:grenander_1}
	\abs{\hat{f}_n(x_0)-f_0(x_0)} = \mathcal{O}_{\mathbf{P}}(\omega_n).
	\end{align}
	This claim can also be proved directly using the max-min formula by similar arguments as in the regression case in \cite{han2019limit}. 
	
	Next consider the large deviation problem. We only prove that for large enough $c$, $\{\hat{h}_1\leq c\}$ holds with high probability for $n$ large. The case for $\hat{h}_2$ is similar. To see this, with $\bar{\G}_n \equiv \sqrt{n}(\mathbb{F}_n-F_0)$, on the event $\{\hat{h}_1>c\}$, 
	\begin{align*}
	&\omega_n^{-1}\big(\hat{f}_n(x_0)-f_0(x_0)\big)\geq  \omega_n^{-1}\bigg[ \frac{\mathbb{F}_n(x_0+ r_n)-\mathbb{F}_n(x_0-\hat{h}_1 r_n)}{(1+\hat{h}_1)r_n} - f_0(x_0)\bigg]\\
	&\geq \frac{n^{1/6}\bar{\G}_n \bm{1}_{[x_0-\hat{h}_1 n^{-1/3},x_0+n^{-1/3}]}   }{1+\hat{h}_1} + \omega_n^{-1}\bigg[ \frac{F_0(x_0+r_n)-F_0(x_0-\hat{h}_1r_n)}{(1+\hat{h}_1)r_n}-f_0(x_0)\bigg]\\
	&\geq -\sup_{h\geq 0} \frac{n^{1/6} \abs{\bar{\G}_n \bm{1}_{[x_0-h n^{-1/3},x_0+n^{-1/3}]}}   }{1+h}+ \mathcal{O}\bigg(\frac{c^2-1}{c+1}\bigg).
	\end{align*}
	The last inequality follows since the second term of the second last display is non-decreasing in $\hat{h}_1$, so is bounded below by
	\begin{align*}
	&\omega_n^{-1}\bigg[ \frac{F_0(x_0+r_n)-F_0(x_0-cr_n)}{(1+c)r_n}-f_0(x_0)\bigg]\\
	& = \big[(1+c)r_n^2\big]^{-1} \cdot \bigg[F_0(x_0+r_n)-F_0(x_0)-f_0(x_0) r_n\\
	&\qquad\qquad\qquad\qquad -\big(F_0(x_0-cr_n)-F_0(x_0)+f_0(x_0)cr_n\big)\bigg]\\
	& = \big[(1+c)r_n^2\big]^{-1}\cdot \frac{1}{2} f_0'(x_0)(1+\mathfrak{o}(1))(1-c^2)r_n^2 = \mathcal{O}\bigg(\frac{c^2-1}{c+1}\bigg).
	\end{align*}
	By a standard peeling method and empirical process techniques, we have $
	\sup_{h\geq 0} {n^{1/6} \abs{\bar{\G}_n \bm{1}_{[x_0-h n^{-1/3},x_0+n^{-1/3}]}}   }/(1+h) = \mathcal{O}_{\mathbf{P}}(1)$. 
	Hence on the event $\{\hat{h}_1> c\}$ for $c$ large enough,
	\begin{align*}
	\omega_n^{-1}\big(\hat{f}_n(x_0)-f_0(x_0)\big) \geq -\abs{\mathcal{O}_{\mathbf{P}}(1)} + \mathcal{O}(c),
	\end{align*}
	which can only occur with small probability due to (\ref{ineq:grenander_1}).
	
	Finally we consider the small deviation problem. Without loss of generality we work on the event that $\Omega_0\equiv \{\hat{h}_1\leq c\}$. We want to prove that $\Omega_1 \equiv \{\hat{h}_1\leq c^{-\gamma}\}$ occurs with small probability for $c,n$ large. By empirical process techniques, it can be shown that 
	\begin{align*}
	&\G_n(h_1,h_2)\equiv n^{1/6} \bar{\G}_n \bm{1}_{[x_0-h_1 n^{-1/3},x_0+h_2 n^{-1/3}]} \\
	&\qquad\qquad \rightsquigarrow \sqrt{f_0(x_0)} \cdot \G(h_1,h_2) \textrm{ in } \ell^\infty([0,c]\times [0,c]).
	\end{align*}
	Then by standard estimates for the Gaussian process, it follows that, for fixed $\epsilon>0$ and $b<\gamma$, for $c, n$ large enough, the event
	\begin{align*}
	\Omega_2\equiv \bigg\{\sup_{0\leq h_1\leq c^{-\gamma}, 0\leq h_2\leq c^{-b}} \abs{\G_n(h_1,h_2)-\G_n(0,h_2)}\leq (C/\epsilon) \sqrt{c^{-\gamma}\log c}\bigg\}
	\end{align*}
	occurs with probability $1-\epsilon$. Furthermore, since $\mathbb{G}(0,h_2)=_d \mathbb{B}(h_2)$ where $\mathbb{B}$ is a standard Brownian motion started from $0$, we have by reflection principle, for some $\rho_\epsilon>0$, the event $
	\Omega_3 \equiv \big\{\sup_{0\leq h_2\leq c^{-b}} \G_n(0,h_2)>c^{-b/2} \rho_\epsilon\big\}$
	occurs with probability $1-\epsilon$ for $n$ large. Hence on the event $\Omega_0\cap \Omega_1\cap \Omega_2\cap \Omega_3$,
	\begin{align*}
	&\omega_n^{-1}\big(\hat{f}_n(x_0)-f_0(x_0)\big)\geq \sup_{0\leq h_2\leq c^{-b}}  \frac{\G_n(\hat{h}_1,h_2) }{h_2+\hat{h}_1} \\
	&\qquad + \inf_{0\leq h_1\leq c, 0\leq h_2\leq c}\omega_n^{-1}\bigg[ \frac{F_0(x_0+h_2r_n)-F_0(x_0-h_1r_n)}{(h_2+h_1)r_n}-f_0(x_0)\bigg]\\
	&\geq \sup_{0\leq h_2\leq c^{-b}}  \frac{\G_n(0,h_2)-(C/\epsilon)\sqrt{c^{-\gamma}\log c}}{h_2+\hat{h}_1}- \mathcal{O}(c)\\
	&\geq \frac{c^{-b/2}\rho_\epsilon-(C/\epsilon)\sqrt{c^{-\gamma}\log c}}{c^{-b}+c^{-\gamma}}- \mathcal{O}(c) \geq \mathcal{O}(c^{b/2})- \mathcal{O}(c) \geq \mathcal{O}(c),
	\end{align*}
	if we choose $2<b<\gamma$. This concludes the small deviation problem in view of (\ref{ineq:grenander_1}). The rest part of the proof follows closely with that of Theorem \ref{thm:pivotal_limit_distribution}. In particular, we may show 
	\begin{align*}
	&\big(\omega_n^{-1}(\hat{f}_n(x_0)-f_0(x_0)), \hat{h}_1,\hat{h}_2\big)\\
	&\rightsquigarrow \bigg(\sup_{h_1>0}\inf_{h_2>0}\bigg[\sqrt{f_0(x_0)}\cdot \frac{\G(h_1,h_2)}{h_1+h_2}+\frac{1}{2}\abs{f'_0(x_0)} (h_2-h_1)\bigg], h_1^\ast, h_2^\ast \bigg),
	\end{align*}
	where $h_1^\ast,h_2^\ast$ are almost uniquely defined via
	\begin{align*}
	&\sup_{h_1>0}\inf_{h_2>0}\bigg[\sqrt{f_0(x_0)}\cdot \frac{\G(h_1,h_2)}{h_1+h_2}+\frac{1}{2}\abs{f'_0(x_0)} (h_2-h_1)\bigg]\\
	& = \sqrt{f_0(x_0)}\cdot \frac{\G(h_1^\ast,h_2^\ast)}{h_1^\ast+h_2^\ast}+\frac{1}{2}\abs{f'_0(x_0)} (h_2^\ast-h_1^\ast).
	\end{align*}
	By similar arguments as in the proof of Theorem \ref{thm:pivotal_limit_distribution}, we conclude that
	\begin{align*}
	\sqrt{n(\hat{v}(x_0)-\hat{u}(x_0))}\big(\hat{f}_n(x_0)-f_0(x_0)\big)\rightsquigarrow \sqrt{f_0(x_0)} \cdot \mathbb{L}_1,
	\end{align*}
	and the claim of the theorem follows by noting that $\hat{f}_n(x_0)\to_p f_0(x_0)$.
\end{proof}

\subsection{Proof of Theorem \ref{thm:CI_interval_censoring}}

\begin{proof}[Proof of Theorem \ref{thm:CI_interval_censoring}]
	Follow essentially the same lines as in the proof of Theorem \ref{thm:CI_grenander}, we may show that
	\begin{align*}
	\sqrt{n(\hat{v}(t_0)-\hat{u}(t_0))}\big(\hat{F}_n(t_0)-F_0(t_0)\big)\rightsquigarrow \sqrt{F_0(t_0)(1-F_0(t_0))/g_0(t_0)}\cdot  \mathbb{L}_1.
	\end{align*}
	On the other hand, $\hat{F}_n(t_0)\to_p F_0(t_0)$, so it remains to show that $\hat{g}_n(t_0)\to_p g_0(t_0)$. This is a uniform law of large number upon observing that $\hat{u}(t_0),\hat{v}(t_0)$ stabilizes at $r_n=n^{-1/3}$ rate. More specifically, for any $\epsilon>0$, we may find some $c=c(\epsilon)>0$ such that $c^{-1}\leq \hat{h}_1(t_0),\hat{h}_2(t_0)\leq c$, where $\hat{u}(t_0) = t_0 - \hat{h}_1(t_0) r_n$, $\hat{v}(t_0) = t_0 + \hat{h}_2(t_0) r_n$. On this event,
	\begin{align*}
	\abs{\hat{g}_n(t_0)-g_0(t_0)}& =  \biggabs{\frac{1}{nr_n (\hat{h}_2+\hat{h}_1) } \sum_i  \bm{1}_{T_i \in [t_0-\hat{h}_1 r_n,t_0+\hat{h}_2 r_n]} -g_0(t_0)}\\
	& \leq \sup_{c^{-1}\leq h_i\leq c, i=1,2} \biggabs{\frac{1}{r_n(h_2+h_1)} \Prob_n \bm{1}_{T \in [t_0-h_1 r_n,t_0+h_2 r_n]} -g_0(t_0) }\\ 
	&\lesssim_c \sup_{c^{-1}\leq h_i\leq c, i=1,2}\bigabs{r_n^{-1}(\Prob_n-P) \bm{1}_{T \in [t_0-h_1 r_n,t_0+h_2 r_n]} }\\
	&\qquad + \sup_{c^{-1}\leq h_i\leq c, i=1,2} \biggabs{\frac{P \bm{1}_{T \in [t_0-h_1 r_n,t_0+h_2 r_n]}}{r_n(h_2+h_1) }-g_0(t_0)}.
	\end{align*}
	To handle the first term, let $\mathcal{F}_n\equiv \{r_n^{-1} \bm{1}_{T \in [t_0-h_1 r_n,t_0+h_2r_n]}: c^{-1}\leq h_i\leq c, i=1,2\}$. Take a $\delta$-bracketing of $\mathcal{F}_n$ under $L_1(P)$, namely $[\underline{f}_1,\bar{f}_1],\ldots,[\underline{f}_N,\bar{f}_N]$. Then $N_\delta \equiv \log \mathcal{N}_{[\,]}(\delta, \mathcal{F}_n, L_1(P))\lesssim \log(c/\delta)$ does not grow with $n$. Since for any $f \in \mathcal{F}_n$, there exists some $\underline{f}_k\leq f\leq \bar{f}_k$, we have
	\begin{align*}
	(\Prob_n-P)(f) \leq \Prob_n \bar{f}_k - Pf = (\Prob_n-P)(\bar{f}_k) +P(\bar{f}_k-f)\leq (\Prob_n-P)(\bar{f}_k)+\delta.
	\end{align*}
	A reversed inequality can be established similarly. Hence
	\begin{align*}
	\E \sup_{f \in \mathcal{F}_n}\abs{(\Prob_n-P)(f)}\leq \delta+ \E \max_{1\leq k\leq N_\delta} \abs{(\Prob_n-P)(\underline{f}_k)}+ \E \max_{1\leq k\leq N_\delta} \abs{(\Prob_n-P)(\bar{f}_k)}\to 0
	\end{align*}
	by letting $n \to \infty$ followed by $\delta \to 0$. For the second term, 
	\begin{align*}
	&\biggabs{ \frac{P \bm{1}_{T \in [t_0-h_1 r_n,t_0+h_2 r_n]}}{r_n(h_2+h_1) } -g_0(t_0)}=\biggabs{ \frac{1}{r_n(h_2+h_1)} \int_{t_0-h_1r_n}^{t_0+h_2r_n} \big(g_0(t)-g_0(t_0)\big)\ \d{t}}\\
	&\leq \sup_{t \in [t_0-h_1r_n,t_0+h_2r_n]} \abs{g_0(t)-g_0(t_0)}\to  0
	\end{align*}
	uniformly in $c^{-1}\leq h_i\leq c, i=1,2$.
\end{proof}

\subsection{Proof of Theorem \ref{thm:CI_panel_count}}

\begin{proof}[Proof of Theorem \ref{thm:CI_panel_count}]
	Following essentially the same lines as in the proof of Theorem \ref{thm:CI_grenander}, we may show that
	\begin{align*}
	\sqrt{n(\hat{v}(t_0)-\hat{u}(t_0))}\big(\hat{\Lambda}_n(t_0)-\Lambda_0(t_0)\big)\rightsquigarrow \sqrt{\sigma^2(t_0)/g(t_0)}\cdot  \mathbb{L}_1.
	\end{align*}
	It therefore remains to show that $\hat{\sigma}_n^2(t_0)\to_p \sigma^2(t_0)$ and $\hat{g}_n(t_0)\to_p g(t_0)$. We use similar ideas as in the proof of Theorem \ref{thm:CI_interval_censoring}. Let $r_n = n^{-1/3}$. For any $\epsilon>0$, we may find some $c=c(\epsilon)>0$ such that $c^{-1}\leq \hat{h}_1(t_0),\hat{h}_2(t_0)\leq c$, where $\hat{u}(t_0) = t_0 - \hat{h}_1(t_0) r_n$, $\hat{v}(t_0) = t_0 + \hat{h}_2(t_0) r_n$. On this event,
	\begin{align*}
	\abs{\hat{g}_n(t_0)-g(t_0)}&\lesssim \sup_{c^{-1}\leq h_i\leq c, i=1,2}\biggabs{r_n^{-1}(\Prob_n-P) \bigg(\sum_{j=1}^K \bm{1}_{T_{K,j} \in [t_0-h_1 r_n,t_0+h_2 r_n]}\bigg) }\\
	&\qquad + \sup_{c^{-1}\leq h_i\leq c, i=1,2}\biggabs{\frac{P \big(\sum_{j=1}^K \bm{1}_{T_{K,j} \in [t_0-h_1 r_n,t_0+h_2 r_n]}\big)}{r_n(h_2+h_1)}-g(t_0) }.
	\end{align*}
	The first term converges to $0$ in probability, while the second term can be handled through
	\begin{align*}
	& \sup_{c^{-1}\leq h_i\leq c, i=1,2}\biggabs{\frac{P \big(\sum_{j=1}^K \bm{1}_{T_{K,j} \in [t_0-h_1 r_n,t_0+h_2 r_n]}\big)}{r_n(h_2+h_1)}-g(t_0) }\\
	&\leq \sup_{c^{-1}\leq h_i\leq c, i=1,2}\biggabs{  \sum_{k=1}^\infty \Prob(K=k) \sum_{j=1}^k \bigg[\frac{1}{r_n(h_2+h_1)}\int_{t_0-h_1 r_n}^{t_0+h_2r_n} \big(g_{k,j}(t)-g_{k,j}(t_0)\big)\ \d{t} \bigg]}\\
	&\leq  \biggabs{\sum_{k=1}^\infty k \cdot \Prob(K=k)} \sup_{1\leq j\leq k, k\geq 1}\sup_{t \in [t_0-c r_n, t_0+c r_n]} \abs{g_{k,j}(t)-g_{k,j}(t_0)}\to 0
	\end{align*}
	by \cite[conditions (E1)-(E2)]{wellner2000two}.
	
	On the other hand, on the same event,
	\begin{align*}
	&\abs{\hat{\sigma}_n^2(t_0)\hat{g}_n(t_0)-\sigma^2(t_0)g(t_0)}\\
	&\lesssim \biggabs{\frac{1}{n(\hat{v}(t_0)-\hat{u}(t_0))}\sum_{i=1}^n \sum_{j=1}^{K_i} \big(N_{K_i,j}^{(i)}-\Lambda_0(T_{K_i,j}^{(i)})\big)^2 \bm{1}_{T_{K_i,j}^{(i)} \in [t_0-h_1 r_n,t_0+h_2r_n] }-\sigma^2(t_0)g(t_0)}\\
	&\qquad + \hat{g}_n(t_0)\cdot \bigg[\big(\hat{\Lambda}_n(t_0)-\Lambda_0(t_0)\big)^2+\sup_{t \in [t_0-h_1r_n,t_0+h_2r_n]}\big(\Lambda_0(t)-\Lambda_0(t_0)\big)^2\bigg] \\
	&\lesssim \sup_{c^{-1}\leq h_i\leq c, i=1,2}\biggabs{r_n^{-1}(\Prob_n-P) \bigg(\sum_{j=1}^K (N_{K,j}-\Lambda_0(T_{K,j}))^2\bm{1}_{T_{K,j} \in [t_0-h_1 r_n,t_0+h_2 r_n]}\bigg) }\\
	&\qquad +  \sup_{c^{-1}\leq h_i\leq c, i=1,2}\bigg\lvert \frac{P \big(\sum_{j=1}^K (N_{K,j}-\Lambda_0(T_{K,j}))^2 \bm{1}_{T_{K,j} \in [t_0-h_1 r_n,t_0+h_2 r_n]}\big)}{r_n(h_2+h_1)}\\
	&\qquad\qquad\qquad\qquad-\sigma^2(t_0)g(t_0) \bigg\lvert+ \mathfrak{o}_{\mathbf{P}}(1).
	\end{align*}
	The first term above can be handled similarly as in the proof in Theorem \ref{thm:CI_interval_censoring}, which converges to $0$ in probability. For the second term, by independence of $N$ and $(K,T)$, the second term equals
	\begin{align*}
	&\sup_{c^{-1}\leq h_i\leq c, i=1,2}\biggabs{\frac{P \big(\sum_{j=1}^K \sigma^2(T_{K,j}) \bm{1}_{T_{K,j} \in [t_0-h_1 r_n,t_0+h_2 r_n]}\big)}{r_n(h_2+h_1)}-\sigma^2(t_0)g(t_0) }\\
	& =\sup_{c^{-1}\leq h_i\leq c, i=1,2}\biggabs{ \sum_{k=1}^\infty \Prob(K=k)\sum_{j=1}^k \frac{\int_{t_0-h_1r_n}^{t_0+h_2 r_n} \big(\sigma^2(t) g_{k,j}(t)-\sigma^2(t_0)g_{k,j}(t_0)\big)\ \d{t} }{r_n(h_2+h_1)} }\\
	& \leq \biggabs{\sum_{k=1}^\infty k \cdot \Prob(K=k)} \sup_{1\leq j\leq k, k\geq 1}\sup_{t \in [t_0-c r_n, t_0+c r_n]} \abs{\sigma^2(t)g_{k,j}(t)-\sigma^2(t_0)g_{k,j}(t_0)}\to 0,
	\end{align*}
	where we used \cite[condition (E5)]{wellner2000two} to complete the proof.
\end{proof}

\subsection{Proof of Theorems \ref{thm:CI_GLM} and \ref{thm:CI_GLM_1}}

\begin{proof}[Proof of Theorems \ref{thm:CI_GLM} and \ref{thm:CI_GLM_1}]
	
We first prove Theorem \ref{thm:CI_GLM}.
	
(1) By definition, $\omega_n|S_{n,0,1}|^{1/2}=\{n\omega_n^{2+1/\alpha}\}^{1/2} + \mathfrak{o}(1) = 1+\mathfrak{o}(1)$ 
and $|S_{n,h_1,h_2}|/|S_{n,0,1}| = h_1+h_2+\mathfrak{o}(1)$ uniformly in $(h_1,h_2)\in [1/c,c]^2$ for every $c >1$. 
Thus, by the Donsker-Prokhorov invariance principle (or one may use \cite[Theorem 2.11.9]{van1996weak}), the second and third lines of \eqref{inid-cond} imply 
\begin{align*}
\sum_{i\in S_{n,h_1,h_2}} \frac{Y_i - \theta_0(x_i)}{\omega_n|S_{n,0,1}|} 
\rightsquigarrow \sigma(x_0) \cdot \G(h_1,h_2)
\end{align*}
in $(h_1,h_2)\in [1/c,c]^2$ for every $c >1$. 
Here $\G(h_1,h_2)\equiv\mathbb{B}(-h_1) -\mathbb{B}(h_2)$ for a standard two-sided Brownian motion $\mathbb{B}$ starting from 0, is defined in Theorem \ref{thm:limit_distribution_pointwise} with $d=1$. This functional CLT and the first line of \eqref{inid-cond} yield part (i) by the proof for i.i.d.\, errors. 

(2) Let $u_n \equiv x_0-(h_1\omega_n^{1/\alpha}/\pi_0)^{1/\beta}$ and $v_n\equiv x_0+(h_2\omega_n^{1/\alpha}/\pi_0)^{1/\beta}$. 
As $\int_{u_n}^{x_0} \pi(x)\ \d{x} =(1+\mathfrak{o}(1)) h_1\omega_n^{1/\alpha}$ 
and $\int_{x_0}^{v_n} \pi(x)\ \d{x} = (1+\mathfrak{o}(1)) h_2\omega_n^{1/\alpha}$, 
\begin{align*}
& \sum_{i\in S_{n,h_1,h_2}} \frac{\theta_0(x_i)-\theta_0(x_0)}{\omega_n|S_{n,h_1,h_2}|} \\
&= \frac{\int_{u_n}^{v_n} \partial^{\alpha\beta} \theta_0(x_0)(x-x_0)^{\alpha\beta} \pi_0 \beta |x-x_0|^{\beta-1}\ \d{x}}
{(\alpha\beta)! \omega_n^{1+1/\alpha}(h_1+h_2)} + \frac{ \mathfrak{o}_{\mathbf{P}}(1)  (v_n-u_n)^{(\alpha+1)\beta}}{\omega_n^{1+1/\alpha}}\\
&= \frac{\pi_0\partial^{\alpha\beta} \theta_0(x_0)}{(\alpha\beta)!(\alpha+1)} \frac{(v_n-x_0)^{(\alpha+1)\beta} - (x_0-u_n)^{(\alpha+1)\beta}}
{\omega_n^{1+1/\alpha}(h_1+h_2)} + \mathfrak{o}_{\mathbf{P}}(1) \\
  &= g_0(x_0)\frac{h_2^{\alpha+1}-h_1^{\alpha+1}}{h_1+h_2} +  \mathfrak{o}_{\mathbf{P}}(1)  
\end{align*}
with $g_0(x_0) \equiv \partial^{\alpha\beta}\theta_0(x_0)/\{(\alpha\beta)!(\alpha+1)\pi_0^{\alpha}\}$. 
This gives the first line of \eqref{inid-cond}. 

For the exponential family, $\sigma^2(x)=1/p^\prime(\theta_0(x))$ is continuous in $\theta_0(x)$ 
so that the continuity of $\theta_0(\cdot)$ implies the second line of \eqref{inid-cond}. 
As the variance of $f(y;\theta)$ is finite at $\theta = \theta_0(0)$ and $\theta = \theta_0(1)$, 
$\{(Y_i-\theta_0(x_i))^2, 1\le i\le n\}, n\ge 1$, are uniformly integrable, so that 
the Lindeberg condition also holds. 

Theorem \ref{thm:CI_GLM_1} can be proved along similar lines as in the proof of Theorem \ref{thm:CI_grenander}. Details are omitted.
\end{proof}

\section*{Acknowledgments}
The authors would like to thank three referees for their helpful comments and suggestions that improved the quality of the paper.

\bibliographystyle{amsalpha}
\bibliography{mybib}

\providecommand{\bysame}{\leavevmode\hbox to3em{\hrulefill}\thinspace}
\providecommand{\MR}{\relax\ifhmode\unskip\space\fi MR }
\providecommand{\MRhref}[2]{%
  \href{http://www.ams.org/mathscinet-getitem?mr=#1}{#2}
}
\providecommand{\href}[2]{#2}
\begin{thebibliography}{MMTW01}

\bibitem[Ban07]{banerjee2007likelihood}
Moulinath Banerjee, \emph{Likelihood based inference for monotone response
  models}, Ann. Statist. \textbf{35} (2007), no.~3, 931--956. \MR{2341693}

\bibitem[Ban08]{banerjee2008estimating}
\bysame, \emph{Estimating monotone, unimodal and {U}-shaped failure rates using
  asymptotic pivots}, Statist. Sinica \textbf{18} (2008), no.~2, 467--492.
  \MR{2411614}

\bibitem[BBBB72]{barlow1972statistical}
R.~E. Barlow, D.~J. Bartholomew, J.~M. Bremner, and H.~D. Brunk,
  \emph{Statistical inference under order restrictions. {T}he theory and
  application of isotonic regression}, John Wiley \& Sons, London-New
  York-Sydney, 1972, Wiley Series in Probability and Mathematical Statistics.
  \MR{0326887}

\bibitem[Bel18]{bellec2018sharp}
Pierre~C. Bellec, \emph{Sharp oracle inequalities for {L}east {S}quares
  estimators in shape restricted regression}, Ann. Statist. \textbf{46} (2018),
  no.~2, 745--780. \MR{3782383}

\bibitem[BM07]{banerjee2007confidence}
Moulinath Banerjee and Ian~W. McKeague, \emph{Confidence sets for split points
  in decision trees}, Ann. Statist. \textbf{35} (2007), no.~2, 543--574.
  \MR{2336859}

\bibitem[Bru70]{brunk1970estimation}
H.~D. Brunk, \emph{Estimation of isotonic regression}, Nonparametric
  {T}echniques in {S}tatistical {I}nference ({P}roc. {S}ympos., {I}ndiana
  {U}niv., {B}loomington, {I}nd., 1969), Cambridge Univ. Press, London, 1970,
  pp.~177--197. \MR{0277070}

\bibitem[BRW09]{balabdaoui2009limit}
Fadoua Balabdaoui, Kaspar Rufibach, and Jon~A. Wellner, \emph{Limit
  distribution theory for maximum likelihood estimation of a log-concave
  density}, Ann. Statist. \textbf{37} (2009), no.~3, 1299--1331. \MR{2509075
  (2010h:62290)}

\bibitem[BW01]{banerjee2001likelihood}
Moulinath Banerjee and Jon~A. Wellner, \emph{Likelihood ratio tests for
  monotone functions}, Ann. Statist. \textbf{29} (2001), no.~6, 1699--1731.
  \MR{1891743}

\bibitem[BW07]{balabdaoui2007estimation}
Fadoua Balabdaoui and Jon~A. Wellner, \emph{Estimation of a {$k$}-monotone
  density: limit distribution theory and the spline connection}, Ann. Statist.
  \textbf{35} (2007), no.~6, 2536--2564. \MR{2382657}

\bibitem[CGS15]{chatterjee2015risk}
Sabyasachi Chatterjee, Adityanand Guntuboyina, and Bodhisattva Sen, \emph{On
  risk bounds in isotonic and other shape restricted regression problems}, Ann.
  Statist. \textbf{43} (2015), no.~4, 1774--1800. \MR{3357878}

\bibitem[CGS18]{chatterjee2018matrix}
\bysame, \emph{On matrix estimation under monotonicity constraints}, Bernoulli
  \textbf{24} (2018), no.~2, 1072--1100. \MR{3706788}

\bibitem[CL19]{chatterjee2015adaptive}
Sabyasachi Chatterjee and John Lafferty, \emph{Adaptive risk bounds in unimodal
  regression}, Bernoulli \textbf{25} (2019), no.~1, 1--25. \MR{3892309}

\bibitem[CS16]{chen2016generalized}
Yining Chen and Richard~J. Samworth, \emph{Generalized additive and index
  models with shape constraints}, J. R. Stat. Soc. Ser. B. Stat. Methodol.
  \textbf{78} (2016), no.~4, 729--754. \MR{3534348}

\bibitem[Dos19]{doss2019concave}
Charles~R. Doss, \emph{Concave regression: value-constrained estimation and
  likelihood ratio-based inference}, Math. Program. \textbf{174} (2019),
  no.~1-2, Ser. B, 5--39. \MR{3935071}

\bibitem[DR09]{dumbgen2009maximum}
Lutz D{\"u}mbgen and Kaspar Rufibach, \emph{Maximum likelihood estimation of a
  log-concave density and its distribution function: basic properties and
  uniform consistency}, Bernoulli \textbf{15} (2009), no.~1, 40--68.
  \MR{2546798 (2011b:62096)}

\bibitem[DSS11]{dumbgen2011approximation}
Lutz D{\"u}mbgen, Richard Samworth, and Dominic Schuhmacher,
  \emph{Approximation by log-concave distributions, with applications to
  regression}, Ann. Statist. \textbf{39} (2011), no.~2, 702--730. \MR{2816336
  (2012e:62039)}

\bibitem[DW16]{doss2013global}
Charles~R. Doss and Jon~A. Wellner, \emph{Global rates of convergence of the
  {MLE}s of log-concave and {$s$}-concave densities}, Ann. Statist. \textbf{44}
  (2016), no.~3, 954--981. \MR{3485950}

\bibitem[DW19]{doss2016inference}
\bysame, \emph{Inference for the mode of a log-concave density}, Ann. Statist.
  \textbf{47} (2019), no.~5, 2950--2976. \MR{3988778}

\bibitem[DZ20]{deng2018isotonic}
Hang Deng and Cun-Hui Zhang, \emph{Isotonic regression in multi-dimensional
  spaces and graphs}, Ann. Statist. (to appear). Available at arXiv:1812.08944
  (2020+).

\bibitem[FGKS18]{feng2018adaptation}
Oliver~Y Feng, Adityanand Guntuboyina, Arlene~KH Kim, and Richard~J Samworth,
  \emph{Adaptation in multivariate log-concave density estimation}, Ann.
  Statist. (to appear). Available at arXiv:1812.11634 (2018).

\bibitem[FGS19]{fang2019multivariate}
Billy Fang, Adityanand Guntuboyina, and Bodhisattva Sen, \emph{Multivariate
  extensions of isotonic regression and total variation denoising via entire
  monotonicity and {H}ardy-{K}rause variation}, arXiv preprint arXiv:1903.01395
  (2019).

\bibitem[FLN17]{fokianos2017integrated}
Konstantinos Fokianos, Anne Leucht, and Michael~H Neumann, \emph{On integrated
  $l_1$ convergence rate of an isotonic regression estimator for multivariate
  observations}, arXiv preprint arXiv:1710.04813 (2017).

\bibitem[GJ95]{groeneboom1995isotonic}
Piet Groeneboom and Geurt Jongbloed, \emph{Isotonic estimation and rates of
  convergence in {W}icksell's problem}, Ann. Statist. \textbf{23} (1995),
  no.~5, 1518--1542. \MR{1370294}

\bibitem[GJ14]{groeneboom2014nonparametric}
\bysame, \emph{Nonparametric estimation under shape constraints}, Cambridge
  Series in Statistical and Probabilistic Mathematics, vol.~38, Cambridge
  University Press, New York, 2014. \MR{3445293}

\bibitem[GJ15]{groeneboom2015nonparametric}
\bysame, \emph{Nonparametric confidence intervals for monotone functions}, Ann.
  Statist. \textbf{43} (2015), no.~5, 2019--2054. \MR{3375875}

\bibitem[GJW01a]{groeneboom2001canonical}
Piet Groeneboom, Geurt Jongbloed, and Jon~A. Wellner, \emph{A canonical process
  for estimation of convex functions: the ``invelope'' of integrated {B}rownian
  motion {$+t^4$}}, Ann. Statist. \textbf{29} (2001), no.~6, 1620--1652.
  \MR{1891741}

\bibitem[GJW01b]{groeneboom2001estimation}
\bysame, \emph{Estimation of a convex function: characterizations and
  asymptotic theory}, Ann. Statist. \textbf{29} (2001), no.~6, 1653--1698.
  \MR{1891742 (2003a:62047)}

\bibitem[GN16]{gine2015mathematical}
Evarist Gin\'{e} and Richard Nickl, \emph{Mathematical foundations of
  infinite-dimensional statistical models}, Cambridge Series in Statistical and
  Probabilistic Mathematics, [40], Cambridge University Press, New York, 2016.
  \MR{3588285}

\bibitem[Gre56]{grenander1956theory}
Ulf Grenander, \emph{On the theory of mortality measurement. {II}}, Skand.
  Aktuarietidskr. \textbf{39} (1956), 125--153 (1957). \MR{0093415}

\bibitem[Gro85]{groeneboom1985estimating}
Piet Groeneboom, \emph{Estimating a monotone density}, Proceedings of the
  {B}erkeley conference in honor of {J}erzy {N}eyman and {J}ack {K}iefer,
  {V}ol. {II} ({B}erkeley, {C}alif., 1983), Wadsworth Statist./Probab. Ser.,
  Wadsworth, Belmont, CA, 1985, pp.~539--555. \MR{822052}

\bibitem[Gro89]{groeneboom1989brownian}
\bysame, \emph{Brownian motion with a parabolic drift and {A}iry functions},
  Probab. Theory Related Fields \textbf{81} (1989), no.~1, 79--109. \MR{981568}

\bibitem[Gro15]{groeneboom2015rcpp}
\bysame, \emph{Rcpp scripts},
  \url{https://github.com/pietg/book/tree/master/Rcpp_scripts/}, 2015.

\bibitem[GS15]{guntuboyina2013global}
Adityanand Guntuboyina and Bodhisattva Sen, \emph{Global risk bounds and
  adaptation in univariate convex regression}, Probab. Theory Related Fields
  \textbf{163} (2015), no.~1-2, 379--411. \MR{3405621}

\bibitem[GS17]{ghosal2017univariate}
Promit Ghosal and Bodhisattva Sen, \emph{On univariate convex regression},
  Sankhya A \textbf{79} (2017), no.~2, 215--253. \MR{3707421}

\bibitem[GW92]{groeneboom1992information}
Piet Groeneboom and Jon~A. Wellner, \emph{Information bounds and nonparametric
  maximum likelihood estimation}, DMV Seminar, vol.~19, Birkh\"{a}user Verlag,
  Basel, 1992. \MR{1180321}

\bibitem[Han19]{han2019}
Qiyang Han, \emph{Global empirical risk minimizers with ``shape constraints''
  are rate optimal in general dimensions}, arXiv preprint arXiv:1905.12823
  (2019).

\bibitem[Hil54]{hildreth1954point}
Clifford Hildreth, \emph{Point estimates of ordinates of concave functions}, J.
  Amer. Statist. Assoc. \textbf{49} (1954), 598--619. \MR{0065093 (16,382f)}

\bibitem[HK19]{han2019berry}
Qiyang Han and Kengo Kato, \emph{Berry-{E}sseen bounds for {C}hernoff-type
  non-standard asymptotics in isotonic regression}, arXiv preprint
  arXiv:1910.09662 (2019).

\bibitem[HKT91]{hall1991estimation}
Peter Hall, J.~W. Kay, and D.~M. Titterington, \emph{On estimation of noise
  variance in two-dimensional signal processing}, Advances in {A}pplied
  {P}robability \textbf{23} (1991), no.~3, 476--495.

\bibitem[HP76]{hanson1976consistency}
D.~L. Hanson and Gordon Pledger, \emph{Consistency in concave regression}, Ann.
  Statist. \textbf{4} (1976), no.~6, 1038--1050. \MR{0426273 (54 \#14219)}

\bibitem[HW16a]{han2015approximation}
Qiyang Han and Jon~A. Wellner, \emph{Approximation and estimation of
  {$s$}-concave densities via {R}\'{e}nyi divergences}, Ann. Statist.
  \textbf{44} (2016), no.~3, 1332--1359. \MR{3485962}

\bibitem[HW16b]{han2016multivariate}
\bysame, \emph{Multivariate convex regression: global risk bounds and
  adaptation}, arXiv preprint arXiv:1601.06844 (2016).

\bibitem[HWCS19]{han2017isotonic}
Qiyang Han, Tengyao Wang, Sabyasachi Chatterjee, and Richard~J. Samworth,
  \emph{Isotonic regression in general dimensions}, Ann. Statist. \textbf{47}
  (2019), no.~5, 2440--2471. \MR{3988762}

\bibitem[HZ19]{han2019limit}
Qiyang Han and Cun-Hui Zhang, \emph{Limit distribution theory for block
  estimators in multiple isotonic regression}, Ann. Statist. (to appear).
  Available at arXiv:1905.12825 (2019+).

\bibitem[JW09]{jankowski2009nonparametric}
Hanna~K. Jankowski and Jon~A. Wellner, \emph{Nonparametric estimation of a
  convex bathtub-shaped hazard function}, Bernoulli \textbf{15} (2009), no.~4,
  1010--1035. \MR{2597581}

\bibitem[KGS18]{kim2016adaptation}
Arlene K.~H. Kim, Adityanand Guntuboyina, and Richard~J. Samworth,
  \emph{Adaptation in log-concave density estimation}, Ann. Statist.
  \textbf{46} (2018), no.~5, 2279--2306. \MR{3845018}

\bibitem[KM10]{koenker2010quasi}
Roger Koenker and Ivan Mizera, \emph{Quasi-concave density estimation}, Ann.
  Statist. \textbf{38} (2010), no.~5, 2998--3027. \MR{2722462 (2011j:62108)}

\bibitem[Kos08]{kosorok2008bootstrapping}
Michael~R. Kosorok, \emph{Bootstrapping in {G}renander estimator}, Beyond
  parametrics in interdisciplinary research: {F}estschrift in honor of
  {P}rofessor {P}ranab {K}. {S}en, Inst. Math. Stat. (IMS) Collect., vol.~1,
  Inst. Math. Statist., Beachwood, OH, 2008, pp.~282--292. \MR{2462212}

\bibitem[KS16]{kim2016global}
Arlene K.~H. Kim and Richard~J. Samworth, \emph{Global rates of convergence in
  log-concave density estimation}, Ann. Statist. \textbf{44} (2016), no.~6,
  2756--2779. \MR{3576560}

\bibitem[Kuo08]{kuosmanen2008representation}
Timo Kuosmanen, \emph{Representation theorem for convex nonparametric least
  squares}, The Econometrics Journal \textbf{11} (2008), no.~2, 308--325.

\bibitem[LG12]{lim2012consistency}
Eunji Lim and Peter~W. Glynn, \emph{Consistency of multidimensional convex
  regression}, Oper. Res. \textbf{60} (2012), no.~1, 196--208. \MR{2911667}

\bibitem[Mam91]{mammen1991nonparametric}
Enno Mammen, \emph{Nonparametric regression under qualitative smoothness
  assumptions}, Ann. Statist. \textbf{19} (1991), no.~2, 741--759. \MR{1105842
  (92j:62051)}

\bibitem[MBWF05]{munk2005difference}
Axel Munk, Nicolai Bissantz, Thorsten Wagner, and Gudrun Freitag, \emph{On
  difference-based variance estimation in nonparametric regression when the
  covariate is high dimensional}, J. R. Stat. Soc. Ser. B Stat. Methodol.
  \textbf{67} (2005), no.~1, 19--41. \MR{2136637}

\bibitem[MMTW01]{mammen2001general}
E.~Mammen, J.~S. Marron, B.~A. Turlach, and M.~P. Wand, \emph{A general
  projection framework for constrained smoothing}, Statist. Sci. \textbf{16}
  (2001), no.~3, 232--248. \MR{1874153}

\bibitem[PR69]{rao1969estimation}
B.~L.~S. Prakasa~Rao, \emph{Estimation of a unimodal density}, Sankhy\=a Ser. A
  \textbf{31} (1969), 23--36. \MR{0267677}

\bibitem[PR70]{rao1970estimation}
\bysame, \emph{Estimation for distributions with monotone failure rate}, Ann.
  Math. Statist. \textbf{41} (1970), 507--519. \MR{0260133}

\bibitem[Ric84]{rice1984bandwidth}
John Rice, \emph{Bandwidth choice for nonparametric regression}, Ann. Statist.
  \textbf{12} (1984), no.~4, 1215--1230. \MR{760684}

\bibitem[Rud91]{rudin1991functional}
Walter Rudin, \emph{Functional analysis}, second ed., International Series in
  Pure and Applied Mathematics, McGraw-Hill, Inc., New York, 1991. \MR{1157815}

\bibitem[RWD88]{robertson1988order}
Tim Robertson, F.~T. Wright, and R.~L. Dykstra, \emph{Order restricted
  statistical inference}, Wiley Series in Probability and Mathematical
  Statistics: Probability and Mathematical Statistics, John Wiley \& Sons,
  Ltd., Chichester, 1988. \MR{961262}

\bibitem[SB07]{sen2008pseudo}
Bodhisattva Sen and Moulinath Banerjee, \emph{A pseudo-likelihood method for
  analyzing interval censored data}, Biometrika \textbf{94} (2007), no.~1,
  71--86. \MR{2307901}

\bibitem[SBW10]{sen2010inconsistency}
Bodhisattva Sen, Moulinath Banerjee, and Michael Woodroofe, \emph{Inconsistency
  of bootstrap: the {G}renander estimator}, Ann. Statist. \textbf{38} (2010),
  no.~4, 1953--1977. \MR{2676880}

\bibitem[SK95]{sun1995estimation}
J.~Sun and J.~D. Kalbfleisch, \emph{Estimation of the mean function of point
  processes based on panel count data}, Statist. Sinica \textbf{5} (1995),
  no.~1, 279--289. \MR{1329298}

\bibitem[SS11a]{seijo2011change}
Emilio Seijo and Bodhisattva Sen, \emph{Change-point in stochastic design
  regression and the bootstrap}, Ann. Statist. \textbf{39} (2011), no.~3,
  1580--1607. \MR{2850213}

\bibitem[SS11b]{seijo2011nonparametric}
\bysame, \emph{Nonparametric least squares estimation of a multivariate convex
  regression function}, Ann. Statist. \textbf{39} (2011), no.~3, 1633--1657.
  \MR{2850215 (2012j:62119)}

\bibitem[SW10]{seregin2010nonparametric}
Arseni Seregin and Jon~A. Wellner, \emph{Nonparametric estimation of
  multivariate convex-transformed densities}, Ann. Statist. \textbf{38} (2010),
  no.~6, 3751--3781. \MR{2766867 (2012b:62126)}

\bibitem[vdVW96]{van1996weak}
Aad van~der Vaart and Jon~A. Wellner, \emph{Weak {C}onvergence and {E}mpirical
  {P}rocesses}, Springer Series in Statistics, Springer-Verlag, New York, 1996.
  \MR{1385671 (97g:60035)}

\bibitem[Wri81]{wright1981asymptotic}
F.~T. Wright, \emph{The asymptotic behavior of monotone regression estimates},
  Ann. Statist. \textbf{9} (1981), no.~2, 443--448. \MR{606630}

\bibitem[WZ00]{wellner2000two}
Jon~A. Wellner and Ying Zhang, \emph{Two estimators of the mean of a counting
  process with panel count data}, Ann. Statist. \textbf{28} (2000), no.~3,
  779--814. \MR{1792787}

\bibitem[XS20]{xu2019high}
Min Xu and Richard~J Samworth, \emph{High-dimensional nonparametric density
  estimation via symmetry and shape constraints}, Ann. Statist. (to appear).
  Available at arXiv:1903.06092 (2020+).

\bibitem[Zha02]{zhang2002risk}
Cun-Hui Zhang, \emph{Risk bounds in isotonic regression}, Ann. Statist.
  \textbf{30} (2002), no.~2, 528--555. \MR{1902898 (2003e:62084)}

\end{thebibliography}

\end{document}